\colorlet{red}{black}
\colorlet{blue}{black}
\definecolor{proofpurple}{RGB}{0,0,0}
\def\R{\mathbb{R} }
\def\1{{{\mathbf 1}}}
\def\bK{\mathbf{K}}
\def\G{\mathbb{G}}
\newcommand{\jap}[1]{\langle #1 \rangle}
\newcommand{\oast}{\circledast}
\newcommand{\bT}{\mathbf{\Theta}}
\def\dif{{\rm d}}
\newcommand{\keywords}{\textbf{Keywords}: }
\newtheorem{theorem}{Theorem}[section]
\newtheorem{corollary}[theorem]{Corollary}
\newtheorem{definition}[theorem]{Definition}
\newtheorem{lemma}[theorem]{Lemma}
\newtheorem{proposition}[theorem]{Proposition}
\newtheorem{remark}[theorem]{Remark}
\newenvironment{proof}[1][Proof]{\textbf{#1.} }{\hfill\rule{0.5em}{0.5em}}
\let\AddToReset=\@addtoreset
\begin{document}
		\title{Scaling-Critical Theory for the Boltzmann and Landau Equations}
		\author{
			{\bf Ke Chen\thanks{Email address: k1chen@polyu.edu.hk, Department of Applied Mathematics, The Hong Kong Polytechnic University, Kowloon, Hong Kong, P. R. China.}},
			{\bf Quoc-Hung Nguyen\thanks{Email address: qhnguyen@amss.ac.cn, Academy of Mathematics and Systems Science,
					Chinese Academy of Sciences,
					Beijing, 100190, P. R. China.} },
			{\bf Tong Yang\thanks{Email address: t.yang@polyu.edu.hk, Department of Applied Mathematics, The Hong Kong Polytechnic University, Kowloon, Hong Kong, P. R. China.
			}}
		}
		\date{}  
		\maketitle

\begin{abstract}
{\color{red}This paper develops a scaling-critical well-posedness theory for
the spatially inhomogeneous non-cutoff Boltzmann and Landau equations with very
soft potentials. More precisely, we consider collision kernels whose kinetic
singularity exponent \(\gamma\) satisfies}
\[
    -d-2s<\gamma\leq -2s,
    \qquad s\in(0,1].
\]
{\color{red}For sufficiently small initial perturbations measured in a
localized, weighted, anisotropic Riesz-potential norm, we prove global
well-posedness near a Maxwellian in the whole space. We introduce this critical
phase-space norm to reflect simultaneously the intrinsic Boltzmann--Landau
scaling, the velocity-dependent anisotropy of the collision operator, the
hypoelliptic transport structure, and the nonnegativity of the full
distribution. The proof combines pointwise estimates for frozen linearized
kinetic operators, a critical fixed-point argument, and weighted hypocoercive
energy estimates that control the macroscopic low-frequency dynamics.

We also establish a short-time pointwise Green-function theory for
variable-coefficient kinetic equations generated by a nonnegative H\"{o}lder
background profile. We first construct the Green function for the small-jump
operator by freezing the coefficients along kinetic characteristics. The
resulting kernel captures both anisotropic hypoelliptic regularization and the
fractional Kolmogorov geometry. Using this kernel as a reference, we construct
the Green function for the full operator through a parametrix expansion. The
H\"{o}lder regularity of the background produces a positive power of time at
each correction, ensuring convergence. The final bounds preserve the
fractional Kolmogorov profile near kinetic characteristics and provide rapid
additional decay away from them. These estimates form the analytic core of the
scaling-critical theory and provide a robust pointwise framework for further
regularity analysis in the soft-potential regime.}
\end{abstract}

	\noindent	\keywords{kinetic equations, scaling invariance, critical function spaces, well-posedness, 
			non-vacuum solutions}
            
			\noindent\textbf{2020 Mathematics Subject Classification:}
			35A01, 35B33, 35Q20, 82C40.
		\tableofcontents
		\section{Introduction}
		
{\color{red}The Boltzmann and Landau equations are fundamental models in kinetic theory.
The Boltzmann equation describes the evolution of a particle distribution in a
dilute gas under free transport and binary elastic collisions. For long-range
interactions dominated by grazing collisions, its collision operator formally
converges to the Landau operator. The two equations therefore provide closely
related descriptions of collisional dynamics in gases and plasmas.}

{\color{red}Let} \(F(t,x,v)\) {denote the particle distribution. The Boltzmann
equation and its Landau endpoint} \(s=1\) {can be written uniformly as}

\begin{equation}\label{eqlandau}
\begin{aligned}
	&\partial_t F+v\cdot\nabla_x F=\mathcal{Q}_s(F,F),
	\qquad
	(t,x,v)\in \mathbb R^+\times \mathbb R^d\times \mathbb R^d,
	\qquad s\in(0,1],\\
	&F|_{t=0}=F_0 .
\end{aligned}
\end{equation}
{\color{red}For} \(s\in(0,1)\){, the operator} \(\mathcal{Q}_s\)
{\color{red}is the non-cutoff Boltzmann collision operator}
\begin{equation}\label{BL}
\mathcal{Q}_s(f_1,f_2)(v)
:=
\int_{\mathbb R^d}\int_{\mathbb S^{d-1}}
\bigl(
f_1(v_\star')f_2(v')-f_1(v_\star)f_2(v)
\bigr)
B(|v_\star-v|,\cos\theta)
\,\dif \sigma\,\dif v_\star .
\end{equation}
{\color{red}The pre- and post-collisional velocities are related by the}
$\sigma${-representation}
\[
v'
=
\frac{v+v_\star}{2}
+
\frac{|v-v_\star|}{2}\sigma,
\qquad
v_\star'
=
\frac{v+v_\star}{2}
-
\frac{|v-v_\star|}{2}\sigma,
\]
{\color{red}where}
\[
\cos\theta
=
\frac{v-v_\star}{|v-v_\star|}\cdot\sigma .
\]
{\color{red}We consider inverse-power-law interactions without an angular cutoff. The
collision kernel is assumed to have the form}
\begin{equation}\label{defB}
B(|v_\star-v|,\cos\theta)
=
|v_\star-v|^\gamma b(\cos\theta),
\qquad
b(\cos\theta)
\sim
\left|\sin\frac{\theta}{2}\right|^{-(d-1)-2s},
\end{equation}
with
\[
\gamma>-d-2s,
\qquad
s\in(0,1).
\]
{\color{red}Using the cancellation lemma and the Carleman representation from}
\cite{ADVW,S} {(see also Lemma~}\ref{lemcanc}{), the
Boltzmann equation can be rewritten as}
		\begin{equation}\label{eq1}
			\partial_t F+v\cdot \nabla_x F=-\int_{\mathbb{R}^d}(F(v)-F(v+h))	\mathbf{C}_{F}(v,h)\frac{\dif h}{|h|^{d+2s}}+c(\gamma,s) F (\Lambda_v^{-d-\gamma}
F),		\end{equation}
{\color{red}Here} 
$\Lambda_v^{\alpha}=(-\Delta_v)^\frac{\alpha}{2}$ {\color{red}is the Fourier
multiplier with symbol} $|\xi|^{\alpha}${, while the nonnegative
coefficient} $\mathbf{C}_{F}$ {depends nonlocally on} $F${.
More explicitly, one may take}
		\begin{equation*}
\mathbf C_F(v,h)
\sim
\int_{w\perp h}
F(v+w)|w|^{\gamma+2s+1}\dif \mathcal H^{d-1}(w),
\qquad h\neq0.
\end{equation*}
{Averaging over directions therefore gives the formal scaling relation below
in the soft-potential range} $\gamma+2s\le0${:}
    \begin{align}\label{eq1-co}
			\int_{\mathbb{S}^{d-1}}	\mathbf{C}_F(v,\theta \rho ) \dif \mathcal{H}^{d-1}(\theta)
			{\sim}\Lambda_v^{-d-\gamma-2s}F(v),\qquad \rho>0.
		\end{align}
{\color{red}Thus, the non-cutoff Boltzmann operator behaves like a fractional diffusion
of order} $2s$ {in} $v${, with a rough, nonlocal coefficient
determined by} $F${. At the endpoint} $s=1${, the collision
operator becomes the Landau operator, given by}
	\begin{align*}
			\mathcal{Q}_1(f_1,f_2)(v):=	
			\operatorname{div}\left[\int_{\mathbb{R}^d}a(v-w)\left(f_1(w)\nabla f_2(v)-f_2(v)\nabla f_1(w)\right) \dif w\right].
		\end{align*}
{\color{red}The matrix-valued function} $a$ {\color{red}is nonnegative, symmetric, and
determined by the particle interaction. For inverse-power-law interactions,}
\begin{align}\label{defa}
			a(z)=|z|^{\gamma+2}\left(\mathrm{Id}-\frac{z}{|z|}\otimes\frac{z}{|z|}\right),
		\end{align}
{\color{red}where} $\mathrm{Id}$ {\color{red}is the identity matrix and} $\otimes$
{\color{red}is the tensor product. Equivalently, the Landau operator has the convolution
form}
\begin{align*}
			\mathcal{Q}_1(f_1,f_2)(v)=\operatorname{div}\left[a\star f_1(v)\nabla f_2(v)\right]-	\operatorname{div}\left[(a\star\nabla f_1)(v)f_2(v)\right].
		\end{align*}
			{\color{red}Under the normalization}
			\[
			(1-s)\int_{\mathbb S^{d-1}}
			b_s(\cos\theta)\,(1-\cos\theta)\,\dif\sigma
			\longrightarrow c_d,
			\]
			{the collision operators satisfy}
			\[
			(1-s)\mathcal Q_s(f,f)
			\longrightarrow c_d\,\mathcal Q_1(f,f)
			\qquad\text{as }s\uparrow1,
			\]
			{for smooth, rapidly decaying} \(f\){; see
			Remark~}\ref{Z}{.}

{Potentials are conventionally classified as hard, Maxwellian, or soft
according to}
\[
\gamma>0,\qquad \gamma=0,\qquad \gamma<0,
\]
{\color{red}respectively. For a repulsive inverse-power-law potential in three
dimensions,}
\[
\phi(r)\sim r^{-(p-1)},\qquad p>2,
\]
{the parameters satisfy}
\[
s=\frac1{p-1},\qquad
\gamma=\frac{p-5}{p-1}=1-4s,
\]
and hence
\[
-3<\gamma<1.
\]
{\color{red}The endpoint} $\gamma=-3${, which formally corresponds to}
$p=2${, represents the Coulomb interaction and is described by the
Landau operator.}

{\color{red}This paper focuses on the very-soft-potential regime}
\[
-d-2s<\gamma\le -2s .
\]
{\color{red}This regime is particularly delicate because the collision kernel is
strongly singular both in the angular variable and at small relative velocities.
In three dimensions, its Landau counterpart includes the Coulomb case}
$\gamma=-3${.}

{\color{red}The mathematical theory of the Boltzmann equation is extensive. For large
data, the DiPerna--Lions theory of renormalized solutions provides a foundational
framework for global weak solutions; see} \cite{D-L}{. In the non-cutoff
case, the angular singularity generates fractional diffusion in velocity rather
than merely creating a technical obstacle. This regularization has been studied
through entropy-dissipation, pseudodifferential, and coercivity estimates, as
well as anisotropic fractional Sobolev norms adapted to the collision geometry;
see, for example,} \cite{ADVW,GS2011,AMUXY-1,AMUXY2011-AA,AMUXY2012JFA}.
{These tools have yielded global classical solutions near Maxwellian
equilibria and sharp decay estimates in perturbative regimes.}

{A complementary, nonperturbative regularity theory has emerged more
recently. It treats the collision operator as a kinetic integro-differential
operator with a solution-dependent kernel. Suitable macroscopic control---upper
bounds on the mass, energy, and entropy densities, together with a positive
lower mass bound---yields H\"older and higher regularity estimates; see}
\cite{S,ImbertSilvestre-JEMS,ImbertSilvestre-APDE,IS0}. {\color{red}This theory
shows that the non-cutoff structure can produce strong regularization even far
from equilibrium.}

{\color{red}We address the related problem of well-posedness in spaces critical with
respect to the intrinsic Landau--Boltzmann scaling. Equation}
\eqref{eqlandau} {\color{red}is invariant under}
\begin{equation}\label{scaling}
F_\lambda(t,x,v)
=
\lambda^{-\nu(d+\gamma)-1}
F\left(
\frac{t}{\lambda},
\frac{x}{\lambda^{1+\nu}},
\frac{v}{\lambda^\nu}
\right),
\qquad
\nu\in\mathbb R .
\end{equation}
{\color{red}For solutions near a Maxwellian and bounded away from vacuum, the principal
part of the linearized operator behaves like the velocity fractional Laplacian}
\((-\Delta_v)^s\){. This suggests the parabolic choice}
\[
\nu=\frac1{2s},
\]
{\color{red}which is naturally associated with the fractional Kolmogorov operator}
\[
\partial_t+v\cdot\nabla_x+(-\Delta_v)^s .
\]
{A critical theory must therefore reflect both the collision scaling and the
hypoelliptic transfer of velocity regularity to the spatial variable through
transport.}

{Despite extensive work on perturbative solutions, hypocoercivity, and
non-cutoff regularization, no scaling-critical well-posedness theory has been
available for the spatially inhomogeneous Landau and non-cutoff Boltzmann
equations. The central difficulty is to control simultaneously the singular
velocity dependence and anisotropic geometry of the collision operator, the
positivity constraint} \(F\geq0\){, and the low-frequency
macroscopic modes that govern long-time behavior.}
	
{\color{red}This paper has two objectives. First, we establish global well-posedness near
a Maxwellian for the spatially inhomogeneous Landau and non-cutoff Boltzmann
equations with very soft potentials. Our scaling-critical framework uses a
localized, weighted, anisotropic Riesz-potential norm adapted to the intrinsic
kinetic scaling, the velocity-dependent collision geometry, and the constraint}
\(F\ge0\){.}

{\color{red}Second, we prove short-time pointwise bounds for Green functions of linear
kinetic equations whose coefficients are generated by a nonnegative background
distribution satisfying quantitative mass, decay, and H\"older bounds. These
estimates exhibit a Kolmogorov-type profile near kinetic characteristics and
rapid off-diagonal decay away from them.}

{Both results rest on sharp pointwise estimates for frozen linearized kinetic
operators. The corresponding kernels provide the short-time smoothing required
by the critical fixed-point argument and serve as the building blocks for the
parametrix expansion of the variable-coefficient Green function.}

		\subsection{Main results}		\label{secnota}
{\color{red}Throughout the paper,} \(d\ge 2\) {\color{red}is an integer, and we fix}
\[
    s\in(0,1],
    \qquad
    -d-2s<\gamma\leq -2s,
\]
{\color{red}and set}
\[
    \kappa:=-(\gamma+2s)\in[0,d).
\]
{\color{red}For} \(x,v\in\mathbb R^d\){, we write}
\begin{equation}\label{notaP}
    \langle x\rangle=(1+|x|^2)^{1/2},
    \qquad
    \langle x,v\rangle=(1+|x|^2+|v|^2)^{1/2},
    \qquad
    [x]_v=|x|+|x\cdot v|.
\end{equation}

{\color{red}Before stating the main theorem, we specify the assumptions on the collision
kernel. In the Boltzmann case} \(0<s<1\){, we adopt the following
standard half-sphere convention.}

\begin{remark}[Half-sphere convention]\label{rem:half-sphere-convention}
{Temporarily denote the angular kernel in} \eqref{defB} {by}
\(\widetilde b\){, and define}
\[
\mathbb S^{d-1}_+
:=\{\sigma\in\mathbb S^{d-1}:\cos\theta\geq0\},
\qquad
b_{\rm sym}(r):=\widetilde b(r)+\widetilde b(-r),
\quad r\in[0,1].
\]
{Split the angular integral in} \(\mathcal Q_s(F,F)\)
{over the two half-spheres and apply} \(\sigma\mapsto-\sigma\)
{on} \(\{\cos\theta<0\}\){. This transformation exchanges}
\(v'\) {\color{red}and} \(v_\star'\) {but leaves}
\(F(v_\star')F(v')\) {unchanged. Consequently, restricting the
angular integration to} \(\mathbb S^{d-1}_+\) {\color{red}and replacing}
\(\widetilde b\) {by} \(b_{\rm sym}\) {does not change the
quadratic collision operator.}

{With a slight abuse of notation, we henceforth write} \(b\)
{for} \(b_{\rm sym}\) {\color{red}and} \(\mathcal Q_s\)
{for the corresponding half-sphere bilinear extension:}
\[
\mathcal Q_s(f_1,f_2)(v)
:=
\int_{\mathbb R^d}\int_{\mathbb S^{d-1}_+}
\bigl(f_1(v_\star')f_2(v')-f_1(v_\star)f_2(v)\bigr)
|v_\star-v|^\gamma b(\cos\theta)
\,\dif\sigma\,\dif v_\star .
\]
\end{remark}

{\color{red}Under this convention,}
\[
    B(|v-v_\star|,\cos\theta)
    =
    |v-v_\star|^\gamma b(\cos\theta),
    \qquad \cos\theta\in[0,1].
\]
{\color{red}We impose the following quantitative form of the non-cutoff angular
singularity; see, for example,} \cite{Villani02,GS2011}. {There exist
constants}
\[
    0<c_b\leq C_b<\infty,
    \qquad
    \delta_0\in(0,1),
\]
{\color{red}such that}
\begin{equation}\label{assofb}
    0\leq
    (1-r)^{\frac{d-1+2s}{2}}b(r)
    \leq C_b,
    \qquad r\in[0,1],
\end{equation}
and
\begin{equation}\label{assofb-lower}
    (1-r)^{\frac{d-1+2s}{2}}b(r)
    \geq c_b,
    \qquad r\in[1-\delta_0,1].
\end{equation}
{\color{red}Moreover, for every} \(k\in \mathbb{N}\){,}
\begin{equation}\label{assofb-derivative}
    \left|
    \frac{\dif^k}{\dif r^k}
    \left[
    (1-r)^{\frac{d-1+2s}{2}}b(r)
    \right]
    \right|
    \leq C_{b,k},
    \qquad r\in[0,1].
\end{equation}
{\color{red}Here} \(\delta_0\) {specifies a fixed neighborhood of grazing
collisions on which the non-cutoff singularity is nondegenerate.}

{\color{red}The Landau case} \(s=1\) {\color{red}has no angular kernel; we use the
Landau collision operator with the matrix} \(a\) {defined in}
\eqref{defa}{.}
		\subsubsection{Well-posedness in scaling-critical spaces}
			{Our first main result establishes well-posedness for the Boltzmann and
			Landau equations with scaling-critical perturbative data near the global
			Maxwellian}
		$$\mu(v)=(2\pi)^{-\frac{d}{2}}\exp\left(-\frac{|v|^2}{2}\right),$$
			{normalized to have zero bulk velocity and unit density and temperature. We
			define the perturbation} $f(t,x,v)$ {by}
		$$
		F=f+\mu.
		$$
			{\color{red}The linearized collision operator is}
		\begin{align}\label{defL}
			\mathbf{L}_s(f)=	-\mathcal{Q}_s(\mu,f)-	\mathcal{Q}_s(f,\mu).
		\end{align}
			{Equation} \eqref{eqlandau} {then becomes the following
			equation for} $f=F-\mu${:}
		\begin{equation}\label{eqperbo}
			\begin{aligned}
				&\partial_t f+v\cdot \nabla_x f+\mathbf{L}_s (f)=	\mathcal{Q}_s(f,f),\quad\quad \quad s\in(0,1],\\
				&f|_{t=0}=f_0:=F_0-\mu.
			\end{aligned}
		\end{equation}
{\color{red}By the Carleman representation} \eqref{eq1}--\eqref{eq1-co}{,
the leading coefficient of the non-cutoff collision operator has formal order}
$\Lambda_v^{-d-\gamma-2s}f${. At the scaling level, the nonlinear
collision operator therefore behaves like}
\[
\mathcal Q_s(f,f)
\approx
-\bigl(\Lambda_v^{-\alpha-2s}f\bigr)\Lambda_v^{2s}f
+(\Lambda_v^{-\alpha}f) f,~~\quad\quad \alpha=d+\gamma.
\]
{\color{red}This heuristic suggests the critical quantity}
\begin{equation*}
\sup_{t>0} \|\Lambda_v^{-d-\gamma-2s}|f(t)|\|_{L^\infty_{x,v}}.
\end{equation*}
{\color{red}It is therefore natural to ask whether global well-posedness holds for
initial data satisfying}
\begin{equation*}
 \|\Lambda_v^{-d-\gamma-2s}|f_0|\|_{L^\infty_{x,v}}\ll 1.
\end{equation*}
{\color{red}We answer this question affirmatively in a weighted, localized framework
adapted to kinetic transport.}

			{\color{red}For a measurable function} $f:\mathbb{R}^d\times
			\mathbb{R}^d\to\mathbb{R}${, define}
		\begin{equation}\label{defI12}
			\begin{aligned}
				&\mathbf{I} f(x,v)=\sup_{v_0\in\mathbb{R}^d}\langle v_0\rangle\int_{\mathbb{R}^d}\mathbf{1}_{[w]_{v_0}\leq 1} [w]_{v_0}^{-\kappa}\sup_{|x_1|\leq 1}|f|(x+x_1,v+w) \dif w,
			\end{aligned}
		\end{equation}
			{\color{red}where} $\mathbf{1}$ {denotes the indicator function. This is a
			localized, weighted analogue of the critical potential}
			$\Lambda_v^{-d-\gamma-2s}|f|${.}

{\color{red}We can now state the main result.}

		\begin{theorem}\label{thmglo}
        	{\color{red}Let \(N_0\in\mathbb N\) and
			\(\varkappa_1,\varkappa_2\geq0\) satisfy}
			\[
			N_0,\varkappa_1\ge 10d,
			\qquad
			\varkappa_2>10s^{-1}(N_0+\varkappa_1)^2.
			\]
            {Assume \(s\in(0,1]\) and}
\[
    -d-2s<\gamma\leq -2s .
\]
	{\color{red}For \(0<s<1\), suppose that the collision kernel satisfies
	\eqref{defB} and \eqref{assofb}--\eqref{assofb-derivative}; for \(s=1\),
	use the Landau collision operator defined in \eqref{defa}.}
			{
			There exist constants $\epsilon_0>0$ and $C>0$, together with a
			solution norm $\|\cdot\|_{\mathcal X}$ defined in the proof, with the
			following property: if the initial datum
			$f_0:\mathbb{R}^d\times\mathbb{R}^d\to\mathbb{R}$ satisfies
			$f_0+\mu\geq0$ and
			}
			\begin{equation}\label{normcr}
				\|f_0\|_{\mathrm{cr}}
				:=\sup_{x,v\in\mathbb{R}^d}
				\langle x,v\rangle^{\varkappa_1}
				\langle v\rangle^{\varkappa_2}\mathbf{I}f_0(x,v)
				\leq\epsilon_0,
			\end{equation}
			{
			then the Cauchy problem \eqref{eqperbo} admits a global solution $f$
			such that $f+\mu\geq0$ and
			$f(t)\to f_0$ in $\mathcal D'(\mathbb R^{2d})$ as $t\downarrow0$.
			Moreover,
			}
			\[
				\|f\|_{\mathcal X}\leq C\|f_0\|_{\mathrm{cr}},
			\]
			{
			and $f$ is unique among solutions $h$ with the same initial datum
			satisfying $\|h\|_{\mathcal X}\leq2C\|f_0\|_{\mathrm{cr}}$. In addition,
			}
			\begin{align}
				&\sup_{t>0}\sum_{\substack{m+n\leq N_0}}\left(\min\{t,1\}^{n+\frac{d-\kappa+m}{2s}}\left\|\big(\langle t\rangle^{\frac{d}{2}}+\langle x/\langle t\rangle\rangle^\frac{\varkappa_1}{2d}\big)\langle v\rangle^\frac{\varkappa_2}{2}\langle \nabla_v\rangle^m\left\langle \langle t\rangle\lambda\left( \nabla_x\right) \right\rangle^n f(t)\right\|_{L_{x,v}^\infty}\right)
				\leq C\|f\|_{\mathcal X}\label{main}
			\end{align}
			{
			Here $\lambda(\nabla_x)$ denotes the Fourier multiplier with symbol
			}
			\[
				\lambda(\xi)=|\xi|^2
				\langle \xi\rangle^{-\frac{2(1+s)}{1+2s}}.
			\]
		\end{theorem}
{\color{red}Throughout the paper, we use the following phase-space Fourier convention:}
\[
\widehat h(\xi,\eta)=\iint_{\mathbb R^{2d}}e^{-i(x\cdot\xi+v\cdot\eta)}h(x,v)\,\dif x\,\dif v,
\qquad
h(x,v)=(2\pi)^{-2d}\iint_{\mathbb R^{2d}}e^{i(x\cdot\xi+v\cdot\eta)}\widehat h(\xi,\eta)\,\dif\xi\,\dif\eta.
\]
{\color{red}Under this convention,} $\langle\nabla_v\rangle^m$
{\color{red}has symbol} $\langle\eta\rangle^m${, while the
multiplier in \eqref{main} has symbol}
\[
    \left\langle \langle t\rangle\lambda(\xi)\right\rangle^n.
\]
{\color{red}We next explain the structure of \eqref{main}.}
{\color{red}The function} \(\lambda\) {interpolates between the
low-frequency diffusive scale and the high-frequency hypoelliptic scale:}
\[
    \lambda(\xi)\sim |\xi|^2
    \quad\text{as }|\xi|\to0,
    \qquad
    \lambda(\xi)\sim |\xi|^{\frac{2s}{1+2s}}
    \quad\text{as }|\xi|\to\infty .
\]
{At low spatial frequencies,} \(\lambda(\nabla_x)\)
{captures the diffusive behavior of the hydrodynamic component of the
linearized equation. At high frequencies, it measures spatial regularization
generated by coupling the transport} \(v\cdot\nabla_x\) {\color{red}with
fractional diffusion in} \(v\){. This multiplier is naturally
associated with the Taylor-dispersion and enhanced-dissipation mechanism
developed in} \cite{BCD}{.}

{\color{red}The same Kolmogorov scaling determines the short-time prefactor in}
\eqref{main}{. Indeed, for} \(0<t\leq1\){,}
\[
    \min\{t,1\}^n
    \left\langle \langle t\rangle\lambda(\nabla_x)\right\rangle^n
    \sim
    t^n\langle\lambda(\nabla_x)\rangle^n .
\]
{At high spatial frequencies, this is the dimensionless quantity}
\[
  \left(  t|\xi|^{\frac{2s}{1+2s}}\right)^n,
\]
{\color{red}which is precisely the hypoelliptic smoothing scale generated by}
\[
    \partial_t+v\cdot\nabla_x+\Lambda_v^{2s} .
\]
{\color{red}The fractional Fokker--Planck model studied in} \cite{CHNfp}
{\color{red}has the same scale because it retains the principal
transport--diffusion structure of the non-cutoff Boltzmann equation.}

{Velocity derivatives are measured on the corresponding fractional
parabolic scale. The factor}
\[
    \min\{t,1\}^{\frac m{2s}}
    \langle\nabla_v\rangle^m
\]
{compensates for the short-time cost} \(t^{-m/(2s)}\)
{of} \(m\) {velocity derivatives. Thus, the estimate records
instantaneous collision-induced smoothing for} \(0<t\leq1\){; for}
\(t\geq1\){, the factor is constant and imposes no additional
large-time decay.}

{\color{red}The remaining short-time factor}
\[
    \min\{t,1\}^{\frac{d-\kappa}{2s}}
\]
{reflects the critical size of the initial perturbation. The norm}
\(\|f_0\|_{\mathrm{cr}}\) {\color{red}is modeled on a truncated anisotropic
Riesz potential of order} \(\kappa\) {\color{red}and therefore permits a
critical local velocity singularity. Fractional kinetic smoothing supplies
exactly the compensating gain. Consequently, the full prefactor}
\[
    \min\{t,1\}^{\,n+\frac{d-\kappa+m}{2s}}
\]
{combines the critical initial-data scale with the gain of} \(m\)
{velocity derivatives and} \(n\) {spatial derivatives
measured through} \(\lambda(\nabla_x)\){.}

{\color{red}The weight below encodes the large-time behavior in} \eqref{main}{:}
\[
    \langle t\rangle^{\frac d2}.
\]
{\color{red}It yields the expected decay}
\[
    \|f(t)\|_{L^\infty_{x,v}}
    \lesssim
    \langle t\rangle^{-\frac d2},
\]
{associated with the diffusive low-frequency spectrum of the linearized
collision operator. This rate is sharp for generic localized data and agrees
with the spectral and Green-function analyses of cutoff Boltzmann equations
with a spectral gap in} \cite{LiuYu1D,LiuYu3D}{. For non-cutoff
Boltzmann equations with soft potentials, where no spectral gap is available,
the corresponding optimal} \(L^\infty_x\){-type decay estimates were
obtained in} \cite{Strain,SohingerStrain,BCD}{.}

{\color{red}Finally, the following weight propagates spatial localization on the natural
transport scale:}
\[
    \left\langle
    \frac{x}{\langle t\rangle}
    \right\rangle^{\frac{\varkappa_1}{2d}} .
\]
{\color{red}The variable} $x/\langle t\rangle$ {reflects the transport of
spatial localization along kinetic characteristics. Spatial decay must
therefore be measured on the expanding scale} $|x|\sim \langle t\rangle$
{rather than the fixed scale} $|x|\sim1${. The weight
propagates polynomial spatial localization while remaining compatible with the
dispersive transport structure.}
	\begin{remark}
{\color{red}The solution in Theorem~}\ref{thmglo} {\color{red}is non-vacuum. More
precisely, smallness of the perturbation implies}
\[
    \sup_{t>0}\sup_{x\in\mathbb R^d}
    \int_{\mathbb R^d}|f(t,x,v)|\,\dif v
    \leq
    C\|f_0\|_{\mathrm{cr}}
    \ll1 .
\]
{\color{red}Since} \(\int_{\mathbb R^d}\mu(v)\,\dif v=1\){, choosing}
\(\epsilon_0\) {sufficiently small yields}
\[
    \int_{\mathbb R^d}(\mu+f)(t,x,v)\,\dif v
    \geq
    1-C\|f_0\|_{\mathrm{cr}}
    >0,
    \qquad
    t>0,\ x\in\mathbb R^d .
\]
\end{remark}	

		{\color{red}We next explain the design of} $\|f_0\|_{\text{cr}}$
		{in} \eqref{normcr}{.}
		\begin{remark}{\color{red}The operator} $\mathbf{I}$ {defined in}
		\eqref{defI12} {\color{red}is a truncated Riesz potential adapted to the
		anisotropy of the linearized Boltzmann and Landau operators; see}
		\cite{Hung} {for its use in parabolic equations. The full norm}
			\begin{align}\label{normful}
				\sup_{x,v}	\sup_{v_1}\langle v_1\rangle\int_{\mathbb{R}^d}[w]_{v_1}^{-\kappa} |f_0|(x,v+w)\dif w 
			\end{align}
			{\color{red}is motivated by the scaling}
			$f_{0,\lambda}(x,v)=\lambda^{-\frac{d-\kappa}{2s}}f_0(\frac{x}{\lambda^{\frac{1+2s}{2s}}},\frac{v}{\lambda^{\frac{1}{2s}}})${. The truncated
			version in} \eqref{defI12} {instead measures local velocity decay.
			Comparing it with the full quantity} \eqref{normful} {would require
			a separate quantitative tail assumption on} $f_0${, which is not
			used below.}
		\end{remark}

		{\color{red}We now outline the proof of Theorem~}\ref{thmglo}{. The first
step constructs local solutions in the critical space. It builds on the
Schauder-type framework for local and nonlocal parabolic systems developed by
the first two authors together with Hu in} \cite{CHN}{. This framework has
yielded critical well-posedness results for numerous nonlinear equations in
geometry and fluid dynamics, including the hypoviscous Navier--Stokes equations,}
{mean-curvature flow, the Peskin equations, and the Muskat equations.}
 
		{\color{red}We adapt and extend these techniques to the inhomogeneous Boltzmann
and Landau equations. This brings Schauder-type estimates to a scaling-critical
problem in kinetic theory and may also prove useful for other kinetic models,
including relativistic equations and the homogeneous Lenard--Balescu equation.}
		
		{\color{red}For the local theory, we study frozen linear equations of the form}
\begin{equation}\label{eqmo}
    \partial_t f+v\cdot\nabla_x f+\mathcal L_\mu^{v_0}f
    =
    F_{v_0},
\end{equation}
{\color{red}where} \(v_0\in\mathbb R^d\) {\color{red}is the frozen velocity and}
\(F_{v_0}\) {\color{red}is a prescribed source. The anisotropy of the
coefficients} \(a\star\mu\) {\color{red}and} \(\mathbf C_\mu\){,
defined in} \eqref{defCf1}{, is central to the argument.}

{\color{red}For the Landau equation, the coefficient admits the anisotropic decomposition}
		\begin{align*}
			a\star \mu(v_0)=\langle v_0\rangle^{\gamma+2}Q_{v_0}A_0Q_{v_0}^\top,
		\end{align*}
		{\color{red}where} $Q_{v_0}$ {\color{red}is orthonormal and satisfies}
		$Q_{v_0}e_d=|v_0|^{-1}v_0$ {when} $v_0\ne0${; for}
		$v_0=0${, we take} $Q_{v_0}=\mathrm{Id}${. Then}
		$$
		A_0=\operatorname{diag}(c_{1},\ldots,c_{1},c_{2}),\qquad c_{1}\sim 1,\quad c_{2}\sim \langle v_0\rangle^{-2}.
		$$

		{\color{red}This structure motivates the anisotropic derivative}
		\begin{align*}
			\mathcal{D}_{v_0}^{(\diamond)}=Q_{v_0}\tilde A_0^{\frac{1}{2}}Q_{v_0}^\top \nabla_\diamond,\qquad \tilde A_0^{\frac{1}{2}}=\operatorname{diag}(1,\ldots,1,\langle v_0\rangle^{-1}),\qquad\diamond\in\{x,v\}.
		\end{align*} 

		{\color{red}This definition captures the effective ellipticity of the frozen Landau
		operator. Indeed, when} $\kappa=-(\gamma+2)${, one formally has}
		\begin{align*}
			\mathcal{L}_\mu^{v_0}(f)=-	\operatorname{div}(a\star \mu(v_0)\nabla f)\sim -\langle v_0\rangle^{-\kappa}\operatorname{Tr}\bigl((\mathcal D_{v_0}^{(v)}\otimes
\mathcal D_{v_0}^{(v)})f\bigr),
		\end{align*}
	 {\color{red}where} $\operatorname{Tr}$ {denotes the matrix trace. The
	 Boltzmann equation has an analogous anisotropy: the principal part of its
	 frozen linearized operator is modeled by}
		\begin{align*}
			\mathcal{L}_\mu^{v_0}\sim \langle v_0\rangle^{-\kappa}(-\Delta_{\perp,\hat v_0})^{s}+ \langle v_0\rangle^{\gamma}(-\Delta_{\parallel,\hat v_0})^s,\ \ \ \text{\color{red}with}\ \hat v_0=v_0|v_0|^{-1},
		\end{align*}
		{\color{red}where} $-\Delta_{\parallel,\hat v_0}$ {\color{red}and}
		$-\Delta_{\perp,\hat v_0}$ {\color{red}are associated with the Fourier
		symbols} $|\xi\cdot \hat v_0|^2$ {\color{red}and}
		$|\xi|^2-|\xi\cdot \hat v_0|^2${, respectively. This structure
		justifies the anisotropic derivatives} $\mathcal D_{v_0}^{(v)}$ {\color{red}and}
		$\mathcal D_{v_0}^{(x)}${, together with the metric}
		$[x]_{v_0}${. Scaling suggests that}
		$$
		\mathcal{D}_{v_0}^{(v)}\sim (\langle v_0\rangle^{-\kappa}t)^{-\frac{1}{2s}}:=\tilde t_{v_0}^{-\frac{1}{2s}}.
		$$
{\color{red}Moreover, the vector field}
			$\mathcal{D}_{v_0}^{(v)}+t\mathcal{D}_{v_0}^{(x)}$ {commutes with}
			$\partial_t +v\cdot\nabla_x${. This commutation transfers
		velocity regularization from the collision operator to spatial regularity
		and identifies the natural hypoelliptic relation between the two scales.
		Consequently,}
		\begin{align*}
			\mathcal{D}_{v_0}^{(x)}\sim t^{-1}\mathcal{D}_{v_0}^{(v)}\sim t^{-1}\tilde t_{v_0}^{-\frac{1}{2s}},
		\end{align*}
		{\color{red}which is precisely the scaling encoded by the norms in}
		\eqref{defyz}{.}

{A key step is to derive sharp pointwise estimates for the fundamental
solution} $H_\mu^{v_0}$ {associated with} \eqref{eqmo}{; see
Lemma~}\ref{Hmu}{. The proof combines two complementary approaches.
The stochastic method identifies the correct Kolmogorov geometry, whereas the
Fourier method exploits the precise anisotropic multiplier of the frozen
Boltzmann and Landau operators, particularly its low-frequency structure, to
obtain far-field decay. The model multiplier has the form}
\begin{align*}
\exp\left(
-\int_0^t \omega(\tau\xi-\eta)\dif\tau
\right),\ 
\qquad
\omega(z)\sim |z|^{2s},
\end{align*}
{\color{red}where} $\xi$ {\color{red}and} $\eta$ {\color{red}are dual to} $x$
{\color{red}and} $v${, respectively. Hou and Zhang obtained sharp
estimates for the fractional Kolmogorov kernel}
$\omega(z)=|z|^{2s}$ {by stochastic methods in}
\cite{HouZhang}{, while Grube used Fourier methods in one dimension
in} \cite{Grube}{. In Section~}\ref{secfrozenkernel}{, we
extend these approaches to the frozen linearized Boltzmann and Landau equations
and prove pointwise upper bounds for their fundamental solutions. The frozen
kernels decay faster than the classical fractional Kolmogorov kernel (see
Lemma~}\ref{lemdecay}{) because their multipliers have the improved
low-frequency regularity}
  \begin{align*}
    |\nabla_z^m\omega(z)|\lesssim_m 1, \qquad m\in\mathbb N,\ m>2.
  \end{align*}

{These frozen-kernel estimates lead to a local solution through a contraction
mapping in the critical space defined by} \eqref{defyz}--\eqref{defnormT}{.
The norm incorporates both the anisotropic geometry and the
intrinsic scaling of the equations, yielding local existence, uniqueness, and
continuous dependence on the initial data.}

		{\color{red}Although the Kolmogorov structure suffices for local well-posedness,
global continuation also requires control of the low-frequency macroscopic
component. Instead of the classical symmetric perturbation}
$F=\mu+\mu^{1/2}f${, we work with} $F=\mu+f$ {\color{red}and
polynomial velocity decay. We combine a Caflisch-type decomposition with the
weighted energy method of Bedrossian--Coti Zelati--Dolce} \cite{BCD}
{to prove global existence and decay. Their frequency-dependent
hypocoercive framework captures the interaction between kinetic transport and
the singular collision operator: mixed energies yield enhanced dissipation at
high spatial frequencies, while a micro--macro decomposition and
Kawashima-type estimates transfer microscopic dissipation to low-frequency
hydrodynamic modes, producing Taylor dispersion. The commuting vector field}
$Z=\nabla_v+t\nabla_x$ {quantifies phase mixing by converting
velocity regularity into spatial decay of macroscopic quantities. We augment
this framework by tracking spatial decay throughout the evolution.}
        
{\color{red}Finally, we prove preservation of nonnegativity. The fixed-point map is
defined through a linearized inhomogeneous problem in a critical Banach space,
so it does not automatically preserve the convex constraint}
\(f+\mu\geq0\){. We therefore establish positivity a posteriori.
For smooth solutions, testing against the negative part of} \(F=\mu+f\)
{\color{red}and using the positivity and cancellation structure of the collision
operator yields a Gronwall inequality that prevents negative values from
forming. For rough critical data, we approximate} \(F_0=f_0+\mu\)
{by smooth nonnegative densities while preserving smallness in the
critical norm. The approximating solutions remain nonnegative, and compactness
together with fixed-point uniqueness passes this property to the critical
solution. This approximation is necessary because positivity is not encoded in
the fixed-point space.}
\begin{remark}[Relation with previous well-posedness theories]
{\color{red}We place Theorem~}\ref{thmglo} {within the broader
well-posedness theory for the Landau and Boltzmann equations. In the spatially
homogeneous setting, classical well-posedness for the Landau equation with
Maxwell molecules and hard potentials was established in}
\cite{Villani981,DV1,DV2}{. The homogeneous Landau equation also
exhibits heat-like smoothing; see} \cite{DV1,MPX}{. Guillen and
Silvestre recently proved global regularity for this equation by establishing
monotonicity of the Fisher information in} \cite{GS}{. Imbert,
Silvestre, and Villani subsequently extended this approach to the spatially
homogeneous Boltzmann equation, proving Fisher-information monotonicity for a
broad class of collision kernels and obtaining global smooth solutions in the
very-soft-potential regime in} \cite{ISV}{. For further discussion
of Landau regularity, see the reviews} \cite{Villani02,Silvestre23}{;
related results for the spatially homogeneous Boltzmann equation
appear in} \cite{Villani98,HMUY-2008,MV04,FM11,MW}{.}

{\color{red}In the spatially inhomogeneous setting, DiPerna and Lions constructed
global renormalized solutions for the Boltzmann equation in} \cite{D-L}{,
while Alexandre and Villani constructed Landau solutions with a
defect measure in} \cite{A-VLandau}{. A substantial literature
addresses global classical solutions and convergence to equilibrium near a
Maxwellian. For the Landau equation with soft potentials, Guo proved global
well-posedness in the periodic box in} \cite{Guo}{, and Hsiao and Yu
treated the whole space in} \cite{HsiaoYu}{. Results for the
Boltzmann equation with exponential or polynomial velocity weights can be found
in} \cite{Guo-0,AMUXY-1,AMUXY2011-AA,AMUXY2011-CMP,AMUXY2012JFA,GS2011,DSSS,
CM17,CTW16,AMSY,CDeL,DuanLi,SS} {\color{red}and the references therein. For
local theories with rough or slowly decaying data, see}
\cite{AMUXY,HY14,HW22,HST,HST20,Luk19,CS21}{.}

{Global well-posedness has also been studied in spatially critical Besov
spaces; see} \cite{DLX,MoSa,LZ,CLXX}{. The notion of ``criticality''
in those works differs from the scaling-critical structure considered here.
Our norm} \(\|f_0\|_{\mathrm{cr}}\) {\color{red}is dictated by the intrinsic
kinetic scaling of the Landau and non-cutoff Boltzmann operators and by the
formal critical potential}
\[
    \Lambda_v^{-d-\gamma-2s}|f_0| .
\]
{\color{red}Thus, Theorem~}\ref{thmglo} {does not rely on smallness in
high-order Sobolev norms or on the usual spatially critical Besov framework.
Instead, it gives global well-posedness in a localized, weighted phase-space
class that is critical under the kinetic scaling.}

{\color{red}We focus on the very-soft-potential range}
\[
    -d-2s<\gamma\leq -2s .
\]
{\color{red}This range deserves particular emphasis. In the Landau case} \(s=1\){,
the Coulomb interaction in dimension} \(d=3\) {corresponds to}
\(\gamma=-3\){. At and below this threshold, the collision
coefficients become strongly singular. Villani constructed three-dimensional
weak solutions for} \(\gamma\geq -3\) {\color{red}and suggested that the theory
might extend slightly below this range in} \cite{Villani98}{. Guo's
perturbative Landau theory includes the Coulomb case and also indicates that
some values below} \(\gamma=-3\) {may be accessible; see}
\cite{Guo}{. More recently, the third author and Zhou developed a
global well-posedness theory for Boltzmann equations with strong kinetic
singularities and proved the grazing-collision limit to the Landau equation in}
\cite{YZ}{.}

{Our emphasis on scaling-critical spaces is partly inspired by the
incompressible Navier--Stokes theory, where scaling is central to
well-posedness and ill-posedness in critical spaces; see}
\cite{Fujita,Kato,Bourgain,Koch}{. Analogous phenomena arise for
Landau and Boltzmann equations with soft potentials, including
Caffarelli--Kohn--Nirenberg-type partial regularity in} \cite{GGIV,GIJ}
{\color{red}and Prodi--Serrin-type criteria for the homogeneous Landau equation
in} \cite{ABD}{. These parallels motivate a kinetic well-posedness
theory in spaces critical under the intrinsic Landau--Boltzmann scaling.}

{A crucial distinction remains: the unknown in the Boltzmann and Landau
equations is a nonnegative particle density, so physically admissible data are
naturally nonnegative measures rather than arbitrary distributions. This makes
the identification of a sharp critical space more subtle. The norm}
\(\|f_0\|_{\mathrm{cr}}\) {in Theorem~}\ref{thmglo}
{captures this kinetic feature by allowing a critical local velocity
singularity while retaining enough weighted phase-space control for the
nonlinear collision operator to be meaningful. To our knowledge,
Theorem~}\ref{thmglo} {\color{red}is the first global well-posedness result in
such a scaling-critical phase-space class for the Landau and non-cutoff
Boltzmann equations in the whole space and in the range}
\(-d-2s<\gamma\leq -2s\){.}
\end{remark}

		\subsubsection{Green function}
{Our second main result gives short-time pointwise estimates for Green
functions associated with linearized non-cutoff Boltzmann equations.}
	
	{As a first step toward a detailed pointwise analysis, we consider the
	linear equation}
	\begin{equation}\label{li1}
		\partial_t F+v\cdot \nabla_x F-\mathcal{Q}_s(g,F)=0,
	\end{equation}
		{\color{red}where} \(g=g(t,x,v)\ge 0\) {\color{red}is prescribed on}
		\([0,T]\times\R^d\times\R^d\){, and}
		\(\varkappa\in\mathbb N\) {\color{red}is sufficiently large. For}
\(a\in\mathbb R\){, let} \(\lfloor a\rfloor\) {denote the
greatest integer not exceeding} \(a\){:}
\[
\lfloor a\rfloor:=\max\{n\in\mathbb Z:n\le a\}.
\]
{\color{red}For} $a>0$ {\color{red}with} $a\notin\mathbb{N}${, set}
$k=\lfloor a\rfloor$ {\color{red}and define the weighted H\"{o}lder norm by}
\begin{align}
\|h\|_{\mathbf{C}^a}:=
\sum_{|\alpha|\le k}\sup_{x,v}\langle v\rangle^\varkappa
|\partial_{x,v}^\alpha h(x,v)|
+\sum_{|\alpha|=k}\sup_{\substack{x,v\\0<|(y,w)|\le1}}
\langle v\rangle^\varkappa
\frac{|\partial_{x,v}^\alpha h(x,v)-
\partial_{x,v}^\alpha h(x-y,v-w)|}{|(y,w)|^{a-k}}.
\end{align}
{\color{red}We also set}
$\|h\|_{\mathbf{C}^0}:=\sup_{x,v}\langle v\rangle^\varkappa |h(x,v)|${.
We assume that the mass density of} \(g\) {\color{red}has a positive lower bound}
	\begin{align}\label{lbmass}
	 \int_{\R^d} g(t,x,v)\,\dif v\geq \mathsf m_0,
	\end{align}
		{\color{red}uniformly for} \((t,x)\in [0,T]\times\R^d\). For
\(0<s<1\){, choose}
			\[
			0<\varepsilon_1<\min\{s,1-s\},
			\qquad b_0:=s+\varepsilon_1.
			\]
	      {Assume further that}
\begin{equation}\label{Ig}
    \begin{aligned}
    \|g\|_{L^\infty_T\mathbf{C}^{b_0}}<\infty.
    \end{aligned}
\end{equation}
	  	{\color{red}If} \(\varkappa>d+2\){, then} \eqref{Ig}
				{implies uniform upper bounds on the local mass and energy
				densities. The same weighted} \(L^\infty\) {control, together
				with the lower mass bound \eqref{lbmass}, also gives the entropy bound.}
 
	{\color{red}Because} \(\mathcal{Q}_s(g,\cdot)\) {\color{red}is linear in its second
	argument, we introduce the fundamental solution}
	\[
	\G=\G_g(t,x,v;\tau,y,w),
	\]
		{defined distributionally by}
	\begin{equation}\label{eq:Green_def}
		\left\{
		\begin{aligned}
			&\partial_t \G+v\cdot\nabla_x \G-\mathcal{Q}_s(g,\G)=0,
			\qquad t>\tau,\\
			&\G(\tau,x,v;\tau,y,w)=\delta(x-y)\delta(v-w).
		\end{aligned}
		\right.
	\end{equation}
		{\color{red}Here} \(t\ge \tau\) {\color{red}and}
		\((x,v),(y,w)\in\R^d\times\R^d\){; the initial datum is the
		phase-space Dirac mass at} \((y,w)\){.}
		
		{\color{red}We also consider the inhomogeneous Cauchy problem with source}
		$h(t,x,v)$ {\color{red}and initial datum} $F_{in}(x,v)${:}
		\begin{equation}\label{eq:inho}
			\left\{
			\begin{aligned}
				& \partial_t F + v \cdot \nabla_x F - \mathcal{Q}_s(g, F) = h, \quad t > 0, \\
				& F(0, x, v) = F_{in}(x, v).
			\end{aligned}
			\right.
		\end{equation}
		{\color{red}On every interval where the kernel is defined, Duhamel's formula reads}
		\begin{equation}\label{eq:representation_formula}
			\begin{aligned}
				F(t, x, v) &= \iint_{\mathbb{R}^{2d}} \G(t, x, v; 0, y, w) F_{in}(y, w) \, \dif y \, \dif w \\
				&\quad + \int_0^t \iint_{\mathbb{R}^{2d}} \G(t, x, v; \tau, y, w) h(\tau, y, w) \, \dif y \, \dif w \, \dif \tau .
			\end{aligned}
		\end{equation}

{\color{red}The short-time Green-function behavior is governed by the intrinsic geometry
of the fractional Kolmogorov equation}
\begin{equation*}
\partial_t f + v \cdot \nabla_x f + (-\Delta_v)^s f = 0.
\end{equation*}
{\color{red}Here diffusion acts only in} $v${, while transport transfers
regularization and decay to} $x$ {along kinetic characteristics. The
sharp heat kernel therefore does not factor into a stable velocity kernel and
an independent spatial kernel. Instead, it reflects a nontrivial off-diagonal
geometry along} $x-\sigma v$ {for} $0\leq\sigma\leq1${.}

{Hou and Zhang identified this structure through sharp pointwise estimates
for the fractional Kolmogorov kernel in} \cite{HouZhang}{. They
introduced the profile}
\begin{equation}\label{defN0}
\mathcal{N}(x,v) := \langle |x|+|v|\rangle^{-(d+2s)} \int_0^1 \langle x-\sigma v\rangle^{-(d+2s)} \, \dif\sigma, \qquad (x,v)\in\mathbb{R}^{2d},
\end{equation}
{\color{red}and proved that the fractional Kolmogorov kernel} \(p_t\)
{\color{red}satisfies}
\begin{equation}\label{eq:kolmogorov-unit-profile}
	p_1(x,v)\asymp\mathcal N(x,v).
\end{equation}
{\color{red}The prefactor in} \eqref{defN0} {captures the stable-type
tail in the joint variables, while the} $\sigma${-integral records
the additional decay produced by kinetic transport. Thus,} $\mathcal N$
{defines the natural heat-kernel scale for nonlocal kinetic equations.}

{Guided by this model, we use} $\mathcal{N}$ {as the leading
profile in our Green-function estimates. The main difficulty is that a general
background} $g$ {produces coefficients that are neither
translation-invariant nor explicit. Nevertheless, after freezing the
coefficients and applying a kinetic parametrix expansion, we obtain a
fractional-Kolmogorov short-time bound with an additional rapid-decay factor
inherited from the velocity decay of the coefficients.}
        
{\color{red}To measure Green-function regularity, we introduce dimensionless spatial
and velocity increments adapted to the kinetic scaling. For} $t>0$
{\color{red}and} $h\in\mathbb R^d${, define}
		\begin{align}\label{defPP2}
				\ell_{t}^{x}(h)=\frac{|h|}{t ^{1+\frac{1}{2s}}},\quad\quad\quad  \ell_{t}^{v}(h)=\frac{|h|}{t^\frac{1}{2s}}.
		\end{align}
{\color{red}For} $b\in(0,1)${, the corresponding normalized pointwise
H\"older seminorms are}
\begin{align}
	[F]_{x,b}
	(t,x,v;\tau,y,w)
	&:=
	\sup_{\substack{h\neq0\\
			\ell_{t-\tau}^{x}(h)\leq1}}
	\frac{
		\left|
		\delta_h^xF(t,x,v;\tau,y,w)
		\right|
	}{
		(\ell_{t-\tau}^{x}(h))^{b}
	},
	\label{eq:def-normalized-x-holder}\\
	[F]_{v,b}
	(t,x,v;\tau,y,w)
	&:=
	\sup_{\substack{h\neq0\\
			\ell_{t-\tau}^{v}(h)\leq1}}
	\frac{
		\left|
		\delta_h^vF(t,x,v;\tau,y,w)
		\right|
	}{
		(\ell_{t-\tau}^{v}(h))^{b}
	}.
	\label{eq:def-normalized-v-holder}
\end{align}

		\begin{theorem}\label{Greshort}
		     {\color{red}Let} \(s\in(0,1)\){, assume that} \(g\)
		     {\color{red}satisfies} \eqref{lbmass} {\color{red}and} \eqref{Ig}{,
		     and let} \(\G_g\) {be the Green function defined
		     by} \eqref{eq:Green_def}{. There exists a structural exponent}
	$M_\sharp>0${, depending only on} $d${,} $s${,
	and} $\gamma${, such that the following conclusion holds whenever}
	$\varkappa>M_\sharp${.}
    
	  {\color{red}Define the Green-function envelope}
            \begin{align}
				\mathbf{B}(t,x,v,w)= \langle w\rangle^{M_\sharp}t^{-d-d/s}\mathcal{N}(t^{-1-\frac{1}{2s}}(x-tw),t^{-\frac{1}{2s}}(v-w))\left\langle\frac{\langle t^{-1}|x-tv|+|v|\rangle}{\langle w\rangle} \right\rangle^{-(\varkappa-M_\sharp)}.\label{defbT}
			\end{align}
		{\color{red}Then there exist} \(T_*\in(0,\min\{1,T\}]\) {\color{red}and}
		\(C_*>0\){, depending only on the structural parameters and
		the uniform bounds listed below, such that for every}
		\(x,y,v,w\in\mathbb R^d\){,} \(0\le\tau<t\le T\){,
		and} \(0<t-\tau\le T_*\){,}
	\begin{equation}\label{ptwGreen}
					\begin{aligned}
						|\G_g(t,x,v;\tau,y,w)|+[\G_g]_{x,\frac{b_0}{1+2s}}(t,x,v;\tau,y,w)+&[\G_g]_{v,b_0}(t,x,v;\tau,y,w) \\
	                    &\leq C_*\,\mathbf{B}(t-\tau,x-y,v,w).
					\end{aligned}
				\end{equation}
		{\color{red}The constants} \(T_*\) {\color{red}and} \(C_*\)
		{depend only on}
		\(d,s,\gamma,\varkappa,T,\mathsf m_0\){,}
			\(\|g\|_{L^\infty_T\mathbf C^{b_0}}\){, and the angular-kernel
			constants.}
				\end{theorem}

	{\color{red}Under free transport} \(\partial_t+v\cdot\nabla_x\){,
			a particle starting at} \((y,w)\) {\color{red}and experiencing no velocity
		jumps follows the characteristic}
		\[
		x = y+(t-\tau)w .
		\]
		{\color{red}The natural spatial variable for the Green function is therefore the
		deviation from this trajectory:}
		\[
		X:=x-y-(t-\tau)w,
		\qquad V:=v-w.
		\]

        We explain the main ideas of the proof. 
		The argument uses a two-level parametrix that separates the local
		smoothing mechanism from the nonlocal velocity tails. First, we retain only
		the small-jump part of the collision operator and freeze its coefficient
		along the backward kinetic characteristic ending at the terminal point.
		The lower mass bound gives the required anisotropic ellipticity, while the
		weighted H\"older bound on $g$ controls the variation of the coefficient.
		Fourier analysis of the frozen operator then yields the sharp fractional
		Kolmogorov profile, together with normalized H\"older estimates and rapid
		off-diagonal decay.

		To pass from the frozen kernel to the variable-coefficient small-jump Green
		function, we localize with a cutoff transported by
		$\partial_t+v\cdot\nabla_x$. The resulting Duhamel formula contains only a
		coefficient-freezing error and a cutoff commutator. Weighted kinetic
		convolution estimates make both terms small on a sufficiently short time
		interval, and a covering bootstrap gives the pointwise and terminal-variable
		H\"older bounds. The corresponding source-variable estimates follow from the
		backward adjoint equation.

		Finally, we restore the large-jump operator and the cancellation remainder by
		iterating a second Duhamel expansion around the small-jump kernel. Velocity-ratio
		weights and a one-jump tail-transfer estimate preserve the kinetic profile
		across large jumps and convert the polynomial decay of $g$ into the additional
		off-diagonal factor in \eqref{defbT}. Each iteration gains a factor of order
		$(t-\tau)^{b_0/(2s)}$ at the cost of a fixed velocity weight. After finitely
		many iterations the remainder is regular enough for the Duhamel map to be a
		contraction. Summing the parametrix, verifying the delta initial trace, and
		using uniqueness then yield \eqref{ptwGreen}.
		\begin{remark}
			{\color{red}The Green-function estimate follows from the frozen-coefficient
		kernel bound in Lemma~}\ref{Hgeneral}{. At small scales, the
	kernel has the sharp fractional Kolmogorov profile from} \cite{HouZhang}{;
	at large scales, the decay of} \(\mathbf C_g\)
	{provides an additional rapid off-diagonal factor.}
		\end{remark}
		\begin{remark}
		{Theorem~}\ref{Greshort} {records only H\"older
		regularity, the natural level induced by the assumption on} \(g\)
		{in} \eqref{Ig}{. Greater regularity of} \(g\)
		{should yield corresponding higher-order estimates for} \(\G_g\){.}
			\end{remark}
			\begin{remark}[Landau case]
				{\color{red}Although Theorem~}\ref{Greshort} {\color{red}is stated for the
				non-cutoff Boltzmann equation, the method should extend to the Landau case}
				\(s=1\){. Because the Landau operator is local in velocity,
				the proof should be simpler; we omit it to avoid redundancy.}
			\end{remark}
\begin{remark}[Comparison with spectral Green-function theories]
{\color{red}We contrast Theorem~}\ref{Greshort} {\color{red}with classical
Green-function and spectral theories for kinetic equations. Liu and Yu
developed a Green-function approach to the Boltzmann equation in}
\cite{LiuYu1D,LiuYu3D}{. Their equation is linearized about a global
Maxwellian, so the linearized operator has constant coefficients in}
\((t,x)\){. After Fourier transformation in} \(x\){, the
Green function can be analyzed through the spectral family}
\[
    - i v\cdot k + L ,
\]
{\color{red}where} \(L\) {\color{red}is the linearized collision operator. The
method relies on coercivity, or a spectral gap, for} \(L\) {on the
microscopic subspace and on a precise spectral decomposition of the transformed
operator. Combined with the micro--macro decomposition and the mixture lemma,
this framework reveals a particle--fluid wave decomposition: particle-like
waves dominate at short times, whereas fluid-like waves associated with the
hydrodynamic spectrum govern the long-time behavior.}

{Theorem~}\ref{Greshort} {differs in both setting and
conclusion. The background} \(g=g(t,x,v)\) {need not be a Maxwellian
or even a small perturbation of one. Consequently, the operator}
\[
    \partial_t+v\cdot\nabla_x-\mathcal Q_s(g,\cdot)
\]
{\color{red}is non-autonomous and has variable coefficients, so no
Fourier-spectral decomposition or spectral-gap framework is available for the
full evolution. Moreover, in the soft-potential regime, the collision frequency
degenerates and the usual Maxwellian spectral-gap mechanism fails; see
Remark~}\ref{reaa1}{. Our proof therefore uses neither the
hydrodynamic spectrum nor spectral projections. It instead exploits the
short-time hypoelliptic structure of the nonlocal kinetic operator, together
with the local lower mass bound and H\"older regularity of} \(g\){.}
\end{remark}

\begin{remark}[A possible application to conditional regularity]
{Theorem~}\ref{Greshort} {may also contribute to a
conditional regularity theory for the non-cutoff Boltzmann equation. The angular
singularity endows the collision operator with genuinely nonlocal velocity coercivity
comparable to fractional diffusion of order} \(2s\). In the
perturbative setting, sharp anisotropic estimates developed by \cite{AMUXY2011-CMP} {\color{red}and by Gressman--Strain in}
\cite{GressmanStrain2011} {\color{red}were used to construct global classical
solutions near Maxwellians in}
\cite{AMUXY-1,AMUXY2011-AA,AMUXY2011-CMP,AMUXY2012JFA,GS2011}{.}

{Silvestre initiated a distinct, intrinsically nonlinear approach in}
\cite{S}{, interpreting the non-cutoff collision operator as a
kinetic integro-differential operator with a solution-dependent kernel. This led
to a conditional regularity program based on macroscopic bounds, later advanced
through kinetic De Giorgi--Harnack theory, velocity-decay estimates, and
Schauder estimates for kinetic integral equations; see}
\cite{ImbertSilvestre-JEMS,IMSdecay2020,ImbertSilvestre-APDE}{. An
overview appears in} \cite{ISmacroscopic}{. In particular, Imbert
and Silvestre proved quantitative} \(C^\infty\) {estimates for the
spatially inhomogeneous non-cutoff Boltzmann equation in} \cite{IS0}
{under the condition}
\[
    \gamma+2s\geq0,
\]
{\color{red}and suitable upper bounds on the local mass, energy, and entropy
densities, together with a positive lower mass bound.}

{\color{red}The soft-potential range}
\[
    \gamma+2s<0
\]
{\color{red}is more delicate. Silvestre's} \(L^\infty\) {estimate in}
\cite{S} {remains valid in this range but requires an additional
weighted} \(L^p_v\){-integrability assumption. In our notation, one
requires}
\[
    \sup_{t\in[0,T]}\sup_x
    \int_{\mathbb R^d}(1+|v|)^q f(t,x,v)^p\,\dif v <\infty,
\]
for some
\[
    p>\frac{d}{d+\gamma+2s},
    \qquad
    q=\max\left\{0,1-\frac{d}{2s}(\gamma+2s)\right\}.
\]
{\color{red}Thus, although} \(L^\infty\) {bounds hold in the soft
regime under additional velocity integrability, a complete macroscopic
conditional regularity theory for very soft potentials remains substantially
more delicate.}

{\color{red}The pointwise estimate in Theorem~}\ref{Greshort}
{suggests one possible route. Once a solution is H\"older continuous and
has a positive lower mass bound together with sufficient velocity decay, the
Green representation can bootstrap regularity: differentiate the equation,
represent the differentiated unknown through the Green kernel, and control the
commutator and bilinear source terms by the pointwise kernel estimates. This
would provide a H\"older-to-higher-regularity mechanism for very soft
potentials. The number of derivatives should depend on the available polynomial
decay in} \(v\){, reflecting the loss of velocity weights in this
regime; see also} \cite{HeJi2023}{. A complete proof requires a
separate bootstrap argument and is left for future work.}
\end{remark}

	{\color{red}The paper is organized as follows. Section~2 collects preliminary
pointwise estimates for Fourier symbols and linear kernels. Section~3 proves
Theorem~}\ref{thmglo}{: we first construct local solutions in the
scaling-critical space and then derive global-in-time estimates. Section~4
studies the Green function for the linearized non-cutoff Boltzmann equation
and proves Theorem~}\ref{Greshort}{. The relevant literature is
discussed within the corresponding subsections.}
       \section{Preliminaries}
		\subsection{Notations}
{\color{red}We first collect the notation used throughout the paper.}
\begin{itemize}
			\item {\color{red}For} $v_0\in\mathbb{R}^d${, define}
			$$\hat v_0=\begin{cases}
				\frac{v_0}{|v_0|},\ \ v_0\neq 0,\\
				0,\ \ \quad v_0=0.
			\end{cases}$$
			\item {\color{red}For} $e\in\mathbb{S}^{d-1}\cup\{0\}${,
			define the projections} $\mathrm{P}_e$ {\color{red}and} $\Pi_e$ {by}
			\begin{align}\label{defproj}
				\mathrm{P}_e= e\otimes e,\quad\quad \quad \Pi_e=\mathrm{Id}-\mathrm{P}_e.
			\end{align}
			\item	{\color{red}For} $v_0\in\mathbb{R}^d${, define the
			stretching matrix} $\mathcal{O}_{v_0}$ {by}
			\begin{align}\label{defmatr}
				\mathcal{O}_{v_0}=\langle v_0\rangle^{-1}\mathrm{P}_{\hat v_0}+\Pi_{\hat v_0}.
			\end{align}
				{Its inverse is}
$\mathcal{O}_{v_0}^{-1}=\langle v_0\rangle\mathrm{P}_{\hat v_0}+\Pi_{\hat v_0}$.
	{\color{red}The anisotropic metric} $[z]_{v_0}$ {from}
	\eqref{notaP} {\color{red}satisfies}
$[z]_{v_0}\sim |\mathcal{O}_{v_0}^{-1}z|$.
			\item	{\color{red}Let} $v_0\in\R^d${. For}
			$\diamond\in\{x,v\}${, define the anisotropic weighted derivatives}
			\begin{align}\label{anideri}
			\mathcal D^{(\diamond)}_{v_0}:=\mathcal O_{v_0}\nabla_\diamond,\ \ \  	\mathcal D^{(\diamond),k}_{v_0}:=\big(\mathcal D^{(\diamond)}_{v_0}\big)^{\otimes k},\ \ \ k\in\mathbb{N}.
			\end{align}
			{\color{red}For} $t>0${, define the scaled anisotropic kinetic
derivative, an} \((x,v)\){-block operator, by}
			\[
			\mathcal D_{t,v_0}^{(x,v)}:=\big(\mathcal D^{(x)}_{t,v_0},\,\mathcal D^{(v)}_{t,v_0}\big)=\big(t\,\tilde t_{v_0}^\frac{1}{2s}\,\mathcal D^{(x)}_{v_0},\,\tilde t_{v_0}^\frac{1}{2s}\,\mathcal D^{(v)}_{v_0}\big),\quad\quad \text{\color{red}with}\ \tilde t_{v_0}=\langle v_0\rangle^{-\kappa}t.
			\]
			{\color{red}The factors}
			$t\tilde t_{v_0}^\frac{1}{2s}$ {\color{red}and}
			$\tilde t_{v_0}^\frac{1}{2s}$ {match the kinetic scaling in}
			$(x,v)${. Similarly, set}
			\[
			\nabla^{(x,v)}_{t,v_0}:=\big(t\,\hat  t_{v_0}^\frac{1}{2s}\,\nabla_{x},\, \hat t_{v_0}^\frac{1}{2s}\nabla_v\big),\quad\quad \text{\color{red}with}\ \hat t_{v_0}=\langle v_0\rangle^{-2s} \tilde t_{v_0}=\langle v_0\rangle^{\gamma}t.
			\]
			{\color{red}For} $m\in\mathbb{N}${, write}
			\[
			\mathcal D_{t,v_0}^{(x,v),m}:=\bigl(\mathcal D_{t,v_0}^{(x,v)}\bigr)^{\otimes m},\qquad
			\nabla_{t,v_0}^{(x,v),m}:=\bigl(\nabla_{t,v_0}^{(x,v)}\bigr)^{\otimes m}.
			\]

			\item 
{\color{red}For} $(y,w)\in \mathbb R^{2d}${, define the finite difference}
\begin{align*}
	\delta_{(y,w)}^{(x,v)} f(x,v)
	:= f(x,v)-f(x-y,v-w).
\end{align*}
{\color{red}In particular, for} $h\in\mathbb{R}^d${,}
\begin{align*}
	\delta_h^x f(x,v):=\delta_{(h,0)}^{(x,v)} f(x,v),
	\qquad
	\delta_h^v f(x,v):=\delta_{(0,h)}^{(x,v)} f(x,v).
\end{align*}

			\item {\color{red}We distinguish convolutions in different variables:}
			\(f\star g\) {denotes convolution in} \(v\){,
			whereas} \(f*g\) {denotes convolution in} \((x,v)\){.
			For two kernels}
			\[
			G_i:\{(t,x,v;\tau,y,w)\in\R^+\times\R^d\times\R^d\times\R^+\times\R^d\times\R^d:\ t>\tau\}\to\R,
			\qquad i=1,2,
			\]
			{their time--space--velocity convolution is defined by}
			\begin{align}\label{deftsvconvo}
			(G_1\circledast G_2)(t,x,v;\tau,y,w)
			:=\int_{\tau}^{t}\iint_{\R^{2d}}
			G_1(t,x,v;t',z,u)\,G_2(t',z,u;\tau,y,w)\,\dif z\,\dif u\,\dif t',
			\qquad t>\tau.
			\end{align}
			\item 	{\color{red}We write} $A\lesssim B$ {if}
			$A\leq CB$ {for a generic constant} $C>0${, and}
			$A\sim B$ {if both} $A\lesssim B$ {\color{red}and}
			$B\lesssim A${. The notation} $\lesssim_a$ {allows
			the implicit constant to depend on} $a${.}
	            \item {\color{red}For} $a,b\in\mathbb{R}${, write}
	            $a\wedge b=\min\{a,b\}$ {\color{red}and}
	            $a_+=\max\{a,0\}${.}
	            \item {\color{red}For} $x,y\in\mathbb{R}^d${, denote
	            the Euclidean inner product by}
	            $(x,y)_{\mathbb{R}^d}=x\cdot y${. For a matrix}
	            $\mathsf{A}\in\mathbb{R}^{d\times d}${, write}
 $(\mathsf{A} x,x)_{\mathbb{R}^d}=x^\top \mathsf{A}\ x$
 {for the quadratic form associated with} $\mathsf{A}${.}
			\item {\color{red}We write} $f(x)\lesssim \langle x\rangle^{-\infty}$
			{if, for every} $N\in\mathbb N${, there exists}
			$C_N>0$ {\color{red}such that}
			\begin{align}\label{def-inf}
			|f(x)|\le C_N\langle x\rangle^{-N}\qquad \text{for all }x\in\mathbb R^d.	\end{align}
			{\color{red}Equivalently,} $f$ {decays faster than every
			polynomial. In particular, for} \(m\in\mathbb N\){,}
\(\langle x\rangle^{m}f(x)\lesssim \langle x\rangle^{-\infty}\).

		\end{itemize}
        \begin{remark}
{\color{red}The derivative} \(\mathcal D_{t,v_0}^{(x,v)}\) {\color{red}is
adapted to the anisotropic kinetic scaling at the frozen velocity} \(v_0\){.
If} \(|v_0|\lesssim1\){, then}
\(\mathcal O_{v_0}\sim\mathrm{Id}\), \(\tilde t_{v_0}\sim t\), and
\(\hat t_{v_0}\sim t\){. Hence,}
\[
    \mathcal D_{t,v_0}^{(x,v)}
    \sim
    \nabla_{t,v_0}^{(x,v)}
    \sim
    \bigl(t^{1+\frac1{2s}}\nabla_x,\,
    t^{\frac1{2s}}\nabla_v\bigr).
\]
{\color{red}Hence, in the bounded-velocity regime, the scaled derivative}
\(\mathcal D_{t,v_0}^{(x,v),m}H\) {of the fractional Kolmogorov
kernel has the same pointwise size as} \(H\){. For sufficiently
regular} \(g\){, let} \(\G_g\) {be the Green kernel of}
\eqref{li1}{. After introducing} \(v_0\) {as an auxiliary
frozen parameter and then evaluating at} \(v_0=v\){, the
anisotropically scaled derivatives obey the same pointwise bound as the kernel
itself. More precisely,}
\[
    \left.
    \mathcal D_{t-\tau,v_0}^{(x,v),m}
    \G_g(t,x,v;\tau,y,w)
    \right|_{v_0=v}
\]
{\color{red}has the same pointwise profile as}
\[
    \G_g(t,x,v;\tau,y,w),
\]
{up to constants depending on} \(m\){. This motivates the
use of} \(\mathcal D_{t,v_0}^{(x,v)}\){, which is adapted to the
anisotropic kinetic scale at the evaluation velocity.}

{\color{red}The analogous statement fails for the Euclidean scaled derivative}
\[
    \left.
    \nabla_{t-\tau,v_0}^{(x,v),m}
    \G_g(t,x,v;\tau,y,w)
    \right|_{v_0=v}.
\]
{\color{red}Indeed,} \(\nabla_{t-\tau,v_0}^{(x,v)}\) {does not
contain the stretching matrix} \(\mathcal O_v\) {\color{red}and therefore
fails to capture the distinct parallel and perpendicular scales at large
velocities.}
\end{remark}
	\begin{remark}[Finite differences associated with \(\mathcal D_{t,v_0}^{(x,v)}\)]
{\color{red}For} \((x,v)\in\mathbb R^d\times\mathbb R^d\){,}
	\[
	x\cdot\nabla_x+v\cdot\nabla_v
	=
	\left(\frac{\mathcal O_{v_0}^{-1} x}{t\tilde t_{v_0}^{1/(2s)}}, 	\frac{\mathcal O_{v_0}^{-1}v}{\tilde t_{v_0}^{1/(2s)}}\right)\cdot 	\mathcal D_{t,v_0}^{(x,v)}.
	\]
{\color{red}Since} \(|\mathcal{O}_{v_0}^{-1}h|\sim [h]_{v_0}\){,
the natural anisotropic lengths of the} \(x\){- and}
\(v\){-increments associated with}
	\(\mathcal D^{(x)}_{t,v_0}\) {\color{red}and}
	\(\mathcal D^{(v)}_{t,v_0}\) {\color{red}are}

    \begin{align}\label{defPP1}
       \ell_{t,v_0}^x(h)
	=\frac{[h]_{v_0}}{t\tilde t_{v_0}^{1/(2s)}},\quad\quad   \ell_{t,v_0}^v(h)
	=\frac{[h]_{v_0}}{\tilde t_{v_0}^{1/(2s)}}.
    \end{align}
  {\color{red}If} $|v_0|\leq C${, then}
  $\ell_{t,v_0}^x(h)\sim_C\ell_t^x(h)$ {\color{red}and}
  $\ell_{t,v_0}^v(h)\sim_C\ell_t^v(h)${, where}
  $\ell_t^x$ {\color{red}and} $\ell_t^v$ {\color{red}are defined in}
  \eqref{defPP2}{.}
	\end{remark}

		\subsection{Carleman-type representation}		
			{Following} \cite{S}{, we reformulate the collision
			operator} \eqref{BL} {under the half-sphere convention of
			Remark~}\ref{rem:half-sphere-convention}{.}
		\begin{lemma}\label{lemrefo}
				{\color{red}For} $s\in(0,1)${, the collision operator}
				$\mathcal{Q}_s(f_1,f_2)$ {admits the representation}
			\begin{align}\label{carle}
				\mathcal{Q}_s(f_1,f_2)(t,x,v)=\int_{\mathbb{R}^d}\left( f_2(t,x,v')	\mathbf{C}_{f_1}(t,x,v,v-v')-f_2(t,x,v)	\mathbf{C}_{f_1}(t,x,v',v-v')\right) \frac{\dif v'}{|v-v'|^{d+2s}},
			\end{align}
				{\color{red}where} $\mathbf{C}_f$ {\color{red}is defined by}
			\begin{align}\label{defCf1}
				\mathbf{C}_f(t,x,v,z):=	2^{d-1}\int_{z^\perp}f(t,x,v+w)|w|^{-\kappa+1}A\left(\frac{|z|^2}{|w|^2}\right)\mathbf{1}_{|w|\geq |z|} \dif \mathcal H^{d-1}(w),
			\end{align}
			with
			\begin{align*}
				A(\rho)= (1+\rho)^{\frac{\gamma+2-d}{2}}b\left(\frac{1-\rho}{1+\rho}\right) \rho^{\frac{d-1+2s}{2}},\quad 
				\kappa=-\gamma-2s.
			\end{align*}
	   {\color{red}Here} $z^\perp=\{w\in\mathbb R^d:w\cdot z=0\}$
	   {\color{red}and} $\mathcal H^{d-1}$ {\color{red}is the}
	   $(d-1)${-dimensional Hausdorff measure on this hyperplane.
	   Under assumption} \eqref{assofb} and \eqref{assofb-lower}, for
	   \(0\le \rho\le1\){,}
    	\begin{equation}\label{esofA}
			0\le A(\rho)\le C,\quad\quad
|A^{(n)}(\rho)|\lesssim_n 1,\qquad n\in\mathbb N\cup\{0\}.
			\end{equation}
				{\color{red}Moreover, there exist} \(\rho_0,c_0>0\)
				{\color{red}such that}
			\begin{align}
			\label{rholb}
            A(\rho)\ge c_0,
			\qquad 0\le\rho\le\rho_0.
			\end{align}

		\end{lemma}
  \begin{proof}
	{\color{red}We suppress} \((t,x)\) {\color{red}and use the half-sphere
	interpretation of} \eqref{BL} {from
	Remark~}\ref{rem:half-sphere-convention}{. The standard Carleman
	change of variables}
	\[
	(\sigma,v_\star)\longmapsto (w,v'),
	\qquad
	w:=v_\star'-v,
	\]
	{(see, for example,} \cite[Lemma A.1]{S}{) gives}
	\begin{align}\label{refo}
		\mathcal Q_s(f_1,f_2)(v)
		=
		2^{d-1}&
		\int_{\mathbb R^d}
		\int_{(v-v')^\perp}
		\bigl(
		f_1(v+w)f_2(v')-f_1(v'+w)f_2(v)
		\bigr)
		\nonumber\\
		&\qquad \qquad\qquad\times
		B(r,\cos\theta)r^{-(d-2)}
		\mathbf 1_{\{\cos\theta\ge0\}}
		\frac{\dif \mathcal H^{d-1}(w)\,\dif v'}{|v-v'|},
	\end{align}
	{\color{red}where}
	\[
	r
	=
	\left(|v-v'|^2+|w|^2\right)^{1/2},
	\qquad
	\cos\frac{\theta}{2}
	=
	\frac{|w|}{r}.
	\]
	{\color{red}It follows that}
	\begin{align}\label{costheta-vw}
		\cos\theta
		&=
		2\cos^2\frac{\theta}{2}-1
		=
		\frac{|w|^2-|v-v'|^2}
		{|w|^2+|v-v'|^2}.
	\end{align}
	{\color{red}In particular,}
$
	\cos\theta\ge0$ is equivalent to
$|w|\ge |v-v'|.
$
		{\color{red}Set}
	\[
	\rho:=\frac{|v-v'|^2}{|w|^2}.
	\]
	{\color{red}By} \eqref{costheta-vw}{,}
	\[
	\cos\theta=\frac{1-\rho}{1+\rho},
	\qquad
	-\cos\theta=\frac{\rho-1}{1+\rho}.
	\]
	{\color{red}Therefore, the definition of the symmetrized kernel} \(B\)
	{\color{red}gives}
	\begin{align*}
		r^{-(d-2)}B(r,\cos\theta)
		&=
		r^{\gamma+2-d}
		b\left(\frac{1-\rho}{1+\rho}\right).
	\end{align*}
{\color{red}Hence,}
	\begin{align}\label{factor-Carleman}
		r^{-(d-2)}B(r,\cos\theta)
		&=
		|w|^{-\kappa+1}
		|v-v'|^{-(d-1+2s)}
		A\left(\frac{|v-v'|^2}{|w|^2}\right).
	\end{align}
{Substituting} \eqref{factor-Carleman} {into}
\eqref{refo} {\color{red}and using}
	$
	\mathbf 1_{\{\cos\theta\ge0\}}
	=
	\mathbf 1_{\{|w|\ge |v-v'|\}},
$
	{\color{red}yields}
	\begin{align*}
		\mathcal Q_s(f_1,f_2)(v)
		&=
		2^{d-1}
		\int_{\mathbb R^d}
		\int_{(v-v')^\perp}
		\bigl(
		f_1(v+w)f_2(v')-f_1(v'+w)f_2(v)
		\bigr)
		\\
		&\qquad\qquad\times
		|w|^{-\kappa+1}
		A\left(\frac{|v-v'|^2}{|w|^2}\right)
		\mathbf 1_{\{|w|\ge |v-v'|\}}
		\frac{\dif \mathcal H^{d-1}(w) \,\dif v'}{|v-v'|^{d+2s}} .
	\end{align*}
	{\color{red}Consequently,}
	\[
	\mathcal Q_s(f_1,f_2)(v)
	=
	\int_{\mathbb R^d}
	\left(
	f_2(v')\mathbf C_{f_1}(v,v-v')
	-
	f_2(v)\mathbf C_{f_1}(v',v-v')
	\right)
	\frac{\dif v'}{|v-v'|^{d+2s}}.
	\]
{\color{red}This proves} \eqref{carle}{. Since}
\(1+\rho\sim1\) {on} \([0,1]\) {\color{red}and the map}
\[
\rho\mapsto\frac{1-\rho}{1+\rho}
\]
{\color{red}is smooth with bounded derivatives on} \([0,1]\){, the
upper and lower bounds follow from} \eqref{assofb} and \eqref{assofb-lower}, while the
estimates for \(A^{(n)}\) follow from
			\eqref{assofb-derivative}.
\end{proof}


		\begin{remark}\label{Z}
			{\color{red}For a suitably normalized} \(s\){-dependent
family of angular kernels} \(b_s\){, one formally expects}
\((1-s)\mathcal Q_s(f_1,f_2)\to c_d\mathcal Q_1(f_1,f_2)\)
{as} \(s\uparrow1\){. We motivate this limit by expanding
the difference term and analyzing the angular integral.}

				{As} $s\to1^-${, the angular singularity
				concentrates the collision operator on grazing collisions, so small
				momentum transfers} \(|v-v'|\ll1\) {dominate.}

{Taylor-expanding} $f_2(v)-f_2(v-z)$ {\color{red}and using symmetry
eliminates the first-order terms, leaving}
$z\otimes z:\nabla^2 f_2(v)$ {as the leading contribution. This gives}
			\begin{align*}
				\int_{|z| \leq \varepsilon} z \otimes z \, \mathbf{C}_{f_1}(v, z) \frac{dz}{|z|^{d+2s}} = \int_0^\varepsilon \rho^{1-2s} \int_{\mathbb{S}^{d-1}} \theta \otimes \theta \int_{\theta^\perp} f_1(v + w) |w|^{-\kappa+1} A\left(\frac{\rho^2}{|w|^2}\right) \dif \mathcal H^{d-1}(w) \, \dif \theta \, \dif \rho.
			\end{align*}
				{\color{red}The angular identity}
			\begin{align*}
				\int_{\mathbb{S}^{d-1}} \theta \otimes \theta \int_{\theta^\perp} g(w) \, \dif w \, \dif \theta = C_d \int_{\mathbb{R}^d} \frac{g(w)}{|w|} (\mathrm{Id} - \hat{w} \otimes \hat{w}) \, \dif w
			\end{align*}
				{matches the matrix coefficient of the Landau collision
				operator; see} \eqref{defa}{. The radial integral}
			\begin{align*}
				\int_0^\varepsilon \rho^{1-2s} \, \dif \rho = \frac{\varepsilon^{2-2s}}{2-2s}
			\end{align*}
				{shows that} $C_s\sim(1-s)^{-1}$ {as}
				$s\to1^-${. Thus, formally,}
			\begin{align*}
				\int_{\mathbb{R}^d} \big( f_2(v - z) - f_2(v) \big) \mathbf{C}_{f_1}(v, z) \frac{dz}{|z|^{d+2s}} \to C_s \, (a \star f_1) : \nabla^2 f_2
				\end{align*}
					{as} $s\to1^-${. The other collision term is
					treated similarly.}
					\end{remark}

	        {\color{red}We next recall the cancellation lemma for the non-cutoff Boltzmann
	        operator. Define}
\begin{align}\label{defK}
    \mathcal K(f)(t,x,v)
    :=
    \int_{\mathbb R^d}
    \delta_z^v \mathbf C_f(t,x,v,z)
    \frac{\dif z}{|z|^{d+2s}} .
\end{align}
\begin{lemma}\cite{ADVW,S,Hungnew}\label{lemcanc}
				{\color{red}Let} \(f\in\mathcal S(\mathbb R^d)\){. For}
				$\gamma>-d-2s${, the following identity holds in the
				sense of tempered distributions:}
			\begin{align}\label{cancel}
				\mathcal{K}(f)(t,x,v)= c(\gamma,s)\Lambda_v^{-d-\gamma} f,
			\end{align}
				{\color{red}where} $c(\gamma,s)$ {\color{red}is a constant.}
		\end{lemma}
			{\color{red}For} $\gamma>-d${, the identity was proved in}
			\cite{ADVW}{; see also} \cite{S}{. The second
			author established it in the more singular range}
			$\gamma\in(-d-2s,-d]$ {in} \cite{Hungnew}{. The
			constant} $c(\gamma,s)$ {\color{red}is positive when} $\gamma>-d$
			{\color{red}or} $s=1${; its sign remains unclear for}
			$s\in(0,1)$ {\color{red}and} $-d-2s<\gamma\leq-d${. The key
			step is to use the regularity of} \(f\) {to cancel the
			singularity as} \(z\to0\){; see} \cite{YZ}
			{for related results.}

			{\color{red}We now isolate the principal part of the collision operator used in}
			\eqref{defL}{.}

{Lemma~}\ref{lemrefo} {decomposes the collision operator as}
		\begin{equation}\label{qsmr}
			\begin{aligned}
				\mathcal{Q}_{s}(f_1,f_2)(v)=&	-\int_{\mathbb{R}^d} (f_2(v)-f_2(v'))	\mathbf{C}_{f_1}(v,v-v')\frac{\dif v'}{|v-v'|^{d+2s}}\\
				&+f_2(v)\int_{\mathbb{R}^d}\left( \mathbf{C}_{f_1}(v,v-v')-\mathbf{C}_{f_1}(v',v-v')\right)\frac{\dif v'}{|v-v'|^{d+2s}}\\
				:=&\mathcal{Q}_{s,\mathrm{main}}(f_1,f_2)(v)+\mathcal{Q}_{s,\mathrm{rem}}(f_1,f_2)(v).
			\end{aligned}
		\end{equation}
{Lemma~}\ref{lemcanc} {\color{red}gives}
\begin{align*}
\mathcal{Q}_{s,\mathrm{rem}}(f_1,f_2)=c(\gamma,s)f_2\,\Lambda_v^{-d-\gamma} f_1.
\end{align*}
			{\color{red}For} $s=1${, define}
		\begin{align}\label{q1mr}
			\mathcal{Q}_{1,\mathrm{main}}(f_1,f_2)(v)=\operatorname{div}(a\star f_1(v)\nabla f_2(v)),\quad\quad\quad\mathcal{Q}_{1,\mathrm{rem}}(f_1,f_2)(v)=-\operatorname{div}(a\star \nabla f_1(v) f_2(v)).
		\end{align}
			{\color{red}The operator} \(\mathcal{Q}_{s,\mathrm{main}}(f_1,f_2)\)
			{acts on} \(f_2\) {\color{red}with nonlocal coefficients
			determined by} \(f_1\){, while}
			\(\mathcal{Q}_{s,\mathrm{rem}}(f_1,f_2)\) {\color{red}is lower order.}

			{\color{red}Define}
		\begin{align*}
			\mathcal{L}_g(f)=-\mathcal{Q}_{s,\mathrm{main}}(g,f).
		\end{align*}
			{\color{red}Then the linearized operator in} \eqref{defL}
			{can be written as}
			\begin{align*}
			-\mathcal{Q}_s(g,f)=\mathcal{L}_g(f)(v)-\mathcal{Q}_{s,\mathrm{rem}}(g,f).
		\end{align*}
			{\color{red}Consequently,} \eqref{li1} {becomes}
			\begin{equation}\label{eqperbo1}
			\begin{aligned}
				&\partial_t F+v\cdot \nabla_x F+\mathcal{L}_g (F)=	\mathcal{Q}_{s,\mathrm{rem}}(g,F),\quad\quad \quad s\in(0,1],\\
				&F|_{t=0}=F_0.
			\end{aligned}
		\end{equation}
			{Similarly, in the perturbative setting,} \eqref{eqperbo}
			{becomes}
		\begin{equation}\label{eqpert2}
			\begin{aligned}
				&	\partial_t f+v\cdot \nabla_x f+\mathcal{L}_\mu (f)=	\mathcal{Q}_{s}(f,f)+\mathcal{Q}_{s}(f,\mu)+\mathcal{Q}_{s,\mathrm{rem}}(\mu,f),\quad\quad \quad s\in(0,1],\\
				&f|_{t=0}=f_0.
			\end{aligned}
		\end{equation}
		\subsection{Frozen-coefficient approximation and the Duhamel formula}
	        {\color{red}We use the convolution convention from}
	        \eqref{deftsvconvo}{. For}
$(t,x,v)\in \mathbb{R}^+\times \mathbb{R}^d\times \mathbb{R}^d${,
set}
		\[
        X^t_{x,v}=x-tv. 
		\]
	            {\color{red}Fix} $(t_0,x_0,v_0)\in
	            (0,T]\times\R^d\times\R^d${. For each}
			$t\in[0,t_0]${, freeze the operator along the backward
			free-transport characteristic by setting}
		\begin{equation}\label{deffro}
		\mathcal L^{t_0,x_0,v_0}_{g}(t)\phi(v)
		:=
		\begin{cases}
			\displaystyle
			\int_{\R^d}\bigl(\phi(v)-\phi(v-z)\bigr)\,
			\mathbf C_g\bigl(t,X^{t_0-t}_{x_0,v_0},v_0,z\bigr)\frac{\dif z}{|z|^{d+2s}},
			& s\in(0,1),\\[2ex]
			\displaystyle
			-\operatorname{div}_v\!\Big((a\star g)\bigl(t,X^{t_0-t}_{x_0,v_0},v_0\bigr)\nabla_v \phi(v)\Big),
			& s=1,
		\end{cases}
		\end{equation}
			{\color{red}where} $\mathbf{C}_g(t,x,v,z)$ {\color{red}is defined in}
			\eqref{defCf1}{. When} \(g=\mu\){, the coefficients
			are independent of} \((t,x)\){, and we simply write}
			\(\mathcal{L}_g^{t_0,x_0,v_0}(t)=\mathcal L_\mu^{v_0}\){.}

			{\color{red}Fix} $\tau\in[0,t_0)$ {\color{red}and consider the Cauchy problem}
		\begin{equation}\label{lineeq_tau_char}
			\left\{
			\begin{aligned}
				&\partial_t f+v\cdot\nabla_x f+\mathcal L^{t_0,x_0,v_0}_{g}(t)f=F,
				\qquad \text{in } (\tau,t_0)\times\R^d\times\R^d,\\
				&f|_{t=\tau}=f_\tau .
			\end{aligned}
			\right.
		\end{equation}
	
			{\color{red}The next lemma gives a Duhamel formula for this equation. Freezing
			the coefficients along transport characteristics is natural because, without
			collisions or forcing,}
			\[ \partial_t f+v\cdot\nabla_x f=0 \qquad\Longrightarrow\qquad f(t,x,v)=f_\tau(x-(t-\tau)v,v). \]
			{Accordingly, the kernel representation uses the backward transport
			shift} \(x\mapsto x-(t-\sigma)v\){.}
	\begin{lemma}\label{lemlin-char}
		{\color{red}For every} $t\in(\tau,t_0]${, the solution of}
		\eqref{lineeq_tau_char} {\color{red}is}
		\begin{align}
			f(t,x,v)
			&=\big(H^{t_0,x_0,v_0}_{g}(t,\tau)\ast f_\tau\big)\big(x-(t-\tau)v,v\big)\notag\\
			&\quad+\int_\tau^t
			\big(H^{t_0,x_0,v_0}_{g}(t,\sigma)\ast F(\sigma)\big)
			\big(x-(t-\sigma)v,v\big)\,\dif \sigma,
			\label{solfor_tau_char}
		\end{align}
		{\color{red}where}
		\begin{align}
			H^{t_0,x_0,v_0}_{g}(t,\tau,x,v)
			=
			\frac1{(2\pi)^{2d}}\iint_{\R^{2d}}
			\exp\!\Big(
			-\int_\tau^t
			E_g\big((r-\tau)\xi-\eta;\,r,\,X_{x_0,v_0}^{t_0-r},\,v_0\big)\,\dif r
			+i\xi\cdot x+i\eta\cdot v
			\Big)\,\dif\xi\,\dif\eta,
			\label{defH}
		\end{align}
		{\color{red}and the symbol is}
		\begin{align}
			E_g(z;r,x,v)
			=
			\begin{cases}
				\displaystyle
				\int_{\R^d}(1-e^{iz\cdot h})\,\mathbf C_g\bigl(r,x,v,h\bigr)\,
				\frac{\dif h}{|h|^{d+2s}},
				& s\in(0,1),\\[2ex]
				\big((a\star g)(r,x,v)\,z,z\big)_{\mathbb{R}^d},
				& s=1.
			\end{cases}
			\label{defEs}
		\end{align}
		{\color{red}Equivalently, the Green kernel of}
		$\partial_t+v\cdot\nabla_x+\mathcal{L}_g^{t_0,x_0,v_0}(t)$
		{\color{red}is}
		\begin{equation}\label{KH}
			\mathcal G_{t_0,x_0,v_0}(t,x,v;\tau,y,u)
			=
			H^{t_0,x_0,v_0}_{g}\bigl(t,\tau,x-y-(t-\tau)v,v-u\bigr).
		\end{equation}
	\end{lemma}
	\begin{proof}
{Taking the Fourier transform in} \((x,v)\){, with}
\(\xi\) {dual to} \(x\) {\color{red}and} \(\eta\)
{dual to} \(v\){, gives}\[
		\partial_t \hat f(t,\xi,\eta)-\xi\cdot\nabla_\eta \hat f(t,\xi,\eta)
		+E_g\bigl(\eta;\,t,\,X_{x_0,v_0}^{t_0-t},\,v_0\bigr)\hat f(t,\xi,\eta)
		=\hat F(t,\xi,\eta),
		\qquad
		\hat f(\tau,\xi,\eta)=\hat f_\tau(\xi,\eta).
		\]
		{Introduce the characteristic shift}
		\[
		\hat p(t,\xi,\eta):=\hat f\bigl(t,\xi,\eta-(t-\tau)\xi\bigr),
		\qquad
		\text{equivalently }\
		p(t,x,v)=f\bigl(t,x+(t-\tau)v,v\bigr).
		\]
		{\color{red}Then} $\hat p$ {\color{red}satisfies}
		\[
		\partial_t \hat p(t,\xi,\eta)
		+
		E_g\bigl(\eta-(t-\tau)\xi;\,t,\,X_{x_0,v_0}^{t_0-t},\,v_0\bigr)\,\hat p(t,\xi,\eta)
		=
		\hat F\bigl(t,\xi,\eta-(t-\tau)\xi\bigr),
		\]
		{\color{red}with initial datum}
		\[
		\hat p(\tau,\xi,\eta)=\hat f_\tau(\xi,\eta).
		\]
		{Solving this ODE yields}
		\begin{align*}
			\hat p(t,\xi,\eta)
			&=
			\exp\!\Big(
			-\int_\tau^t
			E_g\bigl(\eta-(r-\tau)\xi;\,r,\,X_{x_0,v_0}^{t_0-r},\,v_0\bigr)\,\dif r
			\Big)\hat f_\tau(\xi,\eta)\\
			&\quad
			+\int_\tau^t
			\exp\!\Big(
			-\int_\sigma^t
			E_g\bigl(\eta-(r-\tau)\xi;\,r,\,X_{x_0,v_0}^{t_0-r},\,v_0\bigr)\,\dif r
			\Big)
			\hat F\bigl(\sigma,\xi,\eta-(\sigma-\tau)\xi\bigr)\,\dif \sigma.
		\end{align*}
		{Taking the inverse Fourier transform in} $(\xi,\eta)$
		{\color{red}gives}
		\[
		p(t,x,v)
		=
		\big(H^{t_0,x_0,v_0}_{g}(t,\tau)\ast f_\tau\big)(x,v)
		+
		\int_\tau^t
		\big(H^{t_0,x_0,v_0}_{g}(t,\sigma)\ast F(\sigma)\big)
		\bigl(x+(\sigma-\tau)v,v\bigr)\,\dif \sigma.
		\]
		{\color{red}Finally, since}
		\[
		f(t,x,v)=p\bigl(t,x-(t-\tau)v,v\bigr),
		\]
		{we obtain} \eqref{solfor_tau_char}{. Formula}
		\eqref{KH} {then follows by taking}
		$f_\tau=\delta_{(y,u)}$ {\color{red}and} $F\equiv0${.}
	\end{proof}\par\medskip
		{\color{red}When} $g=\mu${, the frozen coefficients depend only on}
		$v_0$ {\color{red}and not on} $t$ {\color{red}or} $x${. Thus,
		modulo free transport, the frozen operator is translation-invariant in}
		$(x,v)${, and its fundamental solution has an explicit Fourier
		representation.}

		\begin{corollary}\label{coroduha}
			{Setting} \(g=\mu\) {\color{red}and} \(\tau=0\)
			{in Lemma~}\ref{lemlin-char}{, we obtain the solution of the
			frozen Cauchy problem:}
			\begin{align}\label{solfor0}
				f(t,x,v)=(H_\mu^{v_0}(t)\ast	f_0)(x-tv,v)+\int_0^t (H_\mu^{v_0}(t-\tau)\ast  F(\tau))(x-(t-\tau)v,v)\dif \tau ,
			\end{align}
			{\color{red}where}
			\begin{align}
				&H_\mu^{v_0}(t,x,v)=\frac{1}{(2\pi)^{2d}}\iint_{\mathbb{R}^{2d}}
				\exp\left(-\int_{0}^{t}E(\sigma\xi-\eta,v_0)\dif \sigma +i\xi\cdot x+i\eta\cdot v\right) \dif \xi \dif \eta,\label{defH0}\\ &E(z,v_0)=\left\{\begin{aligned}
					&	\int_{\mathbb{R}^d} (1-e^{iz\cdot h}) {\mathbf{C}}_\mu(v_0, h)\frac{\dif h}{|h|^{d+2s}},\ \ s\in(0,1),\\
					&(a\star\mu(v_0)z,z)_{\mathbb{R}^d},\ \ \ s=1.
				\end{aligned}\right.\label{defEs0}
			\end{align}	
		\end{corollary}
		\subsection{Estimates for the coefficient functions}\label{seculb}
	{\color{red}We now derive lower and upper pointwise bounds for} \(E_g\){,
	defined in} \eqref{defEs}{. These bounds govern the
	regularity and decay of solutions to the frozen-coefficient equation}
	\eqref{lineeq_tau_char}{.}
	
	\subsubsection{\texorpdfstring{{Lower bound for $E_g$}}{Lower bound for Eg}}
	 {\color{red}Let}
	 $g:[0,T]\times\mathbb{R}^d\times\mathbb{R}^d\to[0,\infty)$
	 {be given, and assume that}
			\begin{subequations}\label{eq:macro-main}
			\begin{align}
				&\mathsf{m}_0\le \int_{\R^d} g(t,x,v)\,\dif v\le \mathsf M_0,\label{eq:macro-main-a}\\
				&\int_{\R^d}|v|^2 g(t,x,v)\,\dif v\le \mathsf E_0,\label{eq:macro-main-b}\\
				&\int_{\R^d} g(t,x,v)\log(1+g(t,x,v))\,\dif v\le \mathsf H_0,\label{eq:macro-main-c}
			\end{align}
		\end{subequations}
	{hold uniformly for} \((t,x)\in[0,T]\times\mathbb{R}^d\){.
	Direct estimates show that} \eqref{lbmass}--\eqref{Ig}
	{imply} \eqref{eq:macro-main}{.}

		{\color{red}We begin with the Landau case} $s=1$ {\color{red}and the
		coefficient matrix} \(\mathsf{A}[g](v)=a\star g(v)\){; see}
		\cite{DL,DV1,Luis2017}{.}
		\begin{lemma}[Coercivity of the Landau diffusion matrix {\cite[Lemma 3.2]{Luis2017}}]\label{leA}
{Assume that} $g$ {\color{red}satisfies}
\eqref{eq:macro-main}{. Then there exists a constant}
\begin{equation}
c=c(\mathsf m_0,\mathsf M_0,\mathsf E_0,\mathsf H_0)>0,
\end{equation}
{\color{red}such that, uniformly for}
$(t,x,v)\in[0,T]\times\mathbb R^d\times\mathbb R^d${,}
\begin{equation}
\operatorname{det} \mathsf{A}[g]
\ge
c\langle v\rangle^{(d-1)(\gamma+2)+\gamma}.
\end{equation}
{More precisely, for every} $e\in\mathbb S^{d-1}${,}
\begin{equation}
\left(\mathsf{A}[g](v) e, e\right)_{\mathbb{R}^d} \geq c \jap{v}^\gamma (\langle v\rangle^2-|v\cdot e|^2).
\end{equation}
\end{lemma}
		
	{\color{red}We next consider the Boltzmann case} $s\in(0,1)$
	{\color{red}and derive a pointwise characterization of the frozen Fourier
	kernel. Related Sobolev coercivity estimates appear in}
	\cite{ADVW,Mou06,MS07,AMUXY2012JFA,AMUXYKJM,AMUXY2011-CMP,GS2011}{.}

{\color{red}We first reformulate} $E_g$ {through the Carleman
representation.}
	\begin{lemma}\label{lem:hyperplane-representation}
		{\color{red}Let} $s\in(0,1)${. For every} $t\in(0,T)$
		{\color{red}and} $x_0,v_0,z\in\mathbb R^d${,}
		\begin{align}\label{eq:hyperplane-representation}
			E_g(z;t,x_0,v_0)
			\gtrsim\int_{\mathbb R^d} g(v_0+w)\,|w|^{-\kappa}
			\int_{\substack{h\in w^\perp\\ |h|\le \sqrt{\rho_0}|w|}}
			\bigl(1-\cos(z\cdot h)\bigr)\,|h|^{-d+1-2s}\,\dif \mathcal H^{d-1}(h)\,\dif w,
		\end{align}
	        {\color{red}where} $\rho_0$ {\color{red}is the structural parameter in}
	        \eqref{rholb}{. The implicit constant depends only on the
	        collision-kernel parameters.}
	\end{lemma}
	\begin{proof}
{\color{red}The definition of} $E_g$ {\color{red}and the Carleman
representation give}
\begin{align*}
E_g(z;t,x_0,v_0)
={}&2^{d-1}\int_{\mathbb R^d}\int_{h^\perp}
g(v_0+w)\bigl(1-\cos(z\cdot h)\bigr)
|h|^{-d-2s}|w|^{-\kappa+1} \\
&\qquad\qquad\times
A\left(\frac{|h|^2}{|w|^2}\right)
\mathbf 1_{\{|h|\leq |w|\}}
\,\dif\mathcal H^{d-1}(w)\,\dif h .
\end{align*}
{All integrands are nonnegative, so Tonelli's theorem applies. The coarea
formula on the incidence set}
$\{(h,w)\in\mathbb R^d\times\mathbb R^d:h\cdot w=0\}$ gives
\[
\int_{\mathbb R^d}\int_{h^\perp}F(h,w)
\,\dif\mathcal H^{d-1}(w)\,\dif h
=
\int_{\mathbb R^d}\int_{w^\perp}F(h,w)
\frac{|h|}{|w|}
\,\dif\mathcal H^{d-1}(h)\,\dif w .
\]
{\color{red}Consequently,}
\begin{align*}
E_g(z;t,x_0,v_0)
={}&2^{d-1}\int_{\mathbb R^d}g(v_0+w)|w|^{-\kappa}
\int_{w^\perp}
\bigl(1-\cos(z\cdot h)\bigr)|h|^{-d+1-2s} \\
&\qquad\qquad\times
A\left(\frac{|h|^2}{|w|^2}\right)
\mathbf 1_{\{|h|\leq |w|\}}
\,\dif\mathcal H^{d-1}(h)\,\dif w .
\end{align*}
{\color{red}By} \eqref{rholb}{,} $A(\rho)\gtrsim1$
{for} $0\leq\rho\leq\rho_0${. Restricting the
nonnegative inner integral to} $|h|\leq\sqrt{\rho_0}|w|$ {\color{red}yields}
\[
E_g(z;t,x_0,v_0)
\gtrsim
\int_{\mathbb R^d}g(v_0+w)|w|^{-\kappa}
\int_{\substack{h\in w^\perp\\
|h|\leq\sqrt{\rho_0}|w|}}
\bigl(1-\cos(z\cdot h)\bigr)|h|^{-d+1-2s}
\,\dif\mathcal H^{d-1}(h)\,\dif w .
\]
{\color{red}The implicit constant depends only on the collision-kernel
parameters.}
\end{proof}

{\color{red}We also need an elementary lower bound for the truncated fractional symbol
on a hyperplane. Here} $\Pi_\theta z$ {denotes the orthogonal
projection of} $z$ {onto} $\theta^\perp${.}
	\begin{lemma}\label{lem:truncated-hyperplane-symbol}
		{\color{red}Let} \(s\in(0,1)\){,}
		\(\theta\in\mathbb{S}^{d-1}\){,} \(z\in\mathbb R^d\){,
		and} \(r>0\){. Then}
		\begin{align}\label{eq:truncated-hyperplane-symbol}
			\int_{\substack{h\in \theta^\perp\\ |h|\le r}}
			\bigl(1-\cos(z\cdot h)\bigr)\,|h|^{-d+1-2s}\,\dif \mathcal H^{d-1}(h)
			\gtrsim|\Pi_\theta z|^{2s}\min\{1,r|\Pi_\theta z|\}^{2-2s}.
		\end{align}
	        {\color{red}The implicit constant depends only on} $d$ {\color{red}and}
	        $s${.}
	\end{lemma}
	\begin{proof}
			{\color{red}Since} \(z\cdot h=(\Pi_\theta z)\cdot h\)
			{for} \(h\in\theta^\perp\){, the left-hand side
			depends only on the tangential component} \(\Pi_\theta z\){.
			Set}
		$
		a:=|\Pi_\theta z|.
		$
			{Rotate coordinates so that}
		$
		\Pi_\theta z = a e_1.
		$
			{Writing} \(h=(h_1,h')\in
			\mathbb R\times\mathbb R^{d-2}\){, the left-hand side becomes}
		\[
		I(a,r)
		:=
		\int_{|h|\le r}
		\bigl(1-\cos(ah_1)\bigr)\,|h|^{-d+1-2s}\,\dif h.
		\]
			{Consider the cone}
		\[
		\Gamma:=\{h\in \mathbb R^{d-1}: |h'|\le |h_1|/2\}.
		\]
			{\color{red}For} \(h\in\Gamma\){,}
		\[
		|h|^{-d+1-2s}\ge c\,|h_1|^{-d+1-2s},
		\]
			{\color{red}with} \(c=\left(\frac{\sqrt{5}}{2}\right)^{-d+1-2s}\){.
			Moreover,} \(|h_1|\le r/\sqrt5\) {\color{red}and}
			\(|h'|\le |h_1|/2\) {imply} \(|h|\le r\){.
			Therefore,}
		\begin{align*}
			I(a,r)
			&\ge
			\int_{|h_1|\le r/\sqrt5}\int_{|h'|\le |h_1|/2}
			\bigl(1-\cos(ah_1)\bigr)\,|h|^{-d+1-2s}\,\dif h\\
			&\gtrsim\int_0^{r/\sqrt5}
			\bigl(1-\cos(ah_1)\bigr)\,h_1^{-1-2s}\,\dif h_1.
		\end{align*}
			{\color{red}It remains to prove}
		\begin{align}\label{eq:1d-symbol-lb}
			\int_0^\rho \bigl(1-\cos(at)\bigr)\,t^{-1-2s}\,\dif t
			\gtrsim a^{2s}\min\{1,a\rho\}^{2-2s},
			\qquad \rho:=\frac{r}{\sqrt5}.
		\end{align}
		
			{\color{red}We distinguish two cases.}
		
		\smallskip
		\noindent
		{\it Case 1: \(a\rho\le 1\).}
			{\color{red}For} \(0\le at\le1\){,}
		\[
		1-\cos(at)\gtrsim (at)^2.
		\]
			{\color{red}Hence,}
		\[
		\int_0^\rho \bigl(1-\cos(at)\bigr)\,t^{-1-2s}\,\dif t
		\gtrsim a^2\int_0^\rho t^{1-2s}\,\dif t
		=a^{2s}(a\rho)^{2-2s}.
		\]
		
		\smallskip
		\noindent
		{\it Case 2: \(a\rho\ge 1\).}
			{\color{red}The interval} \([1/(2a),1/a]\) {\color{red}is then
			contained in} \((0,\rho]\){. On this interval,}
			\(at\in[1/2,1]\){, so}
		\[
		1-\cos(at)\gtrsim 1.
		\]
			{\color{red}Therefore,}
		\[
		\int_0^\rho \bigl(1-\cos(at)\bigr)\,t^{-1-2s}\,\dif t
		\gtrsim\int_{1/(2a)}^{1/a} t^{-1-2s}\,\dif t
		\gtrsim a^{2s}.
		\]
			{\color{red}Combining the two cases proves}
			\eqref{eq:1d-symbol-lb} {\color{red}and hence}
			\eqref{eq:truncated-hyperplane-symbol}{.}
	\end{proof}\par\medskip

{\color{red}To describe the anisotropy of the Fourier symbol} $E_g$
{from} \eqref{defEs}{, introduce the Fourier-space norm}
\begin{align}
\label{defan}
[[z]]_{v_0}
:=
\bigl(
|z|^2-\langle v_0\rangle^{-2}(v_0\cdot z)^2
\bigr)^{1/2}.
\end{align}
{\color{red}Equivalently,}
\begin{equation}
[[z]]_{v_0}=|\mathcal O_{v_0}z|,
\end{equation}
{\color{red}where} $\mathcal O_{v_0}$ {\color{red}is defined in}
\eqref{defmatr}{. Combining the hyperplane representation with a
geometrically nondegenerate set extracted from} \eqref{eq:macro-main}
{\color{red}gives the following anisotropic coercivity estimate.}
\begin{lemma}\label{lem:anisotropic-coercivity}
{\color{red}Let} $s\in(0,1)${, and let} $E_g$ {be
defined by} \eqref{defEs}{. Assume that} $g=g(t,x,v)\ge0$
{\color{red}and that} \eqref{eq:macro-main} {holds. Then there exists}
\begin{equation}
c=c(d,s,\kappa,\mathsf m_0,\mathsf E_0,\mathsf H_0)>0,
\end{equation}
{\color{red}such that, for every}
$(t,x_0,v_0)\in[0,T]\times\mathbb R^d\times\mathbb R^d$
{\color{red}and} $z\in\mathbb R^d${,}
	\begin{equation}\label{eq:anisotropic-coercivity-final}
		E_g(z;t,x_0,v_0)
		\ge
		c\,\langle v_0\rangle^{-\kappa}\,
		[[z]]_{v_0}^{2s}\min\{1,[[z]]_{v_0}\}^{2-2s}.
	\end{equation}
\end{lemma}
\begin{remark}[Low-frequency regularity of the frozen symbol]
{\color{red}The lower bound} \eqref{eq:anisotropic-coercivity-final}
{\color{red}has two regimes. If} \([[z]]_{v_0}\geq1\){, then}
\[
    [[z]]_{v_0}^{2s}\min\{1,[[z]]_{v_0}\}^{2-2s}
    =
    [[z]]_{v_0}^{2s},
\]
{\color{red}which is the expected fractional-diffusion behavior of order} \(2s\){.
If instead} \([[z]]_{v_0}\leq1\){, then}
\[
    [[z]]_{v_0}^{2s}\min\{1,[[z]]_{v_0}\}^{2-2s}
    =
    [[z]]_{v_0}^{2}.
\]
{\color{red}Thus,} \(E_g\) {\color{red}is effectively quadratic at anisotropic
low frequencies. This additional regularity improves physical-space decay: the
frozen kernel decays faster at infinity than a pure fractional Kolmogorov or
fractional Fokker--Planck kernel.}

{\color{red}This distinguishes the present setting from the fractional
Fokker--Planck model in} \cite{CHNfp}{, whose velocity-diffusion
symbol is comparable to} \(|z|^{2s}\) {at every frequency. Here,
the collision geometry and hyperplane cancellation produce second-order
behavior at low frequencies and fractional behavior at high frequencies. The
former is the source of the stronger far-field decay.}
\end{remark}
\begin{proof}[Proof of Lemma \ref{lem:anisotropic-coercivity}]
{\color{red}The proof adapts the mass--energy--entropy truncation argument of}
\cite[Lemma~7.1]{S}{, which extracts a nondegenerate velocity set.
That result does not directly imply} \eqref{eq:anisotropic-coercivity-final}
{\color{red}because we need an anisotropic lower bound depending on the frozen
velocity} \(v_0\) {through} \([[z]]_{v_0}\){. We
therefore apply its geometric argument to the hyperplane representation of the
Boltzmann symbol.}

{\color{red}Fix} $(t,x_0)$ {\color{red}and suppress these variables. Write}
	$E_g(z,v_0)=E_g(z;t,x_0,v_0)$ {\color{red}and set}
	\begin{equation*}
		\phi(r):=r^{2s}\min\{1,r\}^{2-2s},\qquad r\geq0.
	\end{equation*}
	{\color{red}We use the elementary scaling property}
	\begin{equation}\label{eq:phi-scaling}
		\phi(\lambda r)
		\geq\phi(\lambda)\phi(r),
		\qquad \lambda,r\geq0.
	\end{equation}
{Following the standard mass--energy--entropy truncation argument
(see, for example,} \cite{S}{), we first extract a nondegenerate
velocity set. Let}
	\begin{equation*}
		R:=1+2\sqrt{\frac{\mathsf E_0}{\mathsf m_0}} .
	\end{equation*}
	{\color{red}By} \eqref{eq:macro-main-a}--\eqref{eq:macro-main-b}{,}
	\begin{equation*}
		\int_{B_R}g(v)\,\dif v\geq \frac{3\mathsf m_0}{4}.
	\end{equation*}
	{\color{red}Choose} $M>1$ {sufficiently large that}
	\begin{equation*}
		\frac{\mathsf H_0}{\log(1+M)}
		\leq
		\frac{\mathsf m_0}{4}.
	\end{equation*}
	{\color{red}Then} \eqref{eq:macro-main-c} {implies}
	\begin{equation*}
		\int_{\{g>M\}}g(v)\,\dif v\leq \frac{\mathsf m_0}{4}.
	\end{equation*}
	{\color{red}Hence,}
	\begin{equation*}
		\int_{B_R\cap\{g\leq M\}}g(v)\,\dif v
		\geq
		\frac{\mathsf m_0}{2}.
	\end{equation*}
	{\color{red}Let}
	\begin{equation*}
		\ell:=\frac{\mathsf m_0}{4|B_R|}
		\qquad\text{\color{red}and}\qquad
		F:=B_R\cap\{\ell\leq g\leq M\}.
	\end{equation*}
	{\color{red}Then}
	\begin{equation}\label{eq:F-lower}
		|F|\geq \frac{\mathsf m_0}{4M},
		\qquad
		g\geq \ell\quad\text{a.e. on }F.
	\end{equation}
	{Lemmas~}\ref{lem:hyperplane-representation}
	{\color{red}and} \ref{lem:truncated-hyperplane-symbol} {give}
	\begin{align}
		E_g(z,v_0)
		&\gtrsim
		\int_{\mathbb R^d}
		g(v_0+w)|w|^{-\kappa}
		|\Pi_{\hat w}z|^{2s}
		\min\{1,|w||\Pi_{\hat w}z|\}^{2-2s}\,\dif w
		\notag\\
		&\gtrsim
		\int_{\mathbb R^d}
		g(v_0+w)|w|^{-(\kappa+2s)}
		\phi\bigl(|w||\Pi_{\hat w}z|\bigr)\,\dif w .
		\label{eq:Eg-reduced-before-F}
	\end{align}
	{Changing variables by} $u=v_0+w$ {\color{red}and using}
	\eqref{eq:F-lower} {\color{red}gives}
	\begin{equation}\label{eq:Eg-reduced-F}
		E_g(z,v_0)
		\gtrsim
		\int_F
		|u-v_0|^{-(\kappa+2s)}
		\phi\bigl(|u-v_0||\Pi_{u-v_0}z|\bigr)\,\dif u .
	\end{equation}
	{\color{red}The value of} $\Pi_{u-v_0}$ {at} $u=v_0$
	{\color{red}is immaterial.}
	
	{\color{red}We next fix a small geometric constant. For a line}
	$\Lambda\subset\mathbb R^d${, define}
	\begin{equation*}
		N_r(\Lambda):=
		\{u\in\mathbb R^d:\operatorname{dist}(u,\Lambda)<r\}.
	\end{equation*}
	{\color{red}Since}
	\begin{equation*}
		|N_r(\Lambda)\cap B_R|
		\leq C_d R r^{d-1},
		\qquad 0<r\leq R,
	\end{equation*}
	{we may choose} $r_*\in(0,1]${, depending only on}
	$d,R,\mathsf m_0,M${, such that}
	\begin{equation}\label{eq:rstar-choice-new}
		|N_{r_*}(\Lambda)\cap B_R|
		\leq
		\frac{\mathsf m_0}{16M}
		\qquad
		\text{for every line }\Lambda\subset\mathbb R^d .
	\end{equation}
{\color{red}First suppose} $|v_0|\leq4R${. The estimate is immediate
when} $z=0${; otherwise, set}
	\begin{equation*}
		\Lambda:=v_0+\mathbb R z,
		\qquad
		F_*:=F\setminus N_{r_*}(\Lambda).
	\end{equation*}
	{\color{red}By} \eqref{eq:F-lower} {\color{red}and}
	\eqref{eq:rstar-choice-new}{,}
	\begin{equation}\label{eq:Fstar-lower-small-v}
		|F_*|\geq \frac{\mathsf m_0}{8M}.
	\end{equation}
	{\color{red}For} $u\in F_*${,}
	\begin{equation*}
		r_*\leq |u-v_0|\leq 5R,
		\qquad
		|u-v_0||\Pi_{u-v_0}z|
		=
		|z|\operatorname{dist}(u,\Lambda)
		\geq r_*|z|.
	\end{equation*}
	{\color{red}Using} \eqref{eq:Eg-reduced-F}{,}
	\eqref{eq:phi-scaling}{, and}
	\eqref{eq:Fstar-lower-small-v} {\color{red}gives}
	\begin{equation*}
		E_g(z,v_0)
		\geq
		c\,\phi(|z|).
	\end{equation*}
	{\color{red}Since} $|v_0|\leq4R${, we have}
	$[[z]]_{v_0}\leq|z|$ {\color{red}and}
	$\langle v_0\rangle^{-\kappa}\sim1${, with constants depending
	only on} $R$ {\color{red}and} $\kappa${. Hence,}
	\begin{equation}\label{eq:small-v-final}
		E_g(z,v_0)
		\gtrsim\langle v_0\rangle^{-\kappa}\phi([[z]]_{v_0}).
	\end{equation}
{\color{red}It remains to consider} $|v_0|>4R${. Set}
	\begin{equation*}
		e:=\frac{v_0}{|v_0|},
		\qquad
		q:=|\Pi_e z|.
	\end{equation*}
	{Assume again that} $z\neq0$ {\color{red}and define}
	\begin{equation*}
		D:=\operatorname{dist}(0,v_0+\mathbb R z).
	\end{equation*}
	{\color{red}Then}
	\begin{equation}\label{eq:D-formula-new}
		D=\frac{|v_0|q}{|z|}.
	\end{equation}
	{\color{red}First suppose} $D\geq2R${. For every}
	$u\in F\subset B_R${,}
	\begin{equation*}
		\operatorname{dist}(u,v_0+\mathbb R z)
		\geq D-R
		\geq \frac{D}{2}.
	\end{equation*}
	{\color{red}Hence,}
	\begin{equation*}
		|u-v_0||\Pi_{u-v_0}z|
		=
		|z|\operatorname{dist}(u,v_0+\mathbb R z)
		\geq
		\frac{|v_0|q}{2}.
	\end{equation*}
	{\color{red}Moreover, since} $|u|\leq R$ {\color{red}and}
	$|v_0|>4R${,}
	\begin{equation}\label{eq:w-comparable-v0}
		|u-v_0|\sim \langle v_0\rangle .
	\end{equation}
	{\color{red}Using} \eqref{eq:Eg-reduced-F}{,}
	\eqref{eq:F-lower}{, and} \eqref{eq:w-comparable-v0}
{\color{red}gives}
	\begin{equation*}
		E_g(z,v_0)
		\gtrsim\langle v_0\rangle^{-(\kappa+2s)}
		\phi(c|v_0|q).
	\end{equation*}
	{\color{red}By} \eqref{eq:phi-scaling} {\color{red}and}
	$|v_0|\sim\langle v_0\rangle${,}
	\begin{equation}\label{eq:q-lower-large-D}
		E_g(z,v_0)
		\gtrsim\langle v_0\rangle^{-\kappa}\phi(q).
	\end{equation}
	{\color{red}On the other hand,} \eqref{eq:D-formula-new}
	{\color{red}and} $D\geq2R$ {imply}
$
		\frac{|z|}{|v_0|}
		\leq
		C_R q.
$
	{\color{red}Therefore,}
	\begin{align*}
		[[z]]_{v_0}^2
		&=
		|\Pi_e z|^2+\frac{(e\cdot z)^2}{1+|v_0|^2}
\leq
		q^2+\frac{|z|^2}{|v_0|^2}
		\leq
		C_R q^2 .
	\end{align*}
	{\color{red}Combining this estimate with} \eqref{eq:phi-scaling}
	{\color{red}and} \eqref{eq:q-lower-large-D} {\color{red}gives}
	\begin{equation}\label{eq:large-D-final}
		E_g(z,v_0)
		\gtrsim\langle v_0\rangle^{-\kappa}\phi([[z]]_{v_0}).
	\end{equation}
	{\color{red}Now suppose} $D<2R${, and let}
	\begin{equation*}
		\Lambda:=v_0+\mathbb R z,
		\qquad
		F_*:=F\setminus N_{r_*}(\Lambda).
	\end{equation*}
	{\color{red}Then} \eqref{eq:F-lower} {\color{red}and}
	\eqref{eq:rstar-choice-new} {imply}
	\begin{equation*}
		|F_*|\geq \frac{\mathsf m_0}{8M}.
	\end{equation*}
	{\color{red}For} $u\in F_*${,}
	\begin{equation*}
		|u-v_0||\Pi_{u-v_0}z|
		=
		|z|\operatorname{dist}(u,\Lambda)
		\geq
		r_*|z|,
	\end{equation*}
	{\color{red}and again} $|u-v_0|\sim\langle v_0\rangle${.
	Thus,} \eqref{eq:Eg-reduced-F} {\color{red}gives}
	\begin{equation*}
		E_g(z,v_0)
		\gtrsim\langle v_0\rangle^{-(\kappa+2s)}
		\phi(c|z|).
	\end{equation*}
	{\color{red}By} \eqref{eq:phi-scaling}{,}
	\begin{equation}\label{eq:small-D-scale}
		\langle v_0\rangle^{-2s}\phi(c|z|)
		\gtrsim
		\phi\left(\frac{|z|}{\langle v_0\rangle}\right).
	\end{equation}
	{\color{red}Moreover,} \eqref{eq:D-formula-new} {\color{red}and}
	$D<2R$ {imply}
	$
		q\leq C_R\frac{|z|}{|v_0|}.
	$
	{\color{red}Consequently,}
	\begin{align*}
		[[z]]_{v_0}^2
		&=
		q^2+\frac{(e\cdot z)^2}{1+|v_0|^2}
	\leq
		C_R\frac{|z|^2}{\langle v_0\rangle^2}.
	\end{align*}
	{\color{red}Applying} \eqref{eq:phi-scaling} {once more to}
	\eqref{eq:small-D-scale} {\color{red}yields}
	\begin{equation}\label{eq:small-D-final}
		E_g(z,v_0)
		\gtrsim\langle v_0\rangle^{-\kappa}\phi([[z]]_{v_0}).
	\end{equation}
	{\color{red}Combining} \eqref{eq:small-v-final}{,}
	\eqref{eq:large-D-final}{, and} \eqref{eq:small-D-final}
{\color{red}gives}
	\begin{equation*}
		E_g(z,v_0)
		\gtrsim\langle v_0\rangle^{-\kappa}
		\phi([[z]]_{v_0}),
	\end{equation*}
	{\color{red}which is} \eqref{eq:anisotropic-coercivity-final}{.
	The constants are uniform in} $(t,x_0)${.}
\end{proof}

\begin{remark}
{\color{red}When} \(g=\mu\){, the Maxwellian structure substantially
shortens the proofs of Lemmas~}\ref{leA} {\color{red}and}
\ref{lem:anisotropic-coercivity}{.}
\end{remark}
\begin{remark}[Coercivity is generated by small jumps]
\label{rem:small-jump-coercivity}
For \(0<\delta\leq1\), define the following anisotropic cutoff symbol
\[
E^{\rm an}_{g,\delta}(z;t,x_0,v_0)
:=
\int_{\{[h]_{v_0}\leq\delta\}}
(1-\cos(z\cdot h))
\mathbf C_g(t,x_0,v_0,h)
\frac{\dif h}{|h|^{d+2s}}.
\]
{\color{red}Then}
\[
E^{\rm an}_{g,\delta}(z;t,x_0,v_0)
\geq
c\langle v_0\rangle^{-\kappa}
[[z]]_{v_0}^{2s}
\min\{1,\delta[[z]]_{v_0}\}^{2-2s},
\]
{\color{red}where} \(c\) {\color{red}is independent of} \(\delta\){.}

{\color{red}Indeed, let} \(u\in F\subset B_R\){, set}
\(w=u-v_0\){, and assume} \(h\perp w\){. Then}
\[
[h]_{v_0}^2
\leq
(1+R^2)|h|^2.
\]
{\color{red}On each nondegenerate subset used in the proof of
Lemma~}\ref{lem:anisotropic-coercivity}{,}
\[
|w|\geq c_*>0,
\qquad
|\Pi_{\hat w}z|\geq c[[z]]_{v_0}.
\]
{\color{red}Consequently, the truncated Carleman region contains a hyperplane ball
of radius} \(c\delta\){. Applying
Lemma~}\ref{lem:truncated-hyperplane-symbol} {\color{red}and the same geometric
argument gives the claimed estimate.}

{\color{red}In particular, since}
\[
\{|\mathcal O_{v_0}^{-1}h|\leq1\}
\subset
\{|h|\leq1\},
\]
{the Euclidean small-jump symbol}
\[
E_g^{\rm sm}(z;t,x_0,v_0)
:=
\int_{|h|\leq1}
(1-\cos(z\cdot h))
\mathbf C_g(t,x_0,v_0,h)
\frac{\dif h}{|h|^{d+2s}}
\]
{\color{red}satisfies the lower bound} \eqref{eq:anisotropic-coercivity-final}{.
Thus, large jumps are unnecessary for coercivity.}
\end{remark}
	\subsubsection{\texorpdfstring{{Upper bound for $E_g$}}{Upper bound for Eg}}
	    {Upper bounds require stronger assumptions than the lower bounds
	    above. When} $\gamma+2s\geq0${, the collision singularity is
	    mild enough for the macroscopic bounds} \eqref{eq:macro-main}
	    {to control the coefficients pointwise. For example, in the
	    Landau case,}
	    \[ \big(\mathsf A[g](v)e,e\big)_{\mathbb{R}^d} \lesssim\langle v\rangle^\gamma \big(\langle v\rangle^2-|v\cdot e|^2\big), \qquad e\in\mathbb S^{d-1}, \]
	    {\color{red}where} $\mathsf{A}[g](v)=(a\star g)(v)${. Here,
	    however, we work in the very-soft-potential range}
	    \[ -d<\gamma+2s\leq 0 . \]
	    {\color{red}The coefficient kernels are then more singular, and macroscopic
	    bounds alone do not provide the pointwise control needed for the frozen
	    analysis. We therefore assume in addition that}
	    $\|g\|_{L^\infty_T\mathbf{C}^0}<\infty${. Under this
	    assumption, Lemma~}\ref{lemcoeff} {\color{red}gives the Landau upper
	    bound, and Lemma~}\ref{lemCf} {\color{red}gives its Boltzmann analogue.}
    
	{\color{red}We now establish pointwise upper bounds for} $E_g$
	{under} \eqref{Ig}{, beginning with the Landau case:}
	$$
		E_g(z;t,x_0,v_0)
	=
		\big(\mathsf{A}[g](t,x_0,v_0)\,z,\,z\big)_{\mathbb{R}^d},\quad\quad \text{\color{red}and}\ \ \mathsf{A}[g](t,x,v)=(a\star g(t,x,\cdot))(v).
$$

		\begin{lemma}\label{lemcoeff}
		 {Assume that}
		 $\|g\|_{L^\infty_T\mathbf{C}^0}<\infty$ {\color{red}and}
		 $\varkappa>d+2${. Then}
			
            $$\left(\mathsf{A}[g](v) e, e\right)_{\mathbb{R}^d} \lesssim \|g\|_{L^\infty_T\mathbf{C}^0} \jap{v}^\gamma (\langle v\rangle^2-|v\cdot e|^2)~~\forall e\in \mathbb{S}^{d-1}.$$
		\end{lemma}
	\begin{proof}
	{\color{red}It suffices to consider} \(|v|>2\){, because the
	bounded-velocity case is immediate.}
	
	{\color{red}We first record the elementary bounds}
    \begin{align*}
    &(a(w){\hat v},{\hat v})_{\mathbb{R}^d}=|w|^\gamma |\Pi_{\hat v} w|^2,\\&
    |(a(w){\hat v},\eta)_{\mathbb{R}^d}|
	\lesssim |w|^{\gamma+1}|\Pi_{\hat v} w|\,|\eta|,
	\qquad \eta\perp {\hat v},
    \end{align*}
	and
	\[
	|(a(w)\eta_1,\eta_2)_{\mathbb{R}^d}|
	\lesssim |w|^{\gamma+2}|\eta_1|\,|\eta_2|,
	\qquad \eta_1,\eta_2\perp {\hat v}.
	\]
	{\color{red}Using}
	\[
	|g(t,x,v-w)|
	\le
	\|g\|_{L^\infty_T\mathbf C^0}
	\langle v-w\rangle^{-\varkappa},
	\]
	{it remains to prove, for} \(m=0,1,2\){,}
	\[
	J_m(v):=
	\int_{\mathbb R^d}
	|w|^{\gamma+m}|\Pi_{\hat v} w|^{2-m}
	\langle v-w\rangle^{-\varkappa}\,dw
	\lesssim
	\langle v\rangle^{\gamma+m}.
	\]
	{\color{red}Set} \(u=v-w\){. Since}
	\(\Pi_{\hat v}v=0\){,}
	\[
	J_m(v)=
	\int_{\mathbb R^d}
	|v-u|^{\gamma+m}|\Pi_{\hat v} u|^{2-m}
	\langle u\rangle^{-\varkappa}\,du .
	\]
	{Decompose the integral over}
	\[
	\{|u|\le |v|/2\},\qquad
	\{|v-u|\le |v|/2\},
	\qquad
	\{|u|>|v|/2,\ |v-u|>|v|/2\}.
	\]
	{\color{red}On the first region,} \(|v-u|\sim|v|\){, so}
	\[
	J_m^{(1)}
	\lesssim
	|v|^{\gamma+m}
	\int_{\mathbb R^d}|u|^{2-m}\langle u\rangle^{-\varkappa}\,du
	\lesssim
	|v|^{\gamma+m}.
	\]
	{\color{red}On the second region,}
	\(\langle u\rangle\sim\langle v\rangle\) {\color{red}and}
	\(|\Pi_{\hat v}u|\le|v-u|\){. Hence,}
	\[
	J_m^{(2)}
	\lesssim
	\langle v\rangle^{-\varkappa}
	\int_{|v-u|\le |v|/2}
	|v-u|^{\gamma+2}\,du
	\lesssim
	\langle v\rangle^{d+\gamma+2-\varkappa}
	\lesssim
	\langle v\rangle^{\gamma+m}.
	\]
	{\color{red}On the remaining region, both} \(|u|\) {\color{red}and}
	\(|v-u|\) {\color{red}are bounded below by a constant multiple of}
	\(|v|\){. Since} \(\varkappa>d+2\){,}
	\[
	J_m^{(3)}
	\lesssim
	\langle v\rangle^{\gamma+m}.
	\]
	{\color{red}Therefore,} \(J_m(v)\lesssim\langle v\rangle^{\gamma+m}\)
	{for} \(m=0,1,2\){.}
	
	{\color{red}Now, for} \(e\in\mathbb S^{d-1}\){, write}
	\[
	e=e_\parallel+e_\perp,
	\qquad
	e_\parallel=(e\cdot {\hat v}){\hat v},
	\qquad
	e_\perp=\Pi_{\hat v} e.
	\]
    {\color{red}Then}
    \begin{align*}
    &|(\mathsf A[g]e_\parallel,e_\parallel)_{\mathbb{R}^d}|
	\lesssim
	\|g\|_{L^\infty_T\mathbf C^0}
	\langle v\rangle^\gamma |e_\parallel|^2,\\&
    |(\mathsf A[g]e_\parallel,e_\perp)_{\mathbb{R}^d}|
	\lesssim
	\|g\|_{L^\infty_T\mathbf C^0}
	\langle v\rangle^{\gamma+1}|e_\parallel|\,|e_\perp|,
    \end{align*}
	and
	\[
	|(\mathsf A[g]e_\perp,e_\perp)_{\mathbb{R}^d}|
	\lesssim
	\|g\|_{L^\infty_T\mathbf C^0}
	\langle v\rangle^{\gamma+2}|e_\perp|^2.
	\]
	{\color{red}Therefore,}
	\[
	\begin{aligned}
		|(\mathsf A[g]e,e)_{\mathbb{R}^d}|
		&\lesssim
		\|g\|_{L^\infty_T\mathbf C^0}
		\langle v\rangle^\gamma
		\left(
		|e_\parallel|^2
		+\langle v\rangle |e_\parallel|\,|e_\perp|
		+\langle v\rangle^2|e_\perp|^2
		\right)  \\
		&\lesssim
		\|g\|_{L^\infty_T\mathbf C^0}
		\langle v\rangle^\gamma
		\left(
		|e_\parallel|^2+\langle v\rangle^2|e_\perp|^2
		\right).
	\end{aligned}
	\]
	{\color{red}Finally, since} \(|e|=1\){,}
	\[
	|e_\parallel|^2+\langle v\rangle^2|e_\perp|^2
	=
	1+|v|^2|e_\perp|^2
	=
	\langle v\rangle^2-|v\cdot e|^2.
	\]
	{\color{red}This proves the claim.}
\end{proof}\par\medskip
			{\color{red}For} \(a>d\) {\color{red}and}
			\(v,z\in\mathbb R^d\){, define}
		\begin{align}\label{defom}
			\Omega_a(v,z)
			&:=\mathbf{1}_{\langle \Pi_{\hat z}v\rangle \geq |z|/100}
			\langle\Pi_{\hat z}v\rangle^{1-\kappa}
			\langle\mathrm P_{\hat z}v\rangle^{d-1-a}
			+\langle v,z\rangle^{d-\kappa-a},
			\qquad \Omega:=\Omega_{\varkappa},
		\end{align}
			{\color{red}where} $\mathrm{P}_{\hat z}$ {\color{red}and}
			$\Pi_{\hat z}$ {\color{red}are defined in} \eqref{defproj}{.
			The following lemma quantifies the anisotropic decay of}
			$\mathbf{C}_g${.}
		
		\begin{lemma} \label{lemCf}
				{\color{red}In the Boltzmann case, assume that} \(g\geq0\)
				{\color{red}and} \(\|g\|_{L^\infty_T\mathbf C^{b_0}}<\infty\)
				{for some} \(b_0\in(0,1)\){.}
			{\color{red}Then, for every} $v,z\in\mathbb{R}^d$
			{\color{red}and} $t\in(0,T)${,}
			\begin{align}\label{z26}
				&
\sup_{t\in[0,T]}\sup_{x\in\R^d}	\mathbf{C}_{g}(t,x,v,z)  \lesssim {\Omega(v,z)} 
	\|g\|_{L^\infty_T\mathbf{C}^0},\\
				& 
\sup_{t\in[0,T]}\sup_{x\in\R^d}	\sup_{|(y,u)|\leq 1}	\frac{|\delta_{(y,u)}^{(x,v)}		\mathbf{C}_{g}(t,x,v,z)|}{|(y,u)|^{b_0}}\lesssim	\Omega(v,z) \|g\|_{L^\infty_T\mathbf{C}^{b_0}}. \label{z27}
			\end{align}
		\end{lemma}
		\begin{proof}
			{\color{red}The definition of} $\mathbf{C}_g$ {in}
			\eqref{defCf1} {\color{red}and estimate} \eqref{esofA}
			{give}
\begin{align*}			
	&\mathbf{C}_g(t,x,v,z) \lesssim \int_{z^\perp} g(t,x,v+w) |w|^{-\kappa+1}\mathbf{1}_{|w|\geq |z|} \dif \mathcal H^{d-1}(w),\\
	&|\delta_{(y,u)}^{(x,v)}		\mathbf{C}_{g}(t,x,v,z)|\lesssim \int_{z^\perp} |\delta_{(y,u)}^{(x,v)}	g(t,x,v+w)| |w|^{-\kappa+1}\mathbf{1}_{|w|\geq |z|} \dif \mathcal H^{d-1}(w).
	\end{align*}
	{\color{red}It then follows from} \eqref{Ig} {that}
	\begin{align*}
			\frac{\mathbf{C}_g(t,x,v,z) }{\|g\|_{L^\infty_T\mathbf{C}^{0}}}+\sup_{|(y,u)|\leq 1}	\frac{|\delta_{(y,u)}^{(x,v)}		\mathbf{C}_{g}(t,x,v,z)|}{|(y,u)|^{b_0}\|g\|_{L^\infty_T\mathbf{C}^{b_0}}}&\lesssim \int_{z^\perp} \langle v+w\rangle^{-\varkappa} |w|^{-\kappa+1}\mathbf{1}_{|w|\geq |z|} \dif \mathcal H^{d-1}(w)\\
			&\lesssim  \int_{z^\perp} \langle |\mathrm{P}_{\hat z}v|+|\Pi_{\hat z}v+w|\rangle^{-\varkappa} |w|^{-\kappa+1}\mathbf{1}_{|w|\geq |z|}\dif \mathcal H^{d-1}(w).
	\end{align*}
	{\color{red}It remains to bound the last integral. We distinguish two cases according
	to} \(\langle \Pi_{\hat z}v\rangle\){. First suppose}
	\[
	\langle \Pi_{\hat z}v\rangle<\frac{|z|}{100}.
	\]
	{\color{red}Then, on} \(\{|w|\ge|z|\}\){,}
	\[
	|\Pi_{\hat z}v+w|\sim |w|.
	\]
	{\color{red}Therefore, polar coordinates on} \(z^\perp\) {give}
	\[
	\begin{aligned}
		&\int_{z^\perp}
		\left\langle |\mathrm P_{\hat z}v|+|\Pi_{\hat z}v+w|\right\rangle^{-\varkappa}
		|w|^{-\kappa+1}\mathbf 1_{\{|w|\ge |z|\}}\,\dif \mathcal H^{d-1}(w)  \\
		&\qquad\lesssim
		\int_{|z|}^{\infty}
		\langle |\mathrm P_{\hat z}v|+\rho\rangle^{-\varkappa}
		\rho^{d-\kappa-1}\dif\rho
		\lesssim
		\langle v,z\rangle^{d-\kappa-\varkappa}.
	\end{aligned}
	\]
	{\color{red}Now suppose}
	\[
	\langle \Pi_{\hat z}v\rangle\ge \frac{|z|}{100}.
	\]
	{Decompose the integral over}
	\[
	|\Pi_{\hat z}v+w|\le \frac12\langle \Pi_{\hat z}v\rangle
	\qquad\text{\color{red}and}\qquad
	|\Pi_{\hat z}v+w|> \frac12\langle \Pi_{\hat z}v\rangle .
	\]
	{\color{red}On the first region, local integrability of}
	\(|w|^{-\kappa+1}\) {on} \(z^\perp\) {\color{red}and rapid decay
	in} \(|\Pi_{\hat z}v+w|\) {give}
	\[
	\lesssim
	\langle \Pi_{\hat z}v\rangle^{-\kappa+1}
	\langle \mathrm P_{\hat z}v\rangle^{d-1-\varkappa}.
	\]
	{\color{red}The same polar-coordinate estimate gives, on the complementary region,}
	\[
	\lesssim
	\langle v,z\rangle^{d-\kappa-\varkappa}.
	\]
	{\color{red}Combining the two cases yields}
	\[
	\int_{z^\perp}
	\left\langle |\mathrm P_{\hat z}v|+|\Pi_{\hat z}v+w|\right\rangle^{-\varkappa}
	|w|^{-\kappa+1}\mathbf 1_{\{|w|\ge |z|\}}\,\dif \mathcal H^{d-1}(w)
	\lesssim
	\Omega(v,z).
	\]
    {\color{red}This proves the lemma.}
		\end{proof}\medskip
        
{Estimate} \eqref{z26} {implies the following upper bounds
for} \(E_g\) {\color{red}and its derivatives.}
	\begin{lemma}
\label{deofE}{\color{red}Let} $s\in(0,1)${, and assume that}
\(g\) {\color{red}satisfies} \eqref{Ig}{. Then, for every}
\((t,x_0,v_0)\in[0,T]\times\mathbb R^d\times\mathbb R^d\)
{\color{red}and} \(z\in\mathbb R^d\){,}
\begin{equation}\label{equpperE}
		E_g(z;t,x_0,v_0)
		\lesssim 
		\langle v_0\rangle^{-\kappa}
		[[z]]_{v_0}^{2s}\min\{1,\langle v_0\rangle[[z]]_{v_0}\}^{2-2s}\|g\|_{L^\infty_T\mathbf{C}^0}.
	\end{equation}
{\color{red}Moreover, for} \(m\in\mathbb N_+\) {\color{red}with}
\(m\leq\varkappa-d\){,}
					\begin{align}
						&|(\mathcal{O}_{v_0}^{-1} \nabla_z)^{\otimes m} E_g(z;t,x_0,v_0)|\lesssim  \langle v_0\rangle^{-\kappa+m} \left(([[z]]_{v_0}^{(2s-1)_+}+\mathbf{1}_{s=\frac{1}{2}}\log ([[z]]_{v_0}+1) )\mathbf{1}_{m=1}+\mathbf{1}_{m\geq2}\right)\|g\|_{L^\infty_T\mathbf{C}^0},\label{derE}
					\end{align}
					{\color{red}where} $\mathcal{O}_{v_0}$ {\color{red}and}
					$[[z]]_{v_0}$ {\color{red}are defined in} \eqref{defmatr}
					{\color{red}and} \eqref{defan}{, respectively.}
\end{lemma}
\begin{proof}
	{\color{red}Recall the definition of} $E_g$ {in}
	\eqref{defEs}{. To prove} \eqref{equpperE}{, use}
	\eqref{z26} {to obtain}
 \begin{equation}\label{Eup}
        \begin{aligned}
    |E_g(z;t,x_0,v_0)|&\lesssim \int_{\mathbb{R}^d} (1-\cos(z\cdot h)) \mathbf{C}_g(t,x_0,v_0,h)\frac{\dif h}{|h|^{d+2s}}\\
    &\lesssim \|g\|_{L^\infty_T\mathbf{C}^0}\int_{\mathbb{R}^d} \min\{1,([[z]]_{v_0}[h]_{v_0})^2\}\Omega(v_0,h)\frac{\dif h}{|h|^{d+2s}},
    \end{aligned}
 \end{equation}
	    {\color{red}where we used}
	    $|1-\cos(z\cdot h)|\lesssim\min\{1,|z\cdot h|^2\}$
	    {\color{red}and}
	    $|z\cdot h|\leq|\mathcal{O}_{v_0}z||\mathcal{O}_{v_0}^{-1}h|
	    \sim[[z]]_{v_0}[h]_{v_0}${. For}
	    \(2\leq m\leq\varkappa-d\){, differentiation under the
	    integral and} \eqref{z26} {give}
	\[
	\bigl|
	(\mathcal{O}_{v_0}^{-1} \nabla_z)^{\otimes m}  E_g(z;t,x_0,v_0)
	\bigr|
	\lesssim \|g\|_{L^\infty_T\mathbf{C}^0}
	\int_{\mathbb R^d}[h]_{v_0}^m\,\Omega(v_0,h)\frac{\dif h}{|h|^{d+2s}}.
	\]
	{\color{red}It remains to treat} \(m=1\){. The elementary inequality}
    	\begin{align*}
						|\partial_b (1-\cos(ab))|\lesssim |a| \min\{1,|ab|\},\ \forall a,b\in\mathbb{R},
					\end{align*}
	                    {\color{red}gives}
                    \begin{align*}
	\bigl|
	\mathcal{O}_{v_0}^{-1} \nabla_z E_g(z;t,x_0,v_0)
	\bigr|
	&\lesssim \|g\|_{L^\infty_T\mathbf{C}^0}
	\int_{\mathbb R^d}[h]_{v_0} \min\{1,[[z]]_{v_0} [h]_{v_0}\}\,\Omega(v_0,h)\frac{\dif h}{|h|^{d+2s}}.
	\end{align*}
	    {\color{red}We claim that, for every} $\lambda>0${,}
    \begin{equation}\label{bd123}
    \begin{aligned}
   & I_1:=\int_{\R^d}\min\{1,(\lambda [h]_{v_0})^2\} \Omega(v_0,h) \frac{\dif h}{|h|^{d+2s}}\lesssim \langle v_0\rangle^{-\kappa}\lambda^{2s}\min\{1,\lambda\langle v_0\rangle\}^{2-2s},\\
    &I_2:=\int_{\R^d}[h]_{v_0}\min\{1,\lambda [h]_{v_0}\} \Omega(v_0,h) \frac{\dif h}{|h|^{d+2s}}\lesssim \langle v_0\rangle^{-\kappa+1}(\lambda^{(2s-1)_+}+\mathbf{1}_{s=\frac{1}{2}}\log(\lambda+1)),\\
    &I_3:=\int_{\R^d} [h]_{v_0}^m\Omega(v_0,h) \frac{\dif h}{|h|^{d+2s}}\lesssim \langle v_0\rangle^{-\kappa+m},\ \ m\geq 2.
    \end{aligned}
    \end{equation}
	    {These estimates imply} \eqref{equpperE} {\color{red}and}
	    \eqref{derE} {upon taking} \(\lambda=[[z]]_{v_0}\){.
	    Recall from} \eqref{defom} {that}
	    $\Omega(v,h)=\Omega_1(v,h)+\Omega_2(v,h)${, where}
    \begin{align*}
    \Omega_1(v,h)=\mathbf{1}_{\langle \Pi_{\hat h}v\rangle \geq \frac{|h|}{100}}\langle  \Pi_{\hat h} v\rangle^{-\kappa+1}\langle\mathrm{P}_{\hat h} v\rangle^{d-1-\varkappa},\quad\quad\    \Omega_2(v,h)=\langle v,h\rangle^{d-\kappa-\varkappa}.
\end{align*}
{\color{red}For} $i=1,2,3${, write}
$I_i=I_{i,1}+I_{i,2}${, where the two terms correspond to}
$\Omega_1$ {\color{red}and} $\Omega_2${. To estimate the first,
use polar coordinates: for every nonnegative} $\varphi${,}
\begin{align*}
\int_{\mathbb{R}^d} \varphi([h]_{v_0})&\Omega_1(v_0,h)\frac{\dif h}{|h|^{d+2s}} \\
&\sim \int_{\mathbb{S}^{d-1}} \left(\int_0^\infty \varphi\left( \langle v_0\cdot \theta \rangle \rho\right) \mathbf{1}_{\langle \Pi_\theta v_0\rangle\geq \frac{\rho}{100}} \frac{\dif \rho}{\rho^{1+2s}}\right) \langle \Pi_\theta v_0\rangle^{-\kappa+1}\langle v_0\cdot \theta\rangle^{d-1-\varkappa} \dif \theta.
\end{align*}
{\color{red}Applying this identity with}
$\varphi(q)=\min\{1,\lambda^2q^2\}$,
$q\min\{1,\lambda q\}${, and} $q^m${, respectively,
and using the one-dimensional estimates}
\begin{align*}
&\int_0^{a}\min\{1,\lambda^2\rho^2\} \frac{\dif \rho}{\rho^{1+2s}} \lesssim \lambda^{2s}\min\{1,\lambda a\}^{2-2s},\quad\quad\quad \forall a, \lambda>0,\\
&\int_0^{a}\rho\min\{1,\lambda\rho\} \frac{\dif \rho}{\rho^{1+2s}} \lesssim \beta(a,\lambda):=\mathbf{1}_{s<\frac{1}{2}} a^{1-2s}\min\{1,a\lambda\}+\mathbf{1}_{s\geq\frac{1}{2}} \lambda^{2s-1}\min\{1,a\lambda\}^{2-2s}+ \mathbf{1}_{s=\frac{1}{2}}\mathbf{1}_{a>\lambda^{-1}} \log(a\lambda),\\
&\int_0^{a}\rho^m\frac{\dif \rho}{\rho^{1+2s}} \lesssim a^{m-2s},
\end{align*}
{\color{red}gives}
\begin{equation}
\label{i123}
\begin{aligned}
I_{1,1}&\lesssim \int_{\mathbb{S}^{d-1}} (\lambda \langle v_0\cdot \theta\rangle)^{2s}\min\{1,\lambda \langle v_0\cdot \theta\rangle\langle\Pi_\theta v_0\rangle\}^{2-2s} \langle \Pi_\theta v_0\rangle^{-\kappa+1}\langle v_0\cdot \theta\rangle^{d-1-\varkappa} \dif \theta\\
&\lesssim \int_{\mathbb{S}^{d-1}} \lambda ^{2s}\min\{1,\lambda \langle\Pi_\theta v_0\rangle\}^{2-2s} \langle \Pi_\theta v_0\rangle^{-\kappa+1}\langle v_0\cdot \theta\rangle^{d+1-\varkappa} \dif \theta,\\
I_{2,1}&\lesssim \int_{\mathbb{S}^{d-1}} \langle v_0\cdot \theta\rangle \beta(\langle\Pi_\theta v_0\rangle,\lambda \langle v_0\cdot \theta\rangle)\langle \Pi_\theta v_0\rangle^{-\kappa+1}\langle v_0\cdot \theta\rangle^{d-1-\varkappa} \dif \theta\\
&\lesssim  \int_{\mathbb{S}^{d-1}}(\langle\Pi_\theta v_0\rangle^{(1-2s)_+}+\lambda^{(2s-1)_+}+\mathbf{1}_{s=\frac{1}{2}}\log(\langle\Pi_\theta v_0\rangle \lambda+1))\langle \Pi_\theta v_0\rangle^{-\kappa+1}\langle v_0\cdot \theta\rangle^{d+1-\varkappa} \dif \theta,\\
I_{3,1}&\lesssim \int_{\mathbb{S}^{d-1}} \langle \Pi_\theta v_0\rangle^{m-2s-\kappa+1}\langle v_0\cdot \theta\rangle^{d-1-\varkappa+m} \dif \theta.
\end{aligned}
\end{equation}
{\color{red}We claim that, for} $M,\beta\in\mathbb{R}$ {\color{red}with}
$M>1-\beta_-=1-\min\{\beta,0\}${,}
\begin{align*}
    \int_{\mathbb{S}^{d-1}} \langle \Pi_\theta v_0\rangle^{\beta}\langle v_0\cdot \theta\rangle^{-M} \dif \theta \lesssim \langle v_0\rangle ^{\beta-1}.
\end{align*}
{\color{red}Indeed, the transformation formula}
\begin{align*}
\int_{\mathbb{S}^{d-1}} \phi(x\cdot \theta) \dif \theta=\frac{2\pi^\frac{d-1}{2}}{\Gamma(\frac{d-1}{2})}\int_{-1}^1 \phi(|x|s_1) (1-s_1^2)^\frac{d-3}{2}\dif s_1,
\end{align*}
{\color{red}gives}
\begin{align*}
   \int_{\mathbb {S}^{d-1}} \langle \Pi_\theta v_0\rangle^{\beta}\langle v_0\cdot \theta\rangle^{-M} \dif \theta& \lesssim \int_0^1 \left\langle |v_0|\sqrt{1-s_1^2}\right\rangle^{\beta} \langle |v_0|s_1\rangle^{-M} (1-s_1^2)^\frac{d-3}{2}\dif s_1\\
   &\lesssim \langle v_0\rangle^\beta \int_0^\frac{1}{2}\langle |v_0| s_1\rangle^{-M} \dif s_1+  \langle v_0\rangle^{-M}\int _{\frac{1}{2}}^1  \left\langle |v_0||1-s_1|\right\rangle^{\beta} (1-s_1)^\frac{d-3}{2} \dif s_1\\
   &\lesssim \langle v_0\rangle^{\beta-1}+\langle v_0\rangle^{\beta_+-M}\lesssim \langle v_0\rangle^{\beta-1},
\end{align*}
{\color{red}provided} $M>1-\beta_-${. Combining this estimate with}
\eqref{i123} {\color{red}yields}
\begin{align*}
&I_{1,1}\lesssim \langle v_0\rangle^{-\kappa} \lambda^{2s} \min\{1,\lambda\langle v_0\rangle\}^{2-2s},\\
&I_{2,1}\lesssim \langle v_0\rangle^{-\kappa+1}(\lambda^{(2s-1)_+}+\mathbf{1}_{s=\frac{1}{2}}\log(\lambda+1)),\\
&I_{3,1}\lesssim \langle v_0\rangle^{m-2s-\kappa}. 
\end{align*}
{\color{red}It remains to estimate the contribution of} $\Omega_2${. We have}
\begin{align*}
I_{1,2}&\lesssim \int_{\mathbb{R}^d} \min\{1,(\lambda[h]_{v_0})^2\}\langle v_0,h\rangle^{d-\kappa-\varkappa} \frac{\dif h}{|h|^{d+2s}}\\
&\lesssim \int_0^\infty \min\{1,\lambda^2\rho^2\} \langle |v_0|+\rho\rangle^{d-\kappa-\varkappa+2} \frac{\dif \rho}{\rho^{1+2s}
}\\
&\lesssim \langle v_0\rangle^{-\kappa}\lambda^{2s}\min\{1,\lambda\}^{2-2s}.
\end{align*}
{Similarly,}
\begin{align*}
I_{2,2}&\lesssim \int_0^\infty \rho^{-2s}\min\{1,\lambda\rho\} \langle |v_0|+\rho\rangle^{d-\kappa-\varkappa+2}  {\dif \rho}\\
&\lesssim \langle v_0\rangle^{-\kappa}	\left(
		\lambda^{(2s-1)_+}
		+\mathbf 1_{s=\frac12}\log(1+\lambda)
		\right),
\end{align*}
{\color{red}and}
\begin{align*}
I_{3,2}&\lesssim \langle v_0\rangle^m \int_0^\infty \rho^m \langle |v_0|+\rho\rangle^{d-\kappa-\varkappa} \frac{\dif \rho}{\rho^{1+2s}}
\lesssim \langle v_0\rangle^{m-\kappa}. 
\end{align*}
	    {These bounds prove} \eqref{bd123} {\color{red}and hence the
	    lemma.}
\end{proof}	
		\subsection{Pointwise estimates for frozen-coefficient kernels}\label{secfrozenkernel}
	     {\color{red}Using the coefficient bounds from Section~}\ref{seculb}{,
we now derive pointwise upper bounds for the frozen kernel}
\[
    H_g^{t_0,x_0,v_0}(t,\tau,x,v),
\]
{\color{red}and its derivatives. Recall that} \(H_g^{t_0,x_0,v_0}\)
{\color{red}is the fundamental solution of the frozen-coefficient equation}
\eqref{lineeq_tau_char}{.}

{\color{red}The fractional Kolmogorov kernel is the model case. Its sharp
time-one bound is} \eqref{eq:kolmogorov-unit-profile}{, with}
\(\mathcal N\) {defined in} \eqref{defN0}{; see}
\cite{HouZhang} {\color{red}and, in one dimension,}
\cite{Grube,haina}{. Unlike purely nonlocal diffusion kernels (see,
for example,} \cite{CKW,Sato,Ch-Zh4}{), this profile records the
transport-induced coupling between} \(x\) {\color{red}and} \(v\){.}

{\color{red}We next consider the frozen linear Boltzmann equation. It has the same
transport structure as the fractional Kolmogorov equation, but an anisotropic
jump kernel. In particular, velocity decay of the background} \(g\)
{induces anisotropic decay of} \(\mathbf C_g(t,x_0,v_0,h)\)
{in the jump variable} \(h\){:}
\[
    \mathbf C_g(t,x_0,v_0,h)\lesssim \Omega(v_0,h),
\]
{\color{red}where} $\Omega$ {\color{red}is defined in} \eqref{defom}{.
The decay is strongest in the direction selected by} \(v_0\)
{\color{red}and is naturally measured by}
\[
    [h]_{v_0}:=|h|+|h\cdot v_0|.
\]
{\color{red}The frozen fundamental solution should therefore retain the
Kolmogorov concentration near the kinetic line while exhibiting the far-field
anisotropic decay dictated by} \(\Omega(v_0,h)\){. We establish the
corresponding pointwise upper bound by adapting the mechanism of}
\cite{HouZhang}{, replacing the isotropic stable tail with the
anisotropic control of} \(\mathbf C_g\){. To state the result,
define the anisotropic kinetic profile}
\begin{align}\label{anisotropic-HZ-profile}
	\mathcal N_{v_0}(x,v)
	:=
\langle [x]_{v_0}+[v]_{v_0}\rangle^{-d-{2s}}
	\int_0^1
	\langle [x-\sigma v]_{v_0}\rangle^{-d-{2s}}
	\,\dif \sigma .
\end{align}
{\color{red}This is the anisotropic analogue of the Hou--Zhang profile in}
\cite{HouZhang}{. The factor}
\(\langle [x]_{v_0}+[v]_{v_0}\rangle^{-d-{2s}}\)
{measures the largest jump needed to reach} \((x,v)\){,
whereas the integral measures the distance to the kinetic line}
\[
x=\theta v,\qquad 0\le \theta\le1,
\]
{in the anisotropic metric} \([\,\cdot\,]_{v_0}\){.
In particular,}
\begin{align}\label{N-marginal-x}
 &  	\langle v_0\rangle^2 \int_{\mathbb R^d}\mathcal N_{v_0}(x,v)\,\dif x
    \lesssim
    \langle v_0\rangle
    \langle [v]_{v_0}\rangle^{-d-2s},\\
 &   	\langle v_0\rangle^2 \iint_{\mathbb R^{2d}}\mathcal N_{v_0}(x,v)\,\dif x
\dif v    \lesssim
    1.
\end{align}
{\color{red}The factor} \(\langle v_0\rangle^2\) {\color{red}is the
Jacobian of the anisotropic change of variables.}

{\color{red}The estimate below takes the minimum of two complementary bounds.
The first is the anisotropically scaled Kolmogorov profile}
\(\mathcal N_{v_0}\){, which captures spreading generated by
coupling} \(v\cdot\nabla_x\) {\color{red}with nonlocal velocity diffusion and,
in particular, concentration near the transported kinetic line. The second is
a rapid off-diagonal bound specific to the present linear Boltzmann kernel. It
comes from decay of} \(\mathbf C_g(t,x_0,v_0,h)\) {in} \(h\){,
inherited from velocity decay of} \(g\){. Thus, the
estimate captures both the kinetic geometry and the additional anisotropic
suppression of large jumps.}

		\begin{lemma}
			\label{Hgeneral}{\color{red}Let} $s\in(0,1)${. Assume that}
			$g$ {\color{red}satisfies} \eqref{eq:macro-main} {\color{red}and}
			$$\|g\|_{L^\infty_T\mathbf{C}^0}\leq C_0<\infty.$$
			{\color{red}Fix} $t_0\in[0,T]$ {\color{red}and}
			$x_0,v_0\in\mathbb R^d${, and let}
$H_g^{t_0,x_0,v_0}$ {be defined by} \eqref{defH}{.
Then, for each} $m\in\mathbb N${, there exists}
$$
C_m=C(C_0,\mathsf m_0,\mathsf M_0,\mathsf E_0,\mathsf H_0,m),
$$
		{\color{red}such that, whenever} $0<\sigma=t-\tau\leq1${,}
			\begin{align}\label{z1}
				\left| \mathcal{D}_{\sigma,v_0}^{(x,v),m}H_g^{t_0,x_0,v_0}(t,\tau,x,v)\right|\leq C_m\sigma^{-d}\tilde \sigma_{v_0}^{-\frac{d}{s}}	\langle v_0\rangle^2\left(\mathcal{N}_{v_0} \left(\frac{x}{\sigma\tilde \sigma_{v_0}^\frac{1}{2s}},\frac{v}{\tilde \sigma_{v_0}^\frac{1}{2s}}\right)\wedge \left\langle\frac{[x]_{v_0}/\sigma+[v]_{v_0}}{\langle v_0\rangle}\right\rangle^{-(\varkappa-2d)}\right),
			\end{align}
			{\color{red}where} $\mathcal{D}_{\sigma,v_0}^{(x,v),m}$
			{\color{red}is defined in} \eqref{anideri}{.}
		\end{lemma}
	       {\color{red}When} \(|v_0|\lesssim1\){, the anisotropic
	quantities in} \eqref{z1} {reduce to the usual kinetic scale:}
\[
    [x]_{v_0}\sim |x|,
    \qquad
    [v]_{v_0}\sim |v|,
    \qquad
    \tilde\sigma_{v_0}\sim \sigma .
\]
{\color{red}Therefore,} \eqref{z1} {implies, for}
\(m_1,m_2\in\mathbb N\){,}
\begin{align*}
    \left|
    \nabla_x^{m_1}\nabla_v^{m_2}
    H_g^{t_0,x_0,v_0}(t,\tau,x,v)
    \right|
    &\lesssim
    \sigma^{-d-\frac{d}{s}
    -\frac{(2s+1)m_1}{2s}
    -\frac{m_2}{2s}}
    \nonumber\\
    &\quad\times
    \left(
    \mathcal N\left(
        \frac{x}{\sigma^{\frac{2s+1}{2s}}},
        \frac{v}{\sigma^{\frac1{2s}}}
    \right)
    \wedge
    \left\langle
        \frac{|x|}{\sigma}+|v|
    \right\rangle^{-(\varkappa-2d)}
    \right),
\end{align*}
{\color{red}where} \(\sigma=t-\tau\){,} \(\mathcal N\)
{\color{red}is defined in} \eqref{defN0}{, and the implicit constant
may depend on the fixed bound for} \(|v_0|\){. This is the standard
fractional Kolmogorov scaling:}
\[
    |v|\sim \sigma^{\frac1{2s}},
    \qquad
    |x|\sim \sigma^{1+\frac1{2s}}
    =
    \sigma^{\frac{2s+1}{2s}} .
\]	{Lemma~}\ref{Hgeneral} {immediately gives the
corresponding estimate for a Maxwellian background. Indeed, when} \(g=\mu\){,
condition} \eqref{Ig} {\color{red}and the bound}
$\|g\|_{L^\infty_T\mathbf{C}^0}<\infty$ {hold for every}
\(\varkappa>0\){.}
		\begin{lemma}
			\label{Hmu}
		{\color{red}Let} $s\in(0,1)${. For} $H_\mu^{v_0}$
		{defined in} \eqref{defH0} {\color{red}and} \(0<t\leq1\){,}
			\begin{align*}
				\left| \mathcal{D}_{t,v_0}^{(x,v),m}H_\mu^{v_0}(t,x,v)\right|\lesssim_m t^{-d}\tilde t_{v_0}^{-\frac{d}{s}}	\langle v_0\rangle^2\left(\mathcal{N}_{v_0} \left(\frac{x}{t\tilde t_{v_0}^\frac{1}{2s}},\frac{v}{\tilde t_{v_0}^\frac{1}{2s}}\right)\wedge \left\langle\frac{[x]_{v_0}/t+[v]_{v_0}}{\langle v_0\rangle}\right\rangle^{-\infty}\right),\ \ \qquad \forall m\in\mathbb{N},
			\end{align*}
	            {\color{red}where} $A^{-\infty}$ {\color{red}is understood in
	            the sense of} \eqref{def-inf}{.}
		\end{lemma}\vspace{0.3cm}
	{\color{red}For fixed} \((t_0,x_0,v_0)\){, consider the frozen equation}
\begin{align}\label{eqfrozen}
	\partial_t f+v\cdot\nabla_x f
	+\int_{\mathbb R^d}\delta_h f(v)
	\frac{\mathbf C_g(t,X_{x_0,v_0}^{t_0-t},v_0,h)}{|h|^{d+{2s}}}\,\dif h=0 .
\end{align}
{\color{red}Set}
\[
\nu(t,\dif h)
:=
\frac{\mathbf C_g(t,X_{x_0,v_0}^{t_0-t},v_0,h)}{|h|^{d+{2s}}}\,\dif h .
\]
{\color{red}The corresponding kinetic process is}
\[
Z_t
=
\left(
\int_0^t L_r\,\dif r,\,
L_t
\right),
\]
{\color{red}where} \(L\) {\color{red}is the additive pure-jump process with jump
measure} \(\nu(t,\dif h)\){.}

{\color{red}We use the following consequences of the structure of}
\(\mathbf C_g\){. First, it is nonnegative and satisfies the
cancellation condition}
\begin{align}\label{frozen-cancellation}
	\int_{R_0\le |h|\le R_1}
	h\,\nu(t,\dif h)=0,
	\qquad
	0<R_0\le R_1<\infty .
\end{align}
{\color{red}because}
$\mathbf{C}_g(t,x,v,h)=\mathbf{C}_g(t,x,v,-h)${. Second, the
small jumps are nondegenerate in the averaged Fourier sense. For}
$\tau\in[0,1]${, define}
\[
\widetilde\nu(\tau,K)
=t\nu(\tau t,\tilde t_{v_0}^\frac{1}{2s}\mathcal{O}_{v_0}K),\ \ \ \forall K\subset \mathbb{R}^d.
\]
{\color{red}Equivalently,}
\[
\widetilde\nu(\tau,\dif h)
=\frac{\langle v_0\rangle^\kappa \mathbf{C}_g(\tau t,X_{x_0,v_0}^{t_0-\tau t},v_0,\tilde t_{v_0}^\frac{1}{2s} \mathcal{O}_{v_0}h)}{|\mathcal{O}_{v_0} h|^{d+2s}}|\operatorname{det}\mathcal{O}_{v_0}|\ \dif h.
\]
{\color{red}Then} \(\widetilde\nu\) {\color{red}is the Lévy measure of the rescaled
process}
$$
\widetilde L_\tau
:=
\tilde t_{v_0}^{-1/(2s)}\mathcal{O}_{v_0}^{-1}L_{t\tau}.
$$
{\color{red}In particular,}
$$
\left(
t^{-1}\tilde t_{v_0}^{-1/(2s)}\mathcal{O}_{v_0}^{-1}
\int_0^t L_r\,\dif r,\ 
\tilde t_{v_0}^{-1/(2s)}\mathcal{O}_{v_0}^{-1}L_t
\right)
=
\left(
\int_0^1 \widetilde L_\tau \dif\tau,
\widetilde L_1
\right).
$$
{\color{red}Moreover,}
\begin{align}\label{normalized-coercivity}
	\inf_{\tau\in[0,1]}
	\int_{|w|\le1}
	\bigl(1-\cos(\xi\cdot w)\bigr)\,
	\widetilde\nu(\tau,\dif w)
	\ge
	c_0\bigl(|\xi|^2\wedge |\xi|^{2s}\bigr),
\end{align}
and
\begin{align}\label{normalized-moment}
	\sup_{\tau\in[0,1]}
	\int_{|w|\le1}|w|^2\,\widetilde\nu(\tau, \dif w)
	\le C_0 .
\end{align}
Indeed,   the change of variables
$h=\tilde t_{v_0}^{1/(2s)}\mathcal O_{v_0}w$ {and Remark~}\ref{rem:small-jump-coercivity}
{with} $\delta\sim \tilde t_{v_0}^{1/(2s)}$ {give \eqref{normalized-coercivity}. Moreover,
\eqref{normalized-moment} follows from the upper symbol bound
\eqref{equpperE}.
{\color{red}Finally, the pointwise estimate}
\[
\mathbf C_g(t,x_0,v_0,h)\lesssim \Omega(v_0,h),
\]
{implies the large-jump bound}
\begin{align*}
\int_{|w|\geq 1} \widetilde\nu(\tau, \dif w)\lesssim 1.
\end{align*}
{Furthermore, for every ball}
$B_1(u)=\{w\in\mathbb{R}^d:|w-u|\leq1\}${,}
\begin{align}\label{anisotropic-large-jump-bound}
\int_{|w|\geq R}\mathbf{1}_{w\in B_{1}(u)}\widetilde\nu(\tau, \dif w)\lesssim \int_{|w|\geq R}\mathbf{1}_{w\in B_{1}(u)}\frac{\langle v_0\rangle^\kappa
	\Omega(v_0,\tilde  t_{v_0}^{1/{2s}}\mathcal{O}_{v_0}w) |\operatorname{det}\mathcal{O}_{v_0}|}
{|\mathcal{O}_{v_0}w|^{d+{2s}}} \dif w
\lesssim
\frac{1}{\langle R\rangle^{d+{2s}}},
\qquad \forall R>1.
\end{align}

\begin{proposition}[Anisotropic upper bound for the frozen kernel]
	\label{prop-frozen-anisotropic-upper}
	{\color{red}Let} \(p^{\nu}_t(z)\) {denote the density of}
	\[
	Z_t
	=
	\left(
	\int_0^t L_r\,\dif r,\,
	L_t
	\right),
	\qquad z=(x,v)\in\mathbb R^{2d}.
	\]
	{Assume} \eqref{frozen-cancellation}--\eqref{anisotropic-large-jump-bound}{.
	Then, for} \(0<t\le1\){,}
	\begin{align}\label{frozen-anisotropic-upper}
		p^{\nu}_t(z)
		\lesssim
		t^{-d}\tilde  t_{v_0}^{-d/{s}}
	\langle v_0\rangle^2	\mathcal N_{v_0}
	\bigl(
	t^{-1}\tilde  t_{v_0}^{-1/{2s}}x,\,
	\tilde  t_{v_0}^{-1/{2s}}v
	\bigr).
	\end{align}
\end{proposition}
{\color{red}Consequently, for} $m\in\mathbb{N}${,}
\begin{align*}
	|\mathcal{D}_{t,v_0}^{(x,v),m}H_g^{t_0,x_0,v_0}(t,\tau,x,v)|
	\lesssim
	\sigma^{-d}\tilde  \sigma_{v_0}^{-d/{s}}
	\langle v_0\rangle^2\mathcal N_{v_0}
	\left(
	\sigma^{-1}\tilde  \sigma_{v_0}^{-1/{2s}}x,
	\tilde  \sigma_{v_0}^{-1/{2s}}v
	\right),\quad\quad \sigma=t-\tau.
\end{align*}
\begin{proof}
{\color{red}We adapt the Hou--Zhang proof} \cite{HouZhang}
{by replacing the Euclidean scale with} \([\cdot]_{v_0}\){.
Under} \eqref{frozen-cancellation}--\eqref{anisotropic-large-jump-bound}{,
their small-jump integration-by-parts and large-jump box-counting
arguments remain valid in the frozen anisotropic setting. We work in normalized
variables, suppress} \((x_0,v_0,t)\){, and decompose the jump
measure into small- and large-jump parts:}
	\[
	\widetilde\nu
	=
	\widetilde\nu^{(0)}+\widetilde\nu^{(1)},
	\qquad
	\widetilde\nu^{(0)}
	=
	\mathbf 1_{\{|w|\le1\}}\widetilde\nu,
	\qquad
	\widetilde\nu^{(1)}
	=
	\mathbf 1_{\{|w|>1\}}\widetilde\nu .
	\]
		{\color{red}Let}
	\[
	\widetilde Z_1
	=
	\left(
	\int_0^1\widetilde L_r\,\dif r,\,
	\widetilde L_1
	\right)
	=
	\widetilde Z^{(0)}_1+\widetilde Z^{(1)}_1
	\]
	{be the corresponding decomposition; its two terms are
	independent. Define}
\begin{align*}
\psi^{(0)}(\theta,\zeta)
:=
\int_{|w|\le1}
\bigl(1-\cos(\zeta\cdot w)\bigr)
\widetilde\nu(\theta,\dif w).
\end{align*}
{\color{red}The characteristic function of} $\widetilde Z^{(0)}_1$
{\color{red}is}
\begin{align*}
\widehat q_1(\xi,\eta)
=
\exp\{-\Phi_0(\xi,\eta)\},
\end{align*}
{\color{red}where}
\begin{align*}
\Phi_0(\xi,\eta)
=
\int_0^1
\psi^{(0)}\bigl(\theta,\eta+(1-\theta)\xi\bigr)
\dif \theta .
\end{align*}
	{\color{red}By} \eqref{normalized-coercivity} {\color{red}and kinetic
	nondegeneracy,}
	\[
	\Phi_0(\xi,\eta)
	\gtrsim
	|(\xi,\eta)|^2\wedge |(\xi,\eta)|^{2s} .
	\]
	{Together with} \eqref{normalized-moment} {\color{red}and the
	cancellation condition, this provides the Fourier-symbol estimates required
	for integration by parts. Hence, for every} \(N>0\){,}
	\begin{align}\label{small-jump-decay-anisotropic}
		|q_1(Y,W)|
		\le
		C_N
		\langle |Y|+|W|\rangle^{-N}.
	\end{align}
	{\color{red}The constants are uniform whenever the normalized bounds are.}
	
	{\color{red}It remains to estimate the large-jump component. Apply Hou--Zhang's
	upper-bound argument} \cite{HouZhang} {using}
	\eqref{anisotropic-large-jump-bound} {\color{red}and replacing}
\(\langle w\rangle^{-d-2s}\dif w\) {by}
\(\widetilde\nu(\tau,\dif w)\){. This gives}
	\begin{align}\label{large-jump-box-anisotropic}
		\mathbb P
		\bigl(
		\widetilde Z^{(1)}_1\in Q_1(Y,W)
		\bigr)
		\lesssim
		\mathcal N(Y,W),
	\end{align}
  {\color{red}where} $Q_1$ {\color{red}is the} $\mathbb{R}^{2d}$
  {box}
  $Q_1(Y,W):=\{(Y',W'):|Y'-Y|\le1,\ |W'-W|\le1\}${.}
{\color{red}By independence,}
	\[
	p^{\widetilde\nu}(1,Y,W)
	=
	\mathbb E\,
	q_1\bigl((Y,W)-\widetilde Z^{(1)}_1\bigr).
	\]
	{\color{red}Combining} \eqref{small-jump-decay-anisotropic}
	{\color{red}with} \eqref{large-jump-box-anisotropic}
	{\color{red}and applying Hou--Zhang's covering argument} \cite{HouZhang}
	{\color{red}yields}
	\[
	p^{\widetilde\nu}(1,Y,W)
	\lesssim\mathcal N(Y,W).
	\]
	{\color{red}The scaling identity below now gives}
	\eqref{frozen-anisotropic-upper}{:}
	\begin{align*}
p^{\nu}_t(x,v)&\lesssim  t^{-d}\tilde t_{v_0}^{-d/s}
|\det\mathcal O_{v_0}|^{-2} p^{\widetilde\nu }(1,
t^{-1}\tilde t_{v_0}^{-1/(2s)}
\mathcal O_{v_0}^{-1}x,
\tilde t_{v_0}^{-1/(2s)}\mathcal O_{v_0}^{-1}v)\\
&
\lesssim
t^{-d}\tilde t_{v_0}^{-d/s}
\langle v_0\rangle^2
\mathcal N_{v_0}
\left(
t^{-1}\tilde t_{v_0}^{-1/(2s)}
x,
\tilde t_{v_0}^{-1/(2s)}
v
\right).
\end{align*}
    {\color{red}This proves the proposition.}
\end{proof}
\begin{remark}[{Comparison with the Hou--Zhang estimate \cite{HouZhang}}]
	{\color{red}The preceding estimate replaces the isotropic profile}
	\eqref{eq:kolmogorov-unit-profile} {\color{red}with}
	\(\mathcal N_{v_0}\) {\color{red}because} \eqref{z26}
	{controls the jump density in the anisotropic metric}
	\([w]_{v_0}\sim|\mathcal O_{v_0}^{-1}w|\){. Hou--Zhang also
	assume a pointwise lower bound on the Lévy density. Their upper-bound proof
	uses it only to establish coercivity of the real part of the small-jump
	characteristic exponent. Here, the averaged Fourier lower bound}
	\eqref{normalized-coercivity} {supplies this coercivity directly.
	The large-jump argument requires only upper domination of the Lévy measure,
	for which} \eqref{anisotropic-large-jump-bound} {suffices. Thus,
	the pointwise lower bound assumed in} \cite{HouZhang} {\color{red}is
	unnecessary for our upper estimate; it would be required only for a matching
	lower bound, which we do not claim.}
\end{remark}
{Proposition~}\ref{prop-frozen-anisotropic-upper}
{uses the probabilistic approach of Hou--Zhang} \cite{HouZhang}{,
which captures the anisotropic large-jump profile}
\(\mathcal N_{v_0}\){. To prove the rapid far-field decay in
Lemma~}\ref{lemdecay}{, however, we use the Fourier representation
of the frozen kernel. Repeated Fourier integration by parts requires precise
derivative bounds for} \(E_g\){. The improved low-frequency
behavior in} \eqref{derE} {\color{red}is essential: unlike the fractional
Kolmogorov symbol} \(|z|^{2s}\){, the frozen Boltzmann symbol is
effectively quadratic at low frequencies, which strengthens decay away from
the kinetic characteristic region.}
		\begin{lemma}[{Rapid far-field decay}]\label{lemdecay}
	        {\color{red}Let} $g,t_0,x_0,v_0$ {\color{red}and}
	        \(H_g^{t_0,x_0,v_0}\) {be as in Lemma~}\ref{Hgeneral}{.
	        For every} \(m\in\mathbb N\){, there exists}
$$
C_m=C(C_0,\mathsf m_0,\mathsf M_0,\mathsf E_0,\mathsf H_0,m),
$$
{\color{red}such that, whenever} $0<\sigma=t-\tau\leq1${,}
			\begin{align}\label{eq:H-rough-decay}
				\left| \mathcal{D}_{\sigma,v_0}^{(x,v),m}H_g^{t_0,x_0,v_0}(t,\tau,x,v)\right|\leq C_m \langle v_0\rangle^2 \sigma^{-d}\tilde  \sigma_{v_0}^{-\frac{d}{s}} \left\langle\frac{[x]_{v_0}/\sigma+[v]_{v_0}}{\langle v_0\rangle}\right\rangle^{-(\varkappa-2d)}.
			\end{align}
		\end{lemma}
\begin{proof}
	{\color{red}Let}
	\[
	\omega(\xi,\eta)
	:=
	\int_\tau^t
	E_g\bigl((r-\tau)\xi-\eta;\,r,X_{x_0,v_0}^{t_0-r},v_0\bigr)\,\dif r .
	\]
	{\color{red}Then}
	\[
	H_g^{t_0,x_0,v_0}(t,\tau,x,v)
	=
	(2\pi)^{-2d}
	\iint_{\mathbb R^{2d}}
	e^{-\omega(\xi,\eta)}
	e^{i\xi\cdot x+i\eta\cdot v}\,\dif \xi \dif \eta .
	\]
	{\color{red}We suppress the harmless normalization constant and define}
	\[
	p:=\sigma\mathcal O_{v_0}\xi,\qquad
	q:=\mathcal O_{v_0}\eta .
	\]
	{\color{red}Then}
	\[
	\dif \xi\,\dif \eta
	=
	\langle v_0\rangle^2\,\sigma^{-d}\,\dif p\,\dif q .
	\]
	{\color{red}Moreover,}
	\[
	[[(r-\tau)\xi-\eta]]_{v_0}
	=
	|\theta p-q|,
	\qquad
	\theta:=\frac{r-\tau}{\sigma}.
	\]
	{\color{red}By Lemma~}\ref{lem:anisotropic-coercivity}{,}
	\[
	\begin{aligned}
\omega(\xi,\eta)
		&\gtrsim\tilde  \sigma_{v_0}
		\int_0^1
		|\theta p-q|^{2s}
		\min\{1,|\theta p-q|\}^{2-2s}\,\dif \theta .
	\end{aligned}
	\]
	{Kinetic nondegeneracy gives}
	\[
	\int_0^1
	|\theta p-q|^{2s}
	\min\{1,|\theta p-q|\}^{2-2s}\,\dif \theta
	\gtrsim
	(|p|+|q|)^{2s}\wedge (|p|+|q|)^2 .
	\]
	{\color{red}Consequently, for every} \(a\ge0\){,}
	\begin{align}\label{eq:basic-fourier-integral}
		\iint_{\mathbb R^{2d}}
		\bigl(1+\tilde  \sigma_{v_0}^{1/(2s)}(|p|+|q|)\bigr)^a
		e^{-c\omega(\xi,\eta)}
		\,\dif \xi \dif \eta
		\lesssim \langle v_0\rangle^2\sigma^{-d}\tilde  \sigma_{v_0}^{-d/s}.
	\end{align}
	{\color{red}The phase, on the other hand, becomes}
	\[
	\xi\cdot x+\eta\cdot v
	=
	p\cdot X+q\cdot V,
	\qquad
	X:=\frac{\mathcal O_{v_0}^{-1}  x}{\sigma},
	\qquad
	V:=\mathcal O_{v_0}^{-1} v .
	\]
{Introduce the Fourier integration-by-parts vector field}
	\[
	\mathscr D_{\xi,\eta}
	:=
	\left(
	\sigma^{-1}\mathcal O_{v_0}^{-1}\nabla_\xi,\,
	\mathcal O_{v_0}^{-1}\nabla_\eta
	\right).
	\]
	{\color{red}It satisfies}
	\[
	\mathscr D_{\xi,\eta}
	e^{i\xi\cdot x+i\eta\cdot v}
	=
	i
	\left(X,V
	\right)
	e^{i\xi\cdot x+i\eta\cdot v}.
	\]
	{\color{red}Moreover,}
	\begin{align}
	\label{equthe}
	|(X,V)|
	\sim
	\frac{[x]_{v_0}}{\sigma}+[v]_{v_0}.
\end{align}
	{\color{red}For} \(|\beta|\le\varkappa-d\){, Lemma~}\ref{deofE}
	{\color{red}gives}
	\[
	|\mathscr D_{\xi,\eta}^{\beta}\omega(\xi,\eta)|
	\lesssim \langle v_0\rangle^{|\beta|}
	\Bigl(
	1+\tilde  \sigma_{v_0}\left(
	(|p|+|q|)^{(2s-1)_+}+\log(1+|p|+|q|)\mathbf{1}_{s=\frac{1}{2}}\right) 
	\Bigr).
	\]
{\color{red}Hence,}
	\[
	|\mathscr D_{\xi,\eta}^{\beta}e^{-\omega(\xi,\eta)}|
	\lesssim \langle v_0\rangle^{|\beta|}
	\left(1+\tilde  \sigma_{v_0}^{1/(2s)}\left((|p|+|q|)+\log(1+|p|+|q|)\mathbf{1}_{s=\frac{1}{2}}\right)\right)^{C_\beta}
	e^{-c\omega(\xi,\eta)} .
	\]
	{\color{red}Combining this with} \eqref{eq:basic-fourier-integral}
	{\color{red}yields}
	\begin{align}\label{eq:derivative-integral-bound}
		\iint_{\mathbb R^{2d}}
		|\mathscr D_{\xi,\eta}^{\beta}e^{-\omega(\xi,\eta)}|
		\,\dif \xi \dif \eta
		\lesssim \langle v_0\rangle^{2+|\beta|}\sigma^{-d}\tilde  \sigma_{v_0}^{-d/s}.
	\end{align}
	{\color{red}Applying} \(\mathcal{D}_{\sigma,v_0}^{(x,v),m}\)
	{to the kernel produces the Fourier multiplier}
	\[
	\bigl(
	\sigma\tilde  \sigma_{v_0}^{1/(2s)}\mathcal{O}_{v_0}\xi,\,
	\tilde  \sigma_{v_0}^{1/(2s)}\mathcal{O}_{v_0}\eta
	\bigr)^{\otimes m}.
	\]
	{\color{red}In the} \((p,q)\){-variables,}
	\[
	|\sigma\tilde  \sigma_{v_0}^{1/(2s)}\mathcal O_{v_0}\xi|
	+
	|\tilde  \sigma_{v_0}^{1/(2s)}\mathcal O_{v_0}\eta|
	\lesssim
	\tilde  \sigma_{v_0}^{1/(2s)}(|p|+|q|).
	\]

	{\color{red}Thus, differentiation introduces at most the factor}
	\[
	\bigl(1+\tilde  \sigma_{v_0}^{1/(2s)}(|p|+|q|)\bigr)^m .
	\]
	{\color{red}For} $n\leq\varkappa-d${, integrating by parts}
	\(n\) {times with} \(\mathscr D_{\xi,\eta}\) {\color{red}gives}
	\[
	\begin{aligned}
		\langle X,V\rangle^{n}
		\left|
		\mathcal{D}_{\sigma,v_0}^{(x,v),m}H_g^{t_0,x_0,v_0}(t,\tau,x,v)
		\right|
		&\lesssim
		\sum_{|\beta|\le n}
		\iint_{\mathbb R^{2d}}
		\left|
		\mathscr D_{\xi,\eta}^{\beta}
		\left[
			\bigl(
			\sigma\tilde  \sigma_{v_0}^{1/(2s)}\mathcal O_{v_0}\xi,\,
			\tilde \sigma_{v_0}^{1/(2s)}\mathcal O_{v_0}\eta
			\bigr)^{\otimes m}
		e^{-\omega(\xi,\eta)}
		\right]
		\right|
		\,\dif \xi \dif \eta  \\
		&\lesssim
		\langle v_0\rangle^{2+n}
		\sigma^{-d}\tilde \sigma_{v_0}^{-d/s}.
	\end{aligned}
	\]

{\color{red}Combining this estimate with} \eqref{equthe} {\color{red}gives}
	\[
	\left|
	\mathcal{D}_{\sigma,v_0}^{(x,v),m}H_g^{t_0,x_0,v_0}(t,\tau,x,v)
	\right|
	\le
	C_{m}
	\langle v_0\rangle^2
	\sigma^{-d}\tilde  \sigma_{v_0}^{-d/s}
\left\langle\frac{[x]_{v_0}/\sigma+[v]_{v_0}}{\langle v_0\rangle}\right\rangle^{-n}.
	\]
	{\color{red}This proves} \eqref{eq:H-rough-decay}{.}
\end{proof}\par\medskip
{Lemma~}\ref{Hgeneral} {now follows from
Proposition~}\ref{prop-frozen-anisotropic-upper} {\color{red}and
Lemma~}\ref{lemdecay}{.}

					{\color{red}The anisotropic bounds in Lemma~}\ref{leA}
					{yield the following Gaussian pointwise estimate for}
					$H_g^{t_0,x_0,v_0}$ {in the Landau case.}
					\begin{lemma}\label{pwthm}
					{\color{red}Let} $s=1${. Assume that} \(g\ge0\)
				{\color{red}satisfies} \eqref{eq:macro-main} {\color{red}and}
				$\|g\|_{L_T^\infty\mathbf C^{0}}\le C_0${. For every}
$m\in\mathbb N${, there exist} $c_1,c_2>0$
{\color{red}depending only on}
\(d,\gamma,C_0,\mathsf m_0,\mathsf M_0,\mathsf E_0,\mathsf H_0\){,
and} \(m\){, such that, for} \(\sigma=t-\tau>0\){,}
					\begin{align*}
						&						H_g^{t_0,x_0,v_0}(t,\tau,x,v)\gtrsim 		\langle v_0\rangle^{2}	\sigma^{-d}\tilde \sigma_{v_0}^{-d}	\exp\left(-c_1\left(\frac{[x]_{v_0}^2}{\sigma^2\tilde \sigma_{v_0}}+\frac{[v]_{v_0}^2}{\tilde   \sigma_{v_0}}\right)\right),\\
                &|\mathcal{D}^{(x,v), m}_{\sigma, v_0}	H_g^{t_0,x_0,v_0}(t,\tau,x,v)|
						\lesssim 	\langle v_0\rangle^{2}	\sigma^{-d}\tilde \sigma_{v_0}^{-d} \exp\left(-c_2\left(\frac{[x]_{v_0}^2}{\sigma^2\tilde \sigma_{v_0}}+\frac{[v]_{v_0}^2}{\tilde \sigma_{v_0}}\right)\right).	
					\end{align*}
					
				\end{lemma}
\begin{proof}
{Use the notation and changes of variables}
$(\xi,\eta)\mapsto(p,q)$ {\color{red}and} $(x,v)\mapsto(X,V)$
{from the proof of Lemma~}\ref{lemdecay}{, and set}
\(\mathsf{A}(r)=\mathsf{A}[g](r,X_{x_0,v_0}^{t_0-r},v_0)\){.
Then} $\omega(\xi,\eta)
	=
	\tilde \sigma_{v_0} \beta(p,q)${, where}
	\[
	\beta(p,q)=\int_0^1
	\bigl(\langle v_0\rangle^{\kappa}
	\mathcal O_{v_0} ^{-1}\mathsf{A}(\tau+\theta\sigma)\mathcal O_{v_0}^{-1}(\theta p-q),\theta p-q\bigr)_{\mathbb{R}^d}\,\dif \theta .
	\]
	{Lemmas~}\ref{leA} {\color{red}and} \ref{lemcoeff}
	{give the two-sided anisotropic ellipticity bound}
	\[
	c\mathrm{Id}\le 	\langle v_0\rangle^{\kappa}
	\mathcal O_{v_0} ^{-1}\mathsf{A}(\tau+\theta\sigma)\mathcal O_{v_0}^{-1}\le C\mathrm{Id} .
	\]
	{\color{red}Thus,} $\beta(p,q)$ {\color{red}is a positive quadratic form
comparable to} \(|p|^2+|q|^2\){, and its Fourier transform is
Gaussian. Consequently, there exist} \(c,C>0\){, independent of}
\((t,\tau,x,v)\){, such that}
\[
\begin{aligned}
&(\det\mathcal O_{v_0}^{-1})^2
\sigma^{-d}\tilde \sigma_{v_0}^{-d}
\exp\left(
    -C\frac{|X|^2+|V|^2}{\tilde \sigma_{v_0}}
\right) \\
&\qquad\lesssim
H_g^{t_0,x_0,v_0}(t,\tau,x,v)
\lesssim (\det\mathcal O_{v_0}^{-1})^2
\sigma^{-d}\tilde \sigma_{v_0}^{-d}
\exp\left(
    -c\frac{|X|^2+|V|^2}{\tilde \sigma_{v_0}}
\right).
\end{aligned}
\]
{\color{red}Since}
	\[
	|X|\sim \frac{[x]_{v_0}}{\sigma},
	\qquad
	|V|\sim [v]_{v_0},
	\qquad
	(\det\mathcal O_{v_0} ^{-1})^2\sim\langle v_0\rangle^2,
	\]
	{the claimed lower bound follows, as does the upper bound for}
	\(m=0\){.}
	
	{\color{red}For the derivative estimates, applying}
	\[
	\mathcal D_{\sigma,v_0}^{(x,v)}
	=
	\left(
	\sigma\tilde\sigma_{v_0}^{1/2}\mathcal O_{v_0} \nabla_x,\,
	\tilde\sigma_{v_0}^{1/2}\mathcal O_{v_0} \nabla_v
	\right),
	\qquad
	\tilde\sigma_{v_0}:=\langle v_0\rangle^{-\kappa}\sigma ,
	\]
	{corresponds in the} \((p,q)\){-variables to
	multiplication by}
	\[
	\tilde\sigma_{v_0}^{1/2}( p,q).
	\]
{\color{red}Hence, each scaled derivative contributes at most}
	\[
	C
	\tilde \sigma_{v_0}^{1/2}|(p,q)|.
	\]
	{\color{red}The Fourier transform of a polynomial times the Gaussian}
	\(e^{-\tilde \sigma_{v_0}\beta(p,q)}\) {\color{red}satisfies}
	\[
	\left|
	\iint_{\mathbb{R}^{2d}}
	\bigl(\tilde\sigma_{v_0}^{1/2}|(p,q)|\bigr)^m
	e^{-\tilde\sigma_{v_0}\beta(p,q)}
	e^{ip\cdot X+iq\cdot V}\,\dif p\,\dif q
	\right|
	\lesssim_m
	\tilde\sigma_{v_0}^{-d}
	e^{-c(|X|^2+|V|^2)/\tilde\sigma_{v_0}} .
	\]
	{Multiplying by}
$
	(\det\mathcal O_{v_0}^{-1} )^2\sigma^{-d}
=
	\langle v_0\rangle^2\sigma^{-d}
$ {\color{red}gives}
	\[
	|\mathcal D_{\sigma,v_0}^{(x,v),m}H_g^{t_0,x_0,v_0}(t,\tau,x,v)|
	\lesssim_m
	\langle v_0\rangle^{2}
	\sigma^{-d}\tilde \sigma_{v_0}^{-d}
	\exp\left(
	-c_1\left(
	\frac{[x]_{v_0}^2}{\sigma^2\tilde\sigma_{v_0}}
	+
	\frac{[v]_{v_0}^2}{\tilde\sigma_{v_0}}
	\right)\right).
	\]
    {\color{red}This proves the lemma.}
\end{proof}

{\color{red}The final lemma isolates a decay-transfer principle used below. The
kernel's anisotropic factor transfers polynomial decay from the initial to the
terminal variables, with explicit losses that measure the mismatch between the
frozen velocity and the relevant kinetic velocities.}
\begin{lemma}\label{lemtail}
	{\color{red}Let} \(t\in(0,1)\){, and set}
	\begin{align*}
		J:=
		\frac{[x/t]_{v_0}+[v]_{v_0}}{\langle v_0\rangle}.
	\end{align*}
	{\color{red}Then, for every}
	\(x,x_1,v,v_0,v_1\in\mathbb R^d\){,}
	\begin{align}
		\langle J\rangle^{-1}
		&\lesssim
		\langle |v-v_1|+|v_0-v_1|\rangle
		\langle v_1\rangle^{-1},
		\label{eq:A-velocity-bound}
		\\
		\langle J\rangle^{-1}
		&\lesssim
		\langle |x-x_1|+|v-v_1|+|v_0-v_1|\rangle^{1/2}
		\langle |x_1|+|v_1|\rangle^{-1/2}.
		\label{eq:A-space-velocity-bound}
	\end{align}
\end{lemma}
\begin{proof}
	{\color{red}We first prove} \eqref{eq:A-velocity-bound}{. Set}
	\begin{align*}
		R:=|v-v_1|+|v_0-v_1|.
	\end{align*}
	{\color{red}We claim that}
	\begin{align}
		\langle v_1\rangle
		\lesssim
		\langle J\rangle \langle R\rangle .
		\label{eq:v1-control}
	\end{align}
	{\color{red}If} \(R\ge|v_1|/4\){, the claim is immediate. If}
	\(R<|v_1|/4\){, then}
	 \(|v|\sim|v_0|\sim|v_1|\){. Moreover,}
	\begin{align*}
		|v\cdot v_0|
		&=
		|(v_1+(v-v_1))\cdot (v_1+(v_0-v_1))|  \\
		&\ge
		|v_1|^2
		- |v-v_1|\,|v_1|
		- |v_0-v_1|\,|v_1|
		- |v-v_1|\,|v_0-v_1|  \\
		&\gtrsim
		|v_1|^2 .
	\end{align*}
	{\color{red}Therefore,}
	\begin{align*}
		J
		\ge
		\frac{|v\cdot v_0|}{\langle v_0\rangle}
		\gtrsim
		\frac{|v_1|^2}{\langle v_1\rangle}.
	\end{align*}
	{\color{red}Hence,} \(\langle v_1\rangle\lesssim\langle J\rangle\){.
	This proves} \eqref{eq:v1-control} {\color{red}and therefore}
	\eqref{eq:A-velocity-bound}{.}
	
	{\color{red}To prove} \eqref{eq:A-space-velocity-bound}{, set}
	\begin{align*}
		B:=|x-x_1|+|v-v_1|+|v_0-v_1|.
	\end{align*}
	{\color{red}By} \eqref{eq:v1-control} {\color{red}and} \(R\le B\){,}
	\begin{align}
		\langle v_1\rangle
		\lesssim
		\langle J\rangle \langle B\rangle .
		\label{eq:v1-control-B}
	\end{align}
	{\color{red}Consequently,}
	\begin{align}
		\langle v_0\rangle
		&\lesssim
		\langle v_1\rangle+\langle v_0-v_1\rangle  \lesssim
		\langle J\rangle \langle B\rangle .
		\label{eq:v0-control}
	\end{align}
	{\color{red}Since} \(t\in(0,1)\){,}

	\begin{align*}
		|x|+|v|
		\le
		\left[\frac{x}{t}\right]_{v_0}+[v]_{v_0}
		\le
		J\langle v_0\rangle .
	\end{align*}
	{\color{red}Therefore,}
	\begin{align*}
		|x_1|+	|v_1|
		&\le
		|x|+|x-x_1|+	|v|+|v-v_1|
		\lesssim
		\langle J\rangle^2\langle B\rangle.
	\end{align*}
	This proves \eqref{eq:A-space-velocity-bound} and completes the proof of the lemma.
\end{proof}
		\section{Well-posedness in critical spaces}\label{secwp}
		{\color{red}This section establishes local well-posedness for initial data in critical spaces. We begin by defining the critical solution norms for the Boltzmann equation, with $s\in(0,1)$, and the Landau equation, with $s=1$. Fix $\varepsilon_0\in((2s-1)_+,\min\{1,2s\})$ when $s\in(0,1)$, and $\varepsilon_0\in(0,1)$ when $s=1$.}
		Set
		\begin{equation}\label{def:sigma0}
			\sigma_0:=\min\left\{\frac{d-\kappa}{2s},
			\frac{\varepsilon_0}{2s},\frac1{10}\right\}.
		\end{equation}
        {\color{red}Throughout this section, we adopt the convention
		$\mathbb N=\{0,1,2,\ldots\}$.
		For $\varkappa_1,\varkappa_2\in\mathbb{N}$ and $x,v\in\mathbb{R}^d$, define the weight}
		\begin{align}
			&	\mathbf{w}(x,v)=\langle x, v\rangle^{\varkappa_1} \langle  v\rangle^{\varkappa_2}.\label{defwe1}
		\end{align}
		{\color{red}Recall that}
		$$\tilde t_v=\langle v\rangle^{-\kappa}t.$$
        {\color{red}For $m,k\in\mathbb{N}$ and $f:[0,T]\times\mathbb{R}^d\times \mathbb{R}^d\to \mathbb{R}$, define}
		\begin{equation}\label{defyz}
			\begin{aligned}
				&	\|f(t)\|_{ Y_{m}^k}=\sup_{x,v}	\mathbf{w}(x,v)\tilde t_v^{\frac{d-\kappa}{2s}}|\mathcal{D}_{t,v_0}^{(v),k}\nabla_{t,v_0}^{(x,v),{m}}f|(t,x,v)\Big|_{v_0=v},\\
				&	\|f(t)\|_{ Y_{m}^{k+\varepsilon_0}}=\sup_{x,v}	\mathbf{w}(x,v)\tilde t_v^{\frac{d-\kappa}{2s}} \sup_{0<[z]_v\leq 1}\Big({(\tilde t_v^\frac{1}{2s}[z]_v^{-1})^{\varepsilon_0}|\mathcal{D}_{t,v_0}^{(v),k}\nabla_{t,v_0}^{(x,v),m}\delta_z^vf|(t,x,v)}\Big)\Big|_{v_0=v},\\
				&	\|f(t)\|_{ Z_{m}^k}=\sup_{x,v}	\mathbf{w}(x,v)|\mathbf{I}\mathcal{D}_{t,v_0}^{(v),k}\nabla_{t,v_0}^{(x,v),m}f|(t,x,v)\Big|_{v_0=v},\\
				&	\|f(t)\|_{ Z_{m}^{k+\varepsilon_0}}=\sup_{x,v}\mathbf{w}(x,v)	 \sup_{0<[z]_v\leq 1}\Big((\tilde  t_v^{\frac{1}{2s}}[z]_v^{-1})^{\varepsilon_0}|\mathbf{I}\mathcal{D}_{t,v_0}^{(v),k}\nabla_{t,v_0}^{(x,v),m}\delta_z^vf|(t,x,v)\Big)\Big|_{v_0=v}.
			\end{aligned}
		\end{equation}
		{\color{red}When $\mathbf I$ acts on an expression that also depends on the frozen parameter $v_0$, the bound variable in the internal supremum in \eqref{defI12} is understood to be distinct and is denoted by $v_*$. Thus $\mathbf I$ is applied with $v_0$ fixed, after which we impose $v_0=v$. When \(k=0\), we write \(Y_m:=Y_m^0\) and \(Z_m:=Z_m^0\). Fix $N_0\geq 10d$. For $f:[0,T]\times\mathbb{R}^d\times \mathbb{R}^d\to \mathbb{R}$, define}
		\begin{align}\label{defnormT}
			\|f\|_T:=	\|f\|_{T,1}+	\|f\|_{T,2},
		\end{align}
		with
		\begin{align}
			\|f\|_{T,1}:=\sup_{t\in (0,T]}\sum_{m=0}^{ N_0}\left(\|f(t)\|_{Y_{m}}+\|f(t)\|_{Y_{m}^{\lfloor s\rfloor+\varepsilon_0}}\right),\label{defT1}\\	\label{defT2}
			\|f\|_{T,2}:=\sup_{t\in (0,T]}\sum_{m=0}^{ N_0}\left(\|f(t)\|_{Z_{m}}+\|f(t)\|_{Z_{m}^{\lfloor s\rfloor+\varepsilon_0}}\right).
		\end{align}
\begin{remark}
\begin{enumerate}
    \item {\color{red}The norms in \eqref{normcr} and \eqref{defnormT} are critical with respect to the principal Landau--Boltzmann scaling. Although the perturbation equation around the Maxwellian is not strictly scale invariant, the weights in these norms encode decay and localization rather than supercritical regularity.}
    \item {\color{red}The norm \(\|f\|_{T,1}\) controls the pointwise size of the solution and its derivatives, whereas \(\|f\|_{T,2}\) controls the coefficient fields \(a\star f\) in the Landau case and \(\mathbf C_f\) in the Boltzmann case.}
\end{enumerate}
\end{remark}
{\color{red}We can now state the main result of this section.}
\begin{theorem}\label{Th1}
Let \(N_0\geq 10d\), \(\varkappa_1\geq 0\), and
\[
    \varkappa_2>10s^{-1}(d+2)N_0 .
\]
{There exists \(\delta_0\in(0,1)\) with the following property. Let
\(f_0:\mathbb R^d\times\mathbb R^d\to\mathbb R\) satisfy
\(f_0+\mu\geq0\). If}
\begin{align}\label{conini}
    \|f_0\|_{\mathrm{cr}}\leq \delta_0,
\end{align}
{\color{red}where \(\|f_0\|_{\mathrm{cr}}\) is defined in \eqref{normcr}, then there
exist constants \(T_0>0\) and \(C>0\), independent of
\(\|f_0\|_{\mathrm{cr}}\), and a unique solution \(f\) of
\eqref{eqperbo} on \([0,T_0]\) such that}
\[
    f+\mu\geq0
    \qquad\text{on }[0,T_0]\times\mathbb R^d\times\mathbb R^d,
\]
and
\begin{align*}
 \|f\|_{T_0}\leq C\|f_0\|_{\mathrm{cr}},
 \qquad
 f(t)\longrightarrow f_0
 \quad\text{in }\mathcal D'(\mathbb R^{2d})
 \quad\text{as }t\downarrow0.
\end{align*}
\end{theorem}		
		\subsection{Basic estimates}
			Define
		\begin{align}\label{defPh}
			\Phi_{t,v_0}(x,v)=t^{-d}\tilde t_{v_0}^{-\frac{d}{s}}\langle v_0\rangle^2\left(\mathcal{N}_{v_0} \left(\frac{x}{t\tilde t_{v_0}^\frac{1}{2s}},\frac{v}{\tilde t_{v_0}^\frac{1}{2s}}\right)\wedge \left\langle\frac{[x]_{v_0}/t+[v]_{v_0}}{\langle v_0\rangle}\right\rangle^{-\infty}\right).
		\end{align}
{\color{red}Here $A^{-\infty}$ is defined in \eqref{def-inf}, and $\mathcal{N}_{v_0}$ is defined in \eqref{anisotropic-HZ-profile}. Lemma \ref{Hmu} gives}
		\begin{align}\label{Hmues}
			\left| \mathcal{D}_{t,v_0}^{(x,v),m}H_\mu^{v_0}(t,x,v)\right|\lesssim_m	\Phi_{t,v_0}(x,v),\ \ \ \ \forall m\in\mathbb{N}.
		\end{align}
        {\color{red}The elementary inequality $A\wedge B\leq A^{\theta}B^{1-\theta}$, valid for every $\theta\in[0,1]$, yields}
    \begin{align*}
    		\left| \Phi_{t,v_0}(x,v)\right|\lesssim_m t^{-d}&\tilde t_{v_0}^{-\frac{d}{s}}\langle v_0\rangle^2\left(\mathcal{N}_{v_0} \left(\frac{x}{t\tilde t_{v_0}^\frac{1}{2s}},\frac{v}{\tilde t_{v_0}^\frac{1}{2s}}\right)\right)^\theta \left\langle\frac{[x]_{v_0}/t+[v]_{v_0}}{\langle v_0\rangle}\right\rangle^{-\infty}.
    \end{align*}    
{\color{red}For every $\theta\in(\frac{d}{d+2s},1)$, we also have}
\begin{align}
\label{intNthe} 
 t^{-d}&\tilde t_{v_0}^{-\frac{d}{s}}\langle v_0\rangle^2\iint_{\mathbb{R}^{2d}}\left(\mathcal{N}_{v_0} \left(\frac{x}{t\tilde t_{v_0}^\frac{1}{2s}},\frac{v}{\tilde t_{v_0}^\frac{1}{2s}}\right)\right)^\theta  \dif x\dif v \lesssim 1,\quad\quad \forall t>0,\ v_0\in\mathbb{R}^d.
\end{align}
		\begin{definition}
			[Admissible weight] 
			\label{defad}
            	A positive function \(\mathbf q:\mathbb R^{2d}\to(0,\infty)\)
			{\color{red}is called an admissible weight if the following estimates hold for all $t\in(0,1)$ and $x,v,y,w \in \mathbb{R}^d$:}
			\begin{align*}
				&	\mathbf{q}(x,v)\lesssim \mathbf{q}(x-tv,v),\\
                &\left\langle\frac{[x-y]_{v}/t}{\langle v\rangle}\right\rangle^{-\infty}\mathbf{q}^{-1}(y,v)\lesssim \mathbf{q}^{-1}(x,v),\\
            &\left\langle\frac{[v-w]_{v}}{\langle v\rangle}\right\rangle^{-\infty}\mathbf{q}^{-1}(x,w)\lesssim \mathbf{q}^{-1}(x,v).
			\end{align*}
		\end{definition}
{\color{red}The definition immediately implies that every admissible weight $\mathbf q$ satisfies}
    \begin{align*}
    \left\langle\frac{[x-y]_{v}/t+[v-w]_{v}}{\langle v\rangle}\right\rangle^{-\infty}\mathbf{q}^{-1}(y,w)\lesssim \mathbf{q}^{-1}(x,v).
    \end{align*}
    {\color{red}Combining this estimate with \eqref{intNthe} gives}
\begin{align}\label{weighttrans}
 \iint_{\mathbb{R}^{2d}} \Phi_{t,v_0}(x-y,v-u)(\mathbf{q}^{-1}(y, u)+ \mathbf{q}^{-1}(y, v)+\mathbf{q}^{-1}(x, u))\ \dif y \dif u \Big|_{v_0=v} \lesssim \mathbf{q}^{-1}(x, v).
\end{align}  
{Lemma \ref{lemtail} therefore implies that $\langle x,v\rangle^a\langle v\rangle^b$ is an admissible weight whenever $a,b\geq0$.} \vspace{0.3cm}
		
		{\color{red}The next lemma establishes the basic convolution estimates for the frozen-kernel majorant \(\Phi_{t,v_0}\). These estimates control the linear and Duhamel terms in the \(Y_m^k\)- and \(Z_m^k\)-norms: the first propagates admissible weights, while the second and third transfer the critical potential control encoded by \(\mathbf I\).}
		\begin{lemma}\label{le11}
			{\color{red}For $h:\mathbb{R}^d\times \mathbb{R}^d\to \mathbb{R}$, define}
			\begin{align}\label{defGv}
				\mathcal{G}_{v_0}h(t,x,v)=\iint_{\mathbb{R}^{2d}} \Phi_{t,v_0}(y,u)|h|(x-tv-y,v-u) \,\dif y\dif u.
			\end{align}
			{\color{red}If $\mathbf{q}$ is an admissible weight, then the following estimates hold for every $t\in(0,\frac{1}{2}]$:}
			\begin{align}
				&\sup_{x,v}\left(\mathbf{q}(x,v)	\mathcal{G}_{v_0}h(t,x,v)|_{v_0=v}\right)\lesssim \sup_{x,v}\big(\mathbf{q}(x,v)|h(x,v)|\big),\label{G11}\\
				\label{G00}
				&\sup_{x,v}\left(\tilde  t_{v_0}^{\frac{d-\kappa}{2s}}\mathbf{q}(x,v)	\mathcal{G}_{v_0}h(t,x,v)|_{v_0=v}\right)\lesssim \sup_{x,v}\big(\mathbf{q}(x,v)\mathbf{I}h(x,v)\big),\\
				&\sup_{x,v}\left(\mathbf{q}(x,v)(\mathbf{I}	\mathcal{G}_{v_0}h)(t,x,v)|_{v_0=v}\right)\lesssim \sup_{x,v}\big(\mathbf{q}(x,v)\mathbf{I}h(x,v)\big).\label{I1G}
			\end{align}
			{\color{red}Moreover, define}
			\begin{align*}
				&	\mathcal{Z}_{v_0}h(t,x,v)=\iint_{\mathbb{R}^{2d}} \Phi_{t,v_0}(y,u)|\delta^v_uh|(x-tv-y,v)\,\dif y\dif u.
			\end{align*}
			{\color{red}Then, for every $a\in(0,1)$,}
			\begin{align}
				\sup_{x,v}&\left(\mathbf{q}(x,v)	\mathcal{Z}_{v_0}h(t,x,v)|_{v_0=v}\right)\nonumber\\
				&	\lesssim \sup_{x,v}\mathbf{q}(x,v)\Bigg(|h|(x,v)+\sup_{[z]_{v}\leq 1}(\tilde t_{v}^{-\frac{1}{2s}}[z]_{v})^{-a}{|\delta^v_zh|(x,v)}\Bigg),\label{G11d}\\
				\sup_{x,v}&\left(\mathbf{q}(x,v)(\mathbf{I}	\mathcal{Z}_{v_0}h)(t,x,v)|_{v_0=v}\right)\nonumber\\
				&\lesssim \sup_{x,v}\mathbf{q}(x,v)\Bigg(|\mathbf{I}h|(x,v)+\sup_{[z]_{v}\leq 1}\Big((\tilde t_{v}^{-\frac{1}{2s}}[z]_{v})^{-a}{|\mathbf{I}\delta^v_zh|(x,v)}\Big)\Bigg).\label{I1Gd}
			\end{align}
			{\color{red}The implicit constants in all five estimates are uniform for $t\in(0,\frac12]$.}
		\end{lemma}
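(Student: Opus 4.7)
The approach is to handle all five inequalities by a common template: dominate the pointwise values of $h$ (resp.\ of $|\delta^v_u h|$) by an expression of the form $\mathbf{w}^{-1}\cdot(\text{quantity controlled by the RHS})$, integrate the kernels $\Psi^t_{v_0,u}$ and $\Phi^t_{v_0}$ against $\mathbf{w}^{-1}$ using Definition \ref{defad} and Corollary \ref{PPint}, and finally invoke $\mathbf{w}(x-tv,v)\lesssim \mathbf{w}(x,v)$. For the bounds involving $\mathbf{I}h$, the extra ingredient is to commute $\mathbf{I}$ with $\mathcal{G}_{v_0}$; for those involving $|\delta^v_u h|$, one splits the $u$-integral at the concentration scale $\hat t_v^{1/(2s)}$ of $\Phi^t_v$.

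For \eqref{G11}, I would bound $|h|(x-tv-y,v-u)\leq \mathbf{w}^{-1}(x-tv-y,v-u)\sup(\mathbf{w}|h|)$ and successively apply admissibility to the $y$-integral against $\Psi^t_{v,u}$ and then to the $u$-integral against $\Phi^t_v$. For \eqref{I1G}, the plan is to prove the pointwise commutation $\mathbf{I}\mathcal{G}_{v_0}h(t,x,v)\lesssim \mathcal{G}_{v_0}(\mathbf{I}h)(t,x,v)$. Expanding the definition of $\mathbf{I}$, bringing the positive $\sup_{|x_1|\leq 1}$ inside the $(y,u)$-integration, and using Fubini on the $w$-integral, one arrives at $\sup_{v_1}\langle v_1\rangle\int \mathbf{1}_{[w]_{v_1}\leq 1}[w]_{v_1}^{-\kappa}\sup_{|x_1|\leq 1}|h|(\bar x+x_1-tw,\bar v+w)\,dw$, where $\bar x=x-tv-y$ and $\bar v=v-u$. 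Since $t\leq\tfrac12$ and $[w]_{v_1}\leq 1$ force $|tw|\leq\tfrac12$, the shift $-tw$ is absorbed by enlarging the unit $x$-ball, losing at most a dimensional constant via a finite covering by unit balls. The inner quantity is then bounded by $\mathbf{I}h(\bar x,\bar v)$ up to constants, so \eqref{I1G} reduces to \eqref{G11} applied with $\mathbf{I}h$ in place of $h$.

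The most delicate estimate is \eqref{G00}: pointwise values of $|h|$ must be converted into a bound in terms of the averaged quantity $\mathbf{I}h$. I would treat the $u$-integral with an anisotropic dyadic decomposition, splitting $\{[u]_v\leq 1\}$ into shells $\{[u]_v\sim 2^k\hat t_v^{1/(2s)}\}$ for $k\geq 0$, together with the tail $\{[u]_v>1\}$. On each shell, using the explicit bound $\Phi^t_v(u)\lesssim\hat t_v^{-d/(2s)}\langle v\rangle\langle [u]_v/\hat t_v^{1/(2s)}\rangle^{-(d+s)}$, writing $1=[u]_v^\kappa\cdot[u]_v^{-\kappa}$, and bounding $[u]_v^\kappa\leq (2^k\hat t_v^{1/(2s)})^\kappa$, one obtains an integrand comparable to $\langle v\rangle\mathbf{1}_{[u]_v\leq 1}[u]_v^{-\kappa}|h|(\cdot,v-u)$, whose integral is $\leq\mathbf{I}h(\cdot,v)$ by taking $v_0=v$ and $w=-u$ in the definition \eqref{defI12}. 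Summing the dyadic contributions yields a factor $\hat t_v^{(\kappa-d)/(2s)}\sum_k (2^k)^{\kappa-d-s}$, which converges because $\kappa<d<d+s$. The tail $[u]_v>1$ is handled using the fast decay $\langle[u]_v\rangle^{-N}$ of $\Phi^t_v$ together with the pointwise bound $|h|(x,v')\lesssim\mathbf{I}h(x,v')$ (valid by taking $v_0=v'$ in \eqref{defI12} for continuous $h$), producing a contribution $\mathbf{w}^{-1}\sup(\mathbf{w}\mathbf{I}h)$, which is automatically dominated by $\hat t_v^{(\kappa-d)/(2s)}\mathbf{w}^{-1}\sup(\mathbf{w}\mathbf{I}h)$ since $\kappa<d$. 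The $y$-integral against $\Psi^t_{v,u}$ and admissibility then conclude.

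For \eqref{G11d}, I would split the $u$-integral at $[u]_v=\hat t_v^{1/(2s)}$. On the core $\{[u]_v\leq\hat t_v^{1/(2s)}\}$, the H\"older-type bound $|\delta^v_uh|(\cdot,v)\leq(\hat t_v^{-1/(2s)}[u]_v)^a\sup_{[z]_v\leq 1}(\hat t_v^{-1/(2s)}[z]_v)^{-a}|\delta^v_zh|(\cdot,v)$ satisfies $(\hat t_v^{-1/(2s)}[u]_v)^a\leq 1$, so integration against $\Phi^t_v$ produces exactly the seminorm appearing in the RHS; on the complement, the cruder bound $|\delta^v_uh|\leq 2\sup_{v'}|h|(\cdot,v')$ together with the integrability of $\Phi^t_v$ on $\{[u]_v\geq\hat t_v^{1/(2s)}\}$ yields the $|h|$-contribution. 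Estimate \eqref{I1Gd} follows by combining this splitting with the commutation step used for \eqref{I1G}. The main technical obstacle is the precise matching of the power $\hat t_v^{(d-\kappa)/(2s)}$ in \eqref{G00}: verifying that the anisotropic volume $|\{[u]_{v_0}\leq r\}|\sim r^d/\langle v_0\rangle$, the concentration scale $\hat t_v^{1/(2s)}$ of $\Phi^t_v$, the Riesz weight $[u]_v^{-\kappa}$ built into $\mathbf{I}$, and the decay exponent $d+s$ of $\Phi^t_v$ balance exactly to yield the advertised power, with the dyadic geometric series being summable thanks to $\kappa<d+s$.
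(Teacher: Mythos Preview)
Your overall strategy for \eqref{G11}, \eqref{I1G}, \eqref{G11d}, and \eqref{I1Gd} is essentially the paper's, and the commutation argument for $\mathbf I$ through $\mathcal G_{v_0}$ (with the harmless $tw$-shift absorbed into an enlarged unit ball) is exactly right. The dyadic decomposition you propose for the region $[u]_v\le 1$ in \eqref{G00} would also work, though the paper bypasses it with the single pointwise inequality $\Phi^t_{v}(u)\le\langle v\rangle\,\hat t_v^{-(d-\kappa)/(2s)}[u]_v^{-\kappa}$ (valid since $0\le\kappa<d+s$), which yields the advertised factor in one stroke.

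There is, however, a genuine gap in your treatment of the tail $[u]_v>1$ in \eqref{G00}. The claimed pointwise bound $|h|(x,v')\lesssim\mathbf I h(x,v')$ is \emph{false}: $\mathbf I h(x,v')$ is a weighted average of $|h|$ over a unit anisotropic ball around $v'$, and for a function with a narrow spike at $v'$ this average can be arbitrarily small compared to the pointwise value. Since in applications $A_0=\sup(\mathbf w|h|)$ is typically much larger than $A_1=\sup(\mathbf w\,\mathbf I h)$ (think of $h$ a derivative near $t=0$), you cannot fall back on $A_0$ either. The paper's fix is as follows. After the change of variable $u\mapsto \mathcal O_{v_0}^{-1}u$ (so that $\{[u]_{v_0}\ge 1\}$ becomes $\{|u|\ge 1\}$), decompose into unit lattice cells $u=\zeta+u'$ with $\zeta\in\mathbb Z^d\setminus\{0\}$ and $|u'|\le 1$. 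The integral of $|h|(\cdot,\,v-\mathcal O_{v_0}^{-1}(\zeta+u'))$ over $\{|u'|\le 1\}$ is, after reinserting the Riesz weight, bounded by $\mathbf I h(\cdot,\,v-\mathcal O_{v_0}^{-1}\zeta)$; the decay of $\Phi^t_{v_0}(\mathcal O_{v_0}^{-1}\zeta)$ in $|\zeta|$ then makes the lattice sum converge and produces $A_1\,\mathbf w^{-1}(x,v)$ as required. A secondary point: for the main region $[u]_v\le 1$ the $u$-dependence of $\Psi^t_{v,u}(y)$ prevents a naive Fubini, and the paper addresses this with a further split $|y|\le 1$ versus $|y|>1$, using $\int\Psi^t_{v,u}(y)\,dy\lesssim 1$ on the first piece and $\Psi^t_{v,u}(y)\sim\Psi^t_{v,0}(y)$ (valid for $[u]_v\le 1\le|y|$) on the second.
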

		\begin{proof}
			{\color{red}Set $\tilde x=x-tv$ and}
			\begin{align*}
				&A_0= \sup_{x,v}\big(\mathbf{q}(x,v)|h|(x,v)\big),\quad\quad A_1=\sup_{x,v}\left(\mathbf{q}(x,v)(\mathbf{I}h)(x,v)\right).
			\end{align*}
			{\color{red}Since $\mathbf{q}$ is admissible, \eqref{weighttrans} gives}
			\begin{align*}
				\mathcal{G}_{v_0}h(t,x,v)|_{v_0=v}&\lesssim A_0\iint_{\mathbb{R}^{2d}} \Phi_{t,v_0}(y,u)\mathbf{q}^{-1}(\tilde  x-y,v-u)\,\dif y\dif u\Big|_{v_0=v} \\
				&	\lesssim A_0\mathbf{q}^{-1}(\tilde  x,v)	\lesssim A_0\mathbf{q}^{-1}( x,v).
			\end{align*}
			{\color{red}This proves \eqref{G11}. We next prove \eqref{G00}. Decompose}
			\begin{align*}
				\mathcal{G}_{v_0}h(t,x,v)
				&\leq \iint_{\mathbb{R}^{2d}}\mathbf{1}_{[u]_{v_0}\leq 1}\mathbf{1}_{|y|\leq 1}\Phi_{t,v_0}(y,u)|h|(\tilde x-y,v-u)\,\dif y\dif u\\
				&\quad\quad+ \iint_{\mathbb{R}^{2d}}\mathbf{1}_{[u]_{v_0}\leq 1}\mathbf{1}_{|y|\geq 1}\Phi_{t,v_0}(y,u)|h|( \tilde x-y,v-u)\,\dif y\dif u\\
				&\quad\quad+\iint_{\mathbb{R}^{2d}}\mathbf{1}_{[u]_{v_0}\geq 1}\Phi_{t,v_0}(y,u)|h|( \tilde x-y,v-u)\,\dif y\dif u\\
				&:=\mathcal{B}_{1,v_0}+\mathcal{B}_{2,v_0}+\mathcal{B}_{3,v_0}.
			\end{align*}
			{\color{red}For $\mathcal{B}_{1,v_0}$,}
			\begin{align*}
				\mathcal{B}_{1,v_0}\lesssim \int_{\mathbb{R}^{d}}\left(\int_{\mathbb{R}^{d}}\Phi_{t,v_0}(y,u)\dif y\right) \mathbf{1}_{[u]_{v_0}\leq 1}\sup_{|y|\leq 1}|h|( \tilde x-y,v-u) \dif u.
			\end{align*}
			{\color{red}The spatial marginal estimate \eqref{N-marginal-x} for \(\mathcal N_{v_0}\) gives}
			\begin{align}\label{Phi11}
				&\int_{\mathbb{R}^{d}}\Phi_{t,v_0}(y,u)\dif y\lesssim
    \langle v_0\rangle
    \tilde t_{v_0}^{-\frac d{2s}}
    \left\langle
        \frac{[u]_{v_0}}{\tilde t_{v_0}^{1/(2s)}}
    \right\rangle^{-d-2s}\lesssim  \langle v_0\rangle\tilde   t_{v_0}^{-\frac{d-\kappa}{2s}}[u]_{v_0}^{-\kappa},\ \ \qquad   \forall v_0,u\in\mathbb{R}^d.
			\end{align} 
			{\color{red}Consequently,}
			\begin{equation}\label{B1}
				\begin{aligned}
					\mathcal{B}_{1,v_0}
					&\lesssim \langle v_0\rangle\tilde   t_{v_0}^{-\frac{d-\kappa}{2s}} \int_{\mathbb{R}^d}\mathbf{1}_{[u]_{v_0}\leq 1}[u]_{v_0}^{-\kappa}\sup_{|y|\leq 1}|h|( \tilde x-y,v-u) \dif u\\
					&\lesssim \tilde     t_{v_0}^{-\frac{d-\kappa}{2s}}\mathbf{q}^{-1}( x,v)A_1.
				\end{aligned}
			\end{equation}
			{\color{red}For $\mathcal{B}_{2,v_0}$, observe that}
			\begin{align*}
				&	\int_{\mathbb{R}^{d}}\sup_{[u]_{v_0}\leq 1}\Phi_{t,v_0}(y,u
				)\mathbf{1}_{|y|\geq 1}\mathbf{q}^{-1}( \tilde x-y,v)\dif y \Big|_{v_0=v}\lesssim   \langle v\rangle\tilde    t_{v}^{-\frac{d-\kappa}{2s}}[u]_{v}^{-\kappa} \mathbf{q}^{-1}(\tilde x,v).
			\end{align*}
				{\color{red}Combining this estimate with \eqref{Phi11} yields}
			\begin{equation}\label{B2}
				\begin{aligned}
					\mathcal{B}_{2,v_0}|_{v_0=v}&\lesssim 	\tilde    t_{v}^{-\frac{d-\kappa}{2s}}	\int_{\mathbb{R}^{d}}\sup_{[u]_{v}\leq 1}\Phi_{t,v}(y,u
					)\mathbf{1}_{|y|\geq 1}\mathbf{q}^{-1}( \tilde x-y,v)\dif y\\
					&\quad\quad\quad\quad\times\sup_y\left(\mathbf{q}(\tilde x-y,v)\langle v\rangle\int_{\mathbb{R}^d}\mathbf{1}_{[u]_{v_0}\leq 1}[u]_{v}^{-\kappa}|h|( \tilde x-y,v-u)\dif u\right)\\
					&\lesssim \tilde    t_{v}^{-\frac{d-\kappa}{2s}}\mathbf{q}^{-1}(x,v)A_1.
				\end{aligned}
			\end{equation}
			{\color{red}To estimate $\mathcal{B}_{3,v_0}$, decompose $\mathbb{R}_v^d$ into cubes:}
			\begin{align*}
				\mathcal{B}_{3,v_0}&=\langle v_0\rangle^{-1}\iint_{\mathbb{R}^{2d}}\mathbf{1}_{|u|\geq 1}	\Phi_{t,v_0}(y,\mathcal{O}_{v_0}^{-1}u)|h|(\tilde  x-y,v-\mathcal{O}_{v_0}^{-1}u)\dif y\dif u\\
				&\lesssim \langle v_0\rangle^{-1}\sum_{\zeta\in\mathbb{Z}^d\backslash\{0\}} \int_{\mathbb{R}^d}\Phi_{t,v_0}(y,\mathcal{O}_{v_0}^{-1}\zeta)\int_{\mathbb{R}^{d}}\mathbf{1}_{|u|\leq 1}	|h|(\tilde  x-y,v-\mathcal{O}_{v_0}^{-1}(\zeta+u))\dif u\dif y.
			\end{align*}
			{\color{red}Here $\mathcal{O}_{v_0}^{-1}$ is the inverse of the matrix $\mathcal{O}_{v_0}$ defined in \eqref{defmatr}.}
			{A change of variables in the $u$-integral gives}
			\begin{align*}
				\int_{\mathbb{R}^{d}}\mathbf{1}_{|u|\leq 1}|h|( \tilde x-y,v-\mathcal{O}_{v_0}^{-1}(\zeta+u))\dif u
				&\lesssim \langle v_0\rangle	\int_{\mathbb{R}^{d}}\mathbf{1}_{[u]_{v_0}\leq 1}[u]_{v_0}^{-\kappa}|h|(\tilde x-y,v-\mathcal{O}_{v_0}^{-1}\zeta+u)\dif u\\
				&\lesssim \mathbf{I}h(\tilde x-y,v-\mathcal{O}_{v_0}^{-1}\zeta)\\
				&\lesssim  \mathbf{q}^{-1}( \tilde x-y,v-\mathcal{O}_{v_0}^{-1}\zeta) A_1. 
			\end{align*}
			{\color{red}Hence}
			\begin{equation*}
				\begin{aligned}
	\mathcal{B}_{3,v_0}|_{v_0=v}&\lesssim A_1\langle v\rangle^{-1}\sum_{\zeta\in\mathbb{Z}^d\backslash\{0\}}\int_{\mathbb{R}^{d}}\Phi_{t,v_0}(y,\mathcal{O}_{v_0}^{-1}\zeta)\mathbf{q}^{-1}( \tilde x-y,v-\mathcal{O}_{v_0}^{-1}\zeta)\dif y\Big|_{v_0=v}\\
					&\lesssim A_1\mathbf{q}^{-1}( x,v).
				\end{aligned}
			\end{equation*}
			{Together with \eqref{B1} and \eqref{B2}, this proves \eqref{G00}. To prove \eqref{I1G}, note that}
			\begin{align}
				\mathbf{I}\mathcal{G}_{v_0}h(t,x,v)\Big|_{v_0=v}
				&\lesssim \sup_{|x_1|\leq 1}\iint_{\mathbb{R}^{2d}}   \Phi_{t,v_0}(y-x_1,u)|\mathbf{I}h|(\tilde  x-y,v-u)\dif y\dif u\Big|_{v_0=v}\nonumber\\
				&\lesssim A_1\iint_{\mathbb{R}^{2d}} \Phi_{t,v}(y,u)\mathbf{q}^{-1}(\tilde x-y,v-u)\dif y\dif u\nonumber\\
				&\lesssim A_1 \mathbf{q}^{-1}(x,v),\label{I1g1}
			\end{align} 
{\color{red}where the last inequality follows from \eqref{weighttrans}. This proves \eqref{I1G}.} \par\vspace{0.1cm}
			{\color{red}It remains to prove \eqref{G11d} and \eqref{I1Gd}. Set}
			\begin{align*}
				&B_0=\sup_{x,v}\mathbf{q}(x,v)\Big(|h|(x,v)+\sup_{[z]_{v}\leq 1}(\tilde    t_{v}^\frac{1}{2s}[z]_{v}^{-1})^a|\delta^v_zh(x,v)|\Big),\\
				&B_1=\sup_{x,v}\mathbf{q}(x,v)\Big(|\mathbf{I}h|(x,v)+\sup_{[z]_{v}\leq 1}(\tilde   t_{v}^\frac{1}{2s}[z]_{v}^{-1})^a|\mathbf{I}\delta^v_zh|(x,v)\Big).
			\end{align*}
			{\color{red}For any $x,u,v\in\mathbb R^d$ such that $[u]_v\leq1$,}
			\begin{align*}
				|\delta^v_uh(x,v)|\lesssim  (\tilde t_v^{-\frac{1}{2s}}[u]_v)^a \mathbf{q}^{-1}(x,v)B_0.
			\end{align*}
			{\color{red}If $[u]_{v_0}\geq 1$, then}
			\begin{align*}
				|\delta^v_uh(x,v)|&\lesssim 	| h(x,v)|+| h(x,v-u)|\\
				&\lesssim\left(\mathbf{q}^{-1}(x,v)+\mathbf{q}^{-1}(x,v-u)\right)B_0. 
			\end{align*}
			{\color{red}Applying \eqref{weighttrans}, we obtain}
			\begin{align*}
				\mathcal{Z}_{v_0}h(t,x,v)|_{v_0=v}\lesssim &B_0\iint _{\mathbb{R}^{2d}}\Phi_{t,v}(y,u)(\tilde    t_{v}^{-\frac{1}{2s}}[u]_v)^a\mathbf{q}^{-1}( \tilde x-y,v) \mathbf{1}_{[u]_{v}\leq 1}\dif y\dif u\\
				&+B_0\iint _{\mathbb{R}^{2d}}\Phi_{t,v}(y,u)\left(\mathbf{q}^{-1}(\tilde  x-y,v)+\mathbf{q}^{-1}( \tilde x-y,v-u) \right)\dif y\dif u \\
				\lesssim &B_0\, \mathbf{q}^{-1}(x,v).
			\end{align*}
			{\color{red}This proves \eqref{G11d}. Arguing as in \eqref{I1g1}, we also obtain}
			\begin{align*}
				\mathbf{I}\mathcal{Z}_{v_0}h(t,x,v)\big|_{v_0=v}
				&\lesssim \sup_{|x_1|\leq 1}\iint_{\mathbb{R}^{2d}}   \Phi_{t,v_0}(y-x_1,u) |\mathbf{I}\delta^v_uh|(\tilde  x-y,v)\dif y\dif u\Big|_{v_0=v}\\
				&\lesssim B_1\iint _{\mathbb{R}^{2d}} \Phi_{t,v}(y,u)(\tilde    t_{v}^{-\frac{1}{2s}}[u]_v)^a\mathbf{q}^{-1}( \tilde x-y,v) \mathbf{1}_{[u]_{v}\leq 1}\dif y\dif u\\
				&\quad+B_1\iint _{\mathbb{R}^{2d}} \Phi_{t,v}(y,u)\left(\mathbf{q}^{-1}( \tilde x-y,v)+\mathbf{q}^{-1}(\tilde  x-y,v-u) \right)\dif y\dif u \\
				&\lesssim B_1\mathbf{q}^{-1}( x,v),
			\end{align*}
			{\color{red}which proves \eqref{I1Gd} and completes the proof.}
		\end{proof}

		\begin{lemma}\label{lemi1i2}
			{\color{red}For $g,h:\mathbb{R}^d\times\mathbb{R}^d\to\mathbb{R}$, one has}
			\begin{align*}
				&\mathbf{I} (gh)(x,v)\leq \mathbf{I} h(x,v)\sup_{|x_1|,|w|\leq 1}|g(x+x_1,v+w)|.
			\end{align*}
		\end{lemma}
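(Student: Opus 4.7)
\textbf{Proof plan for Lemma \ref{lemi1i2}.}
The inequality is an almost immediate consequence of unwinding the definition of $\mathbf{I}$. The plan is to apply the definition to the product $gh$, extract $g$ from the inner supremum in $x_1$, and then use a key geometric observation about the set $\{[w]_{v_0}\leq 1\}$ to bound the $g$-factor by a quantity that is independent of $w$ and $v_0$.

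Concretely, starting from
\begin{align*}
\mathbf{I}(gh)(x,v) = \sup_{v_0\in\mathbb{R}^d}\langle v_0\rangle\int_{\mathbb{R}^d}\mathbf{1}_{[w]_{v_0}\leq 1}[w]_{v_0}^{-\kappa}\sup_{|x_1|\leq 1}|g\,h|(x+x_1,v+w)\,dw,
\end{align*}
I would first bound the supremum of the product by the product of the suprema: $\sup_{|x_1|\leq 1}|gh|(x+x_1,v+w) \leq \sup_{|x_1|\leq 1}|g|(x+x_1,v+w)\cdot \sup_{|x_1|\leq 1}|h|(x+x_1,v+w).$

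The key step is to notice that the indicator $\mathbf{1}_{[w]_{v_0}\leq 1}$ restricts the integration to $|w|\leq [w]_{v_0}\leq 1$, since $[w]_{v_0}=|w|+|w\cdot v_0|\geq |w|$ by \eqref{notaP}. Therefore, on the support of this indicator,
\begin{align*}
\sup_{|x_1|\leq 1}|g|(x+x_1,v+w)\leq \sup_{|x_1|\leq 1,\,|w'|\leq 1}|g|(x+x_1,v+w'),
\end{align*}
and the right-hand side depends neither on $w$ nor on $v_0$. Pulling this constant outside the integral and the supremum over $v_0$ then yields
\begin{align*}
\mathbf{I}(gh)(x,v)\leq \Bigl(\sup_{|x_1|,|w|\leq 1}|g|(x+x_1,v+w)\Bigr)\cdot \mathbf{I}h(x,v),
\end{align*}
which is exactly the claim. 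I do not anticipate any real obstacle here; the entire argument is a one-line computation once the inclusion $\{[w]_{v_0}\leq 1\}\subset\{|w|\leq 1\}$ is observed.
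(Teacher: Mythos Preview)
Your proposal is correct and matches the paper's approach: the paper simply states that the result follows directly from the definition \eqref{defI12} and omits the details, and your argument is precisely the one-line computation that unpacks that definition, using the inclusion $\{[w]_{v_0}\leq 1\}\subset\{|w|\leq 1\}$.
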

		{Lemma \ref{lemi1i2} follows directly from \eqref{defI12}, so we omit its proof.}
		
		{\color{red}The following lemma records the interpolation inequalities for the $Y_m$- and $Z_m$-norms.}
		\begin{lemma}\label{lemitp} {\color{red}For any $m,k\in\mathbb{N}$, $\varepsilon\in(0,1)$, and $h:[0,T]\times \mathbb{R}^d\times \mathbb{R}^d\to\mathbb{R}$, one has}
			\begin{align*}
				&\|h(t)\|_{Y_{m}^{k+\varepsilon}}\lesssim\|h(t)\|_{Y_{m}^k}+\|h(t)\|_{Y_{m}^{k+1}},\ \ \ ~~ \|h(t)\|_{Z_{m}^{k+\varepsilon}}\lesssim\|h(t)\|_{Z_{m}^k}+\|h(t)\|_{Z_{m}^{k+1}}.
			\end{align*}
		\end{lemma}
			\begin{proof}
			Fix $t,x,v$ and set
			\[
				F_{v_0}:=\mathcal D_{t,v_0}^{(v),k}\nabla_{t,v_0}^{(x,v),m}h,
				\qquad \rho:=\tilde t_v^{-\frac1{2s}}[z]_v.
			\]
			If $[z]_v\leq1$, then the weights, time scales, and frozen derivative frames at $v$ and $v-\theta z$, $0\leq\theta\leq1$, are uniformly comparable.  Hence the triangle inequality and the fundamental theorem of calculus give
			\begin{align*}
				\mathbf w(x,v)\tilde t_v^{\frac{d-\kappa}{2s}}|\delta_z^vF_{v_0}(x,v)|_{v_0=v}|
				&\lesssim\min\big\{\|h(t)\|_{Y_m^k},\rho\|h(t)\|_{Y_m^{k+1}}\big\},\\
				\mathbf w(x,v)\mathbf I(\delta_z^vF_{v_0})(x,v)|_{v_0=v}
				&\lesssim\min\big\{\|h(t)\|_{Z_m^k},\rho\|h(t)\|_{Z_m^{k+1}}\big\}.
			\end{align*}
			{\color{red}Indeed, the identity}
			\[
				z\cdot\nabla_v=\tilde t_v^{-1/(2s)}(\mathcal O_v^{-1}z)\cdot\mathcal D_{t,v}^{(v)},
				\qquad |\mathcal O_v^{-1}z|\sim[z]_v,
			\]
			{\color{red}gives the derivative estimate. The same argument applies within the positive, subadditive operator $\mathbf I$. Since $\rho^{-\varepsilon}\min\{A,\rho B\}\leq A+B$, taking the supremum over $(x,v,z)$ proves the result.}
		\end{proof}

		\subsection{\texorpdfstring{{Estimates for the linear equation}}{Estimates for the linear equation}} \label{seclinear}
		{\color{red}To establish local existence, we first consider the linear problem}
		\begin{equation}\label{linlo}
			\begin{aligned}
				&\partial_t g+v\cdot \nabla_x g+\mathcal{L}_\mu(g)=G,\quad\quad \text{in}\ (0,T)\times \mathbb{R}^d\times\mathbb{R}^d,\\
				&g|_{t=0}=f_0.
			\end{aligned}
		\end{equation}
		{\color{red}Here \(G:[0,T]\times\mathbb{R}^d\times\mathbb{R}^d\to \mathbb{R}\) depends on prescribed functions \(h_i\), \(1\leq i\leq4\), and has the collision structure}
		\begin{align}\label{GQ} G = \mathcal{Q}_{s}(h_1,h_2) + \mathcal{Q}_{s}(h_3,\mu) + \mathcal{Q}_{s,\mathrm{rem}}(\mu,h_4), \end{align}
		{\color{red}Here \(\mathcal{Q}_{s,\mathrm{rem}}\) is defined in \eqref{qsmr}.}
		{\color{blue}For later stability estimates, define the source space
		\(\mathscr Y_T\) by}
		{\color{blue}
		\begin{align}\label{eq:def-linear-source-norm}
		 \|F\|_{\mathrm{src},T}
		 :=\inf\Bigg\{
		 &\|r_1\|_T\|r_2\|_T
		 +T^{\sigma_0}\bigl(\|r_3\|_T+\|r_4\|_T\bigr):\\
		 &F=\mathcal Q_s(r_1,r_2)+\mathcal Q_s(r_3,\mu)
		 +\mathcal Q_{s,\mathrm{rem}}(\mu,r_4)
		 \Bigg\}.
		\end{align}
		}
		{\color{red}The following lemma provides the linear well-posedness result needed below.}\par\medskip
{\color{blue}
\begin{lemma}[Linear well-posedness]\label{lemloc}
Let \(T>0\), assume that \(\|f_0\|_{\mathrm{cr}}<\infty\), and suppose that
each \(h_i\) has finite \(\|\cdot\|_T\)-norm for \(1\leq i\leq4\). If
\(G\) is given by \eqref{GQ}, then \eqref{linlo} has a unique mild
solution \(g\) satisfying \(\|g\|_T<\infty\). Moreover,
\begin{align}\label{largt}
 \|g\|_T
 \leq e^{CT\log(T+2)}
 \Big(
 \|f_0\|_{\mathrm{cr}}
 +\|h_1\|_T\|h_2\|_T
 +T^{\sigma_0}\sum_{i=3,4}\|h_i\|_T
 \Big),
\end{align}
where \(C\) depends only on the fixed structural parameters. The solution
attains its initial datum in the sense that
\begin{align}\label{eq:linear-initial-trace}
 g(t)\longrightarrow f_0
 \qquad\text{in }\mathcal D'(\mathbb R^{2d})
 \quad\text{as }t\downarrow0.
\end{align}

The solution depends continuously on the initial datum and on the source.
More precisely, let \(\bar g\) be associated with
\((\bar f_0,\bar h_1,\ldots,\bar h_4)\), and define
\begin{align*}
 \mathfrak d_T(h,\bar h)
 &:={}
 \|h_1-\bar h_1\|_T\|h_2\|_T
 +\|\bar h_1\|_T\|h_2-\bar h_2\|_T
 +T^{\sigma_0}
 \bigl(\|h_3-\bar h_3\|_T+\|h_4-\bar h_4\|_T\bigr).
\end{align*}
Then
\begin{align}\label{eq:linear-continuous-dependence}
 \|g-\bar g\|_T
 \leq e^{CT\log(T+2)}
 \bigl(
 \|f_0-\bar f_0\|_{\mathrm{cr}}
 +\mathfrak d_T(h,\bar h)
 \bigr).
\end{align}
\end{lemma}
}

{\color{red}To estimate the $\|\cdot\|_T$-norm, fix $v_0\in\mathbb{R}^d$ and rewrite \eqref{linlo} as}
		\begin{equation*}
			\begin{aligned}
				&\partial_t g+v\cdot \nabla_x g+\mathcal{L}^{v_0}_\mu g=G+\mathcal{R}^{v_0}(g),\\
				&	g|_{t=0}=f_0,
			\end{aligned}
		\end{equation*}
		where
		\begin{align}\label{defRe0}
			\mathcal{R}^{v_0}(g)=\mathcal{L}^{v_0}_\mu g-\mathcal{L} _\mu g.
		\end{align}
		{Corollary \ref{coroduha} gives}
		\begin{align}
			\label{forg}
			g=g_{1,v_0}+g_{2,v_0}+g_{3,v_0},
		\end{align}
		where
		\begin{align*}
			&g_{1,v_0}(t,x,v)=(H_\mu^{v_0}(t)\ast	f_0)(x-tv,v),\\
			&g_{2,v_0}(t,x,v)=\int_0^t (H_\mu^{v_0}(t-\tau)\ast  G(\tau))(x-(t-\tau)v,v)\dif \tau ,\\
			&g_{3,v_0}(t,x,v)=\int_0^t \left(H_\mu^{v_0}(t-\tau)\ast  \mathcal{R}^{v_0}(g)(\tau)\right)(x-(t-\tau)v,v)\dif \tau .
		\end{align*} 
		{\color{red}Here $H_\mu^{v_0}(t,x,v)$ is the linear kernel defined in \eqref{defH0}.}\par\medskip
{\color{red}We regard the right-hand side of \eqref{forg} as a function of the additional parameter \(v_0\). For an auxiliary function}
\[ F:[0,T]\times(\mathbb R^d)^3\to\mathbb R, \qquad F=F(t,x,v,v_0), \]
{we define norms analogous to those in \eqref{defyz}, evaluating the frozen parameter at \(v_0=v\). For example,}
\begin{align}\label{tilYn} \|F(t)\|_{\tilde Y_m^k} := \sup_{x,v} \mathbf w(x,v)\, \tilde t_v^{\frac{d-\kappa}{2s}} | \mathcal{D}_{t,v_0}^{(v),k} \nabla_{t,v_0}^{(x,v),m} F| (t,x,v,v_0) \Big|_{v_0=v}. \end{align}
		{With a slight abuse of notation, for $h:[0,T]^2\times (\mathbb{R}^d)^3\to \mathbb{R}$, define}
		\begin{align}\label{tilYntt} 
			&	\|h(t,\tau)\|_{\tilde Y_{m}^k}=\sup_{x,v}	\mathbf{w}(x,v)\sum_{t'\in\{t,\tau\}}\tilde t_v'^{\frac{d-\kappa}{2s}}|\mathcal{D}_{t',v_0}^{(v),k}\nabla^{(x,v),m}_{t',v_0}h|(t,\tau,x,v,v_0)\Big|_{v_0=v}.
		\end{align}
	{\color{red}The remaining two-time norms are defined by}
		\begin{equation*}
	\begin{aligned}
		\|h(t,\tau)\|_{\widetilde Y_m^{k+\varepsilon_0}}
		&:=\sup_{x,v}\mathbf w(x,v)
		\sum_{t'\in\{t,\tau\}}\widetilde t'_v{}^{\frac{d-\kappa}{2s}}
		\sup_{0<[z]_v\le1}
		\big(\widetilde t'_v{}^{\frac1{2s}}[z]_v^{-1}\big)^{\varepsilon_0}
		\big|\mathcal D_{t',v_0}^{(v),k}
		\nabla_{t',v_0}^{(x,v),m}\delta_z^v h\big|
		(t,\tau,x,v,v_0)\Big|_{v_0=v},\\
		\|h(t,\tau)\|_{\widetilde Z_m^k}
		&:=\sup_{x,v}\mathbf w(x,v)
		\sum_{t'\in\{t,\tau\}}
		\big|\mathbf I\mathcal D_{t',v_0}^{(v),k}
		\nabla_{t',v_0}^{(x,v),m}h\big|
		(t,\tau,x,v,v_0)\Big|_{v_0=v},\\
		\|h(t,\tau)\|_{\widetilde Z_m^{k+\varepsilon_0}}
		&:=\sup_{x,v}\mathbf w(x,v)
		\sum_{t'\in\{t,\tau\}}\sup_{0<[z]_v\le1}
		\big(\widetilde t'_v{}^{\frac1{2s}}[z]_v^{-1}\big)^{\varepsilon_0}
		\big|\mathbf I\mathcal D_{t',v_0}^{(v),k}
		\nabla_{t',v_0}^{(x,v),m}\delta_z^v h\big|
		(t,\tau,x,v,v_0)\Big|_{v_0=v}.
		\end{aligned}
		\end{equation*}
		{\color{red}Set}
		\begin{align}\label{defnormT1}
			\lceil F\rfloor_T:=		\lceil F\rfloor_{T,1}+	\lceil F\rfloor_{T,2},
		\end{align}
		with
		\begin{equation}\label{defT12}
			\begin{aligned} 
			\lceil F\rfloor_{T,1}
			:=
			\sup_{0<t\le T}
			\sum_{m=0}^{N_0}
			\left(
			\|F(t)\|_{\widetilde Y_m}
			+\|F(t)\|_{\widetilde Y_m^{\lfloor s\rfloor+\varepsilon_0}}
			\right),
			\end{aligned}
		\end{equation}
		\begin{equation}\label{defT22}
			\begin{aligned}
				\lceil F\rfloor_{T,2}
			:=
			\sup_{0<t\le T}
			\sum_{m=0}^{N_0}
			\left(
			\|F(t)\|_{\widetilde Z_m}
			+\|F(t)\|_{\widetilde Z_m^{\lfloor s\rfloor+\varepsilon_0}}
			\right).
			\end{aligned}
		\end{equation}
        {Hereafter, \(\lceil g_{i,v_0}\rfloor_T\) denotes
\(\lceil \widetilde g_i\rfloor_T\), where}
\[
    \widetilde g_i(t,x,v,v_0):=g_{i,v_0}(t,x,v).
\]
{\color{red}Therefore,}
\begin{align}\label{esSf1}
    \|g\|_T\leq \sum_{i=1}^3 \lceil g_{i,v_0}\rfloor_T .
\end{align}
	{\color{red}We now estimate the three terms on the right-hand side of \eqref{forg}.}\vspace{0.3cm}\\
		{\bf 1. Estimate  of $g_{1,v_0}$.}\par\medskip
		{\color{red}We begin with the contribution from the initial data:}
		\begin{align*}
			g_{1,v_0}(t,x,v)&=(H_\mu^{v_0}(t)\ast f_0)(x-tv,v)\\
			&=\iint_{\mathbb{R}^{2d}}H_\mu^{v_0}(t,x-y-tv,v-u)f_0(y,u)\dif y\dif u.
		\end{align*}
			{\color{red}The following proposition estimates $\lceil g_{1,v_0}\rfloor_T$, defined in \eqref{defnormT1}.}

		\begin{proposition} \label{Pro1} 	{\color{red}For every $T\in (0,1)$,}
			\begin{align*}	\lceil 	g_{1,v_0}\rfloor_{T}\lesssim 	\|f_0\|_{\mathrm{cr}}.
			\end{align*}
		\end{proposition}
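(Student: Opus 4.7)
The plan is to transfer every derivative appearing in $\lceil\cdot\rfloor_T$ onto the kernel $H^s_{v_0}$, estimate it pointwise by Lemma~\ref{lemker}, and recognise the resulting expression as a $\mathcal{G}_{v_0}$-type convolution acting on $f_0$ so that the convolution estimates of Lemma~\ref{le11} can be applied. Starting from
\[
g_{1,v_0}(t,x,v)=\iint_{\mathbb{R}^{2d}} H_{v_0}^s(t,x-tv-y,v-u)\,f_0(y,u)\,dy\,du,
\]
a spatial derivative pulls through as $\nabla_x H^s_{v_0}$, while a velocity derivative produces the galilean combination $(\nabla_v-t\nabla_x)H^s_{v_0}$. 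Expanding $(\nabla_v-t\nabla_x)^m$ binomially, each factor $t^j$ is exactly absorbed by the extra $t^{-j}$ gained from the additional $\nabla_x^{n+j}$ in the kernel bound of Lemma~\ref{lemker}. The anisotropic derivative $\mathcal{D}_{v_0,v}^k$ commutes with the integration; since $\mathcal{O}_{v_0}^{-1}$ has operator norm at most one, bounding $\nabla_v$ by $\mathcal{D}_{v_0,v}$ is free, and a change of variables $(y,u)\to(x-tv-y,v-u)$ yields
\[
\big|\mathcal{D}_{v_0,v}^k\nabla^{m,n}_{v,x}g_{1,v_0}(t,x,v)\big|\lesssim \frac{\mathcal{G}_{v_0}f_0(t,x,v)}{t^n\,\tilde t_{v_0}^{(m+n+k)/(2s)}}.
\]

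For the pure-derivative components $\tilde Y_{m,n}$ and $\tilde Z_{m,n}$ I will apply \eqref{G00} and \eqref{I1G} of Lemma~\ref{le11} with the admissible weight $\mathbf{w}_1$, obtaining $\mathbf{w}_1\mathcal{G}_{v_0}f_0|_{v_0=v}\lesssim \hat t_v^{-(d-\kappa)/(2s)}\|f_0\|_{\mathrm{cr}}$ and the analogous bound for $\mathbf{I}\mathcal{G}_{v_0}f_0$ without the $\hat t_v$ factor. Multiplying by $t^n\tilde t_v^{(d-\kappa+m+n+k)/(2s)}$ closes the $\tilde Y$ estimate since $(\tilde t_v/\hat t_v)^{(d-\kappa)/(2s)}=\langle v\rangle^{-(d-\kappa)}\lesssim 1$, while the $\tilde Z$ estimate closes directly. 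For the Hölder components $\tilde Y_{m,n}^{[s]+\varepsilon_0}$ and $\tilde Z_{m,n}^{[s]+\varepsilon_0}$ I will split the finite difference $\delta_z^v$ into two regimes. When $[z]_v\lesssim \hat t_v^{1/(2s)}$, one extra anisotropic derivative on $H^s_{v_0}$ together with the mean value theorem supplies the factor $[z]_v\langle v\rangle^{-1}\,\tilde t_v^{-1/(2s)}=[z]_v\,\hat t_v^{-1/(2s)}$; when $[z]_v$ is larger I bound $\delta_z^v$ by the sum of its two endpoints and use that $\mathbf{w}_1(x,v-z)\sim\mathbf{w}_1(x,v)$ for $|z|\leq 1$. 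In both regimes the contribution interpolates cleanly against the prefactor $(\hat t_v^{1/(2s)}[z]_v^{-1})^{\varepsilon_0}$, and the restriction $\varepsilon_0\in((2s-1)_+,\min\{1,2s\})$ (resp.\ $(0,1)$ for $s=1$) is exactly what makes the mean-value step consistent with the smoothness available from $H^s_{v_0}$.

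The main obstacle I anticipate is the anisotropic bookkeeping in the Hölder regime: the finite-difference metric $[z]_v=|z|+|z\cdot v|$ weights the parallel and perpendicular components of $z$ differently, while $\mathcal{D}_{v_0,v}$ weights them in an opposite anisotropic fashion ($\nabla$ parallel to $v_0$, $\langle v_0\rangle\nabla$ perpendicular). The key algebraic identity to verify is $|z\cdot\nabla_v f|\lesssim [z]_v\langle v_0\rangle^{-1}|\mathcal{D}_{v_0,v}f|$ at $v_0=v$, obtained by decomposing $z$ into parallel and perpendicular parts, together with $\langle v\rangle^{-1}\tilde t_v^{-1/(2s)}=\hat t_v^{-1/(2s)}$ which is responsible for converting the $\tilde t_v$-weight of Lemma~\ref{lemker} into the $\hat t_v$-weight appearing in the Hölder prefactor. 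Once this anisotropic matching is in place, together with the admissible-weight shift $\mathbf{w}_1(x,v-z)\sim\mathbf{w}_1(x,v)$ for $[z]_v\leq 1$ (which follows directly from \eqref{notaP}), the entire estimate reduces to Lemmas~\ref{lemker} and \ref{le11} applied to the input $\mathbf{I}f_0$.
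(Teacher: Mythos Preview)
Your proposal is correct and, for the integer-order pieces $\tilde Y_{m,n}^k$ and $\tilde Z_{m,n}^k$, proceeds exactly as the paper does: push all derivatives onto $H^s_{v_0}$, invoke Lemma~\ref{lemker} to land on $\mathcal{G}_{v_0}f_0$, and close with \eqref{G00}--\eqref{I1G} of Lemma~\ref{le11} using the admissible weight $\mathbf{w}_1$.

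The genuine difference is in how you handle the H\"older components $\tilde Y_{m,n}^{[s]+\varepsilon_0}$, $\tilde Z_{m,n}^{[s]+\varepsilon_0}$. You argue directly, splitting according to whether $[z]_v\lesssim \hat t_v^{1/(2s)}$ and using in the small regime the anisotropic mean-value identity $|z\cdot\nabla_v|\lesssim [z]_v\langle v_0\rangle^{-1}|\mathcal{D}_{v_0,v}|$ together with $\langle v\rangle^{-1}\tilde t_v^{-1/(2s)}=\hat t_v^{-1/(2s)}$. This is correct, and your anisotropic bookkeeping is right. The paper instead takes a shorter route: since the integer-order bounds $\|g_{1,v_0}\|_{\tilde Y_{m,n}^k}+\|g_{1,v_0}\|_{\tilde Z_{m,n}^k}\lesssim\|f_0\|_{\mathrm{cr}}$ hold for \emph{every} $k\in\mathbb{N}$, the interpolation inequality of Lemma~\ref{lemitp},
\[
\|h\|_{Y_{m,n}^{k+\varepsilon_0}}\lesssim \|h\|_{Y_{m,n}^{k}}+\|h\|_{Y_{m,n}^{k+1}},
\]
delivers the fractional norms for free. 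Your direct argument has the advantage of being self-contained and of making the role of the condition $\varepsilon_0<1$ transparent; the paper's approach is shorter and avoids the two-regime case analysis altogether, at the cost of relying on the auxiliary interpolation lemma.
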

		\begin{proof}
			{\color{red}Combining the kernel bound \eqref{Hmues} with \eqref{G00} and \eqref{I1G} gives the required integer-order \(\widetilde Y_m^k\)- and \(\widetilde Z_m^k\)-estimates for \(0\le m\le N_0\) and \(0\le k\le\lfloor s\rfloor+1\). Lemma~\ref{lemitp} then provides the intermediate H\"older estimate and hence the stated \(\lceil\cdot\rfloor_T\)-bound.}
		\end{proof}\vspace{0.3cm}\\
		{\color{red}\bf 2. Estimate of $g_{2,v_0}$.}\par\medskip
		{\color{red}We next estimate the contribution of the source term $G$ in \eqref{GQ}. Define}
		\begin{equation}
			\begin{aligned}\label{defsv0}
				\mathcal{S}_{v_0} (f_1,f_2)(t,\tau,x,v)=&(H_\mu^{v_0}(t)\ast \mathcal{Q}_{s}(f_1,f_2)(\tau))(x-tv,v)\\
				:=&	\mathcal{S}_{1,v_0} (f_1,f_2)(t,\tau,x,v)+	\mathcal{S}_{2,v_0} (f_1,f_2)(t,\tau,x,v),
			\end{aligned}
		\end{equation}
		where
		\begin{equation*}
			\begin{aligned}
				&	\mathcal{S}_{1,v_0} (f_1,f_2)(t,\tau,x,v)=(H_\mu^{v_0}(t)\ast \mathcal{Q}_{s,\mathrm{main}}(f_1,f_2)(\tau))(x-tv,v),\\
				&	\mathcal{S}_{2,v_0} (f_1,f_2)(t,\tau,x,v)=(H_\mu^{v_0}(t)\ast \mathcal{Q}_{s,\mathrm{rem}}(f_1,f_2)(\tau))(x-tv,v).
			\end{aligned}
		\end{equation*}
		{\color{red}Consequently,}
		\begin{align}\label{intg2}
			g_{2,v_0}(t,x,v)=\int_0^t  ({\mathcal{S}}_{v_0} (h_1,h_2)+\mathcal{S}_{v_0} (h_3,\mu)+\mathcal{S}_{2,v_0} (\mu,h_4))(t-\tau,\tau,x,v)\dif \tau .
		\end{align}
		{\color{red}Because both the kernel and the source terms have time singularities, we split the integral in \eqref{intg2} over $[0,t/2]$ and $[t/2,t]$. To estimate $\mathcal{S}_{i,v_0}(f_1,f_2)$ precisely, we introduce the following non-endpoint norm:}
		\begin{equation*}
			\begin{aligned}
				\|f\|_{T,*}:=&\sup_{t\in (0,T]}\sup_{x,v} \langle  v\rangle^{\varkappa}\sum_{m=0}^{N_0} \tilde t_{v}^{\frac{d-\kappa}{2s}}\left(|\nabla^{(x,v),m}_{t,v_0}f|(t,x,v)\big|_{v_0=v}+ \sup_{0<[z]_v\leq 1}\frac{|\mathcal{D}^{(v),\lfloor s\rfloor}_{t,v_0}\nabla_{t, v_0}^{(x,v),m}\delta_z^vf|(t,x,v)\big|_{v_0=v}}{(\tilde  t_v^{-\frac{1}{2s}}[z]_v)^{\varepsilon_0}}\right)\\
				&+\sup_{t\in (0,T]}\sup_{x,v} \langle  v\rangle^{\varkappa}\sum_{m=0}^{N_0}  \sup_{0<[z]_v\leq 1}\Big({(\tilde  t_v^\frac{1}{2s}[z]_v^{-1})^{\varepsilon_0}|\mathbf{I}\mathcal{D}^{(v),\lfloor s\rfloor}_{t,v_0}\nabla_{t,v_0}^{(x,v),m}\delta_z^vf|}(t,x,v)\Big)\bigg|_{v_0=v}\\
				&+T^{\frac{1}{10}}\sup_{t\in(0,T]}\sup_{x,v}  \langle v\rangle^{\varkappa}\mathbf{I}f(t,x,v),
			\end{aligned}
		\end{equation*}
		{\color{red}where $\varkappa=\varkappa_1+\varkappa_2$. This velocity weight is controlled by the original weight because}
		\[
		\langle v\rangle^{\varkappa}
		\leq \langle x,v\rangle^{\varkappa_1}
		\langle v\rangle^{\varkappa_2}=\mathbf w(x,v).
		\]
		{\color{red}Relative to $\|\cdot\|_T$ in \eqref{defnormT}, this norm omits the spatial weight and includes the factor $T^{1/10}$ in the zeroth-order critical term $|\mathbf I f(t,x,v)|$. Consequently, the Maxwellian is small in this norm for small $T$:}
		\begin{align}\label{musm}
			\|\mu\|_{T,*}\lesssim T^{\sigma_0},
		\end{align}
		where the parameter $\sigma_0$ is fixed in \eqref{def:sigma0}.
        
		{\color{red}We first estimate the nonlocal terms in $\mathcal{Q}_s(f_1,f_2)$. Corollary \ref{lemcoeff1} and Lemma \ref{lemA1} treat the convolution coefficient $a\star f$ in the Landau case, while Lemma \ref{lemCf1} and \ref{lemCfl} treats $\mathbf C_f$ in the Boltzmann case. The following corollary is an immediate consequence of Lemma \ref{lemcoeff}.}\par\medskip
			\begin{corollary}\label{lemcoeff1}
			{\color{red}Set $\tilde a_{v_0}(v)=\mathcal{O}_{v_0}^{-1}a(v)\mathcal{O}_{v_0}^{-1}$ and}
			\begin{align*}
				\mathcal{P}_{v_0}f(t,x,v)=(\tilde a_{v_0}\star f(t,x,\cdot))(v).
			\end{align*}
			{\color{red}Then}
			\begin{align*}
				\sum_{m=0}^{N_0}\sup_{t\in[0,T]}\sup_{x,v}&\langle x \rangle^{\varkappa_1} \langle v\rangle^{\kappa}	\left|\nabla_{t,v_0}^{(x,v),m}		\mathcal{P}_{v_0}f\right|(t,x,v)
				\Big|_{v_0=v}\lesssim \|f\|_{T}.
			\end{align*}
		\end{corollary}
		{\color{red}The next lemma estimates}
\[ \tilde t_v^{\frac12} \mathbf I \big(\tilde a_{v_0}\star \mathcal D_{v_0}^{(v)}f\big). \]
{\color{red}The factor}
\[
    \tilde t_v^{\frac12}
    \big(\tilde a_{v_0}\star \mathcal D_{v_0}^{(v)}f\big)
\]
{\color{red}is scaling-critical, with size \(O(1)\) in the \(\|\cdot\|_T\)-norm. Applying \(\mathbf I\), however, makes it lower order: \(\mathbf I\) behaves like a Riesz potential of order \(d-\kappa\), equivalently a derivative of order \(-(d-\kappa)\). Thus}
\[ \tilde t_v^{\frac12} \mathbf I\big(\tilde a_{v_0}\star \mathcal D_{v_0}^{(v)}f\big) \]
{\color{red}is perturbative in the critical norm.}
		\begin{lemma}
			\label{lemA1}
			{\color{red}For every $0<t<T<1$ and $m\leq N_0$,}
			\begin{align}\label{afl}
				\tilde t_v^{\frac{1}{2}}\left(\mathbf{I}\nabla_{t,v_0}^{(x,v),m}	\big(\tilde a_{v_0}\star \mathcal{D}_{v_0}^{(v)} f\big)\right)(t,x,v)\Big|_{v_0=v}\lesssim (\tilde t_v^{\frac{d-\kappa}{2}}+\tilde t_v^\frac{1}{2}|\log \tilde t_v|\mathbf{1}_{\kappa\leq d-1})\langle v\rangle\langle x\rangle^{-\varkappa_1}\|f\|_{T}.
			\end{align}
		\end{lemma}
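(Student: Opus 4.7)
The plan is to move the derivative off $f$ onto the kernel, then reduce $\mathbf{I}$ applied to a convolution to a one-dimensional weighted integral that can be estimated by a case split on the size of the remaining shift variable.

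First I would integrate by parts in the convolution variable to transfer $\mathcal{D}_{v_0,v}=\mathcal{O}_{v_0}\nabla_v$ from $f$ onto $\tilde a_{v_0}=\mathcal{O}_{v_0}^{-1}a\mathcal{O}_{v_0}^{-1}$. Setting $K_{v_0}(w):=\mathcal{O}_{v_0}\nabla_w\tilde a_{v_0}(w)$, this gives
\begin{align*}
\tilde a_{v_0}\star\mathcal{D}_{v_0,v}f=K_{v_0}\star f.
\end{align*}
Using $|\nabla a(w)|\lesssim|w|^{-\kappa-1}$ together with the anisotropic improvement from the projection $\mathrm{Id}-\hat w\otimes\hat w$ appearing in $a$, one gets the pointwise bound $|K_{v_0}(w)|\lesssim \langle v_0\rangle^{-1}|w|^{-\kappa-1}$ with additional gains in the $\hat v_0$ direction. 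The derivatives $\nabla_{v,x}^{m,n}$ commute with the $v$-convolution against $K_{v_0}$ and so can be moved onto $f$.

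Next, inserting the identity into the definition \eqref{defI12} of $\mathbf{I}$, setting $v_0=v$, and switching the $w$-integral from $\mathbf{I}$ with the convolution variable, I obtain
\begin{align*}
\mathbf{I}\nabla_{v,x}^{m,n}(\tilde a_v\star\mathcal{D}_{v,v}f)(x,v)\lesssim \langle v\rangle\int \Lambda(v,w')\sup_{|x_1|\leq 1}\bigl|\nabla_{v,x}^{m,n}f\bigr|(x+x_1,v+w')\,dw',
\end{align*}
where
\begin{align*}
\Lambda(v,w'):=\int \mathbf{1}_{[w]_v\leq 1}[w]_v^{-\kappa}|K_v(w-w')|\,dw.
\end{align*}
The task reduces to estimating $\Lambda$ and integrating it against the pointwise bounds provided by $|||f|||_T$.

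The core of the proof is the bound on $\Lambda(v,w')$, which I would derive by a case split on $[w']_v$. When $[w']_v\leq 2$, I separate the $w$-integration into the near-diagonal region $|w-w'|\lesssim [w']_v$ and its complement; the output is $\Lambda(v,w')\lesssim \langle v\rangle^{-1}\bigl([w']_v^{d-\kappa-1}+|\log[w']_v|\,\mathbf{1}_{\kappa=d-1}\bigr)$, where the logarithm arises exactly at the borderline $\kappa=d-1$ (and can be extended to the whole range $\kappa\leq d-1$ by monotonicity). For $[w']_v\geq 2$, the cutoff $[w]_v\leq 1$ forces $|w-w'|\sim[w']_v$ and $\Lambda$ decays polynomially. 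Finally, I split the $w'$-integration along $[w']_v\lesssim \hat t_v^{1/2}$ versus $[w']_v\gtrsim \hat t_v^{1/2}$, using $\|f\|_{Z_{m,n}}$ through the Riesz-type structure of $\mathbf{I}$ for the near-diagonal contribution and the pointwise $\|f\|_{Y_{m,n}}$-bound for the far region. Restoring the natural time weights $t^n\tilde t_v^{(m+n+1)/2}$ converts the spatial profile of $\Lambda$ into the stated time factor $\tilde t_v^{(d-\kappa)/2}+\tilde t_v^{1/2}|\log\tilde t_v|\mathbf{1}_{\kappa\leq d-1}$.

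The main obstacle will be the careful bookkeeping of the anisotropic metric $[\cdot]_v$ against the isotropic kernel $K_v$, in particular tracking how the $\mathcal{O}_v^{-1}$ factors cancel to produce the prefactor $\langle v\rangle$ in the final estimate. The endpoint $\kappa=d-1$ requires a dyadic decomposition in $[w-w']_v$ to extract a logarithm rather than a divergence, and propagating this borderline contribution to the full range $\kappa\leq d-1$ requires the monotonicity argument mentioned above.
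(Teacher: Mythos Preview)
Your plan has a genuine gap in the range $d-1\leq\kappa<d$, which is precisely where the lemma is nontrivial. After moving $\mathcal{D}_{v_0,v}$ entirely onto $\tilde a_{v_0}$, the kernel $K_{v_0}$ inherits the singularity $|\nabla a(w)|\sim|w|^{-\kappa-1}$, which is \emph{not locally integrable} when $\kappa\geq d-1$. Consequently your double kernel $\Lambda(v,w')=\int\mathbf{1}_{[w]_v\leq 1}[w]_v^{-\kappa}|K_v(w-w')|\,dw$ is infinite for every $w'$: on the near-diagonal piece $|w-w'|\lesssim[w']_v$ you are integrating $|w-w'|^{-\kappa-1}$ over a ball, which diverges. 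No subsequent split in $w'$ can repair this, and the step ``compute $\Lambda$ and integrate'' is simply not well-defined.

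The paper's proof avoids this by \emph{not} integrating by parts globally. It splits the convolution variable at a scale $\delta$ via a cutoff $\chi_\delta$. On the near piece $|u|\lesssim\delta$ one keeps $\mathcal{D}_{v_0,v}$ on $f$ and uses the swap $\mathbf{I}(g\star h)\leq(\mathbf{I}g)\star|h|$ with $g=\mathcal{D}_{v_0,v}f$ and $h=\tilde a_{v_0}\chi_\delta$; since $|a(u)|\sim|u|^{-\kappa}$ is locally integrable and $\mathbf{I}\mathcal{D}_{v_0,v}f$ is controlled by $|||f|||_T$ with a factor $\tilde t_v^{-1/2}$, this piece gives $\tilde t_v^{-1/2}\delta^{d-\kappa}$. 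Only on the far piece $|u|\gtrsim\delta$ does one integrate by parts, producing the now-harmless kernel $|u|^{-\kappa-1}$ cut off away from the origin, which yields $\delta^{d-\kappa-1}$ (or $|\log\delta|$ at the endpoint). Optimizing $\delta=\tilde t_v^{1/2}$ gives the stated bound. The key point is that two different norms of $f$ ($\mathbf{I}\mathcal{D}f$ versus $\mathbf{I}f$) are used on the two pieces, something that becomes impossible after a global integration by parts.

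Two secondary issues also need attention. First, your claimed bound $|K_{v_0}(w)|\lesssim\langle v_0\rangle^{-1}|w|^{-\kappa-1}$ is not correct in general: the projection $\mathrm{Id}-\hat w\otimes\hat w$ in $a$ does not cancel the $\mathcal{O}_{v_0}$ from the derivative uniformly in the direction of $w$, and the honest isotropic bound is $\langle v_0\rangle|w|^{-\kappa-1}$. Second, when you ``set $v_0=v$'' inside $\mathbf{I}$ you are fixing the supremum direction, which yields only a lower bound on $\mathbf{I}$, not the upper bound required here. The paper sidesteps both issues by never unfolding $\mathbf{I}$.
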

		\begin{proof} {\color{red}It suffices to prove the case $m=0$, since the remaining cases are analogous. If $0\leq\kappa<d-1$, then}
			\begin{align*}
				\mathbf{I}	\left(\tilde a_{v_0}\star \mathcal{D}_{v_0}^{(v)} f\right)(t,x,v)&\lesssim  \int_{\mathbb{R}^d}\mathbf{I}f(t,x,v-u) |\mathcal{D}_{v_0}^{(v)}\tilde a_{v_0}(u)|\dif u\\
				&\lesssim \langle v_0\rangle\int_{\mathbb{R}^d}\mathbf{w}^{-1}(x,v-u) |u|^{-\kappa-1}\dif u\|f\|_{T}\\
				&\lesssim \langle v_0\rangle\langle x\rangle^{-\varkappa_1}\|f\|_{T}.
			\end{align*}
			{\color{red}Now suppose that $d-1\leq \kappa<d$. For $\delta\in(0,1)$, decompose}
			\begin{align*}
				\tilde a_{v_0}\star \mathcal{D}_{v_0}^{(v)} f(t,x,v)=&\int_{\mathbb{R}^d} \mathcal{D}_{v_0}^{(v)} f(t,x,v-u)	\tilde a_{v_0}(u)\dif u\\
				=&\int_{\mathbb{R}^d} \mathcal{D}_{v_0}^{(v)} f(t,x,v-u)	\tilde a_{v_0}(u)\chi_\delta(u)\dif u+\int_{\mathbb{R}^d} \mathcal{D}_{v_0}^{(v)} f(t,x,v-u)	\tilde a_{v_0}(u)(1-\chi_\delta(u))\dif u\\
				:=&\mathcal{A}_{1,v_0}(t,x,v)+\mathcal{A}_{2,v_0}(t,x,v).
			\end{align*}
			{\color{red}For the first term,}
			\begin{align*}
				\mathbf{I}\mathcal{A}_{1,v_0}(t,x,v)\big|_{v_0=v}&\lesssim \int_{\mathbb{R}^d}(\mathbf{I} \mathcal{D}_{v_0}^{(v)}f) (t,x,v-u)|\tilde a_{v_0}(u)|\chi_\delta(u)\dif u\Big|_{v_0=v}\\
				&\lesssim  \tilde t_v^{-\frac{1}{2}}\mathbf{w}^{-1}(x,v) \int_{\mathbb{R}^d}| a(u)|\chi_\delta(u)\dif u\|f\|_T\\
				&\lesssim \tilde t_v^{-\frac{1}{2}}\langle x\rangle^{-\varkappa_1}\delta^{d-\kappa}\,\|f\|_T.
			\end{align*}
			{\color{red}For $\mathbf{I}\mathcal{A}_2$, integration by parts gives}
			\begin{align*}
				\mathbf{I}\mathcal{A}_{2,v_0}(t,x,v)|_{v_0=v}&\lesssim \int_{\mathbb{R}^d}(\mathbf{I} f) (t,x,v-u)|\mathcal{D}_{v_0}^{(v)}(\tilde a_{v_0}(u)(1-\chi_\delta(u)))|\dif u\Big|_{v_0=v}\\
				&\lesssim  \int_{\mathbb{R}^d}\mathbf{w}^{-1}(x,v-u)|\mathcal{D}_{v_0}^{(v)}(\tilde a_{v_0}(u)(1-\chi_\delta(u)))|\dif u\Big|_{v_0=v}\|f\|_T\\
				&\lesssim \langle v\rangle\langle x\rangle^{-\varkappa_1}(\delta^{d-\kappa-1}+|\log\delta|\mathbf{1}_{\kappa=d-1})\|f\|_T.
			\end{align*}
			{Choosing $\delta=\tilde t_v^{1/2}$ proves \eqref{afl}.}\vspace{0.2cm}
		\end{proof}

		{\color{red}The next lemma uses the coefficient envelopes \(\Omega_a\) defined in \eqref{defom}. Set}
		\[
			\varkappa:=\varkappa_1+\varkappa_2,
			\qquad
			\eta_m:=\frac{|\gamma|m}{2s},
			\qquad
			\xi_m:=\eta_m+\frac{\kappa\varepsilon_0}{2s},
		\]
		and define the capped anisotropic increment
		\begin{equation}\label{def:truncated-increment}
			\mathfrak r_{\sigma,v}(z)
			:=1\wedge\ell_{\sigma,v}^v(z)
			=1\wedge\frac{[z]_v}{\tilde\sigma_v^{1/(2s)}}.
		\end{equation}
		{\color{red}For \(0\leq m\leq N_0\), define}
		\begin{align}\label{def:weighted-C-majorant}
			&\Omega_{\mathbf w,m}(x,v,z)
			:=\langle v\rangle^{-\eta_m}\Big(
			\mathbf 1_{\langle x\rangle\geq\langle v\rangle/2}
			\langle x\rangle^{-\varkappa_1}
			\Omega_{\varkappa_2-\eta_m}(v,z)+\mathbf 1_{\langle x\rangle<\langle v\rangle/2}
			\Omega_{\varkappa-\eta_m}(v,z)\Big),\\
		&
			\Omega_{\mathbf w}:=\sum_{m=0}^{N_0}\Omega_{\mathbf w,m}.
		\end{align}
		{\color{red}For the aligned finite difference of the coefficient, set}
		\begin{equation}\label{def:difference-C-majorant}
			\Omega_{\mathrm{diff},m}(v,z)
			:=\langle v\rangle^{-\xi_m}
			\Omega_{\varkappa-\xi_m}(v,z).
		\end{equation}
		{\color{red}Here \(\mathrm P_{\hat z}\) and \(\Pi_{\hat z}\) are defined in \eqref{defproj}.}
		\begin{lemma}\label{lemCf1}
			Assume that
			\[
				\varkappa_2-\eta_{N_0}>d,
				\qquad
				\varkappa-\xi_{N_0}>d.
			\]
			For every \(x,v\in\mathbb R^d\), \(t\in(0,T)\), and
			\(m\leq N_0\), one has
			\begin{align}
				\left|
				\nabla_{t,v_0}^{(x,v),m}\mathbf C_f(t,x,v,z)
				\big|_{v_0=v}
				\right|
				&\lesssim
				\Omega_{\mathbf w,m}(x,v,z)\|f\|_T,
				\label{Cfs1}\\
				\left|
				\delta_z^v\nabla_{t,v_0}^{(x,v),m}
				\mathbf C_f(t,x,v,z)\big|_{v_0=v}
				\right|
				&\lesssim
				\ell_{t,v}^v(z)^{\varepsilon_0}
				\Omega_{\mathrm{diff},m}(v,z)
				\|f\|_{T,*},
				\qquad 0<[z]_v\leq1.
				\label{delCfs2}
			\end{align}
		\end{lemma}
			\begin{proof}
			{\color{red}We begin with \eqref{Cfs1}. First, we record a hyperplane estimate encoded in the definition of $\mathbf I$. Let $F\geq0$ be continuous. For every $\theta\in\mathbb S^{d-1}$ and $u\in\mathbb R^d$,}
	\begin{align}
		\int_{\theta^\perp\cap B_{1/2}}
		F(x,u+h)
		\bigl(1+|h|^{1-\kappa}\bigr)
		\,\dif\mathcal H^{d-1}(h)
		\lesssim
		\mathbf I F(x,u).
		\label{eq:I-hyperplane-trace}
	\end{align}
	
	{\color{red}Indeed, in the supremum defining $\mathbf I$, choose the anisotropy parameter $v_0=R\theta$. Writing}
$
		h=h_\perp+a\theta,\ 
		h_\perp\in\theta^\perp,
$
	we have
$
		[h]_{R\theta}^2
		=
		|h_\perp|^2+\langle R\rangle^2a^2.
$
	{\color{red}After the change of variables $r=\langle R\rangle a$, the factor $\langle R\rangle$ in the definition of $\mathbf I$ cancels the Jacobian, and hence}
	\begin{align*}
		\mathbf I f(x,u)
		\geq
		\int_{|h_\perp|^2+r^2\leq1}
		&f\left(
		x,u+h_\perp+\frac{r}{\langle R\rangle}\theta
		\right)
		\bigl(|h_\perp|^2+r^2\bigr)^{-\kappa/2}
		\,\dif r\,\dif h_\perp.
	\end{align*}
	{Letting $R\to\infty$ and applying Fatou's lemma followed by Fubini's theorem, we obtain}
	\begin{align*}
		\mathbf I F(x,u)
		\gtrsim
		\int_{|h_\perp|\leq1}
		F(x,u+h_\perp)
		K_\kappa(|h_\perp|)
		\,\dif\mathcal H^{d-1}(h_\perp),\ \ \ \ \quad	K_\kappa(r)
		:=
		\int_{-\sqrt{1-r^2}}^{\sqrt{1-r^2}}
		(r^2+\tau^2)^{-\kappa/2}\,\dif\tau.
	\end{align*}
	{\color{red}For $0\leq r\leq1/2$,}
$
		K_\kappa(r)
		\gtrsim
		1+r^{1-\kappa}.
$
	{\color{red}For $0\leq\kappa\leq1$, this follows by integrating over $1/4\leq|\tau|\leq1/2$; for $1<\kappa<d$, it follows by integrating over $|\tau|\leq r$. This proves \eqref{eq:I-hyperplane-trace}.}
	
	{\color{red}We next derive the weighted hyperplane estimate used below. Suppose that, for some $\beta>\max\{d-1,d-\kappa\}$,}
	\begin{align}
		\mathbf I F(x,u)
		\lesssim 
		M\langle u\rangle^{-\beta},
		\qquad u\in\mathbb R^d.
		\label{eq:I-weighted-assumption}
	\end{align}
	{\color{red}Then}
	\begin{align}
		&\int_{\theta^\perp}
		F(x,v+w)|w|^{1-\kappa}
		\mathbf 1_{|w|\geq\rho}
		\,\dif\mathcal H^{d-1}(w)
		\notag\\
		&\quad\lesssim
		M\Big(
		\mathbf 1_{
			\langle\Pi_\theta v\rangle\geq\rho/100
		}
		\langle\Pi_\theta v\rangle^{1-\kappa}
		\langle\mathrm P_\theta v\rangle^{d-1-\beta}
		+
		\langle v,\rho\rangle^{d-\kappa-\beta}
		\Big).
		\label{eq:weighted-hyperplane-estimate}
	\end{align}
	{\color{red}To prove this estimate, choose a lattice $\Lambda_\theta\subset\theta^\perp$ and cover $\theta^\perp$ by fixed-radius balls centered at its lattice points, with uniformly bounded overlap. Estimate the ball centered at the origin using the weighted part of \eqref{eq:I-hyperplane-trace}. On every remaining ball centered at $\zeta\in\Lambda_\theta$, one has $|w|\sim |\zeta|$; the unweighted part of \eqref{eq:I-hyperplane-trace}, together with \eqref{eq:I-weighted-assumption}, then gives}
	\begin{align}
		&\int_{\theta^\perp}
		F(x,v+w)|w|^{1-\kappa}
		\mathbf 1_{|w|\geq\rho}
		\,\dif\mathcal H^{d-1}(w)
\lesssim
		M
		\sum_{\zeta\in\Lambda_\theta(\rho)}
		\langle\zeta\rangle^{1-\kappa}
		\langle v+\zeta\rangle^{-\beta},
		\label{eq:hyperplane-lattice-sum}
	\end{align}
	{\color{red}where no restriction is imposed on $\zeta$ when $\rho\leq4$, whereas $|\zeta|\geq\rho/2$ when $\rho>4$.}

	If
	$\langle\Pi_\theta v\rangle<\rho/100$, then every term in
	\eqref{eq:hyperplane-lattice-sum} satisfies
	\begin{align*}
		|\Pi_\theta v+\zeta|
		\gtrsim
		|\Pi_\theta v|+|\zeta|.
	\end{align*}
	{A dyadic summation in the $(d-1)$-dimensional hyperplane therefore gives}
	\begin{align*}
		\sum_{\zeta\in\Lambda_\theta(\rho)}
		\langle\zeta\rangle^{1-\kappa}
		\langle v+\zeta\rangle^{-\beta}
		\lesssim
		\langle v,\rho\rangle^{d-\kappa-\beta}.
	\end{align*}
	{\color{red}If $\langle\Pi_\theta v\rangle\geq\rho/100$, split the sum into}
	\begin{align*}
		|\Pi_\theta v+\zeta|
		\leq
		\frac12\langle\Pi_\theta v\rangle
		\qquad\text{\color{red}and}\qquad
		|\Pi_\theta v+\zeta|
		>
		\frac12\langle\Pi_\theta v\rangle.
	\end{align*}
	{\color{red}On the first region, $|\zeta|\asymp\langle\Pi_\theta v\rangle$, and summation in $\Pi_\theta v+\zeta\in\theta^\perp$ yields}
	\begin{align*}
		\sum_{
			|\Pi_\theta v+\zeta|
			\leq\frac12\langle\Pi_\theta v\rangle
		}
		\langle\zeta\rangle^{1-\kappa}
		\langle v+\zeta\rangle^{-\beta}
		\lesssim
		\langle\Pi_\theta v\rangle^{1-\kappa}
		\langle\mathrm P_\theta v\rangle^{d-1-\beta}.
	\end{align*}
	{\color{red}The preceding dyadic argument bounds the complementary region by $\langle v,\rho\rangle^{d-\kappa-\beta}$. This proves \eqref{eq:weighted-hyperplane-estimate}. In particular, the local cube $\zeta=0$ has been retained and estimated separately.}

	{\color{red}We now apply \eqref{eq:weighted-hyperplane-estimate} to $\mathbf C_f$. By the definition of the scaled derivatives,}
	\begin{align*}
		\left|
		\nabla_{t,v_0}^{(x,v),m}f(t,x,u)
		\big|_{v_0=v}
		\right|
		\lesssim
		\langle v\rangle^{-\eta_m}
		\langle u\rangle^{\eta_m}
		\left|\nabla_{t,u}^{(x,v),m}f(t,x,u)\right|.
	\end{align*}
	{\color{red}Moreover,}
	\[
		\mathbf w^{-1}(x,u)
		\leq\langle x\rangle^{-\varkappa_1}
		\langle u\rangle^{-\varkappa_2},
		\qquad
		\mathbf w^{-1}(x,u)\leq\langle u\rangle^{-\varkappa}.
	\]
	{\color{red}Consequently, the $Z_m$-component of $\|f\|_T$ gives}
	\begin{align}
		&\left(
		\mathbf I
		\left|
		\nabla_{t,v_0}^{(x,v),m}f
		\right|
		\right)(t,x,u)\Big|_{v_0=v}
		\lesssim
		\langle v\rangle^{-\eta_m}
		\Big(
		\mathbf 1_{\langle x\rangle\geq\langle v\rangle/2}
		\langle x\rangle^{-\varkappa_1}
		\langle u\rangle^{-(\varkappa_2-\eta_m)}
		+
		\mathbf 1_{\langle x\rangle<\langle v\rangle/2}
		\langle u\rangle^{-(\varkappa-\eta_m)}
		\Big)\|f\|_T.
		\label{eq:Cf-Zm-input}
	\end{align}
	{Both exponents in \eqref{eq:Cf-Zm-input} exceed $d$, since}
	\(
	\varkappa-\eta_m\geq\varkappa_2-\eta_m>d
	\).
	{Writing $\rho=|z|$ and $\theta=\hat z$, the definition of $\mathbf C_f$ yields}
	\begin{align*}
		&\left|
		\nabla_{t,v_0}^{(x,v),m}
		\mathbf C_f(t,x,v,z)
		\big|_{v_0=v}
		\right|\lesssim
		\int_{\theta^\perp}
		\left|
		\nabla_{t,v_0}^{(x,v),m}f(t,x,v+w)
		\big|_{v_0=v}
		\right|
		|w|^{1-\kappa}
		\mathbf 1_{|w|\geq\rho}
		\,\dif\mathcal H^{d-1}(w).
	\end{align*}
	{\color{red}Applying \eqref{eq:weighted-hyperplane-estimate} separately in the two regions of \eqref{eq:Cf-Zm-input}, with \(\beta=\varkappa_2-\eta_m\) and \(\beta=\varkappa-\eta_m\), respectively, we obtain}
	\begin{align*}
		\left|
		\nabla_{t,v_0}^{(x,v),m}
		\mathbf C_f(t,x,v,z)
		\big|_{v_0=v}
		\right|
		\lesssim
		\Omega_{\mathbf w,m}(x,v,z)\|f\|_T.
	\end{align*}
	{\color{red}This proves \eqref{Cfs1}.}

	{\color{red}We next prove the aligned finite-difference estimate \eqref{delCfs2}. In the Carleman integral, write \(u=v+w\), where \(w\perp z\). Then}
	\begin{align*}
		[z]_u=[z]_{v+w}=[z]_v,
		\qquad
		\tilde t_u^{-1/(2s)}
		=
		\tilde t_v^{-1/(2s)}
		\left(\frac{\langle u\rangle}{\langle v\rangle}\right)^{\kappa/(2s)}.
	\end{align*}
	{\color{red}Because the frozen derivatives have constant coefficients, both they and \(\delta_z^v\) commute with the Carleman integral. The \(Z_m^{\varepsilon_0}\)-component of \(\|f\|_{T,*}\), together with the preceding scaled-derivative comparison, therefore gives, for \(0<[z]_v\leq1\),}
	\begin{align*}
		&\left(
		\mathbf I
		\left|
		\delta_z^v\nabla_{t,v_0}^{(x,v),m}f
		\right|
		\right)(t,x,u)\Big|_{v_0=v}
		\lesssim
		\ell_{t,v}^v(z)^{\varepsilon_0}
		\langle v\rangle^{-\xi_m}
		\langle u\rangle^{-(\varkappa-\xi_m)}
		\|f\|_{T,*}.
	\end{align*}
	{\color{red}Since \(\varkappa-\xi_m>d\), applying \eqref{eq:weighted-hyperplane-estimate} with \(\beta=\varkappa-\xi_m\) proves \eqref{delCfs2}.}
		\end{proof}\par\medskip

        {As in Lemma~\ref{lemA1}, we estimate \(t\,\mathbf I\mathcal K(f)\), where $\mathcal{K}(f)$ is defined in \eqref{defK}. The term}
\[
    t\,\mathcal K(f)
\]
{\color{red}has scaling-critical size in the \(\|\cdot\|_T\)-norm. Lemma~\ref{lemcanc} gives \(\mathcal K(f)=c_{d,\gamma}\Lambda_v^{-d-\gamma}f\); thus the cancellation structure converts the singular integral into an operator of nonpositive order when $\gamma\in[-d,-2s]$, or into a velocity operator of order strictly less than $2s$ when $\gamma\in(-d-2s,-d)$. This structure reduces the small-time loss incurred by estimating \eqref{defK} directly. Consequently, $t\,\mathbf I\mathcal K(f)$ is lower order and gains a positive power of the time scale. We prove the following lemma for $\gamma\in[-d,-2s]$.}
\medskip
	\begin{lemma}
	\label{lemCfl}
	Assume that $\varkappa_2>\frac{|\gamma| N_0}{2s}+d$.  Suppose moreover that $-d-\gamma\leq 0$.
	Then, for every $0<t<T<1$ and every $m\leq N_0$,
	\begin{align}
	t	\left(
		\mathbf I
		\left|
		\nabla_{t,v_0}^{(x,v),m}\mathcal K(f)
		\right|
		\right)(t,x,v)\Big|_{v_0=v}
		\lesssim
	t	\langle x\rangle^{-\varkappa_1}
		\|f\|_T .
		\label{cance}
	\end{align}
\end{lemma}
\begin{proof}
	Fix $m\leq N_0$ and, with $v_0$ kept fixed, set
	\begin{align*}
		F_{m,v_0}
		:=
		\nabla_{t,v_0}^{(x,v),m}f.
	\end{align*}
	{\color{red}Since $\mathcal K$ acts only in the velocity variable and $\nabla_{t,v_0}^{(x,v),m}$ has constant coefficients for fixed $v_0$, Lemma \ref{lemcanc} gives}
	\begin{align}
		\nabla_{t,v_0}^{(x,v),m}\mathcal K(f)
		=
		c_{d,\gamma}\Lambda_v^{-d-\gamma} F_{m,v_0}.
		\label{eq:commute-cancellation}
	\end{align}
	{\color{red}If $a:=d+\gamma\in(0,d)$, then $\Lambda_v^{-a}$ is the Riesz potential}
	\begin{align*}
		\Lambda_v^{-a}F(v)
		=
		c_a\int_{\mathbb R^d}
		F(v-h)|h|^{a-d}\,\dif h.
	\end{align*}
	{\color{red}Therefore,}
	\begin{align}
		\left(
		\mathbf I
		\left|
		\Lambda_v^{-a}F_{m,v_0}
		\right|
		\right)(t,x,v)\Big|_{v_0=v}
		&\lesssim
		\langle x\rangle^{-\varkappa_1}
		\|f\|_T\langle v\rangle^{\frac{\gamma m}{2s}}\int_{\mathbb{R}^d} \langle v-h\rangle^{-\frac{\gamma m}{2s}}\mathbf{w}^{-1}(x,v-h)|h|^{a-d}\dif h\nonumber\\
        &\lesssim \langle x\rangle^{-\varkappa_1}
		\|f\|_T.
		\label{eq:Riesz-cancellation-bound}
	\end{align}
	{\color{red}To obtain the last inequality, split the $h$-integration into the regions $|h|\leq\langle v\rangle/2$, $|v-h|\leq\langle v\rangle/2$, and their complement, and use that the available velocity exponent exceeds $a$. The case $-d-\gamma=0$ follows immediately from $\Lambda_v^0F=F$.}
\end{proof}

{The bilinear estimates below contain two truncated increments at different
		cutoff scales: a first-order difference from the propagated kernel and a
		fractional difference from the source or coefficient. The following elementary
		radial estimate quantifies their interaction and provides the one-dimensional
		input for the anisotropic jump integrations.}
		\begin{lemma}\label{lemW}
			{\color{red}Let \(\alpha\in(0,2s)\) satisfy \(1+\alpha>2s\). If $0<\delta_1\leq \delta_2\leq 1$, then}
			\begin{align*}
				\int_0^1\left(\min\{\frac{\rho }{\delta_1},1\}\min\{\frac{\rho}{\delta_2},1\}^{\alpha}\right)\frac{\dif \rho}{\rho^{1+2s}}\sim 	\frac{1}{\delta_1^{2s-\alpha}\delta_2^{\alpha}}.
			\end{align*}
		\end{lemma}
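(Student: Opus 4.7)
The plan is to split the dyadic integral at the two break-points $\rho=\delta_1$ and $\rho=\delta_2$ where the respective minima toggle between their linear branch and the value $1$. On each piece the integrand collapses to a pure power of $\rho$ multiplied by an explicit $(\delta_1,\delta_2)$-prefactor, and elementary integration together with the numerical hypotheses $0<\alpha<2s$ and $1+\alpha>2s$ will deliver both the upper and the lower bound. Since $\delta_1\leq \delta_2$, set
\begin{equation*}
I_1=\int_0^{\delta_1},\qquad I_2=\int_{\delta_1}^{\delta_2},\qquad I_3=\int_{\delta_2}^{1},
\end{equation*}
so that the left-hand side equals $I_1+I_2+I_3$.

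On $(0,\delta_1]$ neither minimum saturates, so the integrand is $\rho^{1+\alpha}/(\delta_1\delta_2^\alpha\,\rho^{1+2s})$, and the condition $1+\alpha>2s$ makes the integral convergent at the origin, giving $I_1\sim \delta_1^{\alpha-2s}\delta_2^{-\alpha}$. On $[\delta_1,\delta_2]$ only the first minimum is saturated, producing $I_2=\delta_2^{-\alpha}\int_{\delta_1}^{\delta_2}\rho^{\alpha-2s-1}\,d\rho$; because $\alpha-2s<0$, the antiderivative evaluates to $(2s-\alpha)^{-1}(\delta_1^{\alpha-2s}-\delta_2^{\alpha-2s})\delta_2^{-\alpha}$, which is bounded by $\delta_1^{\alpha-2s}\delta_2^{-\alpha}$ and, when $\delta_1\ll\delta_2$, also bounded from below by a constant multiple of it. On $[\delta_2,1]$ both minima are saturated, hence $I_3=\int_{\delta_2}^{1}\rho^{-1-2s}\,d\rho\lesssim \delta_2^{-2s}=\delta_2^{\alpha-2s}\delta_2^{-\alpha}$, which is dominated by $\delta_1^{\alpha-2s}\delta_2^{-\alpha}$ using $\delta_1\leq \delta_2$ and $\alpha-2s<0$.

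Summing the three upper estimates yields $\lesssim \delta_1^{\alpha-2s}\delta_2^{-\alpha}=\delta_1^{-(2s-\alpha)}\delta_2^{-\alpha}$, and the matching lower bound is already supplied by $I_1$ alone (or, in the degenerate regime $\delta_1\sim \delta_2$, still by $I_1$, since $I_2$ then drops out). There is no real obstacle: the two hypotheses on $\alpha$ do exactly the work of guaranteeing integrability at the origin ($1+\alpha>2s$) and that $I_2$ is controlled by its lower endpoint rather than its upper endpoint ($\alpha<2s$); the remaining inequalities are routine once the interval is split at the correct thresholds.
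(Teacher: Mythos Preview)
Your proof is correct and follows essentially the same approach as the paper: split the integral at the two thresholds $\rho\sim\delta_1$ and $\rho\sim\delta_2$, evaluate each piece as a pure power, and observe that the first piece already matches the claimed size while the others are dominated by it. The only cosmetic difference is that the paper places the breakpoints at $\delta_1/4$ and $\delta_2/2$ rather than $\delta_1$ and $\delta_2$.
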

		\begin{proof}
			If $0<\alpha<2s$ and $1+\alpha>2s$, we have
			\begin{align*}
				&	\int_0^1\left(\min\{\frac{\rho}{\delta_1},1\}\min\{\frac{\rho}{\delta_2},1\}^{\alpha}\right)\frac{\dif \rho}{\rho^{1+2s}}\\&\sim
				\left(\int_{0}^{\delta_1/4}\frac{\rho^{1+\alpha}}{\delta_1\delta_2^\alpha}\frac{d \rho}{\rho^{1+2s}} +	\int_{\delta_1/4}^{\delta_2/2}\frac{\rho^{\alpha}}{\delta_2^{\alpha}}\frac{\dif \rho}{\rho^{1+2s}} +	\int_{\delta_2/2}^1\frac{\dif \rho}{\rho^{1+2s}} \right)
				\\&\sim
				\frac{\delta_1^{1+\alpha-2s}}{\delta_1\delta_2^\alpha}	 +	\frac{\delta_1^{\alpha-2s}}{\delta_2^{\alpha}} + \delta_2^{-2s}\sim  \frac{\delta_1^{\alpha-2s}}{\delta_2^{\alpha}}.
			\end{align*}
			{\color{red}This proves the lemma.}
			\end{proof}\par\medskip
            {We now combine the preceding radial calculation with the anisotropic
			coefficient envelopes $\Omega_a$ and $\Omega_{\mathbf w}$. The next lemma
			converts the two finite-difference cutoffs into the precise time powers used
			in the local bilinear estimates, while retaining the velocity decay
			$\langle v\rangle^{-\kappa}$. It also records the capped-moment and
			finite-rate variants needed for the remainder terms.}
					\begin{lemma}
				\label{lem:global-Omega-integration}
				Let \(s\in(0,1)\) and \(0<t,\tau\leq1\).  If
				\(a>d+2s\), \(0<b<2s\), and \(1+b>2s\), then
				\begin{align}
					&\int_{\mathbb R^d}
					\mathfrak r_{t,v}(z)\mathfrak r_{\tau,v}(z)^b
					\big(\Omega_a(v,z)+\Omega_a(v-z,z)\big)
					\frac{\dif z}{|z|^{d+2s}}
				\lesssim
					t^{-\frac{2s-b}{2s}}
				\tau^{-\frac{b}{2s}}.
					\label{eq:global-Omega-unweighted}
				\end{align}
				{If, in addition,}
				\(\varkappa_2-|\gamma|N_0/(2s)>d+2s\), then
				\begin{align}
					&\int_{\mathbb R^d}
					\mathfrak r_{t,v}(z)\mathfrak r_{\tau,v}(z)^{\varepsilon_0}
					\big(\langle v\rangle^{-\varkappa}
					+\langle v-z\rangle^{-\varkappa}\big)
					\Omega_{\mathbf w}(x,v,z)
					\frac{\dif z}{|z|^{d+2s}}
					\lesssim
				t^{-\frac{2s-\varepsilon_0}{2s}}
				\tau^{-\frac{\varepsilon_0}{2s}}
					\mathbf w^{-1}(x,v).
					\label{eq:global-Omega-weighted}
				\end{align}
				{For \(s<b<\min\{1,2s\}\), one also has}
				\begin{align}
					&\int_{|z|\leq1}|z|^b\mathfrak r_{t,v}(z)^b
					\big(\Omega_a(v,z)+\Omega_a(v-z,z)\big)
					\frac{\dif z}{|z|^{d+2s}}
					\lesssim
					\langle v\rangle^{-\kappa}
					\tilde t_v^{-1+\frac b{2s}}.
					\label{eq:global-Omega-capped-moment}
				\end{align}
				{Finally, for each fixed \(r_0\in(0,1)\),}
				\begin{align}
					\sup_{v\in\mathbb R^d}\int_{|z|\geq r_0}
					\big(\Omega_a(v,z)+\Omega_a(v-z,z)\big)
					\frac{\dif z}{|z|^{d+2s}}
					\lesssim_{a,r_0}1.
					\label{eq:finite-large-jump-rate}
				\end{align}
			\end{lemma}
			\begin{proof}
				{Write \(z=\rho\theta\). Since \([\rho\theta]_v\sim\rho\langle v\cdot\theta\rangle\), Lemma~\ref{lemW} gives}
				\begin{align}
					\int_0^1
					\mathfrak r_{t,v}(\rho\theta)
					\mathfrak r_{\tau,v}(\rho\theta)^b
					\frac{\dif\rho}{\rho^{1+2s}}
					\lesssim
					\langle v\cdot\theta\rangle^{2s}
					\tilde t_v^{-\frac{2s-b}{2s}}
					\tilde\tau_v^{-\frac{b}{2s}}\lesssim \langle v\cdot\theta\rangle^{2s} \langle v\rangle^{\kappa}t^{-\frac{2s-b}{2s}}
					\tau^{-\frac{b}{2s}}.
					\label{eq:radial-Carleman-bound}
				\end{align}
				{If the two intrinsic radii occur in the opposite order, splitting at both radii and using \(1+b>2s\) gives the same upper bound. The angular reduction is}
				\[
					\int_{\mathbb S^{d-1}}
					\langle v\cdot\theta\rangle^{2s+d-1-a}
					\langle\Pi_\theta v\rangle^{1-\kappa}\,\dif\theta
					+\langle v\rangle^{d-\kappa-a}
					\int_{\mathbb S^{d-1}}
					\langle v\cdot\theta\rangle^{2s}\,\dif\theta
					\lesssim\langle v\rangle^{-\kappa},
				\]
				{for \(a>d+2s\). This proves the estimates on \(|z|\leq1\), where \(\Omega_a(v-z,z)\lesssim\Omega_a(v,z)\) and \(\langle v-z\rangle\sim\langle v\rangle\). The same polar-coordinate argument, now applied to \(|z|^b\mathfrak r_{t,v}(z)^b\), proves \eqref{eq:global-Omega-capped-moment}; the condition \(b>s\) is precisely the integrability condition at the origin.}

				{\color{red}On \(|z|>1\), both cutoff factors equal one. Splitting the radial integral at \(\rho=(v\cdot\theta)/2\) and \(\rho=2(v\cdot\theta)\), and using \(\Pi_\theta(v-\rho\theta)=\Pi_\theta v\), gives, for \(\beta>0\),}
				\begin{align*}
					\int_{|z|>1}
					\frac{\Omega_a(v,z)+\Omega_a(v-z,z)}{|z|^{d+2s}}\,\dif z
					&\lesssim\langle v\rangle^{-\kappa},\\
					\int_{|z|>1}
					\big(\langle v\rangle^{-\beta}+\langle v-z\rangle^{-\beta}\big)
					\Omega_a(v,z)\frac{\dif z}{|z|^{d+2s}}
					&\lesssim\langle v\rangle^{-\kappa-\min\{a,\beta\}}.
				\end{align*}
				{\color{red}For the second line, further split into \(|z|\leq\langle v\rangle/2\), \(|v-z|\geq c\langle v\rangle\), and \(|v-z|<c\langle v\rangle\), with \(c<1/200\). In the last region, the indicator part of \(\Omega_a\) vanishes, while the remaining part is integrable with the stated decay. On the fixed annulus \(r_0\leq|z|\leq1\), the preceding angular bound without the factor \(\langle v\cdot\theta\rangle^{2s}\) is uniform. Together with the first tail bound, this proves \eqref{eq:finite-large-jump-rate}.}

				{\color{red}The radial and tail bounds prove \eqref{eq:global-Omega-unweighted}. Applying them with \(b=\varepsilon_0\), summing over \(0\leq j\leq N_0\), and taking \((a,\beta)=(\varkappa_2-\eta_j,\varkappa)\) or \((\varkappa-\eta_j,\varkappa)\) proves \eqref{eq:global-Omega-weighted}.}
			\end{proof}
			\par\medskip\noindent
	{\color{red}We now use Corollary~\ref{lemcoeff1} and Lemma~\ref{lem:global-Omega-integration} to prove the principal bilinear estimate for local well-posedness. Recall the operators defined in \eqref{defsv0}--\eqref{intg2}. The key term is}
\[
    \mathcal S_{v_0}(f_1,f_2)(t,\tau,x,v)
    =
    \bigl(H_\mu^{v_0}(t)\ast \mathcal Q_s(f_1,f_2)(\tau)\bigr)(x-tv,v),
\]
{\color{red}which represents the contribution of the nonlinear collision operator propagated by the frozen linear kernel. The bounds below control this bilinear Duhamel term in the critical norms and close the fixed-point argument.}
		\begin{lemma}\label{lemnew}
			{\color{red}Let $T\in(0,1)$. For any $t,\tau\in(0,T)$ and $m,k\in\mathbb{N}$ with $m\leq N_0$,}
			\begin{align}\label{Sh12a}
				&	\|\mathcal{S}_{v_0} (h_1,h_2)(t,\tau)\|_{\tilde Y_{m}^{\lfloor s\rfloor+k}}+	\|\mathcal{S}_{v_0} (h_1,h_2)(t,\tau)\|_{\tilde Z_{m}^{\lfloor s\rfloor+k}}
				\lesssim \frac{\langle \tau/t\rangle^\frac{k}{2s}}{(t+\tau)^\frac{\varepsilon_0}{2s}\min\{t,\tau\}^{1-\frac{\varepsilon_0}{2s}}} 	\|h_1\|_T	\|h_2\|_{T},\\
				\label{Sh12b}
				&	\|\mathcal{S}_{v_0} (h,\mu)(t,\tau)\|_{\tilde Y_{m}^{\lfloor s\rfloor+k}}+	\|\mathcal{S}_{v_0} (h,\mu)(t,\tau)\|_{\tilde Z_{m}^{\lfloor s\rfloor+k}}
				\lesssim  \frac{\langle \tau/t\rangle^\frac{k}{2s}}{(t+\tau)^\frac{\varepsilon_0}{2s}\min\{t,\tau\}^{1-\frac{\varepsilon_0}{2s}}} 	T^{\sigma_0}\|h\|_T,\\
				\label{Sh12c}
				&	\|\mathcal{S}_{2,v_0} (\mu,h)(t,\tau)\|_{\tilde Y_{m}^{\lfloor s\rfloor+k}}+	\|\mathcal{S}_{2,v_0} (\mu,h)(t,\tau)\|_{\tilde Z_{m}^{\lfloor s\rfloor+k}}
				\lesssim   \frac{\langle \tau/t\rangle^\frac{k}{2s}}{(t+\tau)^\frac{\varepsilon_0}{2s}\min\{t,\tau\}^{1-\frac{\varepsilon_0}{2s}}} 	T^{\sigma_0}\|h\|_T,
			\end{align}
			{\color{red}where the norms $\|\cdot \|_{\tilde Y_{m}^k}$ and $\|\cdot \|_{\tilde Z_{m}^k}$ are defined in \eqref{tilYntt}.}
		\end{lemma}
        \begin{remark}
          {Estimate \eqref{Sh12a} does not imply either \eqref{Sh12b} or \eqref{Sh12c}, because the Maxwellian does not decay in $x$ and therefore cannot be controlled by the full norm $\|\mu\|_T$. In \eqref{Sh12b} and \eqref{Sh12c}, however, at least one derivative falls on the Maxwellian, allowing us to use the non-endpoint norm $\|\mu\|_{T,*}$. The smallness estimate \eqref{musm} then gives $\|\mu\|_{T,*}\lesssim T^{\sigma_0}$.}
        \end{remark}
		\begin{proof}[Proof of Lemma \ref{lemnew}]
			{\textbf{Landau case ($s=1$).}}
			{\color{red}Since $\mathcal{D}_{v_0}^{(v)}=\mathcal{O}_{v_0}\nabla_v$, we may write}
			\begin{align*}
				\mathcal{Q}_{1,\mathrm{main}}(h_1,h_2)=\mathcal{D}_{v_0}^{(v)}(\tilde a_{v_0}\star h_1(v)\mathcal{D}_{v_0}^{(v)}h_2(v)), \ \ \ \ 	\mathcal{Q}_{1,\mathrm{rem}}(h_1,h_2)=-\mathcal{D}_{v_0}^{(v)}(\tilde a_{v_0}\star \mathcal{D}_{v_0}^{(v)}h_1(v)h_2(v)).
			\end{align*}
			{\color{red}Here $\tilde a_{v_0}(v)=\mathcal{O}_{v_0}^{-1}a(v)\mathcal{O}_{v_0}^{-1}$.}
			{\color{red}Set $G_{1,v_0}=((\tilde a_{v_0}\star h_1)\mathcal{D}_{v_0}^{(v)} h_2)(\tau)$ and $G_{2,v_0}=((\tilde a_{v_0}\star \mathcal{D}_{v_0}^{(v)}h_1) h_2)(\tau)$, and define}
			\begin{equation*}
				\begin{aligned}
					\|G_{i,v_0}\|_{\mathbf{X}_{N_0}}:=\sum_{m=0}^{N_0}&\sup_{\tau\in(0,T)}\left(\|\tilde \tau_{v_0}^\frac{1}{2}\langle v_0\rangle^{\kappa}G_{i,v_0}(\tau)\|_{\tilde Y_{m}}+	\|\tilde \tau_{v_0}^\frac{1}{2}\langle v_0\rangle^{\kappa}G_{i,v_0}(\tau)\|_{\tilde Z_{m}}\right.\\
					&\quad\quad\quad+\left.	\|\tilde \tau_{v_0}^\frac{1}{2}\langle v_0\rangle^{\kappa}G_{i,v_0}(\tau)\|_{\tilde Y_{m}^{\varepsilon_0}}+	\|\tilde \tau_{v_0}^\frac{1}{2}\langle v_0\rangle^{\kappa}G_{i,v_0}(\tau)\|_{\tilde Z_{m}^{\varepsilon_0}}\right).
				\end{aligned}
			\end{equation*}
			{\color{red}We claim that}
			\begin{equation}\label{esGi}
				\begin{aligned}
					\|G_{1,v_0}\|_{\mathbf{X}_{N_0}}\lesssim\|h_1\|_T \|h_2\|_{T,*},\quad\quad\quad 	\|G_{2,v_0}\|_{\mathbf{X}_{N_0}}\lesssim \|h_1\|_{T,*}\|h_2\|_{T}.
				\end{aligned}
			\end{equation}
			{\color{red}We first show that \eqref{esGi} implies \eqref{Sh12a} and \eqref{Sh12c}. Fix $m,k\in\mathbb{N}$ with $m\leq N_0$, and suppose first that $0<t<\tau<T$. Set $\tilde x=x-tv$. Lemma \ref{Hmu} and integration by parts give}
			\begin{equation}\label{dsf}
				\begin{aligned}
					&	|\nabla_{t,v_0}^{(x,v),m}\mathcal{D}_{t,v_0}^{(v),k}{\mathcal{S}}_{i,v_0} (h_1,h_2)|(t,\tau,x,v)\Big|_{v_0=v}\\
					&\lesssim \left| \iint_{\mathbb{R}^{2d}}\mathcal{D}_{t,v_0}^{(x,v),k}\mathcal{D}_{v_0}^{(v)}	H_\mu^{v_0}(t, y,u)\nabla_{t,v_0}^{(x,v),m} G_{i,v_0}(\tau,\tilde x-y,v-u)\dif y \dif u\Big|_{v_0=v}\right|\\
					&\lesssim \tilde t_{v}^{-\frac{1}{2}} \iint_{\mathbb{R}^{2d}} \Phi_{t,v_0}(y,u)|\nabla_{t,v_0}^{(x,v),m} \delta^v_{u}G_{i,v_0}|(\tau,\tilde x-y,v)\dif y \dif u\Big|_{v_0=v},
				\end{aligned}
			\end{equation} 
		{\color{red}where the last inequality uses}
			\begin{align*}
				\int_{\mathbb{R}^{d}}\mathcal{D}_{t,v_0}^{(v),k}\mathcal{D}_{v_0}^{(v)}	H_\mu^{v_0}(t,y,u)\, \dif u=0.
			\end{align*}
			{\color{red}If instead $0<\tau<t<T$, then \eqref{Hmues} gives}
			\begin{equation}\label{essf}
				\begin{aligned}
					&|\nabla_{t,v_0}^{(x,v),m}\mathcal{D}_{t,v_0}^{(v),k}	{\mathcal{S}}_{i,v_0} (h_1,h_2)|(t,\tau,x,v)\Big|_{v_0=v}\\
					&\quad\quad\lesssim  \iint_{\mathbb{R}^{2d}}|\nabla_{t,v_0}^{(x,v),m}\mathcal{D}_{t,v_0}^{(x,v), k} \mathcal{D}_{v_0}^{(v)} 	H_\mu^{v_0}|(t,\tilde x-y,v-u)\,| G_{i,v_0}(\tau,y,u)|\dif y \dif u\Big|_{v_0=v}\\
					&\quad\quad\lesssim \tilde t_v^{-\frac{1}{2}}\iint_{\mathbb{R}^{2d}}	\Phi_{t,v_0}(y,u)|G_{i,v_0}(\tau,\tilde x-y,v-u)|\dif y \dif u\Big|_{v_0=v}.
				\end{aligned}
			\end{equation}
			{\color{red}Applying Lemma \ref{le11} to \eqref{dsf} and \eqref{essf}, we obtain}
			\begin{align}\label{Si1}
				\|{\mathcal{S}}_{i,v_0} (h_1,h_2)(t,\tau)\|_{\tilde Y_{m}^k\cap\tilde Z_{m}^k}	\lesssim( t^{-1}(t/\tau)^{\frac{1+\varepsilon_0-k}{2}}\mathbf{1}_{t<\tau}+t^{-\frac{1}{2}}\tau^{-\frac{1}{2}}\mathbf{1}_{\tau<t})\|G_{i,v_0}\|_{\mathbf{X}_{N_0}}.
			\end{align}	
			{Together with \eqref{esGi}, this proves \eqref{Sh12a} and \eqref{Sh12c}.}
			
			{\color{red}It remains to prove \eqref{esGi}. Lemmas \ref{lemcoeff} and \ref{lemA1} imply that, for every $m\leq N_0$,}
			\begin{align*}
				|\nabla^{(x,v),m}_{\tau,v_0} G_{1,v_0}|(\tau,x,v)\big|_{v_0=v}&\lesssim \sum_{m_1=0}^m\left( |\nabla_{\tau,v_0}^{(x,v),m_1}(\tilde a_{v_0}\star h_1)|(\tau,x,v)\, |\nabla_{\tau,v}^{(x,v),m-m_1}\mathcal{D}_{v_0}^{(v)}h_2|(\tau,x,v)\right)\Big|_{v_0=v}\\
				&\lesssim \langle v\rangle^{-\kappa}\mathbf{w}^{-1}(x,v)\tilde \tau_v^{-\frac{d-\kappa+1}{2}}\|h_1\|_T\|h_2\|_{T,*},
			\end{align*}
			and 
			\begin{align}
				|\nabla^{(x,v),m}_{\tau,v_0} {G}_{2,v_0}|(\tau,x,v)\big|_{v_0=v}&\lesssim \sum_{m_1=0}^m\left( |\nabla_{\tau,v_0}^{(x,v),m_1}(\tilde a_{v_0}\star \mathcal{D}_{v_0}^{(v)} h_1)| (\tau,x,v)\, |\nabla_{\tau,v_0}^{(x,v),m-m_1}h_2| (\tau,x,v)\right)\Big|_{v_0=v}\nonumber\\
				&\lesssim\langle v\rangle^{-\kappa}\mathbf{w}^{-1}(x,v)\tilde \tau_v^{-\frac{d-\kappa+1}{2}}\min\{\|h_1\|_{T}\|h_2\|_{T,*},\|h_1\|_{T,*}\|h_2\|_{T}\}.
				\label{G2}	\end{align}
			{\color{red}For the nonlocal norms, Lemma \ref{lemi1i2} gives}
			\begin{align*}
				|\mathbf{I}\nabla^{(x,v),m}_{\tau,v_0} G_{1,v_0} |(\tau,x,v)\big|_{v_0=v}\lesssim&\sum_{m_1=0}^m \Big(\sup_{|w|\leq 1}|\nabla^{(x,v),m_1}_{\tau,v_0}(\tilde a_{v_0}\star h_1)|(\tau,x,v-w) \,|\mathbf{I} \nabla^{(x,v),m-m_1}_{\tau,v_0}\mathcal{D}_{v_0}^{(v)} h_2|(\tau,x,v)\Big)\Big|_{v_0=v}\\
				\lesssim & \langle v\rangle^{-\kappa}\mathbf{w}^{-1}(x,v)\tilde \tau_v^{-\frac{1}{2}}	\|h_1\|_T	\|h_2\|_{T,*},
			\end{align*}
		and
			\begin{align*}
				|\mathbf{I}\nabla^{(x,v),m}_{\tau,v_0}G_{2,v_0} |(\tau,x,v)\big|_{v_0=v}\lesssim&\sum_{m_1=0}^m\Big(\sup_{|w|\leq 1}|\nabla^{(x,v),m_1}_{\tau,v_0}(\tilde a_{v_0}\star \mathcal{D}_{v_0}^{(v)}h_1)|(\tau,x,v-w)\, |\mathbf{I} \nabla^{(x,v),m-m_1}_{\tau,v_0} h_2|(\tau,x,v)\Big)\Big|_{v_0=v}\\
				\lesssim &\langle v\rangle^{-\kappa}\mathbf{w}^{-1}(x,v)\tilde \tau_v^{-\frac{1}{2}}	\|h_1\|_{T,*}	\|h_2\|_{T}.
			\end{align*}
			{These estimates prove \eqref{esGi}, and hence \eqref{Sh12a} and \eqref{Sh12c}, when $s=1$. It remains to prove \eqref{Sh12b} for $\mathcal{S}_{v_0}(h,\mu)$. Equations \eqref{esGi} and \eqref{Si1} yield}
			\begin{equation}\label{s1hmu}
				\begin{aligned}
					\|\mathcal{S}_{1,v_0}(h,\mu)(t,\tau )\|_{\tilde Y^k_{m}\cap \tilde Z^k_{m}}&\lesssim \frac{\langle \tau/t\rangle^\frac{k}{2s}}{(t+\tau)^\frac{\varepsilon_0}{2s}\min\{t,\tau\}^{1-\frac{\varepsilon_0}{2s}}} \|h\|_T\|\mu\|_{T,*}\\
					&\lesssim \frac{\langle \tau/t\rangle^\frac{k}{2s}}{(t+\tau)^\frac{\varepsilon_0}{2s}\min\{t,\tau\}^{1-\frac{\varepsilon_0}{2s}}}T^{\sigma_0} \|h\|_T.
				\end{aligned}
			\end{equation}
			{\color{red}It remains to estimate $\mathcal{S}_{2,v_0}(h,\mu)$. Observe that}
			\begin{align*}
				\mathcal{S}_{2,v_0}(h,\mu)(t,\tau,x,v)&=(H_\mu^{v_0}(t)\ast \mathcal{Q}_{s,\mathrm{rem}}(h,\mu)(\tau))(\tilde x,v)\\
				&=-(\mathcal{D}_{v_0}^{(v)}H_\mu^{v_0}(t)\ast G_{3,v_0}(\tau))(\tilde x,v),
			\end{align*}
			{\color{red}where $G_{3,v_0}=(\tilde a_{v_0}\star \mathcal{D}_{v_0}^{(v)}h)(\tau) \mu$. As in \eqref{Si1}, it suffices to estimate $\|G_{3,v_0}\|_{\mathbf{X}_{N_0}}$. Equations \eqref{G2} and \eqref{musm} give}
			\begin{equation}\label{G31}
				\begin{aligned}
					|\nabla^{(x,v),m}_{\tau,v_0} {G}_{3,v_0}|(\tau,x,v)\big|_{v_0=v}
					&\lesssim\langle v\rangle^{-\kappa}\mathbf{w}^{-1}(x,v)\tilde \tau_v^{-\frac{d-\kappa+1}{2}}\|h\|_{T}\|\mu\|_{\tau,*}\\
					&\lesssim\langle v\rangle^{-\kappa}\mathbf{w}^{-1}(x,v)\tilde \tau_v^{-\frac{d-\kappa+1}{2}}\tau^{\sigma_0}\|h\|_{T}.
				\end{aligned}
			\end{equation}
			{\color{red}For the nonlocal component, Lemma \ref{lemi1i2} gives}
			\begin{align*}
				|\mathbf{I}\nabla^{(x,v),m}_{\tau,v_0}G_{3,v_0} |(\tau,x,v)\big|_{v_0=v}\lesssim& \, |\mathbf{I}\nabla^{(x,v),m}_{\tau,v_0}(\tilde a_{v_0}\star \mathcal{D}_{v_0}^{(v)}h)|(\tau,x,v)\,  \mu(v)\Big|_{v_0=v}\\
				&+	\sum_{m_1=1}^m\Big(\sup_{|w|\leq 1}|\nabla^{(x,v),m_1}_{\tau,v_0}(\tilde a_{v_0}\star \mathcal{D}_{v_0}^{(v)}h)|(\tau,x,v-w)\, |\mathbf{I} \nabla^{(v),m-m_1}_{\tau,v_0}\mu|(v)\Big)\Big|_{v_0=v}\\
				:=&\mathcal{A}_1+\mathcal{A}_2.
			\end{align*}
			{Lemma \ref{lemA1} yields}
			\begin{align*}
				\mathcal{A}_1\lesssim \langle v\rangle^{-\kappa}\mathbf{w}^{-1}(x,v)\tilde \tau_v^{-\frac{1}{2}}	(\tau^{\frac{d-\kappa}{2}}+\mathbf{1}_{\kappa=d-1}\tau^{\frac{1}{2}}|\log \tau|)	\|h\|_{T}.
			\end{align*}
				{\color{red}Moreover,}
			\begin{align*}
				\mathcal{A}_2		\lesssim &\langle v\rangle^{-\kappa}\mathbf{w}^{-1}(x,v)	\|h\|_{T}.
			\end{align*}
			{\color{red}Therefore,}
			\begin{align*}
				|\mathbf{I}\nabla^{(x,v),m}_{\tau,v_0}G_{3,v_0} |(\tau,x,v)\big|_{v_0=v}\lesssim&\langle v\rangle^{-\kappa}\mathbf{w}^{-1}(x,v)\tilde \tau_v^{-\frac{1}{2}}	\tau^{\min\{\frac{d-\kappa}{2},\frac{1}{2}\}}	\|h\|_{T}.
			\end{align*}
			{\color{red}Combining this estimate with \eqref{G31}, we obtain, for $0<t<\tau<T$,}
			\begin{align*}
				\|{\mathcal{S}}_{2,v_0} (h,\mu)(t,\tau)\|_{\tilde Y_{m}^k}	+\|{\mathcal{S}}_{2,v_0} (h,\mu)(t,\tau)\|_{\tilde Z_{m}^k}	\lesssim \frac{\langle \tau/t\rangle^\frac{k}{2s}}{(t+\tau)^\frac{\varepsilon_0}{2s}\min\{t,\tau\}^{1-\frac{\varepsilon_0}{2s}}} 	T^{\sigma_0}\|h\|_T.
			\end{align*}
			{Together with \eqref{s1hmu}, this proves \eqref{Sh12b} and completes the Landau case.}\par\medskip
			{\color{red}\bf Boltzmann case: $s\in(0,1)$.} {As in the Landau case, the bilinear estimates require integration by parts. Moving a derivative from the nonlinear term to the kernel replaces the nonintegrable singularity $\tau^{-1}$ by $t^{-\varepsilon}\tau^{-1+\varepsilon}$ for some $\varepsilon\in(0,1)$. Redistributing derivatives between the quadratic factors also closes the estimates without derivative loss. In the Boltzmann case, both steps rely on the symmetry of the collision operator. A change of variables gives the following identities for $\mathcal{Q}_{s,\mathrm{main}}$ and $\mathcal{Q}_{s,\mathrm{rem}}$ defined in \eqref{qsmr}:}
			\begin{align}\nonumber
				&\int_{\mathbb{R}^d} g(v)\mathcal{Q}_{s,\mathrm{main}}(h_1,h_2)(v)\,\dif v\\
				&=-\frac{1}{2}\iint_{\mathbb{R}^{2d}}\delta_z^v(g(v)\mathbf{C}_{h_1}(v,z)) \delta_z^v h_2(v)		\mathbf{1}_{|z|\leq 1} \frac{\dif z \dif v}{|z|^{d+2s}} -\iint_{\mathbb{R}^{2d}}g(v)\mathbf{C}_{h_1}(v,z)\delta_z^v h_2(v)		\mathbf{1}_{|z|> 1} \frac{\dif z \dif v}{|z|^{d+2s}}\nonumber\\
				&=-\frac{1}{2}\iint_{\mathbb{R}^{2d}}\delta_z^vg(v)\delta_z^vh_2(v) \mathbf{C}_{h_1}(v,z) 	\mathbf{1}_{|z|\leq 1}\frac{\dif z \dif v}{|z|^{d+2s}} -\frac{1}{2}\iint_{\mathbb{R}^{2d}}g(v)\delta_{-z}^vh_2(v)\delta_{-z}^v \mathbf{C}_{h_1}(v,z) 	\mathbf{1}_{|z|\leq 1}\frac{\dif z \dif v}{|z|^{d+2s}}\nonumber\\
				&\quad\quad -\iint_{\mathbb{R}^{2d}}g(v)\delta_z^vh_2(v)\mathbf{C}_{h_1}(v,z)	\mathbf{1}_{|z|> 1} \frac{\dif z \dif v}{|z|^{d+2s}},\label{z18}
			\end{align}	
			and 
			\begin{align}
				&\int_{\mathbb{R}^d} g(v)\mathcal{Q}_{s,\mathrm{rem}}(h_1,h_2)(v)\,\dif v\nonumber\\
				&=\frac{1}{2}
				\iint_{\mathbb{R}^{2d}}\delta_z^v (g(v)h_2(v))\delta_z\mathbf{C}_{h_1}(v,z)\mathbf{1}_{|z|\leq 1}\frac{\dif z \dif v}{|z|^{d+2s}}+	\iint_{\mathbb{R}^{2d}}g(v)h_2(v)\delta_z^v\mathbf{C}_{h_1}(v,z)\mathbf{1}_{|z|> 1}\frac{\dif z \dif v}{|z|^{d+2s}}\nonumber\\
				&=\frac{1}{2}	\iint_{\mathbb{R}^{2d}} \delta_z^vg(v)h_2(v)\delta_z^v\mathbf{C}_{h_1}(v,z)\mathbf{1}_{|z|\leq 1}\frac{\dif z \dif v}{|z|^{d+2s}}+\frac{1}{2}	\iint_{\mathbb{R}^{2d}} g(v)\delta_{-z}^vh_2(v)\delta_{-z}^v\mathbf{C}_{h_1}(v,z)\mathbf{1}_{|z|\leq 1}\frac{\dif z \dif v}{|z|^{d+2s}}\nonumber\\
				&\quad\quad +\iint_{\mathbb{R}^{2d}}g(v)h_2(v)\delta_z^v\mathbf{C}_{h_1}(v,z)\mathbf{1}_{|z|> 1}\frac{\dif z \dif v}{|z|^{d+2s}}.\label{z19}
			\end{align}
			{\color{red}Define}
			\begin{align*}
				&	\mathcal{N}_1(\tau,y,u;z)=\delta_z^v	h_2(\tau,y,u)	\mathbf{C}_{h_1(\tau,y)}(u,z),\quad\quad 	\mathcal{N}_2(\tau,y,u;z)= h_2(\tau,y,u)\delta_z^v\mathbf{C}_{h_1(\tau,y)}(u,z),	\\
				&\mathcal{N}_3(\tau,y,u;z)= \frac{1}{2}\delta^v_{-z} h_2(\tau,y,u)\delta_{-z}^v\mathbf{C}_{h_1(\tau,y)}(u,z).
			\end{align*}
			{Equations \eqref{z18} and \eqref{z19} give}
			\begin{align*}
				&		\mathcal{S}_{1,v_0}(h_1,h_2) (t,\tau,x,v)=-\mathcal{Y}_{1,v_0}(h_1,h_2)(t,\tau,x,v)-\mathcal{Y}_{3,v_0}(h_1,h_2)(t,\tau,x,v),\\
				&		\mathcal{S}_{2,v_0}(h_1,h_2) (t,\tau,x,v)=\mathcal{Y}_{2,v_0}(h_1,h_2)(t,\tau,x,v)+\mathcal{Y}_{3,v_0}(h_1,h_2)(t,\tau,x,v),
			\end{align*}
			{\color{red}where $\mathcal{Y}_{i,v_0}(h_1,h_2)$ are defined by}
			\begin{align*}
				\mathcal{Y}_{1,v_0}(t,\tau,x,v) &=\iint_{\mathbb{R}^{2d}}\int_{\mathbb{R}^d}\mathbf{H}^{v_0}(t,\tilde x-y,v-u;z) \mathcal{N}_1(\tau,y,u;z)\frac{\dif z}{|z|^{d+2s}}\dif y\dif u,\\
				\mathcal{Y}_{2,v_0}(t,\tau,x,v)&=\iint_{\mathbb{R}^{2d}}\int_{\mathbb{R}^d}\mathbf{H}^{v_0}(t,\tilde x-y,v-u;z)\mathcal{N}_2(\tau,y,u;z )\frac{\dif z}{|z|^{d+2s}}\dif y\dif u,\\
				\mathcal{Y}_{3,v_0}(t,\tau,x,v)&=\iint_{\mathbb{R}^{2d}}\int_{|z|\leq 1}H_\mu^{v_0}(t, \tilde x-y,v-u)\mathcal{N}_3(\tau,y,u;z)\frac{\dif z}{|z|^{d+2s}}\dif y\dif u.
			\end{align*}
			{\color{red}Here $\mathbf{H}^{v_0}(t,x,v;z)=\frac{1}{2}\mathbf{1}_{|z|\leq 1}(\delta_{-z}^vH_\mu^{v_0})(t,x,v)+\mathbf{1}_{|z|> 1}H_\mu^{v_0}(t,x,v)$. Fix $m,k\in\mathbb{N}$ with $m\leq N_0$. Then}
			\begin{align*}
				&|\nabla^{(x,v),m}_{t+\tau,v_0}\mathcal{D}_{t+\tau,v_0}^{(v),k}	\mathcal{Y}_{1,v_0}|(t,\tau,x,v)\\ &\lesssim\iint_{\mathbb{R}^{2d}}\int_{\mathbb{R}^d}\left(|\nabla_{t,v_0}^{(x,v),m-m_1}\mathcal{D}_{t+\tau,v_0}^{(x,v),k}\mathbf{H}^{v_0}|(t,\tilde x-y,v-u;z)\, |\nabla_{\tau,v_0}^{(x,v),m_1}\mathcal{N}_1|(\tau,y,u;z)\right)\frac{\dif z \dif y\dif u}{|z|^{d+2s}},
			\end{align*}
			{\color{red}where}
			\begin{align}\label{vt12}
			m_1=\begin{cases}
				m,\ \ \ \text{if } 0<t<\tau<T,\\
			0,\ \ ~\ \ \text{if } 0<\tau<t<T.
				\end{cases}
			\end{align}
				{Similarly,}
			\begin{align*}
				&	|\nabla^{(x,v),m}_{t+\tau,v_0}\mathcal{D}_{t+\tau,v_0}^{(v),k}	\mathcal{Y}_{2,v_0}(t,\tau,x,v)|\\
				&\quad\quad\lesssim  \iint_{\mathbb{R}^{2d}}\int_{\mathbb{R}^d}|\nabla_{t,v_0}^{(x,v),m-m_1}\mathcal{D}_{t+\tau,v_0}^{(x,v),k}\mathbf{H}^{v_0}|(t,\tilde x-y,v-u;z)|\nabla_{\tau,v_0}^{(x,v),m_1}\mathcal{N}_2|(\tau,y,u;z)\frac{\dif z \dif y\dif u}{|z|^{d+2s}},\\
				&|\nabla^{(x,v),m}_{t,v_0}\mathcal{D}_{t+\tau,v_0}^{(v),k}				\mathcal{Y}_{3,v_0}(t,\tau,x,v)|\\
				&\quad\quad\lesssim\iint_{\mathbb{R}^{2d}}\int_{|z|\leq 1}|\nabla_{t,v_0}^{(x,v),m-m_1}\mathcal{D}_{t+\tau,v_0}^{(x,v),k}H_\mu^{v_0}|(t,\tilde x-y,v-u) |\nabla_{\tau,v_0}^{(x,v),m_1}\mathcal{N}_3|(\tau,y,u;z)\frac{\dif z \dif y\dif u}{|z|^{d+2s}}.
			\end{align*}
			{Lemma \ref{Hmu} gives}
			\begin{equation}\label{delH}
				\begin{aligned}
					&|\nabla_{t,v_0}^{(x,v),l}\mathcal{D}_{t,v_0}^{(x,v),j} \mathbf{H}^{v_0}|(t,x,v;z)\lesssim \Phi_{t,v_0}(x,v)\mathfrak r_{t,v_0}(z),\\
					&|\nabla_{t,v_0}^{(x,v),l}\mathcal{D}_{t,v_0}^{(x,v),j} H_\mu^{v_0}|(t,x,v)\lesssim \Phi_{t,v_0}(x,v),\ \ \ \ \forall l, j\in\mathbb{N},
				\end{aligned}
			\end{equation}
				{\color{red}where $\Phi_{t,v_0}$ is defined in \eqref{defPh}, and $\mathfrak r_{t,v_0}=1\wedge\ell _{t,v_0}^v$ is defined in \eqref{def:truncated-increment}. Recalling the definition of $\mathcal{G}_{v_0}$ in \eqref{defGv}, \eqref{delH} yields}
			\begin{align*}
				&	|\nabla^{(x,v),m}_{t+\tau,v_0}\mathcal{D}_{t, v_0}^{(v),k}	\mathcal{Y}_{i,v_0}|(t,\tau,x,v)
				\lesssim \mathcal{G}_{v_0}({\mathfrak{N}}_{i,v_0}^{m_1}(\tau))(t,x,v),\ \ \ i=1,2,3,
			\end{align*}
			{\color{red}where}
			\begin{align*}
					&	{\mathfrak{N}}_{i,v_0}^{m_1}(\tau,x,v)=	\int_{\mathbb{R}^d}\mathfrak r_{t,v_0}(z)|\nabla_{\tau,v_0}^{(x,v),m_1}\mathcal{N}_i(\tau,x,v;z)| \frac{\dif z}{|z|^{d+2s}},\ \ i=1,2,\\
				&	{\mathfrak{N}}_{3,v_0}^{m_1}(\tau,x,v)=\int_{|z|\leq 1} |\nabla_{\tau,v_0}^{(x,v),m_1}\mathcal{N}_3(\tau,x,v;z)|\frac{\dif z}{|z|^{d+2s}}.
			\end{align*}
					{\color{red}Using \eqref{Cfs1} and then \eqref{eq:global-Omega-weighted} in the final step, we obtain}
			\begin{align*}
				&\left(\tilde \tau_v^\frac{d-\kappa}{2s}{\mathfrak{N}}_{1,v_0}^{m_1}(\tau,x,v)+\mathbf{I}{\mathfrak{N}}_{1,v_0}^{m_1}(\tau,x,v)\right)\Big|_{v_0=v}\\
&\lesssim\sum_{m_2=0}^{m_1}\int_{\mathbb{R}^d}\mathfrak r_{t,v_0}(z)\left(\tilde \tau_v^\frac{d-\kappa}{2s}|\delta_{z}^v\nabla^{(x,v),m_2}_{\tau,v_0}	h_2|+|\mathbf{I}(\delta_{z}^v\nabla^{(x,v),m_2}_{\tau,v_0}	h_2)| \right) (\tau,x,v) |\nabla^{(x,v),m_1-m_2}_{\tau,v_0}	\mathbf{C}_{h_1}|(\tau,x,v,z)\frac{\dif z}{|z|^{d+2s}}\Big|_{v_0=v}\\
&\lesssim \int_{\mathbb{R}^d} \mathfrak r_{t,v}(z)\mathfrak r_{\tau,v}(z)^{\varepsilon_0}(\langle v\rangle^{-\varkappa}+\langle v-z\rangle^{-\varkappa})\Omega_{\mathbf w}(x,v,z) \frac{\dif z}{|z|^{d+2s}}\|h_1\|_T \|h_2\|_{T,*}\\
				&\lesssim \tau^{-\frac{\varepsilon_0}{2s}}t^{-\frac{2s-\varepsilon_0}{2s}}\mathbf{w}^{-1}(x,v)\|h_1\|_T \|h_2\|_{T,*}.
			\end{align*}
			{\color{red}The corresponding estimate for \(\mathfrak N_2\) is}
            	\begin{align*}
			&\left(\tilde \tau_v^\frac{d-\kappa}{2s}{\mathfrak{N}}_{2,v_0}^{m_1}(\tau,x,v)+\mathbf{I}{\mathfrak{N}}_{2,v_0}^{m_1}(\tau,x,v)\right)\Big|_{v_0=v}\\
			&\lesssim\sum_{m_2=0}^{m_1} \int_{\mathbb{R}^d}\mathfrak r_{t,v_0}(z)\left(\tilde \tau_v^\frac{d-\kappa}{2s}|\nabla^{(x,v),m_2}_{\tau,v_0}	h_2|+\mathbf{I}(\nabla^{(x,v),m_2}_{\tau,v_0}	h_2)\right)(\tau,x,v)\, |\nabla^{(x,v),m_1-m_2}_{\tau,v_0}\delta^v_{z}	\mathbf{C}_{h_1}|(\tau,x,v,z)\frac{\dif z}{|z|^{d+2s}}\Big|_{v_0=v}\\
		                &\lesssim \mathbf{w}^{-1}(x,v)\int_{\mathbb{R}^d}\mathfrak r_{t,v_0}(z) \mathfrak r_{\tau,v_0}(z) ^{\varepsilon_0}\big(\Omega_{\varkappa_2}(v,z)+\Omega_{\varkappa_2}(v-z,z)\big)\frac{\dif z}{|z|^{d+2s}}\Big|_{v_0=v}\|h_1\|_{T,*} \|h_2\|_{T}\\
				&\lesssim \tau^{-\frac{\varepsilon_0}{2s}} t^{-\frac{2s-\varepsilon_0}{2s}} \mathbf{w}^{-1}(x,v)\|h_1\|_{T,*} \|h_2\|_{T}.
			\end{align*}
					{\color{red}Once the preceding coefficient majorant is available, the final inequality is exactly \eqref{eq:global-Omega-unweighted}.}
				
			{Similarly, for ${\mathfrak{N}}_{3,v_0}^{m_1}(\tau,x,v)$,}
			\begin{align*}
				&\left(\tilde \tau_v^\frac{d-\kappa}{2s}{\mathfrak{N}}_{3,v_0}^{m_1}(\tau,x,v)+\mathbf{I}{\mathfrak{N}}_{3,v_0}^{m_1}(\tau,x,v)\right)\Big|_{v_0=v}		\\&\lesssim \tau^{-1}\mathbf{w}^{-1}(x,v)\min\{(\|h_1\|_{T}\|h_2\|_{T,*}),(\|h_1\|_{T,*}\|h_2\|_{T})\}.
			\end{align*}
			{\color{red}The function}
			$\mathbf{w}(x,v)$
			{\color{red}is admissible in the sense of Definition \ref{defad}. Lemma \ref{le11} therefore gives}
			\begin{align*}
				&\sup_{x,v} \mathbf{w}(x,v)\left((\tilde t_{v_0}^\frac{d-\kappa}{2s}+\tilde \tau_{v_0}^\frac{d-\kappa}{2s})\,\mathcal{G}_{v_0}({\mathfrak{N}}_{i,v_0}^{m_1}(\tau))+\mathbf{I}\mathcal{G}_{v_0}({\mathfrak{N}}_{i,v_0}^{m_1}(\tau))\right)(t,x,v)\Big|_{v_0=v}\\
				&\quad\quad	\lesssim 	\sup_{x,v} \mathbf{w}(x,v)\left(\tilde  \tau_{v}^\frac{d-\kappa}{2s}|{\mathfrak{N}}_{i,v_0}^{m_1}(\tau,x,v)|+|\mathbf{I}{\mathfrak{N}}_{i,v_0}^{m_1}(\tau,x,v)|\right)\Big|_{v_0=v}.
			\end{align*}
			{\color{red}Consequently,}
			\begin{equation}\label{Y12.1}
				\begin{aligned}
			\frac{\|\mathcal{Y}_{1,v_0}(h_1,h_2)(t,\tau)\|_{\tilde Y_{m}^k\cap \tilde Z_{m}^k }}{\|h_1\|_{T}\|h_2\|_{T,*} }+&\frac{\|\mathcal{Y}_{2,v_0}(h_1,h_2)(t,\tau)\|_{\tilde Y_{m}^k\cap \tilde Z_{m}^k }}{\|h_1\|_{T,*}\|h_2\|_{T} }
					&\lesssim \langle{\tau}/{t}\rangle^\frac{k}{2s}t^{-1}(t/\tau)^\frac{\varepsilon_0}{2s},\ \ \ \ \forall t,\tau\in(0,T),
				\end{aligned}
			\end{equation}
			and 
			\begin{equation}\label{Y3.1}
				\begin{aligned}
					\frac{\|\mathcal{Y}_{3,v_0}(h_1,h_2)(t,\tau)\|_{\tilde Y_{m}^k\cap \tilde Z_{m}^k }}{\min\{(\|h_1\|_{T}\|h_2\|_{T,*}),(\|h_1\|_{T,*}\|h_2\|_{T})\}}	
					&\lesssim \langle{\tau}/{t}\rangle^\frac{k}{2s}\tau^{-1}.
				\end{aligned}
			\end{equation}
			{\color{red}Combining \eqref{Y12.1} and \eqref{Y3.1}, we obtain}
			\begin{align*}
				\|\mathcal{S}_{v_0}(h_1,h_2)(t,\tau)\|_{\tilde Y^k_{m}\cap \tilde Z^k_{m}}&\lesssim \sum_{i=1,2}\|\mathcal{Y}_{i,v_0}(h_1,h_2)(t,\tau)\|_{\tilde Y_{m}^k\cap \tilde Z_{m}^k }\\
				&\lesssim \langle{\tau}/{t}\rangle^\frac{k}{2s}t^{-1}(t/\tau)^\frac{\varepsilon_0}{2s} \|h_1\|_{T}\|h_2\|_{T},
			\end{align*}
			{\color{red}which proves \eqref{Sh12a}. Moreover,}
			\begin{equation}\label{S2v0}
				\begin{aligned}
					\|\mathcal{S}_{2,v_0}(\mu,h)(t,\tau )\|_{\tilde Y^k_{m}\cap \tilde Z^k_{m}}
					&\lesssim \sum_{i=2,3}\|\mathcal{Y}_{i,v_0}(\mu,h)(t,\tau)\|_{\tilde Y_{m}^k\cap \tilde Z_{m}^k }\\
					&\lesssim \langle{\tau}/{t}\rangle^\frac{k}{2s}(t^{-1}(t/\tau)^\frac{\varepsilon_0}{2s}+\tau^{-1}) \|\mu\|_{\tau,*}\|h\|_{T}.
				\end{aligned}
			\end{equation}
			{\color{red}Here $\|\mu\|_{\tau,*}$ denotes $\|\mu\|_{T,*}$ with terminal time $T=\tau$. Applying \eqref{musm} with $T=\tau$ gives $\|\mu\|_{\tau,*}\lesssim\tau^{\sigma_0}$ and hence proves \eqref{Sh12c}.}
			
			{\color{red}We now estimate $\mathcal{S}_{v_0}(h,\mu)$. As in \eqref{S2v0}, equations \eqref{Y12.1}, \eqref{Y3.1}, and \eqref{musm} imply}
			\begin{equation}\label{S1v0}
				\begin{aligned}
					\|\mathcal{S}_{1,v_0}(h,\mu)(t,\tau)\|_{\tilde Y^k_{m}\cap \tilde Z^k_{m}}
					&\lesssim \sum_{i=1,3}\|\mathcal{Y}_{i,v_0}(h,\mu)(t,\tau)\|_{\tilde Y_{m}^k\cap \tilde Z_{m}^k }\\
					&\lesssim \langle{\tau}/{t}\rangle^\frac{k}{2s}(t^{-1}(t/\tau)^\frac{\varepsilon_0}{2s}+\tau^{-1})\tau^{\sigma_0}\|h\|_{T}.
				\end{aligned}
			\end{equation}
			{\color{red}It remains to estimate $\mathcal{S}_{2,v_0}(h,\mu)$. Observe that}
			\begin{align*}
				&	|\nabla_{t+\tau,v_0}^{(x,v),m} \mathcal{D}_{t+\tau,v_0}^{(v),k} \mathcal{S}_{2,v_0}(h,\mu)(t,\tau,x,v)|\\
				&\quad\quad\quad\quad\lesssim \iint_{\mathbb{R}^{2d}}|\nabla_{t,v_0}^{(x,v),m-m_1}\mathcal{D}_{t+\tau,v_0}^{(x,v),k}H_\mu^{v_0}(t,\tilde x-y,v-u)\nabla_{\tau,v_0}^{(x,v),m_1}	\mathcal{Q}_{s,\mathrm{rem}}(h,\mu)(\tau,y,u)|\dif y\dif u\\
				&\quad\quad\quad\quad\lesssim   \mathcal{G}_{v_0}(\nabla_{\tau,v_0}^{(x,v),m_1}	\mathcal{Q}_{s,\mathrm{rem}}(h,\mu)(\tau))(t,x,v),
			\end{align*}
			{\color{red}where $\mathcal{G}_{v_0}$ is defined in \eqref{defGv}, and $m_1$ is fixed by \eqref{vt12}. Lemma \ref{le11} gives}
			\begin{align*}
				&\sup_{x,v} \mathbf{w}(x,v)\left((\tilde  t_{v_0}^\frac{d-\kappa}{2s}+\tilde  \tau_{v_0}^\frac{d-\kappa}{2s})\mathcal{G}_{v_0}(\nabla_{\tau,v_0}^{(x,v),m_1}	\mathcal{Q}_{s,\mathrm{rem}}(h,\mu)(\tau))+\mathbf{I}\mathcal{G}_{v_0}(\nabla_{\tau,v_0}^{(x,v),m_1}	\mathcal{Q}_{s,\mathrm{rem}}(h,\mu)(\tau))\right)(t,x,v)\Big|_{v_0=v}\\
				&\quad\quad	\lesssim 	\sup_{x,v} \mathbf{w}(x,v)\left(\tilde \tau_{v}^\frac{d-\kappa}{2s}|(\nabla_{\tau,v_0}^{(x,v),m_1}	\mathcal{Q}_{s,\mathrm{rem}}(h,\mu)(\tau,x,v))|+|\mathbf{I}\nabla_{\tau,v_0}^{(x,v),m_1}	\mathcal{Q}_{s,\mathrm{rem}}(h,\mu)(\tau,x,v)|\right)\Big|_{v_0=v}.
			\end{align*}
			{\color{red}Recall that}
			\begin{align*}
				\mathcal{Q}_{s,\mathrm{rem}}(f_1,f_2)(v)&=f_2(v)\int_{\mathbb{R}^d}\left( \mathbf{C}_{f_1}(v,z)-\mathbf{C}_{f_1}(v-z,z)\right)\frac{dz}{|z|^{d+2s}}:=f_2(v)\mathcal{K}(f_1)(v).
			\end{align*}
			{\color{red}If $-d-\gamma\leq 0$, Lemma \ref{lemCfl} yields}
			\begin{align*}
				\sup_{x,v} \mathbf{w}(x,v)\left(\tilde \tau_{v}^\frac{d-\kappa}{2s}|\nabla_{\tau,v_0}^{(x,v),m_1}	\mathcal{Q}_{s,\mathrm{rem}}(h,\mu)|(\tau,x,v)+|\mathbf{I}\nabla_{\tau,v_0}^{(x,v),m_1}	\mathcal{Q}_{s,\mathrm{rem}}(h,\mu)|(\tau,x,v)\right)\Big|_{v_0=v}\lesssim \tau^{-1+\sigma_0}\|h\|_T.
			\end{align*}
			{\color{red}Hence,}
			\begin{align*}
				\|\mathcal{S}_{2,v_0}(h,\mu)(t,\tau)\|_{\tilde Y^k_{m}}+	\|\mathcal{S}_{2,v_0}(h,\mu)(t,\tau)\|_{\tilde Z^k_{m}}\lesssim \tau^{-1+\sigma_0} \langle{\tau}/{t}\rangle^\frac{k}{2s}\|h\|_T.
			\end{align*}
			{\color{red}If instead $-d-2s<\gamma<-d$, set
			\[
				\alpha:=-d-\gamma\in(0,2s),\qquad
				\vartheta:=1-\frac{\alpha}{2s}
				=\frac{d+2s+\gamma}{2s}=\frac{d-\kappa}{2s}.
			\]
			Lemma \ref{lemcanc} and fractional integration by parts give, for every smooth $g$,
			\[
				\int_{\mathbb R^d}g\,\mathcal Q_{s,\mathrm{rem}}(h,\mu)\,\dif v
				=\frac{c_{\gamma,s}}2\iint_{\mathbb R^{2d}}
				\delta_z^v(g\mu)\,\delta_z^vh\,
				\frac{\dif z\dif v}{|z|^{d+\alpha}}.
			\]
			Apply this identity with $g$ equal to each differentiated translate of $H_\mu^{v_0}(t)$ arising above. Expanding
			$\delta_z^v(g\mu)=\mu\,\delta_z^vg+g(\cdot-z)\delta_z^v\mu$, estimate the first term by \eqref{delH} and the $\varepsilon_0$-seminorm in \eqref{defyz}; the second term and the large-jump part are easier because $\mu$ is rapidly decaying. After the same weight and angular integrations used for $\mathcal Y_{1,v_0}$ and $\mathcal Y_{2,v_0}$, the only new time factor is bounded by
			\[
				1+\int_0^1
				\left(1\wedge\frac{\rho}{t^{1/(2s)}}\right)
				\left(1\wedge\frac{\rho}{\tau^{1/(2s)}}\right)^{\varepsilon_0}
				\frac{\dif\rho}{\rho^{1+\alpha}}
				\lesssim
				\frac{T^\vartheta}{(t+\tau)^{\frac{\varepsilon_0}{2s}}
				\min\{t,\tau\}^{1-\frac{\varepsilon_0}{2s}}}.
			\]
			Indeed, this follows by splitting at $t^{1/(2s)}$ and $\tau^{1/(2s)}$, since $\alpha<2s<1+\varepsilon_0$; the endpoint logarithms are harmless. For example, when $t\leq\tau$ and $\alpha>\varepsilon_0$, the leading term is
			$t^{-(\alpha-\varepsilon_0)/(2s)}\tau^{-\varepsilon_0/(2s)}
			=t^{-1}(t/\tau)^{\varepsilon_0/(2s)}t^\vartheta$, which displays the required gain.
			Consequently, Lemmas \ref{lemi1i2} and \ref{le11} yield
			\[
				\|\mathcal S_{2,v_0}(h,\mu)(t,\tau)\|_{\tilde Y_m^k}
				+\|\mathcal S_{2,v_0}(h,\mu)(t,\tau)\|_{\tilde Z_m^k}
				\lesssim
				\frac{T^\vartheta\langle\tau/t\rangle^{k/(2s)}}
				{(t+\tau)^{\frac{\varepsilon_0}{2s}}
				\min\{t,\tau\}^{1-\frac{\varepsilon_0}{2s}}}\|h\|_T.
			\]
			Finally, $\sigma_0\leq\vartheta$ by \eqref{def:sigma0}, so $T^\vartheta\leq T^{\sigma_0}$. Combining this estimate with \eqref{S1v0} proves \eqref{Sh12b} and completes the proof.}
		\end{proof}
		
		{Lemma \ref{lemnew} yields the following estimate for $g_{2,v_0}$ defined in \eqref{intg2}.}
		\begin{proposition}
			\label{propg2}
			{\color{red}For every $T\in(0,1)$,}
			\begin{align*}
				\lceil g_{2,v_0}\rfloor_T\lesssim \|h_1\|_T\|h_2\|_T+T^{\sigma_0}\sum_{i=3,4}\|h_i\|_T,
			\end{align*}
            {\color{red}where $\sigma_0$ is defined in \eqref{def:sigma0}.}
		\end{proposition}
		\begin{proof}
			{\color{red}By definition,}
			\begin{align*}
				\lceil g_{2,v_0}\rfloor_T\leq&\sum_{m\leq N_0}\sup_{t\in[0,T]}\Big( \|g_{2,v_0}(t)\|_{\tilde Y_{m}}+\|g_{2,v_0}(t)\|_{\tilde Z_{m}}+\|g_{2,v_0}(t)\|_{\tilde Y_{m}^{\lfloor s\rfloor+\varepsilon_0}}+\|g_{2,v_0}(t)\|_{\tilde Z_{m}^{\lfloor s\rfloor+\varepsilon_0}}\Big).
			\end{align*}
			{\color{red}It suffices to estimate $\|g_{2,v_0}(t)\|_{\tilde Y_m}$ and $\|g_{2,v_0}(t)\|_{\tilde Y_m^{\lfloor s\rfloor+\varepsilon_0}}$, since the $\tilde Z_m$-estimates are analogous. Set}
			\begin{align*}
				\mathbf{G}(t,\tau,x,v)&=(H_\mu^{v_0}(t)\ast G(\tau))(x-tv,v)\\
				&= ({\mathcal{S}}_{v_0} (h_1,h_2)+\mathcal{S}_{v_0} (h_3,\mu)+\mathcal{S}_{2,v_0} (\mu,h_4))(t,\tau,x,v).
			\end{align*}
			{\color{red}For brevity, set}
			\begin{align*}
				\mathcal{B}_T(h)=\|h_1\|_T\|h_2\|_T+T^{\sigma_0}\sum_{i=3,4}\|h_i\|_T.
			\end{align*}
			{\color{red}Then, by definition,}
			\begin{equation}\label{b1v}
				\begin{aligned}
					\|g_{2,v_0}(t)\|_{\tilde Y_{m}}
					&\lesssim \int_0^{t}\|\mathbf{G}(t-\tau,\tau)\|_{\tilde Y_{m}}\dif \tau .
				\end{aligned}
			\end{equation}
			{Lemma \ref{lemnew} yields}
			\begin{align*}
				&\|  \mathbf{G}(t-\tau,\tau)\|_{\tilde Y_{m}}\lesssim \frac{\mathcal B_T(h)}
			{t^{\varepsilon_0/(2s)}
				\min\{t-\tau,\tau\}^{1-\varepsilon_0/(2s)}}.
			\end{align*}
			{Substituting this estimate into \eqref{b1v}, we obtain}
			\begin{align*}
				&	\sup_{t\in[0,T]}\|g_{2,v_0}(t)\|_{\tilde Y_{m}}
				\lesssim \mathcal{B}_T(h).
			\end{align*}
			{\color{red}We next consider the higher-order norms. We have}
			\begin{align*}
				|\nabla_{t,v_0}^{(x,v),m}\mathcal{D}_{t,v_0}^{(v),\lfloor s\rfloor}\delta^v_zg_{2,v_0}| (t,x,v) \Big|_{v_0=v}
				&\lesssim \int_0^t |\nabla_{t,v_0}^{(x,v),m}\mathcal{D}_{t,v_0}^{(v),\lfloor s\rfloor}\delta_z^v\mathbf{G}|(t-\tau,\tau,x,v)\dif \tau \Big|_{v_0=v}.
			\end{align*}
			{\color{red}Applying Lemma \ref{lemnew} again, we obtain, for every $m\leq N_0$ and $z\in\mathbb{R}^d$ with $[z]_{v_0}\leq 1$,}
			\begin{align*}
				\mathbf{w} (x,v)	&|\nabla_{t,v_0}^{(x,v),m}\mathcal{D}_{t,v_0}^{(v),\lfloor s\rfloor}\delta^v_z \mathbf{G}|(t,\tau,x,v)\Big|_{v_0=v}\\
				&\quad\quad\lesssim \min\{\tilde t_v^{-\frac{1}{2s}}[z]_v,1\}\mathcal{B}_T(h)
				\begin{cases}
					&	 \tilde \tau_v^{-\frac{d-\kappa}{2s}}t^{-1}(t/\tau)^{\frac{\varepsilon_0}{2s}},\quad\quad\ \tau\geq t,\\
					& \tilde t_v^{-\frac{d-\kappa}{2s}}\tau^{-1}(\tau/t)^{\frac{\varepsilon_0}{2s}},\quad\quad\quad \tau<t.
				\end{cases}
			\end{align*}
			{\color{red}Using the inequality}
			\begin{align}\label{ineq11}
				\int_0^t \tau^{-(1-a)}\min\left\{1,\frac{\sigma}{\tau}\right\}^b\dif \tau \lesssim_a \sigma^{a},\quad\quad \forall a\in(0,1),\ b>a, \ \sigma>0,
			\end{align}
			{we obtain}
			\begin{align*}
				\mathbf{w} (x,v)&\tilde t_v^{\frac{d-\kappa}{2s}}\int_0^t\left|\left(\nabla_{t,v_0}^{(x,v),m}\mathcal{D}_{t,v_0}^{(v),\lfloor s\rfloor}\delta^v_z \mathbf{G}\right)(t-\tau,\tau,x,v)\right|\dif \tau \Big|_{v_0=v}\\
				&\lesssim  \mathcal{B}_T(h) \int_0^{\frac{t}{2}}\tau^{-(1-\frac{\varepsilon_0}{2s})}(t-\tau)^{-\frac{\varepsilon_0}{2s}}\min\{\tilde  \tau_v^{-\frac{1}{2s}}[z]_v,1\}\dif \tau \\
				&\lesssim (\tilde  t_v^{-\frac{1}{2s}}[z]_v)^{\varepsilon_0}\mathcal{B}_T(h).
			\end{align*}
			{\color{red}Therefore,}
			\begin{align*}
				&\sup_{t\in[0,T]}\|g_{2,v_0}(t)\|_{\tilde Y_{m}^{\lfloor s\rfloor+\varepsilon_0}}\lesssim\mathcal{B}_T(h),\ \ \ \forall m\leq N_0.
			\end{align*}
			{\color{red}This proves the proposition.}
		\end{proof}\vspace{0.3cm}\\
		{\bf 3. Estimate $g_{3,v_0}$.}\par\medskip
		{\color{red}We now estimate the \textit{lower-order remainder term} produced by freezing the coefficients.}
		\begin{align}\label{forre}
			g_{3,v_0}(t,x,v)=\int_0^t  \mathbf{R}_{v_0}(t-\tau,\tau,x,v)\ \dif \tau ,
		\end{align}
		{\color{red}where}
		\begin{align*}
			\mathbf{R}_{v_0}(t,\tau,x,v)=(H_\mu^{v_0}(t)\ast  \mathcal{R}^{v_0}(g)(\tau))(x-tv,v),
		\end{align*}
		{\color{red}and $\mathcal{R}^{v_0}(g)$ is defined in \eqref{defRe0}.}
		
		{\color{red}We first use the following lemma, whose proof is analogous to that of Lemma \ref{lemcoeff}.}
		\begin{lemma}\label{lemdif}
			{\color{red}For any $z_1,z_2\in\mathbb{S}^{d-1}$ and $v,v'\in\mathbb{R}^d$ satisfying $|v-v'|\leq 1$,}
			\begin{align*}
				|\left((a\star \mu(v)-a\star \mu(v'))z_1,z_2\right)|\lesssim |v-v'|  \langle v\rangle^{\gamma}(\langle v\rangle^2-(z_1\cdot v)^2)^\frac{1}{2}(\langle v\rangle^2-(z_2\cdot v)^2)^\frac{1}{2}.
			\end{align*}
		\end{lemma}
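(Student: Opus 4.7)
I would reduce the difference to a gradient estimate via the fundamental theorem of calculus. Writing $v_t := v' + t(v-v')$ for $t \in [0,1]$,
\begin{align*}
\bigl( (a\star\mu(v) - a\star\mu(v'))z_1, z_2 \bigr) = (v-v') \cdot \int_0^1 \nabla_v \bigl( (a\star\mu)(v_t) z_1, z_2 \bigr) dt.
\end{align*}
Since $|v-v'| \leq 1$, the identity $(\langle v\rangle^2 - (z_i \cdot v)^2)^{1/2} = (1 + |v - (z_i \cdot v)z_i|^2)^{1/2}$ together with $|v_t - v| \leq 1$ yields $\langle v_t\rangle \sim \langle v\rangle$ and $(\langle v_t\rangle^2 - (z_i \cdot v_t)^2)^{1/2} \sim (\langle v\rangle^2 - (z_i \cdot v)^2)^{1/2}$ uniformly for $t \in [0,1]$. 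It therefore suffices to establish the gradient bound
\begin{align*}
\bigl| \nabla_v \bigl( (a \star \mu)(v) z_1, z_2 \bigr) \bigr| \lesssim \langle v \rangle^{\gamma} (\langle v \rangle^2 - (z_1 \cdot v)^2)^{1/2} (\langle v \rangle^2 - (z_2 \cdot v)^2)^{1/2}.
\end{align*}

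For the gradient estimate, integration by parts transfers the $v$-derivative onto the Gaussian and produces $\nabla_v (a \star \mu)(v) = -\int a(v-w)\, w\, \mu(w)\, dw$. For any unit vector $e$, this gives
\begin{align*}
|e \cdot \nabla_v((a\star\mu)(v) z_1, z_2)| \leq \int |(a(v-w) z_1, z_2)| \, |w| \mu(w) \, dw,
\end{align*}
and the pointwise bound $|w| \mu(w) \lesssim \mu_*(w) := C \exp(-|w|^2/4)$ replaces the weight by another Gaussian. Since $a(u) = |u|^{\gamma+2}(\mathrm{Id} - \hat u \otimes \hat u)$ is positive semi-definite, the pointwise Cauchy--Schwarz $|(a(u) z_1, z_2)| \leq (a(u) z_1, z_1)^{1/2} (a(u) z_2, z_2)^{1/2}$ combined with the integral Cauchy--Schwarz in the weight $\mu_*\, dw$ yields
\begin{align*}
\int |(a(v-w) z_1, z_2)| \mu_*(w)\, dw \leq (\mathsf{A}_*(v) z_1, z_1)^{1/2} (\mathsf{A}_*(v) z_2, z_2)^{1/2},
\end{align*}
where $\mathsf{A}_*(v) := \int a(v-w) \mu_*(w)\, dw$.

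The matrix $\mathsf{A}_*$ differs from $a \star \mu$ only through the variance of the Gaussian in the convolution, so the argument of Lemma \ref{lemlan} applies verbatim and gives $(\mathsf{A}_*(re_d))_{kk} \sim \langle r \rangle^{-\kappa}$ for $k < d$ and $(\mathsf{A}_*(re_d))_{dd} \sim \langle r \rangle^{\gamma}$. Combined with the covariance $\mathsf{A}_*(Qv) = Q \mathsf{A}_*(v) Q^{\top}$, this yields $(\mathsf{A}_*(v) z, z) \sim \langle v \rangle^{\gamma} (\langle v \rangle^2 - (z \cdot v)^2)$ for every $z \in \mathbb{S}^{d-1}$. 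Inserting this into the Cauchy--Schwarz bound closes the gradient estimate, and the mean-value representation then produces the advertised prefactor $|v-v'|$. I do not expect a genuine obstacle: the main technical point is merely the combination of pointwise and integral Cauchy--Schwarz, which cleanly reduces the non-symmetric vector-valued form $\nabla_v\bigl( (a\star\mu)(v) z_1, z_2 \bigr)$ to the two symmetric scalar quadratic forms $(\mathsf{A}_* z_i, z_i)$ whose anisotropic asymptotics are already available from Lemma \ref{lemlan} applied to $\mu_*$ in place of $\mu$.
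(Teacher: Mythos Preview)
Your argument is correct. The paper does not spell out a proof but points to Lemma~\ref{lemcoeff}, whose method is a case analysis: decompose each $z_i$ into its components parallel and perpendicular to $\hat v$, and for each of the resulting $k\in\{0,1,2\}$ parallel components use the pointwise bound $|(a(w)z_1,z_2)|\lesssim |w|^{-\kappa-k}(|w|^2-(w\cdot\hat v)^2)^{k/2}$ together with the Gaussian decay of $\nabla\mu$. Your route is different: after the same mean-value reduction you bypass the case split entirely by combining the pointwise Cauchy--Schwarz inequality for the positive semidefinite matrix $a(u)$ with the integral Cauchy--Schwarz in the measure $\mu_*\,dw$, reducing the off-diagonal form to the two diagonal forms $(\mathsf{A}_*(v)z_i,z_i)$, whose anisotropy is read off directly from Lemma~\ref{lemlan}. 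This is cleaner and shorter than the $k$-decomposition, at the minor cost of invoking Lemma~\ref{lemlan} for the modified Gaussian $\mu_*$; since that lemma uses only the radial Gaussian structure, the extension is immediate, as you note.
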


		\begin{lemma}\label{leR}
			Let $t,\tau\in(0,T)$ with $T\in(0,1)$.  For any $m,k\in\mathbb{N}$, $m\leq N_0$,
			\begin{align}\label{esR1}
				\| \mathbf{R}_{v_0}(t,\tau)\|_{\tilde Y_{m}^k}+	\| \mathbf{R}_{v_0}(t,\tau)\|_{\tilde Z_{m}^k}
				\lesssim\langle \tau/t\rangle^{\frac{k}{2s}}
				\min\{t,\tau\}^{-\frac{(2s-1)_+}{2s}}\|g\|_T,
			\end{align}
			{\color{red}where the norms $\|\cdot \|_{\tilde Y_{m}^k}$ and $\|\cdot \|_{\tilde Z_{m}^k}$ are defined in \eqref{tilYntt}.}
		\end{lemma}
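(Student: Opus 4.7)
The plan is to exploit the fact that $\mathcal{R}^{v_0}_s(g)$ involves the difference of coefficients frozen at $v_0$ versus at the actual velocity, which by smoothness of $\mu$ yields a factor proportional to $|u-v_0|$ at the integration point $u$, and then to absorb this factor into the convolution against the kernel $H^s_{v_0}$, gaining one power of $\hat t_{v_0}^{1/(2s)}$ that upgrades the $\min\{t,\tau\}^{-1}$ of Lemma \ref{lemnew} to $\min\{t,\tau\}^{-1+1/(2s)}$.

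First I would write $\mathcal{R}^{v_0}_s(g)(\tau,y,u)$ explicitly. For $s=1$ it is
$$\mathcal{R}^{v_0}_1(g)(\tau,y,u)=-\mathcal{D}_{v_0,u}\!\cdot\!\bigl((\tilde a_{v_0}\star\mu(v_0)-\tilde a_{v_0}\star\mu(u))\mathcal{D}_{v_0,u}g(\tau,y,u)\bigr),$$
and Lemma \ref{lemdif} gives $|a\star\mu(v_0)-a\star\mu(u)|\lesssim |u-v_0|\langle v_0\rangle^{\gamma}\langle v_0\rangle^{2}$ up to the same anisotropic matrix structure that already appears in $\mathcal{L}^{v_0}_1$, so the remainder splits into pieces having exactly the form of the bilinear $\mathcal{Q}_{1,m}$/$\mathcal{Q}_{1,r}$ estimates handled in Lemma \ref{lemnew}, multiplied by $|u-v_0|$. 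For $s\in(0,1)$ the analogous identity reads
$$\mathcal{R}^{v_0}_s(g)(\tau,y,u)=\int_{\mathbb{R}^d}(g(\tau,y,u)-g(\tau,y,u-z))\bigl(\mathcal{C}_\mu(v_0,\hat z)-\mathcal{C}_\mu(u,\hat z)\bigr)\chi(z)\frac{dz}{|z|^{d+2s}},$$
and a Taylor expansion in the first slot of the representation \eqref{deftC}, together with the Gaussian factor present in $\mathcal{C}_\mu$, yields $|\mathcal{C}_\mu(v_0,\hat z)-\mathcal{C}_\mu(u,\hat z)|\lesssim |u-v_0|\,\langle v_0\rangle^{1-\kappa}$ times a Gaussian decay in $|u-v_0|$.

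Next I would run the same integration-by-parts scheme used for $\mathcal{S}_{v_0}(h,\mu)$ in the proof of Lemma \ref{lemnew}: split into $t<\tau$ and $\tau<t$; transfer one derivative from the nonlinear term onto $H^s_{v_0}$ via the cancellation $\iint\mathcal{D}^\alpha_{v_0,v,x} H^s_{v_0}=0$ to avoid the non-integrable $\tau^{-1}$; and in the Boltzmann case perform the symmetrization producing finite differences $\delta_z^v$ exactly as in the derivation yielding $\mathcal{Y}_{1,v_0},\mathcal{Y}_{2,v_0},\mathcal{Y}_{3,v_0}$. This reduces matters to convolution bounds of the type handled by Lemma \ref{le11}. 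The crucial new ingredient is that the extra factor $|u-v_0|$ inside the convolution, after evaluating at $v_0=v$, becomes $|v-u|$, which is localized by the $\Phi^t_{v_0}(v-u)$-part of \eqref{HP} to the anisotropic ball $[v-u]_v\lesssim \hat t_v^{1/(2s)}$. Since $|v-u|\lesssim \hat t_v^{1/(2s)}$ on this ball, the convolution contributes an extra factor $\hat t_v^{1/(2s)}\sim t^{1/(2s)}\langle v\rangle^{-\kappa/(2s)}$ which, combined with the pointwise weights of Lemmas \ref{lemcoeff}--\ref{lemCf}, produces precisely the claimed $\min\{t,\tau\}^{-1+1/(2s)}$; the $\langle\tau/t\rangle^{k/(2s)}$ dependence on $k$ comes from the same scaling analysis already carried out in \eqref{Sh12a}--\eqref{Sh12d}.

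The main obstacle is bookkeeping. In the Boltzmann case the coefficient difference $\mathcal{C}_\mu(v_0,\hat z)-\mathcal{C}_\mu(u,\hat z)$ is only smooth in the angular argument, so the symmetrization in $z$ that underlies the bilinear estimate must be performed carefully so that the $|u-v_0|$-factor is not consumed by the cancellation; I would keep the Taylor remainder in the form $(u-v_0)\cdot\int_0^1 \nabla_{v_1}\mathcal{C}_\mu(v_0+\sigma(u-v_0),\hat z)\,d\sigma$ and absorb the $\sigma$-integral into an enlarged class of admissible coefficients to which Lemma \ref{lemCf} still applies. The higher-order case $k\ge 1$ requires checking that anisotropic derivatives $\mathcal{D}_{v_0,v}^k$ distributed by Leibniz between the difference and $g$ do not destroy the extra $|u-v_0|$ factor; each derivative of the coefficient difference costs one $\langle v_0\rangle^{-1}$ which is exactly what is needed to convert the kernel-derivative cost $\tilde t_{v_0}^{-k/(2s)}$ into the $\langle\tau/t\rangle^{k/(2s)}$ on the right-hand side of \eqref{esR1}. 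Once this is in place, the $\tilde Y$ and $\tilde Z$ estimates follow by applying \eqref{G11}, \eqref{G00}, \eqref{I1G}, \eqref{G11d}, \eqref{I1Gd} in Lemma \ref{le11} exactly as in the proofs of Lemma \ref{lemnew} and Proposition \ref{propg2}.
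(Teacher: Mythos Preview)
Your proposal is correct and follows essentially the same route as the paper: in both the Landau and Boltzmann cases the paper writes $\mathcal{R}^{v_0}_s(g)$ explicitly, moves part of the operator onto $H^s_{v_0}$ (one $\mathcal{D}_{v_0,v}$ for $s=1$, a $\delta^v_{-z}$ via the $z\to-z$/$u\to u-z$ symmetrization for $s<1$), and then extracts the $\min\{t,\tau\}^{1/(2s)}$ gain from the factor $|u-v_0|$ coming from the coefficient difference (Lemma~\ref{lemdif} for $s=1$, smoothness of $\mathcal{C}_\mu$ for $s<1$), absorbing it against $\Phi^t_{v_0}$ via Lemma~\ref{le11}. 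One small caveat: your phrase ``localized to the ball $[v-u]_v\lesssim \hat t_v^{1/(2s)}$'' is heuristic since $\Phi^t_{v_0}$ only has polynomial tails for $s<1$; the paper makes this precise by splitting $|u|\le 1$ versus $|u|\ge 1$ (Landau) or by using the extra $\langle[u]_{v_0}\rangle^{-N}$ decay built into $\Phi^t_{v_0}$, but the integrated bound $\int\Phi^t_{v_0}(u)|u|\,du\lesssim \hat t_{v_0}^{1/(2s)}$ that you need does hold.
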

		\begin{proof}
			{\textbf{Landau case ($s=1$).}}
			{\color{red}Set $\tilde {\mathsf{A}}_{v_0}(v)=(\tilde a_{v_0}\star \mu)(v)$. Then}
			\begin{align*}
				\mathbf{R}_{v_0}(t,\tau,x,v)=(\mathcal{D}_{v_0}^{(v)}H_\mu^{v_0}(t)\ast  F_{v_0}(\tau))( x-tv,v),
			\end{align*}
			{\color{red}where}
			\begin{align*}
				F_{v_0}(t,x,v)=-(\tilde {\mathsf{A}}_{v_0}(v_0)-\tilde {\mathsf{A}}_{v_0}(v))\mathcal{D}_{v_0}^{(v)} g(t,x,v).
			\end{align*}
			{\color{red}Set $\tilde x=x-tv$ and first suppose that $0\leq t\leq\tau\leq T\leq 1$. For every $m,k\in\mathbb{N}$ with $m\leq N_0$,}
			\begin{align*}
				&|\nabla_{t+\tau,v_0}^{(x,v),m}\mathcal{D}_{t+\tau,v_0}^{(v),k} \mathbf{R}_{v_0}(t,\tau,x,v)|\Big|_{v_0=v}\\
				&\quad\quad\lesssim\left|  \tilde t_{v_0}^{-\frac{1}{2}}	\langle \tau/t \rangle^{\frac{k}{2}} \iint_{\mathbb{R}^{2d}}\mathcal{D}_{t,v_0}^{(x,v), k+1}	H_\mu^{v_0}(t,y,u)\nabla_{\tau,v_0}^{(x,v), m} F_{v_0}(\tau, \tilde x-y,v-u)\dif y \dif u\Big|_{v_0=v}\right|\\
				&\quad\quad\lesssim \tilde t_v^{-\frac{1}{2}} 	\langle \tau/t \rangle^{\frac{k}{2}} \iint_{\mathbb{R}^{2d}}\Phi_{t,v_0}(y,u)|\nabla_{\tau,v_0}^{(x,v),m} \delta^v_uF_{v_0}|(\tau, \tilde x-y,v)\dif y \dif u\Big|_{v_0=v}.
			\end{align*} 
			{Split the integral into the singular region $|u|\leq1$ and the regular region $|u|\geq1$, and denote the corresponding terms by $L_{1,v_0}$ and $L_{2,v_0}$. On $|u|\leq1$, Lemma \ref{lemdif} gives}
			\begin{align*}
				&| \nabla_{\tau,v_0}^{(x,v),m} F_{v_0}(\tau, \tilde x-y,v-u)|\\
				&\lesssim \langle v\rangle^{-\kappa}\Big(|v_0-(v-u)| |\nabla_{\tau,v_0}^{(x,v),m} \mathcal{D}_{v_0}^{(v)}g(\tau, \tilde x-y,v-u)|+\tilde \tau_{v_0}^\frac{1}{2}\sum_{m'<m}|\nabla_{\tau,v_0}^{(x,v),m'} \mathcal{D}_{v_0}^{(v)}g(\tau, \tilde x-y,v-u)|\Big)\\
				&\lesssim \frac{\langle v\rangle^{-\kappa}\|g\|_T	}{\tilde \tau_{v_0}^{\frac{d-\kappa+1}{2}}}(|v_0-(v-u)|+\tilde \tau_{v_0}^\frac{1}{2})\mathbf{w}^{-1}( \tilde x-y,v).
			\end{align*}
			{\color{red}Hence,}
			\begin{equation}\label{L1}
				\begin{aligned}
					&	L_{1,v_0}(t,\tau,v,x)|_{v_0=v}\\
					&\quad\quad\quad\quad\lesssim \frac{\langle v\rangle^{-\kappa}	\langle \tau/t \rangle^{\frac{k}{2}} \|g\|_T}{\tilde t_{v}^{\frac{1}{2}}\tilde \tau_{v}^{\frac{d-\kappa+1}{2}}}\iint_{\mathbb{R}^{2d}}\Phi_{t,v}(y,u)(|u|+\tilde \tau_v^\frac{1}{2})\mathbf{w}^{-1}( \tilde x-y,v)\mathbf{1}_{|u|\leq 1}\dif y\dif u \\
					&\quad\quad\quad\quad\lesssim \frac{\langle v\rangle^{-\kappa} 	\langle \tau/t \rangle^{\frac{k}{2}} \|g\|_T(\tilde t_v^\frac{1}{2}+\tilde \tau_v^\frac{1}{2})}{\tilde t_{v}^{\frac{1}{2}}\tilde \tau_{v}^{\frac{d-\kappa+1}{2}}}\mathbf{w}^{-1}(x,v).
				\end{aligned}
			\end{equation} 
			{\color{red}For the regular part $L_{2,v_0}$,}
			\begin{equation}\label{L2}
				\begin{aligned}
					&	L_{2,v_0}(t,\tau,v,x)|_{v_0=v}\\
					&\lesssim \frac{\langle \tau/t \rangle^{\frac{k}{2}} \|g\|_T}{\tilde t_{v}^{\frac{1}{2}}\tilde \tau_{v}^{\frac{d-\kappa+1}{2}}} \iint_{\mathbb{R}^{2d}}\Phi_{t,v_0}(y,u)\mathbf{w}^{-1}(  \tilde x-y,v-u)(\langle v\rangle^{-\kappa}+\langle v-u\rangle^{-\kappa})\mathbf{1}_{|u|\geq 1}\dif y\dif u \Big|_{v_0=v}\\
					&\lesssim  \frac{\langle v\rangle^{-\kappa}	\langle \tau/t \rangle^{\frac{k}{2}}\|g\|_T}{\tilde \tau_{v}^{\frac{d-\kappa+1}{2}}} \mathbf{w}^{-1}( x,v),
				\end{aligned}
			\end{equation}
			{\color{red}To justify the last inequality, set}
			\(
				\mathbf q_0(x,v)=\mathbf w(x,v)
			\)
			and
			\(
				\mathbf q_1(x,v)=\mathbf w(x,v)\langle v\rangle^\kappa.
			\)
			{Both weights are admissible. The proof of \eqref{weighttrans}, with one additional velocity moment, gives, for \(i=0,1\),}
			\begin{align}\label{eq:Phi-first-moment}
				\iint_{\mathbb R^{2d}}
				\Phi_{t,v}(y,u)\mathbf q_i^{-1}(\tilde x-y,v-u)
				\mathbf1_{|u|\geq1}\,\dif y\dif u
				\lesssim \tilde t_v^{1/2}\mathbf q_i^{-1}(x,v).
			\end{align}
			{\color{red}Indeed,}
			\(
				\mathbf1_{|u|\geq1}\leq |u|\leq [u]_v
			\), and the change of variables
			\(
				u=\tilde t_v^{1/2}\mathcal O_vU
			\)
			extracts the factor \(\tilde t_v^{1/2}\); the remaining first
			moment is finite by \eqref{defPh}--\eqref{intNthe}, choosing
			\(\theta<1\) sufficiently close to \(1\). {\color{red}Applying \eqref{eq:Phi-first-moment} with \(i=0\) to \(\langle v\rangle^{-\kappa}\) and with \(i=1\) to \(\langle v-u\rangle^{-\kappa}\) yields}
			\begin{align*}
				&\iint_{\mathbb R^{2d}}\Phi_{t,v}(y,u)
				\mathbf w^{-1}(\tilde x-y,v-u)
				\bigl(\langle v\rangle^{-\kappa}
				+\langle v-u\rangle^{-\kappa}\bigr)
				\mathbf1_{|u|\geq1}\,\dif y\dif u
				\lesssim
				\tilde t_v^{1/2}\mathbf w^{-1}(x,v)
				\langle v\rangle^{-\kappa}.
			\end{align*}
			{\color{red}Combining these estimates gives}
			\begin{equation}\label{RRR1}
				\begin{aligned}
					|\nabla_{t+\tau,v_0}^{(x,v),m} \mathcal{D}_{t+\tau,v_0}^{(v),k}	 \mathbf{R}_{v_0}(t,\tau,x,v)|\Big|_{v_0=v}&\lesssim	L_{1,v_0}(t,\tau,v,x)|_{v_0=v}+	L_{2,v_0}(t,\tau,v,x)|_{v_0=v}\\
					&\lesssim 	\frac{\|g\|_T}{\tilde \tau_{v}^{\frac{d-\kappa}{2}}}(t/\tau)^{-\frac{k}{2}}( t^{-\frac{1}{2}}+ \tau^{-\frac{1}{2}})\mathbf{w}^{-1}(x,v).
				\end{aligned}
			\end{equation}
			{\color{red}It remains to consider $\tau\leq t$. For every $m\leq N_0$,}
			\begin{align*}
				|\nabla_{t+\tau,v_0}^{(x,v),m}\mathcal{D}_{t+\tau,v_0}^{(v),k}	 \mathbf{R}_{v_0}(t,\tau,x,v)|\Big|_{v_0=v}&\lesssim \tilde t_v^{-\frac{1}{2}} \left|\iint_{\mathbb{R}^{2d}}\mathcal{D}_{t,v_0}^{(x,v),m+k+1} H_\mu^{v_0}(t,\tilde x-y,v-u) F_{v_0}(\tau,y,u)\dif y\dif u\Big|_{v_0=v}\right|\\
				&\lesssim\tilde t_{v}^{-\frac{1}{2}}\iint_{\mathbb{R}^{2d}}\Phi_{t,v_0}(y,u)|F_{v_0}(\tau,\tilde x-y,v-u)|\dif y \dif u\Big|_{v_0=v}.
			\end{align*} 
			{\color{red}Moreover,}
			\begin{align*}
				|F_{v_0}(\tau,x,v-u)|\big|_{v_0=v}\lesssim \min\{1,|u|\}|\mathcal{D}_{v_0}^{(v)}g|(\tau,x,v-u)(\langle v\rangle^{-\kappa}+\langle v-u\rangle^{-\kappa}).
			\end{align*}
			{Arguing as in \eqref{L1}--\eqref{L2}, we obtain}
			\begin{align*}
				&|\nabla_{t+\tau,v_0}^{(x,v),m}\mathcal{D}_{t+\tau,v_0}^{(v),k} \mathbf{R}_{v_0}(t,\tau,x,v)|\Big|_{v_0=v}\\
				&\quad\quad\quad\quad\lesssim
				\frac{1}{\tilde t_{v}^{\frac{1}{2}}}\iint_{\mathbb{R}^{2d}}\Phi_{t,v_0}(y,u)\min\{1,|u|\}|\mathcal{D}_{v_0}^{(v)}g|(\tau,\tilde x-y,v-u)(\langle v\rangle^{-\kappa}+\langle v-u\rangle^{-\kappa})\dif y \dif u\Big|_{v_0=v}\\
				&\quad\quad\quad\quad\lesssim \tilde t_{v}^{-\frac{d-\kappa}{2}} \tau^{-\frac{1}{2}}\mathbf{w}^{-1}(x,v)\|g\|_T.
			\end{align*}
			{Together with \eqref{RRR1}, this yields}
			\begin{align*}
				\| \mathbf{R}_{v_0}(t,\tau)\|_{\tilde Y_{m}^k}\lesssim \langle \tau/t\rangle^{\frac{k}{2}} \min\{t,\tau\}^{-\frac{1}{2}}\|g\|_T.
			\end{align*}
			{\color{red}The $\tilde Z_m^k$-estimate follows similarly. This proves \eqref{esR1} in the Landau case.}\par\medskip
			{\color{red}\bf Boltzmann case: $s\in(0,1)$.} {Recalling \eqref{defRe0}, we have}
			\begin{align*}
				\mathcal R^{v_0}(g)(
				v)=\int_{\mathbb{R}^d}	(\mathbf{C}_\mu(v_0, z)-\mathbf{C}_\mu(v, z))\delta_{z} g(v)\frac{\dif z}{|z|^{d+2s}}.
			\end{align*}
			{\color{red}Set}
			\begin{align*}
				\mathbf{R}_{v_0}(t,\tau,x,v)=(H_\mu^{v_0}(t)\ast \mathcal{R}^{v_0}(g)(\tau))(x-tv,v).
			\end{align*}
			{A change of variables gives}
			\begin{align*}
				\mathbf{R}_{v_0}(t,\tau,x,v)=&\frac{1}{2}\iiint_{\mathbb{R}^{3d}} \delta^v_{-z}H_\mu^{v_0}(t, \tilde x-y,v-u) (\mathbf{C}_\mu(v_0, z)-\mathbf{C}_\mu(u, z))\delta_{z}^v g(\tau,y,u)\frac{\dif y\dif u\dif z}{|z|^{d+2s}}\\
				&-\frac{1}{2}\iiint_{\mathbb{R}^{3d}} H_\mu^{v_0}(t, \tilde x-y,v-u+z) \delta^v_{z}\mathbf{C}_\mu(u, z)\delta_{z}^v g(\tau,y,u)\frac{\dif y\dif u\dif z}{|z|^{d+2s}}.
			\end{align*}
			{Assume first that \(0<t\leq\tau\leq1\). The cutoff supplied by \eqref{delH} is the anisotropic quantity \(\mathfrak r_{t,v_0}(z)\), not a scalar function of \(|z|\). Moreover, the Maxwellian coefficient majorant must remain inside the \(z\)-integral. Thus \eqref{eq:global-Omega-unweighted} gives}
			\begin{align}\label{esrrrr}
				\int_{\mathbb R^d}\mathfrak r_{t,v}(z)
				\mathfrak r_{\tau,v}(z)^{\varepsilon_0}
				&\bigl(\Omega_{\varkappa_2}(v,z)
				+\Omega_{\varkappa_2}(v-z,z)\bigr)
				\frac{\dif z}{|z|^{d+2s}}\nonumber\\
				&\qquad\lesssim
				\tilde t_v^{-\frac{2s-\varepsilon_0}{2s}}
				\tilde\tau_v^{-\frac{\varepsilon_0}{2s}}
				\langle v\rangle^{-\kappa}
				=t^{-1}\left(\frac t\tau\right)^{\frac{\varepsilon_0}{2s}}.
			\end{align}
			{\color{red}The last equality follows from \(\tilde t_v=\langle v\rangle^{-\kappa}t\) and \(\tilde\tau_v=\langle v\rangle^{-\kappa}\tau\). This is precisely the velocity factor that would be lost by applying Lemma \ref{lemW} to a scalar cutoff.}

			{\color{red}For the term in which no scaled derivative falls on \(\mathbf C_\mu\), the difference \(\mathbf C_\mu(v,z)-\mathbf C_\mu(u,z)\) and the first-moment estimate for \(\Phi_{t,v}\) yield a factor \(t^{1/(2s)}\). Therefore, \eqref{delH}, \eqref{esrrrr}, and Lemma \ref{le11} give}
			\begin{align*}
				\|\mathbf R_{v_0}(t,\tau)\|_{\tilde Y_m^k}
				+\|\mathbf R_{v_0}(t,\tau)\|_{\tilde Z_m^k}
				&\lesssim \left\langle\frac\tau t\right\rangle^{\frac{k}{2s}}
				t^{-1+\frac1{2s}}
				\left(\frac t\tau\right)^{\frac{\varepsilon_0}{2s}}
				\|g\|_T
				\lesssim \left\langle\frac\tau t\right\rangle^{\frac{k}{2s}}
				t^{-\frac{(2s-1)_+}{2s}}\|g\|_T.
			\end{align*}
			{\color{red}Here and below, this notation refers to the contribution under consideration.}

			{\color{red}It remains to estimate the terms in which \(n\geq1\) scaled derivatives fall on \(\mathbf C_\mu\). Pair these terms before taking absolute values. Since \(\mathbf C_\mu(u,z)\) is even in \(z\), for \(h=\nabla_{\tau,v_0}^{(x,v),m-n}g(\tau,y,\cdot)\),}
			\begin{align*}
				&\int_{\mathbb R^d}\nabla_{\tau,v_0}^{(v),n}
				\mathbf C_\mu(u,z)\,\delta_z^vh(u)
				\frac{\dif z}{|z|^{d+2s}}=\frac12\int_{\mathbb R^d}\nabla_{\tau,v_0}^{(v),n}
				\mathbf C_\mu(u,z)
				\bigl(2h(u)-h(u-z)-h(u+z)\bigr)
				\frac{\dif z}{|z|^{d+2s}}.
			\end{align*}
			{\color{red}Because \(m-n+1\leq m\), the fundamental theorem of calculus and the \(\varepsilon_0\)-seminorm of \(g\) bound the second difference by \(\mathfrak r_{\tau,u}(z)^{1+\varepsilon_0}\). Each scaled coefficient derivative contributes at least \(\tau^{1/(2s)}\). Applying the Maxwellian version of \eqref{Cfs1}, then \eqref{eq:global-Omega-unweighted} with \(t=\tau\), and finally Lemma \ref{le11}, yields}
			\begin{align*}
				\|\mathbf R_{v_0}(t,\tau)\|_{\tilde Y_m^k}
				+\|\mathbf R_{v_0}(t,\tau)\|_{\tilde Z_m^k}
				&\lesssim\left\langle\frac\tau t\right\rangle^{\frac{k}{2s}}
				\tau^{-1+\frac1{2s}}\|g\|_T
				\lesssim\left\langle\frac\tau t\right\rangle^{\frac{k}{2s}}
				\tau^{-\frac{(2s-1)_+}{2s}}\|g\|_T.
			\end{align*}
			  {\color{red}Combining the two cases proves \eqref{esR1}.}
			\end{proof}
		
		{Lemma \ref{leR} immediately yields the following estimate for the remainder term $g_{3,v_0}$.}
		\begin{proposition}\label{prore} {\color{red}For every $T\in(0,1)$,}
			\begin{align*}
				\lceil  g_{3,v_0}\rfloor_T\lesssim T^{\sigma_0}\| g\|_T.
			\end{align*}
		\end{proposition}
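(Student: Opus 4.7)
The plan is to substitute the integral representation \eqref{forre} into the definition \eqref{defnormT1} of $\lceil\cdot\rfloor_T$ and then apply Lemma \ref{leR} termwise, paralleling the proof of Proposition \ref{propg2} (where Lemma \ref{lemnew} played the corresponding role). No new pointwise ingredients are needed; the argument is purely a time-integral manipulation of the estimates already provided by Lemma \ref{leR}.

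\emph{Low-order norms.} For $m+n\leq N_0$, using \eqref{forre} I will bound
\begin{align*}
\|g_{3,v_0}(t)\|_{\tilde Y_{m,n}} + \|g_{3,v_0}(t)\|_{\tilde Z_{m,n}} \leq \int_0^t \bigl(\|\mathbf{R}_{v_0}(t-\tau,\tau)\|_{\tilde Y_{m,n}} + \|\mathbf{R}_{v_0}(t-\tau,\tau)\|_{\tilde Z_{m,n}}\bigr)\,d\tau.
\end{align*}
Lemma \ref{leR} with $k=0$ bounds the integrand by $\min\{t-\tau,\tau\}^{-1+\frac{1}{2s}}|||g|||_T$, whose $\tau$-integral over $[0,t]$ is $\lesssim t^{\frac{1}{2s}}$. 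Because $T<1$ and $\frac{1}{2s}\geq \frac12 \geq \sigma_0$, this contribution is $\lesssim T^{\sigma_0}|||g|||_T$.

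\emph{High-order difference-quotient norms.} For $\|g_{3,v_0}(t)\|_{\tilde Y_{m,n}^{[s]+\varepsilon_0}}$ and $\|g_{3,v_0}(t)\|_{\tilde Z_{m,n}^{[s]+\varepsilon_0}}$ with $m+n\leq N_0$ I will estimate the difference quotient $|\delta_z^v \mathcal{D}_{v_0,v}^{[s]}\nabla_{v,x}^{m,n}\mathbf{R}_{v_0}|$ for $[z]_v\leq 1$ by interpolating Lemma \ref{leR} between $k=[s]$ (raw pointwise bound) and $k=[s]+1$ (extra $\mathcal{D}_{v_0,v}$ applied via the mean value theorem along $v\mapsto v-z$), yielding
\begin{align*}
\mathbf{w}_1(x,v)|\delta_z^v \mathcal{D}_{v_0,v}^{[s]}\nabla_{v,x}^{m,n}\mathbf{R}_{v_0}(t-\tau,\tau,x,v)|\bigl|_{v_0=v} \lesssim \min\bigl\{1,\hat \tau_v^{-\frac{1}{2s}}[z]_v\bigr\}\frac{\min\{t-\tau,\tau\}^{-1+\frac{1}{2s}}|||g|||_T}{\tau^n \tilde \tau_v^{(d-\kappa+m+n+[s])/(2s)}},
\end{align*}
and a symmetric bound with $t-\tau$ and $\tau$ exchanged. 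Multiplying by the prescribed weight $\mathbf{w}_1(x,v)t^n\tilde t_v^{(d-\kappa+m+n+[s])/(2s)}(\hat t_v^{1/(2s)}[z]_v^{-1})^{\varepsilon_0}$ and applying the elementary inequality \eqref{ineq11} to control $\int_0^t \min\{t-\tau,\tau\}^{-1+1/(2s)}\min\{1,\hat \tau_v^{-1/(2s)}[z]_v\}^{\varepsilon_0}d\tau$ produces a bound $\lesssim t^{1/(2s)}|||g|||_T \leq T^{\sigma_0}|||g|||_T$. Summing over $m+n\leq N_0$ then closes the proposition.

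\emph{Principal obstacle.} The only nontrivial bookkeeping is checking that the interpolation between the $k=[s]$ and $k=[s]+1$ bounds of Lemma \ref{leR} reproduces exactly the anisotropic factor $\min\{1,\hat t_v^{-1/(2s)}[z]_v\}$ required by the definitions of $\tilde Y_{m,n}^{[s]+\varepsilon_0}$ and $\tilde Z_{m,n}^{[s]+\varepsilon_0}$. This relies on the identity $\tilde t_v = \langle v\rangle^{2s}\hat t_v$ together with the anisotropic form of $\mathcal{D}_{v_0,v}=\nabla_{\hat v_0,v}+\langle v_0\rangle \widetilde\nabla_{\hat v_0,v}$: evaluated at $v_0=v$, the gain from one additional $\mathcal{D}_{v_0,v}$ (carrying scale $\tilde t_v^{-1/(2s)}$) multiplied by the anisotropic displacement $[z]_v=|z|+|z\cdot v|$ collapses in both the parallel and perpendicular components to exactly $\hat t_v^{-1/(2s)}[z]_v$. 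Beyond this algebraic verification, every step reduces to a direct application of Lemma \ref{leR} together with \eqref{ineq11}.
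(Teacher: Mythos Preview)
Your proposal is correct and follows essentially the same route as the paper: bound the low-order norms by directly integrating the $k=0$ case of Lemma \ref{leR}, and handle the $\tilde Y_{m,n}^{[s]+\varepsilon_0}$, $\tilde Z_{m,n}^{[s]+\varepsilon_0}$ norms by interpolating the $k=[s]$ and $k=[s]+1$ bounds of Lemma \ref{leR} to extract the factor $\min\{1,\hat t_v^{-1/(2s)}[z]_v\}$, then integrate in time via \eqref{ineq11}. The anisotropic bookkeeping you flag in the ``Principal obstacle'' paragraph (that one extra $\mathcal{D}_{v_0,v}$ at $v_0=v$ costs $\tilde t_v^{-1/(2s)}=\langle v\rangle\hat t_v^{-1/(2s)}$ while the mean-value displacement contributes $|\mathcal{O}_v^{-1}z|\sim\langle v\rangle^{-1}[z]_v$, so the product is exactly $\hat t_v^{-1/(2s)}[z]_v$) is precisely the verification needed, and the paper relies on it implicitly.
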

		\begin{proof}
			{\color{red}By definition,}
			\begin{align*}
				\lceil g_{3,v_0}\rfloor_T\leq& \sum_{m\leq N_0}\sup_{t\in[0,T]}\Big(\|g_{3,v_0}(t)\|_{\tilde Y_{m}}+\|g_{3,v_0}(t)\|_{\tilde Z_{m}}+\|g_{3,v_0}(t)\|_{\tilde Y_{m}^{\lfloor s\rfloor+\varepsilon_0}}+\|g_{3,v_0}(t)\|_{\tilde Z_{m}^{\lfloor s\rfloor+\varepsilon_0}}\Big).
			\end{align*}
			{\color{red}For every $m\leq N_0$, \eqref{forre} gives}
			\begin{equation*}
				\begin{aligned}
					\|g_{3,v_0}(t)\|_{\tilde Y_{m}}
					+	\|g_{3,v_0}(t)\|_{\tilde Z_{m}}		&\lesssim \int_0^{t}\|\mathbf{R}_{v_0}(t-\tau,\tau)\|_{\tilde Y_{m}}\dif \tau +\int_0^{t}\|\mathbf{R}_{v_0}(t-\tau,\tau)\|_{\tilde Z_{m}}\dif \tau .
				\end{aligned}
			\end{equation*}
			{\color{red}Since}
			\[
				\int_0^t\min\{t-\tau,\tau\}^{-\frac{(2s-1)_+}{2s}}
				\,\dif\tau
				\lesssim t^{\min\{1,\frac1{2s}\}},
			\]
			{Lemma \ref{leR} gives}
			\begin{align*}
				\|g_{3,v_0}(t)\|_{\tilde Y_{m}}
				+\|g_{3,v_0}(t)\|_{\tilde Z_{m}}
				&\lesssim t^{\min\{1,\frac1{2s}\}}\|g\|_T.
			\end{align*}
			{\color{red}For the highest-order norms, repeat the preceding argument with one additional finite difference. The two time regimes produce the cutoffs \(\mathfrak r_{t-\tau,v}(z)\) and \(\mathfrak r_{\tau,v}(z)\), respectively. Inequality \eqref{ineq11} then yields, for \([z]_v\leq1\),}
			\begin{align*}
				&\mathbf{w}(x,v)\tilde t_v^{\frac{d-\kappa}{2s}}
				\int_0^t
				|(\nabla_{t,v_0}^{(x,v),m}
				\mathcal{D}_{t,v_0}^{(v),\lfloor s\rfloor}
				\delta_z^v\mathbf{R}_{v_0})(t-\tau,\tau,x,v)|
				\,\dif \tau \Big|_{v_0=v}\lesssim
				(\tilde t_v^{-\frac{1}{2s}}[z]_v)^{\varepsilon_0}
				t^{\min\{1,\frac1{2s}\}}\|g\|_T.
			\end{align*}
			{\color{red}Consequently,}
			\begin{align*}
				&\sum_{m\leq N_0}\sup_{t\in[0,T]}
				\|g_{3,v_0}(t)\|_{\tilde Y_{m}^{\lfloor s\rfloor+\varepsilon_0}}
				\lesssim T^{\min\{1,\frac1{2s}\}}\|g\|_T.
			\end{align*}
			{\color{red}The estimate for \(\|g_{3,v_0}(t)\|_{\tilde Z_{m}^{\lfloor s\rfloor+\varepsilon_0}}\) follows similarly. Since \(\sigma_0\leq\min\{1,1/(2s)\}\), this proves the proposition.}
		\end{proof}\par\medskip
		{\color{red}We can now combine Propositions \ref{Pro1}, \ref{propg2}, and \ref{prore} to prove Lemma \ref{lemloc}.}\\
		{\color{blue}
		\begin{proof}[Proof of Lemma \ref{lemloc}]
		\textbf{Step 1: the short-time a priori estimate.}
		{For every $T\in(0,1]$, Propositions \ref{Pro1}, \ref{propg2}, and \ref{prore} give}
			\begin{align*}
				\|g\|_T\lesssim \|f_0\|_{\mathrm{cr}}+\mathcal{B}_T(h)+T^{\sigma_0}\|g\|_{T},
			\end{align*}
			{where the implicit constant depends only on $N_0$, $\gamma$, and $s$, and $\mathcal{B}_T(h)=\|h_1\|_{T}\|h_2\|_{T}+T^{\sigma_0}\sum_{i=3,4}\|h_i\|_{T}$. Choosing $T_1>0$ sufficiently small, we obtain}
			\begin{align*}
				\|g\|_{T_1}\leq C\|f_0\|_{\mathrm{cr}}+C\mathcal{B}_{T_1}(h).
			\end{align*}

		\smallskip
		\noindent\textbf{Step 2: construction by approximation.}
		Choose smooth approximations \(f_0^{(n)}\) and \(h_i^{(n)}\), obtained
		by phase-space convolution and cutoff, such that
		\begin{align*}
		 f_0^{(n)}&\longrightarrow f_0
		 &&\text{in }\mathcal D'(\mathbb R^{2d}),\\
		 h_i^{(n)}&\longrightarrow h_i
		 &&\text{in }\mathcal D'((0,T_1)\times\mathbb R^{2d}),
		\end{align*}
		and
		\begin{align}\label{eq:linear-approximation-bounds}
		 \sup_n\|f_0^{(n)}\|_{\mathrm{cr}}
		 &\leq C\|f_0\|_{\mathrm{cr}},
		 &
		 \sup_n\|h_i^{(n)}\|_{T_1}
		 &\leq C\|h_i\|_{T_1}.
		\end{align}
		The bounds in \eqref{eq:linear-approximation-bounds} follow from the
		positivity of the mollifier, the definition of \(\mathbf I\), and the
		polynomial-weight inequalities used in Lemma~\ref{le11}.

		For smooth data, the closed kinetic Dirichlet form associated with
		\(v\cdot\nabla_x+\mathcal L_\mu\) gives a weak solution by the standard
		Friedrichs--Galerkin construction. Equivalently, the closed realization of
		\(-v\cdot\nabla_x-\mathcal L_\mu\) generates a strongly continuous
		semigroup on a sufficiently weighted \(L^2_{x,v}\)-space, and Duhamel's
		formula gives a solution with source \(G^{(n)}\). The coercivity estimates
		of Section~\ref{seculb}, followed by the frozen-kernel estimates of
		Section~\ref{secfrozenkernel}, imply that this solution is smooth for
		positive time. Denote it by \(g^{(n)}\).

		Applying the preceding estimate to \(g^{(n)}\) yields
		\begin{align}\label{eq:linear-uniform-approximation}
		 \sup_n\|g^{(n)}\|_{T_1}
		 \leq C\bigl(\|f_0\|_{\mathrm{cr}}+\mathcal B_{T_1}(h)\bigr).
		\end{align}
		For every \(0<\delta<T_1\), the functions \(g^{(n)}\) and their scaled
		derivatives through order \(N_0\) are therefore uniformly bounded on
		compact subsets of
		\([\delta,T_1]\times\mathbb R^{2d}\). The equation supplies local time
		equicontinuity. A diagonal Arzel\`a--Ascoli argument produces a subsequence
		and a function \(g\) such that
		\[
		 g^{(n)}\longrightarrow g
		 \quad\text{locally uniformly on }
		 (0,T_1]\times\mathbb R^{2d},
		\]
		together with all derivatives of order strictly below \(N_0\). The
		collision terms converge distributionally, and Fatou's lemma applied to
		the pointwise and \(\mathbf I\)-parts of the norm gives
		\begin{align}\label{eq:linear-limit-bound}
		 \|g\|_{T_1}
		 \leq C\bigl(\|f_0\|_{\mathrm{cr}}+\mathcal B_{T_1}(h)\bigr).
		\end{align}
		Thus \(g\) solves \eqref{linlo} in distributions. The frozen Duhamel
		formula, first applied to \(g^{(n)}\), passes to the limit. Consequently,
		the lifted Duhamel terms have finite
		\(\lceil\cdot\rfloor_{T_1}\)-norm, which is precisely the tilde-norm
		regularity used in Propositions \ref{Pro1}, \ref{propg2}, and \ref{prore}.

		\smallskip
		\noindent\textbf{Step 3: the initial trace.}
		For \(\varphi\in C_c^\infty(\mathbb R^{2d})\), the weak formulation gives
		\begin{align*}
		 \langle g(t)-f_0,\varphi\rangle
		 &={}
		 \int_0^t\langle g(r),v\cdot\nabla_x\varphi
		 -\mathcal L_\mu^*\varphi\rangle\,\dif r
		 +\int_0^t\langle G(r),\varphi\rangle\,\dif r.
		\end{align*}
		The \(Z_0\)-part of \(\|g\|_{T_1}\) controls the local \(L^1\)-mass of
		\(g(r)\) uniformly near \(r=0\), while Proposition~\ref{propg2} implies
		\(G\in L^1_{\mathrm{loc}}([0,T_1);\mathcal D')\). The right-hand side
		therefore tends to zero as \(t\downarrow0\), proving
		\eqref{eq:linear-initial-trace}.

		\smallskip
		\noindent\textbf{Step 4: uniqueness and local continuous dependence.}
		If \(g_1\) and \(g_2\) have the same data, their difference \(q\) has
		zero initial datum and zero source. Applying the short-time estimate to a
		positive-time regularization of \(q\), and then passing to the limit, gives
		\[
		 \|q\|_{T_1}\leq CT_1^{\sigma_0}\|q\|_{T_1}
		 \leq\frac12\|q\|_{T_1}.
		\]
		Hence \(q=0\) on \([0,T_1]\). Repeating the argument on consecutive
		overlapping intervals proves uniqueness on \([0,T]\).

		For two sets of data, bilinearity gives
		\begin{align*}
		 \mathcal Q_s(h_1,h_2)-\mathcal Q_s(\bar h_1,\bar h_2)
		 &={}
		 \mathcal Q_s(h_1-\bar h_1,h_2)
		 +\mathcal Q_s(\bar h_1,h_2-\bar h_2).
		\end{align*}
		The same estimate, together with Proposition~\ref{propg2}, gives
		\begin{align}\label{eq:linear-local-stability}
		 \|g-\bar g\|_{T_1}
		 \leq C\bigl(
		 \|f_0-\bar f_0\|_{\mathrm{cr}}
		 +\mathfrak d_{T_1}(h,\bar h)
		 \bigr).
		\end{align}

		\smallskip
		\noindent\textbf{Step 5: continuation.}
			{Restarting from $g(T_1/2)$ extends the solution to time $3T_1/2$. Moreover, for $g_1(t,x,v)=g(t+T_1/2,x,v)$,}
					\begin{align*}
				\|g_1\|_{T_1}&\leq C\|g(T_1/2)\|_{\mathrm{cr}}+C\mathcal{B}_{3T_1/2}(h)\leq C\|g\|_{T_1}+C\mathcal{B}_{3T_1/2}(h).
			\end{align*}
			{Definition \eqref{defnormT} and the inequality $t\leq 2(t-T_1/2)$ for $t\geq T_1$ imply}
			\begin{align*}
				\|g\|_{3T_1/2}\leq 	\|g\|_{T_1}+2^{\frac{N_0+1}{2}}	 \|g_1\|_{T_1}.
			\end{align*}
			{Therefore,}
			\begin{align*}
				\|g\|_{3T_1/2}
				&\leq C^22^{\frac{N_0+1}{2}}	(\|f_0\|_{\mathrm{cr}}+\mathcal{B}_{3T_1/2}(h)).
			\end{align*}
			{Iterating this argument $n-1$ times, for any $n\geq1$ we obtain}
			\begin{align*}
				\|g\|_{(n+1)T_1/2}
				&\leq C^n(n!)^{\frac{N_0}{2}}\bigl(\|f_0\|_{\mathrm{cr}}+\mathcal{B}_{(n+1)T_1/2}(h)\bigr).
			\end{align*}
			
			{Using the elementary factorial bound}
\[
    C^n(n!)^{\frac{N_0}{2}}
    \leq
    \exp\big(C_{N_0}n\log(2+n)\big),
\]
{we further obtain}
\begin{align*}
    \|g\|_{(n+1)T_1/2}
    &\leq
    \exp\big(C_{N_0}n\log(2+n)\big)
    \left(
        \|f_0\|_{\mathrm{cr}}
        +
        \mathcal B_{(n+1)T_1/2}(h)
    \right).
\end{align*}
			{This proves \eqref{largt}. Applying the same continuation argument to
			\eqref{eq:linear-local-stability} proves
			\eqref{eq:linear-continuous-dependence}.}
		\end{proof}
		}

{\color{blue}
\begin{corollary}[Dependence on the collision coefficient]
\label{cor:linear-coefficient-stability}
Let \(b\) and \(\bar b\) be two nonnegative background profiles satisfying,
with the same constants, the structural coefficient and frozen-kernel
assumptions used above, and set
\(\mathcal L_b q=-\mathcal Q_{s,\mathrm{main}}(b,q)\). Let \(g\) and
\(\bar g\) solve the corresponding linear equations with data
\((f_0,G)\) and \((\bar f_0,\bar G)\). Then
\begin{align}\label{eq:linear-coefficient-stability}
 \|g-\bar g\|_T
 \leq C_T\Bigl(
 \|f_0-\bar f_0\|_{\mathrm{cr}}
 +\|G-\bar G\|_{\mathrm{src},T}
 +\|b-\bar b\|_T\,\|\bar g\|_T
 \Bigr),
\end{align}
where \(\|\cdot\|_{\mathrm{src},T}\) is defined in
\eqref{eq:def-linear-source-norm}.
\end{corollary}
\begin{proof}
Set \(q=g-\bar g\). Then
\[
 \partial_tq+v\cdot\nabla_xq+\mathcal L_bq
 =G-\bar G-(\mathcal L_b-\mathcal L_{\bar b})\bar g,
 \qquad q|_{t=0}=f_0-\bar f_0.
\]
The coefficient difference equals
\[
 (\mathcal L_b-\mathcal L_{\bar b})\bar g
 =-\mathcal Q_{s,\mathrm{main}}(b-\bar b,\bar g).
\]
The estimates in the proof of Proposition~\ref{propg2} bound this term by
\(C\|b-\bar b\|_T\|\bar g\|_T\). Lemma~\ref{lemloc} now gives
\eqref{eq:linear-coefficient-stability}.
\end{proof}
}
		\subsection{Proof of Theorem \ref{Th1} }\label{secproof}
		{\color{red}We use Lemma \ref{lemloc} and a fixed-point argument to construct a local solution of \eqref{eqpert2}. For $\delta,T>0$, define}
		\begin{align*}
			\mathcal{V}_{T,\delta}:=\{f:f|_{t=0}=f_0,\|f\|_T\leq 2\delta\}.
		\end{align*}
		{\color{red}For $h\in\mathcal{V}_{T,\delta}$, define $\mathfrak S h$ as the solution $g$ of \eqref{linlo} with $h_i=h$ for $1\leq i\leq4$, where $T$ and $\delta$ will be chosen below. Every fixed point of $\mathfrak S$ solves \eqref{eqpert2}. We show that, if $f_0$ satisfies \eqref{conini} with $\delta_0$ sufficiently small, then $\mathfrak S$ has a unique fixed point in $\mathcal V_{T,\delta}$ for suitable $T,\delta>0$. Lemma \ref{lemloc} gives the required estimates.}
		\begin{theorem}\label{thmmap} 
			{\color{red}For any $T,\delta\in(0,1)$ and $f,f_1,f_2\in\mathcal{V}_{T,\delta}$,}
			\begin{align}
				&\|\mathfrak{S}f\|_{T}\lesssim  \|f_0\|_{\mathrm{cr}}+\|f\|_{T}^2+T^{\sigma_0}\|f\|_{T},\label{esmap1}\\& \|\mathfrak{S}f_1-\mathfrak{S}f_2\|_{T}\lesssim  \|f_1-f_2\|_{T}	(\|f_1\|_{T}+\|f_2\|_{T}+T^{\sigma_0}).\label{esmap2}
			\end{align}
		\end{theorem}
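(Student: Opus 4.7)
The plan is to deduce both estimates directly from Lemma~\ref{lemloc}, which already bounds solutions of the linear Cauchy problem \eqref{linlo} for a source in the general form \eqref{GQ}. No further kernel analysis is needed; the whole proof will be an exercise in identifying the correct $h_1,\dots,h_5$ for two specific sources, combined with bilinearity of $\mathcal{Q}_s$.

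For \eqref{esmap1}, I would observe that $g=\mathfrak{S}h$ solves \eqref{linlo} with initial data $f_0$ and force $\mathcal{Q}_s(h,h)+\mathbf{LO}^s(h)$. Expanding $\mathbf{LO}^s$ via \eqref{deflos} puts this source in the form \eqref{GQ} with $h_1=h_2=h$ for the bilinear piece and $h_3=h_4=h_5=h$ for the three linear pieces (the constant $1-[s]$ in front of $\mathcal{Y}(h_5)$ is harmless). Feeding these into \eqref{largt} and absorbing the prefactor $e^{CT\log(T+2)}$, which is bounded on $T\in(0,1)$, yields \eqref{esmap1} at once.

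For \eqref{esmap2}, I would use the bilinearity of $\mathcal{Q}_s$ to write
\[
\mathcal{Q}_s(f_1,f_1)-\mathcal{Q}_s(f_2,f_2)=\mathcal{Q}_s(f_1-f_2,f_1)+\mathcal{Q}_s(f_2,f_1-f_2),
\]
so that $g:=\mathfrak{S}f_1-\mathfrak{S}f_2$ solves the linear Cauchy problem with zero initial data and source $\mathcal{Q}_s(f_1-f_2,f_1)+\mathcal{Q}_s(f_2,f_1-f_2)+\mathbf{LO}^s(f_1-f_2)$. This splits into two pieces that each match \eqref{GQ}: the first uses $(h_1,h_2)=(f_1-f_2,f_1)$ together with $h_3=h_4=h_5=f_1-f_2$, while the second uses only the bilinear slot $(h_1,h_2)=(f_2,f_1-f_2)$ (with $h_3=h_4=h_5=0$). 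By linearity of the linear Cauchy problem, $g$ is the sum of the two corresponding solutions. Applying Lemma~\ref{lemloc} to each and adding produces \eqref{esmap2}, since the $\|f_0\|_{\mathrm{cr}}$ term vanishes and the bilinear bound $|||h_1|||_T|||h_2|||_T$ in \eqref{largt} generates the factor $|||f_1-f_2|||_T\bigl(|||f_1|||_T+|||f_2|||_T\bigr)$, while the linear contribution produces the $T^{\sigma_0}|||f_1-f_2|||_T$ term.

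The proof is essentially bookkeeping, as the hard analysis has already been completed in Lemma~\ref{lemloc} via Propositions~\ref{Pro1}, \ref{propg2}, and \ref{prore}. The one subtlety to be careful about is the bilinear split: one must place $f_1-f_2$ in at least one argument of every $\mathcal{Q}_s$ piece, so that the bound \eqref{largt} outputs precisely the desired Lipschitz factor rather than mixed terms not involving $f_1-f_2$.
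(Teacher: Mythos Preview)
Your proposal is correct and follows essentially the same route as the paper's proof: both deduce \eqref{esmap1} and \eqref{esmap2} directly from Lemma~\ref{lemloc} by identifying the appropriate $h_i$, with the difference estimate obtained via the bilinear splitting $\mathcal{Q}_s(f_1,f_1)-\mathcal{Q}_s(f_2,f_2)=\mathcal{Q}_s(f_1-f_2,f_1)+\mathcal{Q}_s(f_2,f_1-f_2)$ and two applications of the lemma. Your decomposition (putting the $\mathbf{LO}^s$ pieces entirely into the first application and taking $h_3=h_4=h_5=0$ in the second) is a slightly cleaner bookkeeping than the paper's phrasing, but the content is identical.
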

		\begin{proof}
			{Estimate \eqref{esmap1} follows from Lemma~\ref{lemloc} with \(h_i=f\) for \(1\le i\le4\). To prove \eqref{esmap2}, use bilinearity to write the equation for \(\mathfrak S f_1-\mathfrak S f_2\) as the sum of the source quadruples \((f_1-f_2,f_1,f_1-f_2,f_1-f_2)\) and \((f_2,f_1-f_2,0,0)\), and apply Lemma~\ref{lemloc} to each.}
		\end{proof}\par\medskip
		{\color{red}We now complete the proof of Theorem~\ref{Th1}. Fix \(C_1\geq1\) so that both estimates in Theorem~\ref{thmmap} hold with constant \(C_1\). Let \(f,f_1,f_2\in \mathcal V_{T_0,\delta}\), where \(0<T_0<T\) and \(\delta>0\) will be chosen below. By \eqref{conini},}
\begin{align*}
    \|\mathfrak S f\|_{T_0}
    &\leq
    C_1\|f_0\|_{\mathrm{cr}}+4C_1\delta^2
    +2C_1T_0^{\sigma_0}\delta,\\
    \|\mathfrak S f_1-\mathfrak S f_2\|_{T_0}
    &\leq
    C_1(4\delta+T_0^{\sigma_0})
    \|f_1-f_2\|_{T_0}.
\end{align*}
{\color{red}Choose}
\[
    \delta=2C_1\|f_0\|_{\mathrm{cr}}.
\]
{Decrease the data threshold \(\delta_0\), if necessary, so that \(4C_1\delta\leq\frac14\), and choose \(T_0\in(0,1)\) so that \(C_1T_0^{\sigma_0}\leq\frac18\). Since \(\sigma_0>0\), these choices depend only on the structural constants.}
{These choices yield}
\[
    \|\mathfrak S f\|_{T_0}\leq \delta,
    \qquad
    \|\mathfrak S f_1-\mathfrak S f_2\|_{T_0}
    \leq \frac12\|f_1-f_2\|_{T_0}.
\]
{\color{red}Thus \(\mathfrak S\) is a contraction on \(\mathcal V_{T_0,\delta}\). It therefore has a unique fixed point \(f\in\mathcal V_{T_0,\delta}\), which solves the Cauchy problem \eqref{eqperbo}. Moreover,}
\[
    \|f\|_{T_0}\leq C\|f_0\|_{\mathrm{cr}} .
\]
{\color{red}It remains to prove the nonnegativity of the full distribution}
\[
    F=\mu+f .
\]
{Positivity is not imposed in the fixed-point space: although physical solutions satisfy \(F\geq0\), it is not immediate that the map \(\mathfrak S\), defined through a linearized inhomogeneous problem, preserves the closed convex set}
\[
    \mathcal V_{T_0,\delta}\cap\{f:f+\mu\geq0\}.
\]
{\color{red}We therefore establish positivity a posteriori by approximation. First, approximate the initial datum by smooth data that preserve the nonnegativity of the full distribution. Let}\par\medskip
\[
    \mu_\varepsilon^v(v)
    :=
    \varepsilon^{-d/2}\mu(v/\sqrt{\varepsilon}),
    \qquad
    \varrho_\varepsilon^x(x)
    :=
    \varepsilon^{-d/2}\mu(x/\sqrt{\varepsilon}),
    \qquad 0<\varepsilon<1.
\]
{\color{red}By the semigroup property of the Gaussian heat kernel,}
\[
    \mu_\varepsilon^v\star\mu=\mu_{1+\varepsilon},
\]
{\color{red}where \(\star\) denotes convolution in \(v\). Since \(f_0+\mu\geq0\),}
\[
    (f_0+\mu)*(\varrho_\varepsilon^x\mu_\varepsilon^v)
    =
    f_0*(\varrho_\varepsilon^x\mu_\varepsilon^v)
    +
    \mu_{1+\varepsilon}^v
    \geq0 ,
\]
{\color{red}where \(*\) denotes convolution in \((x,v)\). Let \(\chi\in C_c^\infty(\mathbb R^d)\) satisfy \(0\leq\chi\leq1\) and \(\chi=1\) on \(B_1\), and define}
\[
    f_{0,\varepsilon}(x,v)
    :=
    \chi(\varepsilon v)\,
    \mu(v)\,\mu_{1+\varepsilon}(v)^{-1}\,
    \bigl(f_0*(\varrho_\varepsilon^x\mu_\varepsilon^v)\bigr)(x,v).
\]
{\color{red}Then}
\[
\begin{aligned}
    f_{0,\varepsilon}(x,v)+\mu(v)
    &=
    \chi(\varepsilon v)\,
    \mu(v)\,\mu_{1+\varepsilon}(v)^{-1}
    \bigl[
        \bigl(f_0*(\varrho_\varepsilon^x\mu_\varepsilon^v)\bigr)(x,v)
        +
        \mu_{1+\varepsilon}(v)
    \bigr]  \\
    &\qquad
    +(1-\chi(\varepsilon v))\mu(v)
    \geq0 .
\end{aligned}
\]
{\color{red}After decreasing the smallness threshold \(\delta_0\), if necessary, we may also assume}
\[
    \|f_{0,\varepsilon}\|_{\mathrm{cr}}
    \leq
    (1+\varepsilon)^d\|f_0\|_{\mathrm{cr}}
    \leq
    \delta_0.
\]
{for all sufficiently small \(\varepsilon>0\). Each \(f_{0,\varepsilon}\) is smooth and compactly supported in \(v\). For such \(\varepsilon\), let \(f_\varepsilon\) be the fixed point associated with \(f_{0,\varepsilon}\), and set}\par\medskip
\[
    F_\varepsilon:=\mu+f_\varepsilon .
\]
{\color{red}The preceding estimates give the uniform bound}
\[
    \|f_\varepsilon\|_{T_0}
    \leq
    C\|f_{0,\varepsilon}\|_{\mathrm{cr}}
    \leq
    C\|f_0\|_{\mathrm{cr}} .
\]
{\color{red}By the smoothing estimates and compactness, a subsequence converges to some \(f\in\mathcal V_{T_0,\delta}\) such that, for every \(\tau\in(0,T_0)\) and integer \(k<N_0/2\),}
\[
    f_\varepsilon\to f
    \qquad\text{locally in } C([\tau,T_0];C^k_{x,v}) .
\]
{\color{red}Moreover, for every \(\varphi\in C_c^\infty(\mathbb R^d_x\times\mathbb R^d_v)\),}
\[
    \sup_{t\in[0,T_0]}
    \left|
    \iint_{\mathbb R^d_x\times\mathbb R^d_v}
    \varphi(x,v)\bigl(f_\varepsilon(t,x,v)-f(t,x,v)\bigr)
    \,\dif x\,\dif v
    \right|
    \to0.
\]

{Passing to the limit in the equation shows that \(f\) solves \eqref{eqperbo} with initial datum \(f_0\). By fixed-point uniqueness, this limit is the solution constructed above.}

{\color{red}It remains to prove that each smooth approximating solution is nonnegative. For every \(\varepsilon>0\),}
\[
    f_{0,\varepsilon}+\mu\geq0 .
\]
{\color{red}The classical smooth theory for the non-cutoff Boltzmann equation and the Landau equation implies that the corresponding solution \(f_\varepsilon\) is unique and smooth on \([0,T_0]\); see, for example, \cite{ADVW,GS2011,AMUXY-1,AMUXY2011-AA,AMUXY2012JFA}.}

{\color{red}We apply the standard positivity argument based on the iteration scheme in \cite[Lemma~8]{Guo}. Fix an interval \([\tau_1,\tau_2]\subset[0,T_0]\) with \(\tau_2-\tau_1>0\) sufficiently small. Starting from \(F_\varepsilon^0=F_\varepsilon(\tau_1)\), define \(F_\varepsilon^{n+1}\) by}
\[
\begin{aligned}
    &\partial_t F_\varepsilon^{n+1}
    +v\cdot\nabla_x F_\varepsilon^{n+1}
    =
    \mathcal Q_s(F_\varepsilon^n,F_\varepsilon^{n+1}),
    \qquad
    (t,x,v)\in(\tau_1,\tau_2]\times\mathbb R^d\times\mathbb R^d,\\
    &F_\varepsilon^{n+1}|_{t=\tau_1}
    =
    F_\varepsilon(\tau_1,x,v).
\end{aligned}
\]
{\color{red}The interval length \(\tau_2-\tau_1\) may depend on \(\varepsilon\), which does not affect the argument.}

{Assume that \(F_\varepsilon(\tau_1)\geq0\) and \(F_\varepsilon^n\geq0\) on \([\tau_1,\tau_2]\). We claim that \(F_\varepsilon^{n+1}\geq0\). Indeed, the cancellation representation gives}
\[
\begin{aligned}
\mathcal Q_s(F_\varepsilon^n,F_\varepsilon^{n+1})(v)
&=
-\int_{\mathbb R^d}
\bigl(
    F_\varepsilon^{n+1}(v)-F_\varepsilon^{n+1}(v')
\bigr)
\mathbf C_{F_\varepsilon^n}(v,v-v')
\frac{\dif v'}{|v-v'|^{d+2s}} \\
&\qquad
+c(\gamma,s)
F_\varepsilon^{n+1}(v)
\Lambda_v^{-d-\gamma}F_\varepsilon^n(v).
\end{aligned}
\]
{\color{red}Since \(F_\varepsilon^n\geq0\), we have \(\mathbf C_{F_\varepsilon^n}\geq0\). Moreover,}
\[
    \int_{\tau_1}^{\tau_2}
    \|\Lambda_v^{-d-\gamma}F_\varepsilon^n(t)\|_{L^\infty_{x,v}}
    \,\dif t
    <\infty .
\]
{\color{red}Thus the linear equation for \(F_\varepsilon^{n+1}\) satisfies the maximum principle, and hence}
\[
    F_\varepsilon^{n+1}(t,x,v)\geq0,
    \qquad
    (t,x,v)\in[\tau_1,\tau_2]\times\mathbb R^d\times\mathbb R^d .
\]
{Induction gives \(F_\varepsilon^n\geq0\) for every \(n\), and passing to the limit in the smooth iteration yields}
\[
    F_\varepsilon(t,x,v)\geq0
    \qquad
    \text{on }[\tau_1,\tau_2]\times\mathbb R^d\times\mathbb R^d .
\]
{\color{red}For fixed \(\varepsilon\), let}
\[
\mathcal T_\varepsilon
:=\{t\in[0,T_0]:F_\varepsilon\geq0\text{ on }[0,t]\}.
\]
{\color{red}This set contains \(0\), is closed by continuity, and is open to the right by the preceding local maximum-principle argument started at any \(t\in\mathcal T_\varepsilon\). The standard supremum argument therefore gives \(\mathcal T_\varepsilon=[0,T_0]\), without requiring a uniform lower bound on the successive interval lengths. Hence}
\[
    F_\varepsilon(t,x,v)\geq0,
    \qquad
    (t,x,v)\in[0,T_0]\times\mathbb R^d\times\mathbb R^d .
\]

{\color{red}Finally, the local convergence for \(t>0\) and the assumption \(f_0+\mu\geq0\) allow us to pass to the limit \(\varepsilon\to0\), giving}
\[
    f(t,x,v)+\mu(v)\geq0,
    \qquad
    0\leq t\leq T_0 .
\]
{\color{red}This completes the proof of Theorem~\ref{Th1}.}
		\subsection{Global well-posedness}\label{secglo}
{\color{red}We now extend the local solution constructed in Theorem~\ref{Th1} to \([0,\infty)\). For every \(t\in(0,T_0]\), the local solution is regular and decays sufficiently rapidly at large velocities. We may therefore restart the problem at a positive time with regular, decaying data.

Once this regularity is available, the solution can be treated perturbatively near the Maxwellian. The classical symmetric perturbation method is not directly suited to the soft-potential regime with polynomial velocity tails. We instead use Caflisch's decomposition: one component carries the polynomial tail of the initial perturbation, while the other has zero initial data and is treated in a Maxwellian-weighted space.

We consider the Boltzmann case $s\in(0,1)$, the Landau case ; the Landau case can be treated similarly and is therefore omitted. Because the soft-potential dynamics lack a spectral gap in the usual sense, we use the frequency-dependent hypocoercive estimates of Bedrossian--Coti Zelati--Dolce \cite{BCD}. These estimates capture low-frequency diffusion, high-frequency enhanced dissipation, and Taylor dispersion in the whole space. Combined with Caflisch's decomposition, they provide the global bounds needed to continue the local solution; see also \cite{Caf,CG24,DuanLiu20}.}

{Restart the local solution at \(T_0\) by setting}
\[
g_0(x,v):=f(T_0,x,v),\qquad g(t,x,v):=f(T_0+t,x,v),\quad t\geq0.
\]
{\color{red}Throughout this subsection, \(t\) denotes the elapsed time. After deriving the global estimate, we return to the original variable by replacing \(t\) with \(t-T_0\). Decompose}
\[
    g=g_1+\sqrt{\mu}\,g_2,
\]
{\color{red}where \(g_1\) carries the polynomial velocity tail of the initial data, while \(g_2\) is the Maxwellian-weighted component with zero initial data. We consider the system}
\begin{equation}\label{eqdeco}
    \begin{aligned}
        &\partial_t g_1+v\cdot \nabla_x g_1+\mathbf{L}_s g_1
        +\mathbf{c}_1\chi_R(v)g_1
        =
        F(g_1,g_2),\\
        &\partial_t g_2+v\cdot \nabla_x g_2+\tilde{\mathcal{L}}_s g_2
        =
        G(g_1,g_2),\\
        &(g_1,g_2)|_{t=0}=(g_0,0).
    \end{aligned}
\end{equation}
{\color{red}Here \(\mathbf{c}_1,R>0\) will be fixed later, and \(\chi_R\) is a smooth cutoff satisfying}
\[
    \mathbf{1}_{|v|\leq R}\leq \chi_R\leq \mathbf{1}_{|v|\leq 2R}.
\]
{\color{red}The operator \(\mathbf{L}_s\) is defined in \eqref{defL}, and}
\[
    \tilde{\mathcal{L}}_s
    :=
    \mu^{-\frac12}\mathbf{L}_s\mu^{\frac12}.
\]
{\color{red}The source terms are}
\begin{equation}\label{defFG}
    \begin{aligned}
        F(g_1,g_2)
        &=
        \mathcal{Q}_s(g,g)
        -
        \mathcal{Q}_s(\mu^{\frac12}g_2,\mu^{\frac12}g_2),\\
        G(g_1,g_2)
        &=
        \mu^{-\frac12}\mathbf{c}_1\chi_R(v)g_1
        +
        \mu^{-\frac12}
        \mathcal{Q}_s(\mu^{\frac12}g_2,\mu^{\frac12}g_2).
    \end{aligned}
\end{equation}
{\color{red}If \((g_1,g_2)\) solves \eqref{eqdeco}, then}
\[
    g=g_1+\mu^{\frac12}g_2
\]
{solves the perturbation equation \eqref{eqperbo}. Thus it suffices to solve the decomposed system \eqref{eqdeco}.}

{\color{red}The estimate for \(g_2\) uses the hypocoercive framework of Bedrossian--Coti Zelati--Dolce \cite{BCD}, in a form slightly different from their stated result. Their analysis concerns the symmetric perturbation}
\[
    F=\mu+\sqrt{\mu}\,g,
\]
{\color{red}which has Maxwellian, and hence exponential, velocity decay. Their global decay result is also stated in the soft-potential range}
\[
    \gamma>\max\{-d,-d/2-2s\}.
\]
{\color{red}Here, by contrast, we write}
\[
    F=\mu+g,
\]
{\color{red}and allow only polynomial decay in \(v\), including the very-soft-potential range}
\[
    \gamma>-d-2s,
    \qquad
    \gamma+2s\leq0 .
\]
{\color{red}The frequency-dependent hypocoercive mechanism of \cite{BCD} is expected to remain valid in this regime when sufficiently strong polynomial velocity weights are imposed. Caflisch's decomposition separates the polynomially decaying and Maxwellian-weighted components. We record the resulting global estimate in the form used below.}

		\begin{proposition}\label{propglo}
			{\color{red}Let $\sigma,N\in\mathbb N$ satisfy $\sigma,N>d$, and let $M>d$ and $M^{\prime}>2s^{-1}(N+\sigma)$. There exists $\varepsilon>0$ such that, if the initial datum $f_0$ satisfies}
			\begin{align}\label{condata}
				\mathbf{Data}:=	\left\|\langle v\rangle^{M+M^{\prime}}\langle x\rangle^N \langle \nabla_{x,v}\rangle^{\sigma} f_0\right\|_{L_{x, v}^2}+\left\|\langle v\rangle^{M+M^{\prime}}\langle x\rangle^N \langle\nabla_{x,v}\rangle^\sigma f_0 \right\|_{L_v^2 L_x^1} \leq \varepsilon,
			\end{align}
			{then the corresponding solution $f$ of \eqref{eqperbo} has the following properties. For $\delta_{\mathrm{hyp}},\delta_{\lambda}>0$, chosen in the proof and depending only on $\sigma$, $M$, and $M^{\prime}$, define}
			
			$$
			\lambda_{\xi}=\lambda(\xi)=\delta_{\lambda} |\xi|^2\langle \delta_{\mathrm{hyp}}^{-1} \xi\rangle^{-\frac{2(1+s)}{1+2s}}.
			$$
			{There exists an integer $J\geq\frac{(M'+|\gamma|)s}{|\gamma|(1-s)}$ such that}
			$$
			\sup _{t>0}\left(\left\|\langle v\rangle^M\langle x/\langle t\rangle\rangle^{N/2}\left\langle\lambda\left( \nabla_x\right) t\right\rangle^J\langle \nabla_{x,v}\rangle^\sigma f(t)\right\|_{L_{x, v}^2} + \langle t\rangle^{d / 2}	\left\|\langle v\rangle^{M }\left\langle\lambda\left( \nabla_x\right) t\right\rangle^{J }\langle \nabla_{x,v}\rangle^\sigma f(t)\right\|_{L_v^2 L_x^{\infty}} \right)\lesssim \mathbf{Data}.
			$$
			{\color{red}In particular, for any $j,n\in\mathbb{N}$ satisfying $2j+n\leq \sigma$,}
			$$
			\sup _{t>0}\left(\left\langle t\right\rangle^j \left\|\langle v\rangle^M\langle x/\langle t\rangle\rangle^{N/2}(\lambda\left( \nabla_x\right))^j\langle \nabla_{x,v}\rangle^nf(t)\right\|_{L_{x, v}^2} + \langle t\rangle^{d / 2+j}	\left\|\langle v\rangle^{M }(\lambda\left( \nabla_x\right))^j \langle \nabla_{x,v}\rangle^nf(t)\right\|_{L_v^2 L_x^{\infty}} \right)\lesssim \mathbf{Data}.
			$$
			
		\end{proposition}

		\subsubsection{\texorpdfstring{{Linear estimates for $g_1$}}{Linear estimates for g1}}\label{secg1}
		{\color{red}We begin the analysis of \eqref{eqdeco} with the linear problem}
		\begin{align}
			&	\partial_t f+v\cdot \nabla_x f+\mathbf{L}_s f+\mathbf{c}_1\chi_R(v)f=F,\label{lineq1}\\
			&	f|_{t=0}=f_0.\nonumber
		\end{align}
		{Following earlier work, we use the vector field}
		\begin{align*}
			Z=\nabla_v+t\nabla_x,
		\end{align*}
		{\color{red}which satisfies}
		\begin{align*}
			[Z,\partial_t +v\cdot\nabla_x]=0.
		\end{align*}
		{\color{red}We use the weighted Sobolev norms}
		\begin{align*}
			\|g\|_{L^2_{q}}=\|\langle v\rangle^qg\|_{L^2},\quad\quad	\|g\|_{H^s_{q}}=\|\langle v\rangle^qg\|_{H^s},
		\end{align*}
		{\color{red}where $H^s$ is the standard Sobolev space on $\mathbb{R}^d_v$. We write $\langle \cdot,\cdot\rangle_{L^2}$ and $(\cdot,\cdot)_{L^2}$ for the standard $L^2$ inner products on $\mathbb{R}^d$ and $\mathbb{R}^d\times \mathbb{R}^d$, respectively. The following lemma gives the coercivity of $\mathbf{L}_s+\mathbf{c}_1\chi_R$ in polynomially weighted spaces; see \cite{AMUXY,CG24}.}\par\medskip
		\begin{lemma}\label{lemcoer1}
		There exist $c_0,\mathbf c_1,R>0$ such that
			\begin{align*}
				&\langle {\mathbf{L}}_s g+\mathbf{c}_1\chi_Rg,g\langle v\rangle^{2k} \rangle_{L^2}\geq c_0 \|g\|_{H^s_{k+\gamma/2}}^2.
			\end{align*}
		\end{lemma}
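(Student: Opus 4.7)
The plan is to establish weighted coercivity for $\mathbf{L}_s$ up to an $L^2$ error localized on a ball $\{|v|\le R_0\}$, and then choose $\mathbf{c}_1,R$ large enough so that the bounded multiplier $\mathbf{c}_1\chi_R$ absorbs that error. Concretely, I would first prove
\begin{align*}
\langle \mathbf{L}_s g,\, g\langle v\rangle^{2k}\rangle \;\ge\; 2c_0\,\|g\|_{H^s_{k+\gamma/2}}^2 \;-\; C_1\,\|\mathbf{1}_{|v|\le R_0}g\|_{L^2}^2.
\end{align*}
Picking any $R\ge R_0$ and then $\mathbf{c}_1\ge 2C_1\langle R_0\rangle^{-2k}$ gives $\mathbf{c}_1\langle\chi_Rg,g\langle v\rangle^{2k}\rangle\ge 2C_1\|\mathbf{1}_{|v|\le R_0}g\|_{L^2}^2$, and adding the two bounds yields the claim.

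For the coercivity up to a compact error, I would use the decomposition $\mathbf{L}_sg=-\mathcal{Q}_s(\mu,g)-\mathcal{Q}_s(g,\mu)$ from \eqref{defL}. The term $-\mathcal{Q}_s(g,\mu)$ is lower order for soft potentials: in the Boltzmann case $s\in(0,1)$, the Carleman formula of Lemma \ref{lemrefo} combined with the decay estimates on $\mathbf{C}_g$ from Lemma \ref{lemCf} (with $f$ replaced by the Gaussian $\mu$, yielding rapid decay) shows that it is controlled by $\eta\|g\|_{H^s_{k+\gamma/2}}^2+C_\eta\|\mathbf{1}_{|v|\le R_0}g\|_{L^2}^2$ for arbitrary small $\eta>0$ once $R_0$ is large enough; for $s=1$ the same conclusion follows directly from \eqref{q1mr} together with Lemma \ref{lemcoeff} and the exponential decay of $\mu$. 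The main dissipative term $-\mathcal{Q}_s(\mu,g)$ supplies the coercivity: via the Carleman representation one has, up to lower-order symmetrization terms,
\begin{align*}
-\langle \mathcal{Q}_s(\mu,g),\,g\langle v\rangle^{2k}\rangle \;=\; \tfrac12\!\iint |\delta^v_z(g\langle\cdot\rangle^k)(v)|^2\,\mathbf{C}_\mu(v,z)\,\frac{dv\,dz}{|z|^{d+2s}}+\mathrm{comm.},
\end{align*}
and the two-sided bound $\mathbf{C}_\mu(v,z)\sim \exp(-(v\cdot\hat z)^2/2)\langle\Pi_{\hat z}v\rangle^{1-\kappa}$ from Lemma \ref{le10} and \eqref{deftC}, together with the argument used in the proof of Lemma \ref{lemEs}, gives a lower bound proportional to the anisotropic $H^s$ semi-norm of $g\langle v\rangle^k$ with the additional weight $\langle v\rangle^\gamma$. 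Integrating in $v$ produces $c_0\|g\|_{H^s_{k+\gamma/2}}^2$. The commutator, which arises from moving $\langle v\rangle^k$ inside the finite difference $\delta_z^v$, is of lower order in $v$ and, after a Cauchy--Schwarz, is dominated by $\eta\|g\|_{H^s_{k+\gamma/2}}^2+C_\eta\|\mathbf{1}_{|v|\le R_0}g\|_{L^2}^2$. For the Landau case $s=1$, Lemma \ref{lemlan} gives directly $(a\star\mu(v)\xi,\xi)\gtrsim \langle v\rangle^{\gamma+2}|\Pi_{\hat v}\xi|^2+\langle v\rangle^\gamma|\mathrm{P}_{\hat v}\xi|^2$, so the $H^1_{k+\gamma/2}$ bound follows after one integration by parts, and the commutator error is again supported on a compact set by the smoothness of $a\star\mu$.

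The principal technical obstacle is controlling the commutator of the polynomial weight $\langle v\rangle^{k}$ with the strongly singular non-cutoff kernel in the very soft regime $\kappa=-\gamma-2s\in[0,d)$. The naive bound loses a factor of $\langle v\rangle^{-1}$ that must be absorbed by the gap $\langle v\rangle^{\gamma/2}$ between $\|g\|_{L^2_{k+\gamma/2}}$ and $\|g\langle v\rangle^k\|_{L^2}$; this succeeds because $\gamma<0$, but only after an angular decomposition of $b(\cos\theta)$ in the spirit of Lemma \ref{lemEs} that separates the grazing part (which yields the $H^s$-gain) from the non-grazing part (which is harmless once $\mu$ is inserted). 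A secondary subtlety is that the lower-order contribution $\mathcal{Q}_s(g,\mu)$ and the remainder $\mathcal{Q}_{s,r}(\mu,g)$ must be bounded with constants that do not depend on $R_0$ in a way that would preclude absorption; this is ensured by the Gaussian decay of $\mu$ and its derivatives, which makes every such term effectively compactly supported modulo an $\eta\|g\|_{H^s_{k+\gamma/2}}^2$ loss.
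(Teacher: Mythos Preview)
The paper does not give a proof of this lemma; it is stated with a citation to \cite{CG24} (Carrapatoso--Gervais) and used as a black box. So there is no paper proof to compare against. Your outline follows the standard route to such weighted coercivity estimates---coercivity of $-\mathcal{Q}_s(\mu,\cdot)$ in the anisotropic norm, commutator control for the weight $\langle v\rangle^k$, lower-order treatment of $\mathcal{Q}_s(\cdot,\mu)$, and absorption of the residual compact error by $\mathbf{c}_1\chi_R$---and this is indeed how the cited references (and, e.g., \cite{CTW16,AMSY,He18}) proceed. One small slip: for $\mathcal{Q}_s(g,\mu)$ the nonlocal coefficient is $\mathbf{C}_g$, not $\mathbf{C}_\mu$, so the smallness comes from the smoothness and Gaussian decay of the \emph{second} argument $\mu$ (and the cancellation lemma \eqref{cancel} for $\mathcal{Q}_{s,r}$), not from Lemma~\ref{lemCf} with $f=\mu$. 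With that corrected, your plan is sound and matches the literature the paper is quoting.
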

		{\color{red}Define the energy and dissipation functionals}
		\begin{align*}
			\mathbb E_{M,B}[f](t)=\sum_{|\alpha|+|\beta|\leq B}\|   Z^\beta \nabla_x^\alpha f(t)\|_{L^2L^2_{M}}^2,\quad\quad\quad \mathbb{D}_{M,B}[f](t)=\sum_{|\alpha|+|\beta|\leq B}\|  Z^\beta \nabla_x^\alpha f(t)\|_{L^2H^s_{M+\gamma/2}}^2.
		\end{align*}
		{Lemma \ref{lemcoer1} yields the following estimate.}
		\begin{lemma}
			\label{lemlines}
			{\color{red}Suppose that \(f\) solves \eqref{lineq1}. Then, for every $\rho\geq0$,}
			\begin{align*}
				\frac{d}{dt}(\langle t\rangle^{2\rho}	\mathbb{E}_{M,B}[f](t))&+\frac{c_0}{2}\langle t\rangle^{2\rho}\mathbb{D}_{M,B}[f](t)\\&\lesssim \langle t\rangle^{2\rho}\sum_{|\alpha|+|\beta|\leq B}\left| \left(Z^\beta \nabla_x^\alpha F(t), \langle v\rangle^{2M}Z^\beta \nabla_x^\alpha f(t)\right)_{L^2}\right|+\rho\mathbb{D}_{M-\gamma\rho,B}[f](t).
			\end{align*}
		\end{lemma}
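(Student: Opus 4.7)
The proof proceeds by a weighted energy estimate in the vector fields $(Z,\nabla_x)$, combined with a velocity-splitting argument to deal with the time weight $\langle t\rangle^{2\rho}$. Fix a multi-index $(\alpha,\beta)$ with $|\alpha|+|\beta|\le B$. Since $\nabla_x$ commutes with $\partial_t+v\cdot\nabla_x$ and with $\mathbf{L}_s+\mathbf{c}_1\chi_R$ (both act only in $v$), and since $[Z,\partial_t+v\cdot\nabla_x]=0$ by the commutation property already noted in the paper, applying $Z^\beta\nabla_x^\alpha$ to \eqref{lineq1} yields
\begin{equation*}
(\partial_t+v\cdot\nabla_x)(Z^\beta\nabla_x^\alpha f)+(\mathbf{L}_s+\mathbf{c}_1\chi_R)(Z^\beta\nabla_x^\alpha f)=Z^\beta\nabla_x^\alpha F+\mathscr{C}_{\alpha,\beta}[f],
\end{equation*}
with commutator $\mathscr{C}_{\alpha,\beta}[f]=[(\mathbf{L}_s+\mathbf{c}_1\chi_R),\,Z^\beta]\nabla_x^\alpha f$ involving strictly fewer $Z$-derivatives. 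Testing against $\langle v\rangle^{2M}Z^\beta\nabla_x^\alpha f$ and using that the transport term integrates to zero, Lemma~\ref{lemcoer1} with $k=M$ gives the coercive lower bound $c_0\|Z^\beta\nabla_x^\alpha f\|_{H^s_{M+\gamma/2}}^2$.

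\textbf{Commutator handling.} Because $\nabla_x$ commutes with everything velocity-only, only the $\nabla_v$-component of $Z=\nabla_v+t\nabla_x$ contributes, so the commutators are $t$-free. For $[\nabla_v,\chi_R]=(\nabla_v\chi_R)$ the support is compact, giving a smooth bounded multiplier. For $[\nabla_v,\mathbf{L}_s]$, I would use the translation-invariance identity $\nabla_v\mathcal{Q}_s(g,h)=\mathcal{Q}_s(\nabla_v g,h)+\mathcal{Q}_s(g,\nabla_v h)$ (verified via the Carleman representation of Lemma~\ref{lemrefo}) to express $[\nabla_v,\mathbf{L}_s]f$ as a sum of operators of the form $\mathcal{Q}_s(\nabla_v\mu,\cdot)$, where the differentiated Gaussian provides rapid velocity decay. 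Iterating yields, for every $(\alpha,\beta)$,
\begin{equation*}
\bigl|(\mathscr{C}_{\alpha,\beta}[f],\langle v\rangle^{2M}Z^\beta\nabla_x^\alpha f)\bigr|\le\tfrac{c_0}{4}\|Z^\beta\nabla_x^\alpha f\|_{H^s_{M+\gamma/2}}^2+C\!\!\sum_{|\alpha'|+|\beta'|<|\alpha|+|\beta|}\!\!\|Z^{\beta'}\nabla_x^{\alpha'}f\|_{H^s_{M+\gamma/2}}^2,
\end{equation*}
via standard non-cutoff trilinear estimates and Young's inequality. Induction on $|\beta|$ and summation over $|\alpha|+|\beta|\le B$ absorbs all commutator contributions into $\frac{c_0}{2}\mathbb{D}_{M,B}[f]$, yielding the preliminary estimate $\frac{d}{dt}\mathbb{E}_{M,B}[f]+\frac{3c_0}{4}\mathbb{D}_{M,B}[f]\le C\sum|(Z^\beta\nabla_x^\alpha F,\langle v\rangle^{2M}Z^\beta\nabla_x^\alpha f)|$.

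\textbf{Introducing the time weight.} Using $\frac{d}{dt}\langle t\rangle^{2\rho}=2\rho\langle t\rangle^{2\rho-2}t\le 2\rho\langle t\rangle^{2\rho-1}$, one gets
\begin{equation*}
\frac{d}{dt}\bigl(\langle t\rangle^{2\rho}\mathbb{E}_{M,B}[f]\bigr)+\frac{3c_0}{4}\langle t\rangle^{2\rho}\mathbb{D}_{M,B}[f]\le 2\rho\langle t\rangle^{2\rho-1}\mathbb{E}_{M,B}[f]+C\langle t\rangle^{2\rho}\sum\bigl|(Z^\beta\nabla_x^\alpha F,\langle v\rangle^{2M}Z^\beta\nabla_x^\alpha f)\bigr|.
\end{equation*}
The key step is to dominate $2\rho\langle t\rangle^{2\rho-1}\mathbb{E}_{M,B}[f]$ by $\frac{c_0}{4}\langle t\rangle^{2\rho}\mathbb{D}_{M,B}[f]+C\rho\,\mathbb{D}_{M-\gamma\rho,B}[f]$. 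Split the velocity integral at $\langle v\rangle=\langle t\rangle^{1/|\gamma|}$. On $\{\langle v\rangle\le\langle t\rangle^{1/|\gamma|}\}$, the bound $\langle v\rangle^{-\gamma}\le\langle t\rangle$ gives $\langle t\rangle^{2\rho-1}\langle v\rangle^{2M}\le\langle t\rangle^{2\rho}\langle v\rangle^{2M+\gamma}$, producing a multiple of $\langle t\rangle^{2\rho}\mathbb{D}_{M,B}[f]$. On $\{\langle v\rangle>\langle t\rangle^{1/|\gamma|}\}$, the reverse bound $\langle t\rangle^{2\rho-1}\le\langle v\rangle^{|\gamma|(2\rho-1)}$ gives $\langle t\rangle^{2\rho-1}\langle v\rangle^{2M}\le\langle v\rangle^{2(M-\gamma\rho)+\gamma}$, producing a multiple of $\mathbb{D}_{M-\gamma\rho,B}[f]$. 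A weighted Young inequality with conjugate exponents $(p,q)=(2\rho/(2\rho-1),\,2\rho)$ and small parameter $\varepsilon\sim c_0/\rho$ redistributes the first contribution into $\frac{c_0}{4}\langle t\rangle^{2\rho}\mathbb{D}_{M,B}[f]$ (absorbed on the left) plus an additional $\rho$-weighted piece of $\mathbb{D}_{M-\gamma\rho,B}[f]$.

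\textbf{Main obstacle.} The main subtlety is precisely this last absorption: in the low-velocity region the coefficient of $\langle t\rangle^{2\rho}\mathbb{D}_{M,B}[f]$ produced by the naive estimate is proportional to $\rho$, which is not a priori small, so one cannot absorb it by reducing the $c_0$ coefficient on the left. The Young splitting above is the crucial device that converts the excess $\rho$ into a contribution to the weaker dissipation $\mathbb{D}_{M-\gamma\rho,B}[f]$ (of which we have plenty because $M-\gamma\rho>M$ when $\gamma<0$), trading time-weight growth against velocity-weight loss; handling this trade carefully is the technical heart of the argument. A secondary but routine point is the rigorous justification of the differentiation identity for $\mathcal{Q}_s$ in the non-cutoff setting, which proceeds through the Carleman form of Lemma~\ref{lemrefo}.
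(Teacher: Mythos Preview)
Your proposal is correct and follows essentially the same strategy as the paper: commute $Z^\beta\nabla_x^\alpha$ through, invoke the coercivity Lemma~\ref{lemcoer1}, absorb commutators (which drop a $Z$-derivative) by taking a suitable linear combination over $|\alpha|+|\beta|\le B$, and handle the time-weight term $\rho\langle t\rangle^{2\rho-1}\mathbb{E}_{M,B}$ by a small-parameter splitting that trades time growth against extra velocity weight. The paper implements this last step by splitting directly at the $\epsilon$-shifted threshold $\langle t\rangle=\epsilon^{-1}\langle v\rangle^{-\gamma}$ (so the small factor $\epsilon$ lands immediately on the absorbable piece, and one then takes $\epsilon\rho\ll c_0$), which is equivalent to applying your weighted Young inequality \emph{globally} to the identity $\langle t\rangle^{2\rho-1}\langle v\rangle^{2M}=(\langle t\rangle^{2\rho}\langle v\rangle^{2M+\gamma})^{(2\rho-1)/(2\rho)}(\langle v\rangle^{2(M-\gamma\rho)+\gamma})^{1/(2\rho)}$; your preliminary fixed split at $\langle v\rangle=\langle t\rangle^{1/|\gamma|}$ is harmless but redundant scaffolding.
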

		\begin{proof}
		{\color{red}Fix $|\alpha|+|\beta|\leq B$. Applying $Z^\beta \nabla_x^\alpha$ to \eqref{lineq1} gives}
			
			\begin{equation}\label{derif1}
				\begin{aligned}
					\partial_t \bigl(\langle t\rangle^\rho Z^\beta\nabla_x^\alpha f\bigr)+v\cdot \nabla_x \bigl(\langle t\rangle^\rho Z^\beta\nabla_x^\alpha f\bigr)+&\mathbf{L}_s \bigl(\langle t\rangle^\rho Z^\beta\nabla_x^\alpha f\bigr)+\mathbf{c}_1\chi_R(v)\bigl(\langle t\rangle^\rho Z^\beta\nabla_x^\alpha f\bigr)\\
					&=\langle t\rangle^\rho Z^\beta\nabla_x^\alpha F+\langle t\rangle^\rho \mathbf{Co}+\rho t\langle t\rangle^{\rho-2}Z^\beta\nabla_x^\alpha f,
				\end{aligned}
			\end{equation}
			{\color{red}where $\mathbf{Co}$ denotes the commutator terms}
			\begin{align*}
				\mathbf{Co}=&\left(\mathbf{L}_s Z^\beta  \nabla_x^\alpha f- Z^\beta\mathbf{L}_s  \nabla_x^\alpha f\right)
				+\left(\mathbf{c}_1\chi_R(v)Z^\beta  \nabla_x^\alpha f-\mathbf{c}_1Z^\beta(\chi_R(v) \nabla_x^\alpha f)\right).
			\end{align*}
			{Taking the inner product of \eqref{derif1} with $\langle t\rangle^\rho\langle v\rangle^{2M}Z^\beta\nabla_x^\alpha f$ and applying Lemma \ref{lemcoer1}, we obtain}
			\begin{align*}
				&	\partial_t(\langle t\rangle^{2\rho} \|Z^\beta \nabla_x^\alpha f(t)\|_{L^2_{M}}^2)+c_0\langle t\rangle^{2\rho}\|Z^\beta \nabla_x^\alpha  f(t)\|_{H^s_{M+\gamma/2}}^2\\
				&\quad\lesssim \langle t\rangle^{2\rho}\left|\left(Z^\beta\nabla_x^\alpha F(t), \langle v\rangle^{2M}Z^\beta   \nabla_x^\alpha f(t)\right)_{L^2}\right|+\langle t\rangle^{2\rho}\left|\left(\mathbf{Co}(t), \langle v\rangle^{2M}Z^\beta  \nabla_x^\alpha  f(t)\right)_{L^2}\right|+\rho t\langle t\rangle^{2\rho-2}\|Z^\beta  \nabla_x^\alpha f(t)\|_{L^2_{M}}^2.
			\end{align*}
			{\color{red}The standard commutator estimate (see, for example, \cite{Guo,HsiaoYu}) is}
			\begin{align*}
				|	\left(\mathbf{Co}, \langle v\rangle^{2M}Z^\beta  \nabla_x^\alpha f\right)|\leq  \epsilon\|Z^{\beta} \nabla_x^\alpha f\|_{L^2L^2_M}^2+C\epsilon^{-1}\sum_{ |\beta'|\leq |\beta|-1}\|Z^{\beta'} \nabla_x^\alpha f\|_{L^2L^2_M}^2,\quad\quad\quad \forall\epsilon\in (0,1).
			\end{align*}
			{\color{red}We next estimate the term produced by the time weight. Observe that}
			\begin{align*}
				\langle t\rangle^{2\rho-1}	\|Z^\beta\nabla_x^\alpha f\|_{L^2_M}^2&\leq \langle t\rangle^{2\rho-1}\int_{\mathbb{R}^d}\mathbf{1}_{\langle t\rangle\geq \epsilon^{-1}\langle v\rangle^{-\gamma}}|Z^\beta\nabla_x^\alpha f|^2\langle v\rangle^{2M}\dif v\\
				&\quad+\langle t\rangle^{2\rho-1}\int_{\mathbb{R}^d}\mathbf{1}_{\langle t\rangle\leq \epsilon^{-1}\langle v\rangle^{-\gamma}}|Z^\beta\nabla_x^\alpha f|^2\langle v\rangle^{2M}\dif v\\
				&\leq \epsilon\langle t\rangle^{2\rho}\int_{\mathbb{R}^d}|Z^\beta\nabla_x^\alpha f|^2\langle v\rangle^{2M+\gamma}\dif v+C(\epsilon)\int_{\mathbb{R}^d}|Z^\beta\nabla_x^\alpha f|^2\langle v\rangle^{2M-\gamma(2\rho-1)}\dif v.
			\end{align*}
			{Choosing $\epsilon>0$ so that $\epsilon\rho\ll c_0$, we obtain}
			\begin{align*}
				&	\partial_t(\langle t\rangle^{2\rho} \|Z^\beta \nabla_x^\alpha f(t)\|_{L^2L^2_{M}}^2)+\frac{c_0}{2}\langle t\rangle^{2\rho}\|Z^\beta  \nabla_x^\alpha f(t)\|_{L^2H^s_{M+\gamma/2}}^2\\
				&\quad\quad\lesssim \langle t\rangle^{2\rho}\left|\left(Z^\beta  \nabla_x^\alpha F(t), \langle v\rangle^{2M}Z^\beta  \nabla_x^\alpha  f(t)\right)_{L^2}\right|+\langle t\rangle^{2\rho}\sum_{ |\beta'|\leq |\beta|-1}\|Z^{\beta'} \nabla_x^\alpha f\|_{L^2L^2_M}^2+\rho\|Z^\beta \nabla_x^\alpha  f\|_{L^2L^2_{M-\gamma(2\rho-1)/2}}^2.
			\end{align*}
			{\color{red}Since $M-\gamma(2\rho-1)/2=(M-\gamma\rho)+\gamma/2$ and the \(H^s_q\)-norm dominates the \(L^2_q\)-norm, the last term is controlled by the corresponding summand of $\mathbb D_{M-\gamma\rho,B}[f]$. Taking a suitable linear combination over $|\alpha|+|\beta|\leq B$ completes the proof.}
		\end{proof}\par\medskip
		{\color{red}The preceding lemma yields the following weighted estimate, which propagates polynomial spatial decay.}
		\begin{corollary}\label{corof1}
			{\color{red}Let $N\in \mathbb{N}$ and assume $N|\gamma-1|<M$. For $0\leq n\leq N$, set $M_n=M-n|\gamma-1|$ and $f_n=\langle x\rangle^n f$. If \(f\) solves \eqref{lineq1}, then, for every \(\rho\geq0\),}
			\begin{align*}
				\sum_{n\leq N}	\frac{d}{dt}(\langle t\rangle^{2\rho}	\mathbb{E}_{M_n,B}[f_n](t))&+	\frac{c_0}{2}	\sum_{n\leq N}(\langle t\rangle^{2\rho}\mathbb{D}_{M_n,B}[f_n](t))\\
		\quad\quad\quad\quad\quad\quad\lesssim\langle t\rangle^{2\rho}	\sum_{n\leq N}\sum_{|\alpha|+|\beta|\leq B}&\left| \left(Z^\beta \nabla_x^{\alpha}F_n(t), \langle v\rangle^{2M_n}Z^\beta\nabla_x^{\alpha} f_n(t)\right)_{L^2}\right|+\rho\sum_{n\leq N}\mathbb{D}_{M_n-\gamma\rho,B}[f_n](t).
			\end{align*}
		\end{corollary}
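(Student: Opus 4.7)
The plan is to reduce Corollary~\ref{corof1} to Lemma~\ref{lemlines} applied to the weighted unknown $f_n:=\langle x\rangle^n f$, by deriving the equation that $f_n$ satisfies and then absorbing the extra commutator source into the dissipation through the carefully tuned weight hierarchy $M_n = M - n|\gamma-1|$.

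First, I would derive the equation for $f_n$. Since $\mathbf{L}_s$ and $\chi_R(v)$ act only in velocity, multiplying \eqref{lineq1} by $\langle x\rangle^n$ and using
\begin{align*}
[v\cdot\nabla_x,\langle x\rangle^n] \;=\; n\langle x\rangle^{n-2}(x\cdot v),
\end{align*}
gives
\begin{align*}
\partial_t f_n + v\cdot\nabla_x f_n + \mathbf{L}_s f_n + \mathbf{c}_1\chi_R(v)f_n \;=\; F_n + G_n, \qquad F_n:=\langle x\rangle^n F,\ \ G_n:= m(x,v)\,f_{n-1},
\end{align*}
with $m(x,v):=n(x\cdot v)/\langle x\rangle$. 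The coefficient $m$ is linear in $v$ with $\nabla_v^2 m = 0$, and satisfies the pointwise estimate $|\nabla_x^{\alpha_1}\nabla_v^{\beta_1}m|\lesssim \langle v\rangle^{1-|\beta_1|}\langle x\rangle^{-|\alpha_1|}\mathbf{1}_{|\beta_1|\leq 1}$.

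Next, I would apply Lemma~\ref{lemlines} to $f_n$ with forcing $F_n+G_n$. This produces exactly the stated inequality except for one additional source term
\begin{align*}
C\langle t\rangle^{2\rho}\sum_{|\alpha|+|\beta|\leq B}\bigl|\bigl(Z^\beta\nabla_x^\alpha G_n,\,\langle v\rangle^{2M_n}Z^\beta\nabla_x^\alpha f_n\bigr)\bigr|
\end{align*}
that must be controlled. Expanding $Z^\beta\nabla_x^\alpha(m\,f_{n-1})$ by the Leibniz rule, using $Z=\nabla_v+t\nabla_x$ together with the estimates on derivatives of $m$, I obtain the pointwise bound
\begin{align*}
|Z^\beta\nabla_x^\alpha G_n|\;\lesssim\; \langle v\rangle\sum_{\substack{\tilde\alpha+\alpha_1=\alpha\\ \tilde\beta+\beta_1=\beta}}\langle t\rangle^{|\beta_1|}\,\bigl|Z^{\tilde\beta}\nabla_x^{\tilde\alpha}f_{n-1}\bigr|.
\end{align*}
Applying Cauchy--Schwarz and using the key arithmetic identity
\begin{align*}
2M_n+1-M_{n-1}-\tfrac{\gamma}{2} \;=\; M_n+\tfrac{\gamma}{2}\qquad\Longleftrightarrow\qquad M_{n-1}-M_n = 1-\gamma = |\gamma-1|,
\end{align*}
(which is precisely why $M_n$ is defined as $M-n|\gamma-1|$), together with Young's inequality, yields for any $\epsilon>0$,
\begin{align*}
|(Z^\beta\nabla_x^\alpha G_n,\langle v\rangle^{2M_n}Z^\beta\nabla_x^\alpha f_n)|\;\leq\; \epsilon\,\|Z^\beta\nabla_x^\alpha f_n\|_{H^s_{M_n+\gamma/2}}^2 + C_\epsilon\!\!\!\sum_{|\tilde\alpha|+|\tilde\beta|\leq B}\!\!\!\|Z^{\tilde\beta}\nabla_x^{\tilde\alpha}f_{n-1}\|_{H^s_{M_{n-1}+\gamma/2}}^2.
\end{align*}
Summing over $|\alpha|+|\beta|\leq B$ gives $\epsilon\,\mathbb{D}_{M_n,B}[f_n]+C_\epsilon\,\mathbb{D}_{M_{n-1},B}[f_{n-1}]$.

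Finally I would sum over $n=0,\ldots,N$. The $\epsilon$-terms are absorbed into $\frac{c_0}{2}\mathbb{D}_{M_n,B}[f_n]$ on the LHS by taking $\epsilon\ll c_0$. The cross terms $\sum_n C_\epsilon\mathbb{D}_{M_{n-1},B}[f_{n-1}]$ are controlled by the standard weighted-sum trick: one replaces the equal-weight sum by $\sum_n \eta^n\mathbb{E}_{M_n,B}[f_n]$ for a small $\eta>0$; the cross term then becomes $\eta\,C_\epsilon\sum_n\eta^n\mathbb{D}_{M_n,B}[f_n]$, absorbed by the LHS when $\eta\,C_\epsilon<c_0/4$. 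Because $n\leq N$ is finite, the weighted and unweighted sums are equivalent, giving the stated inequality with a modified constant.

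The main obstacle is the bookkeeping in the third step, specifically the management of the $\langle t\rangle^{|\beta_1|}$ factors produced when $Z$-derivatives hit the coefficient $m$ (reflecting the non-commutativity $[Z,\langle x\rangle^n]= tn\langle x\rangle^{n-2}x\neq 0$). These can be handled either by the identity $t\nabla_x = Z-\nabla_v$ applied iteratively (which trades each $t$ for a $Z$-derivative on $f_{n-1}$), or by the split-region argument of Lemma~\ref{lemlines} (splitting according to whether $\langle t\rangle\langle v\rangle^\gamma$ is large or small), which converts powers of $\langle t\rangle$ into negative powers of $\langle v\rangle^\gamma$; the residual higher-weight contribution is then exactly the $C\rho\,\mathbb{D}_{M_n-\gamma\rho,B}[f_n]$ term already present on the right-hand side.
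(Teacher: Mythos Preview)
Your approach is essentially identical to the paper's: derive the equation for $f_n$ with commutator source $R_n=(v\cdot\nabla_x\langle x\rangle^n)f$, apply Lemma~\ref{lemlines}, estimate $(Z^\beta\nabla_x^\alpha R_n,\langle v\rangle^{2M_n}Z^\beta\nabla_x^\alpha f_n)$ via Cauchy--Schwarz together with the weight identity $2M_n+1=(M_n+\gamma/2)+(M_{n-1}+\gamma/2)$, and absorb the resulting $C_\varepsilon\,\mathbb{D}_{M_{n-1},B}[f_{n-1}]$ by a suitable linear combination over $n$. You are in fact more explicit than the paper about the key arithmetic and the $\eta^n$-weighted summation, and you correctly flag the $\langle t\rangle^{|\beta_1|}$ factors coming from $[Z,\langle x\rangle^n]=t\,\partial_x\langle x\rangle^n$ as the delicate point; the paper's terse proof writes only the H\"older step and does not discuss this commutator at all.
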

		\begin{proof}
			{\color{red}For $0\leq n\leq N$, observe that}
			\begin{align*}
				\partial_t f_n+v\cdot \nabla_x f_n+\mathbf{L}_sf_n+\mathbf{c}_1 \chi_R(v )f_n=F_n+R_n,
			\end{align*}
			{\color{red}where $R_n=v\cdot \nabla_x(\langle x\rangle^n) f$.}
			{Lemma \ref{lemlines} yields}
			\begin{align*}
				&\frac{d}{dt}(\langle t\rangle^{2\rho}	\mathbb{E}_{M_n,B}[f_n](t))+\langle t\rangle^{2\rho}\mathbb{D}_{M_n,B}[f_n](t)\\
				&\leq 	C\langle t\rangle^{2\rho}\sum_{|\alpha|+|\beta|\leq B}\left(\left| \left(Z^\beta \nabla_x^\alpha F_n(t), \langle v\rangle^{2M_n}Z^\beta\nabla_x^\alpha f_n(t)\right)_{L^2}\right|+\left| \left(Z^\beta \nabla_x^\alpha R_n(t), \langle v\rangle^{2M_n}Z^\beta \nabla_x^\alpha f_n(t)\right)_{L^2}\right|\right)\\
				&\quad\quad\quad+ C\rho \mathbb{D}_{M_n-\gamma\rho,B}[f_n](t).
			\end{align*}
			{\color{red}By H\"older's inequality,}
			\begin{align*}
			&\langle t\rangle^{2\rho}
			\sum_{|\alpha|+|\beta|\leq B}
			\left|\left(
			Z^\beta\nabla_x^\alpha R_n(t),
			\langle v\rangle^{2M_n}Z^\beta\nabla_x^\alpha f_n(t)
			\right)_{L^2}\right|\\
				&\lesssim \langle t\rangle^{2\rho}\sum_{|\alpha|+|\beta|\leq B}
				\|Z^\beta\nabla_x^\alpha f_n\|_{H^s_{M_n+\gamma/2}}
				\|Z^\beta\nabla_x^\alpha R_n\|_{H^{-s}_{M_n-\gamma/2}}\\
				&\lesssim \varepsilon\langle t\rangle^{2\rho}
				\mathbb{D}_{M_n,B}[f_n](t)+C_{\varepsilon,N}\langle t\rangle^{2\rho}
				\sum_{0\leq\ell<n}\mathbb{D}_{M_\ell,B}[f_\ell](t),
				\qquad \varepsilon\in(0,1),
			\end{align*}
		{\color{red}where the last inequality follows from the Leibniz rule applied to \(R_n=v\cdot\nabla_x\langle x\rangle^n f\): each derivative of the polynomial factor lowers the spatial level. For \(n=0\), \(R_0=0\). Consequently,}
			\begin{align*}
				&	\frac{d}{dt}(\langle t\rangle^{2\rho}	\mathbb{E}_{M_n,B}[f_n](t))+\langle t\rangle^{2\rho}\mathbb{D}_{M_n,B}[f_n](t)\\
				&\leq 	C_N\langle t\rangle^{2\rho}\sum_{0\leq\ell<n}\mathbb{D}_{M_\ell,B}[f_\ell](t)+C\langle t\rangle^{2\rho}\sum_{|\alpha|+|\beta|\leq B}\left| \left(Z^\beta \nabla_x^\alpha F_n(t), \langle v\rangle^{2M_n}Z^\beta\nabla_x^\alpha  f_n(t)\right)_{L^2}\right|+ C\rho \mathbb{D}_{M_n-\gamma\rho,B}[f_n](t).
			\end{align*}
			{Multiplying the inequalities by fixed positive coefficients, chosen successively from level \(N\) down to level \(0\), absorbs the triangular lower-level sum and completes the proof.}
		\end{proof}

		\subsubsection{\texorpdfstring{{Linear estimates for $g_2$}}{Linear estimates for g2}}\label{secg2}
		{\color{red}We now consider the linear equation}
		\begin{equation}\label{lineq2}
			\begin{aligned}
				&\partial_t f +v\cdot \nabla_x f+\tilde {\mathcal{L}}_s f=G,\\
				&f|_{t=0}=0.
			\end{aligned}
		\end{equation}
		{As proved in \cite{Guo}, $\tilde {\mathcal{L}}_s$ is an unbounded symmetric operator on $L^2_v$ satisfying}
		\begin{align*}
			\langle \tilde {\mathcal{L}}_sf,f\rangle_{L^2}\geq 0.
		\end{align*}
		{\color{red}Moreover, $\langle \tilde {\mathcal{L}}_sf,f\rangle=0$ if and only if $f\in \operatorname{ker} \tilde{ \mathcal{L}}_s$, the $(d+2)$-dimensional space spanned by the collision invariants}
		\begin{align*}
			\operatorname{ker}\tilde{ \mathcal{L}}_s=\operatorname{span}\{\sqrt{\mu}, v\sqrt{\mu}, (2d)^{-\frac{1}{2}}(|v|^2-d)\sqrt{\mu}\}.
		\end{align*}
		{\color{red}Let $\mathbf P$ be the orthogonal projection from $L^2_v(\mathbb{R}^d)$ onto $\operatorname{ker}\tilde{\mathcal L}_s$. Define $(\rho,\mathsf{m},\mathsf{e})$ by}
		\begin{equation}\label{defabc}
			\begin{aligned}
				&\rho(t,x)=\int_{\mathbb{R}^d}\sqrt{\mu}f(t,x,v)\dif v,\\&\mathsf{m}(t,x)=\int_{\mathbb{R}^d}v\sqrt{\mu}f(t,x,v)\dif v,\\& \mathsf{e}(t,x)=\frac{1}{\sqrt{2d}}\int_{\mathbb{R}^d}(|v|^2-d)\sqrt{\mu}f(t,x,v)\dif v,
			\end{aligned}
		\end{equation}
		{These functions represent the mass, momentum, and internal-energy components of $\mathbf Pf$, respectively, and}
		\begin{align*}
			\mathbf{P}f=(\rho+\mathsf{m}\cdot v+(2d)^{-\frac{1}{2}}(|v|^2-d)\mathsf{e})\sqrt{\mu}.
		\end{align*}
		{Following \cite{AMUXY2011-CMP,AMUXY2012JFA}, decompose the quadratic form as}
		\begin{align*}
			\langle \tilde {\mathcal{L}}_s g,g \rangle_{L^2}= \mathcal{A}[g]+\mathcal{K}[g], 
		\end{align*} 
		{\color{red}where the principal form $\mathcal A$ captures the anisotropic dissipation and is comparable to weighted Sobolev norms:}
        \begin{align*}
        \mathcal{A}[g]=\frac{1}{2}\int_{\mathbb{R}^{2d}}\int_{\mathbb{S}^{d-1}}B(|v_*-v|,\cos\theta)(\mu_*(g-g')^2+g_*^2(\sqrt{\mu'}-\sqrt{\mu})^2)\dif \sigma \dif v_* \dif v.
        \end{align*}
		{Specifically,}
		\begin{align*}
			\|g\|^2_{H^s_{\gamma/2}}+\|g\|^2_{L^2_{s+\gamma/2}}\lesssim 	\mathcal{A}[g]\lesssim   \|g\|^2_{H^s_{s+\gamma/2}}.
		\end{align*}
		{\color{red}The form $\mathcal K$, by contrast, is a compact perturbation. The following lemma summarizes the coercivity bounds; see \cite{DL,Guo,Mou06,MS07,YZ}.}
		\begin{lemma}\label{lemcoer}
			{\color{red}Suppose that $-d-2s<\gamma\leq-2s$. Then there exist $c_0,\mathbf c_1,R>0$ such that}
			\begin{align*}
				&\langle \tilde {\mathcal{L}}_s g,g \rangle_{L^2}\geq c_0\mathcal{A}[(\mathrm{Id}-\mathbf{P})g].
			\end{align*}
		\end{lemma}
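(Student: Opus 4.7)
The result is a classical coercivity bound for the symmetric linearized non-cutoff Boltzmann operator, and the plan is to adapt to the very soft range $-d-2s<\gamma\leq -2s$ the now-standard argument that relies on (i) the kernel characterization of $\tilde{\mathcal{L}}_s$, (ii) a Bobylev--Carleman-type decomposition $\tilde{\mathcal{L}}_s = \mathcal{A} + \mathcal{K}$, and (iii) a compactness/contradiction argument on $(\ker \tilde{\mathcal{L}}_s)^{\perp}$. First, I would reduce the problem to $\mathbf{P}g=0$: since $\tilde{\mathcal{L}}_s$ is symmetric on $L^2_v$, vanishes on $\ker\tilde{\mathcal{L}}_s$, and $\mathbf{P}$ is the $L^2$-orthogonal projection onto that kernel, one has $\langle \tilde{\mathcal{L}}_s g, g\rangle = \langle \tilde{\mathcal{L}}_s (\mathrm{Id}-\mathbf{P})g, (\mathrm{Id}-\mathbf{P})g\rangle$, so it suffices to prove $\langle \tilde{\mathcal{L}}_s h, h\rangle \geq c_0 \mathcal{A}[h]$ for $h$ satisfying $\mathbf{P}h=0$.

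Next I would invoke the Carleman-type representation of Lemma \ref{lemrefo}, which yields (following the computations of \cite{AMUXY2011-CMP,AMUXY2012JFA,Mou06}) the splitting $\langle \tilde{\mathcal{L}}_s h, h\rangle = \mathcal{A}[h] + \mathcal{K}[h]$, where $\mathcal{A}[h]\geq 0$ captures the fractional-derivative, anisotropic dissipation and satisfies the two-sided comparison recorded in the excerpt,
\[
\|h\|_{H^s_{\gamma/2}}^2 + \|h\|_{L^2_{s+\gamma/2}}^2 \lesssim \mathcal{A}[h] \lesssim \|h\|_{H^s_{s+\gamma/2}}^2,
\]
while $\mathcal{K}$ is a lower-order quadratic form whose integral kernel decays fast in the large-velocity regime (coming from the Maxwellian factor $\sqrt{\mu}$). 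In particular, on bounded sets of $H^s_{s+\gamma/2}$, $\mathcal{K}$ is compact: the key input is that multiplication by $\sqrt{\mu}$ followed by the symmetrized kernel gives an operator whose symbol decays enough to yield local compactness by Rellich.

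It then remains to establish the coercive bound $\langle \tilde{\mathcal{L}}_s h,h\rangle \geq c_0 \mathcal{A}[h]$ on $\{h:\mathbf{P}h=0\}$. I would argue by contradiction: assume the existence of a sequence $(h_n)$ with $\mathbf{P}h_n=0$, $\mathcal{A}[h_n]=1$ and $\langle \tilde{\mathcal{L}}_s h_n,h_n\rangle \to 0$. The normalization controls $h_n$ in $H^s_{\gamma/2}\cap L^2_{s+\gamma/2}$, so up to a subsequence $h_n \rightharpoonup h_\infty$ weakly in $H^s_{\mathrm{loc}}$ and $\mathbf{P}h_\infty=0$. Weak lower semicontinuity of $\mathcal{A}$ together with the compactness of $\mathcal{K}$ on bounded sets forces $\langle \tilde{\mathcal{L}}_s h_\infty, h_\infty\rangle \leq \liminf \langle \tilde{\mathcal{L}}_s h_n, h_n\rangle = 0$, hence $h_\infty\in\ker\tilde{\mathcal{L}}_s\cap(\ker\tilde{\mathcal{L}}_s)^\perp=\{0\}$. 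Since $\mathcal{K}[h_n] = \langle \tilde{\mathcal{L}}_s h_n,h_n\rangle-\mathcal{A}[h_n]\to -1$, while $h_n\to 0$ locally, one deduces a contradiction from the local compactness of $\mathcal{K}$, closing the argument.

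The main obstacle, specific to the regime $-d-2s<\gamma\leq -2s$, is that the weighted-$L^2$ control provided by $\mathcal{A}[h_n]=1$ is strictly weaker than $\|h_n\|_{L^2}^2$, so one cannot directly appeal to a Poincar\'e-type inequality on $(\ker\tilde{\mathcal{L}}_s)^\perp$; the Maxwellian decay must be used carefully to ensure that the contribution of $\mathcal{K}$ to $h_n$ concentrated at large velocities is negligible. This is where the references \cite{DL,Guo,Mou06,MS07,YZ} provide the relevant sharp computations, which I would cite rather than reproduce. Once the coercive bound on $(\ker\tilde{\mathcal{L}}_s)^\perp$ is obtained, combining it with the kernel reduction completes the proof.
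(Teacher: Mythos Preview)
The paper does not supply its own proof of Lemma~\ref{lemcoer}: the statement is introduced as a summary of known coercivity bounds with references to \cite{DL,Guo,Mou06,MS07,YZ}, and no proof environment follows. Your proposal is therefore not in competition with any argument in the paper; rather, it is a reasonable outline of the standard compactness/contradiction strategy that underlies those references. The reduction to $(\ker\tilde{\mathcal{L}}_s)^\perp$, the splitting $\langle\tilde{\mathcal{L}}_s h,h\rangle=\mathcal{A}[h]+\mathcal{K}[h]$ (which the paper itself records just before the lemma), and the contradiction via weak convergence and relative compactness of $\mathcal{K}$ are all correct in spirit. You also correctly flag the main technical point specific to $-d-2s<\gamma\leq -2s$: the control furnished by $\mathcal{A}[h_n]=1$ carries a nonpositive velocity weight, so passing to the limit in $\mathcal{K}[h_n]$ requires the Maxwellian localization built into $\mathcal{K}$ to compensate, exactly as handled in \cite{MS07,YZ}. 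Since you ultimately defer to those references for the sharp computations, your proposal amounts to the same citation the paper makes, fleshed out with the standard roadmap.
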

		{Lemma \ref{lemcoer} controls the microscopic component $(\mathrm{Id}-\mathbf P)g$. Dissipation of the degenerate component $\mathbf Pg$ is recovered from a fluid-type moment system, following the mechanism introduced by Kawashima \cite{Kawa} and later refined in \cite{Guo,Guo-0,Duan09,Duan11,Guo06}. Let $f$ solve \eqref{lineq2} with prescribed source $G$. Taking the velocity moments}
		\begin{align*}
			&	\sqrt{\mu(v)},\ \  (v_i	\sqrt{\mu(v)})_{i=1}^d,\ \ (2d)^{-\frac{1}{2}}(|v|^2-d)	\sqrt{\mu(v)},\\  &\left[(v_iv_j-\delta_{ij})	\sqrt{\mu(v)}\right]_{d\times d},\ \  \left((|v|^2-d-2)v_i	\sqrt{\mu(v)}\right)_{i=1}^d.
		\end{align*}
		{\color{red}yields the following system for $(\rho,\mathsf{m},\mathsf{e})$ defined in \eqref{defabc}:}
		\begin{equation*}
			\begin{cases}
				&\partial_t \rho+\nabla_x \cdot \mathsf{m}=\langle\sqrt{\mu},G\rangle,\\
				&\partial_t \mathsf{m}+\nabla_x (\rho+\sqrt{\frac{2}{d}}\mathsf{e})+\nabla_x\cdot \Gamma((\mathrm{Id}-\mathbf{P})f)=\langle v\sqrt{\mu},G\rangle,\\
				&\partial_t \mathsf{e}+\sqrt{\frac{2}{d}}\nabla_x\cdot \mathsf{m}+\frac{1}{\sqrt{2d}}\nabla_x\cdot \Lambda((\mathrm{Id}-\mathbf{P})f)=\langle (2d)^{-\frac{1}{2}}(|v|^2-d)	\sqrt{\mu}, G\rangle ,\\
				&\partial_t [\Gamma_{ij}((\mathrm{Id}-\mathbf{P})f)+\sqrt{\frac{2}{d}}\mathsf{e}\delta_{ij}]+\partial_i \mathsf{m}_j+\partial_j \mathsf{m}_i=\Gamma_{ij}(\mathbf{r}+G),\\
				&\partial_t \Lambda_i((\mathrm{Id}-\mathbf{P})f)+\frac{1}{\sqrt{2d}}\partial_i \mathsf{e}=\Lambda_i(\mathbf{r}+G),
			\end{cases}
		\end{equation*}
		{\color{red}where $\Lambda=(\Lambda_i)_{i=1}^d$ and $\Gamma=(\Gamma_{ij})_{d\times d}$ are defined by}
		\begin{align*}
			&\Lambda_i (f)=\langle    (|v|^2-d-2)v_i\sqrt{\mu},f\rangle_{L^2},\quad\quad\quad	\Gamma_{ij}(f)
			:=
			\left\langle
			(v_iv_j-\delta_{ij})\sqrt\mu,f
			\right\rangle_{L_v^2},
		\end{align*}
		{\color{red}and}
		\begin{align*}
			\mathbf r=-v\cdot \nabla_x (\mathrm{Id}-\mathbf{P})f-\tilde{ \mathcal{L}}_s(\mathrm{Id}-\mathbf{P})f.
		\end{align*}
		{\color{red}To recover the dissipation of $\mathbf Pf$, introduce}
		\begin{align*}
			\mathcal{M}=\mathrm{Re}\left(\mathrm{i}\hat{\mathsf{e}}\xi\cdot \Lambda (\mathrm{Id}-\mathbf{P})\hat f+\mathrm{i}\nu_1(\xi_j \hat {\mathsf{m}}_k+\xi_k \hat {\mathsf{m}}_j)\left(\Gamma_{kj}(\mathrm{Id}-\mathbf{P})\hat f+2\hat {\mathsf{e}}\delta_{kj}\right)
			+\mathrm{i}\nu_2\hat \rho\xi \cdot \hat {\mathsf{m}}\right),
		\end{align*}
		{\color{red}where $0<\nu_1\ll\nu_2\ll1$ are universal constants fixed in the proof. The following estimate is proved in \cite{Guo06,Duan09,BCD}.}
		\begin{lemma}
         For every \(B\in\mathbb N\), there exist constants
			\(0<\tilde c<1\), \(0<\delta_{\mathrm m}<\frac14\), and \(C_B>0\)
			such that
			\begin{align*}
				\frac{d}{dt}\mathcal{M} +\tilde c\sum_{0\leq |\beta|\leq B}\frac{|\xi|^2}{\langle t\rangle^{2|\beta|}}\|Z^\beta\mathbf{P}\hat f\|^2_{L^2_v}\leq C_B\left(\|\mu^{\delta_{\mathrm m}}(\mathrm{Id}-\mathbf{P})\hat f\|_{L^2_v}^2+|\Omega\hat G|_{\ell^2}^2\right),
			\end{align*}
			
			{\color{red}where $\Omega f=\langle (v^\alpha \sqrt{\mu})_{0\leq|\alpha|\leq 4}, f\rangle$ and}
			\[
			|\Omega f|_{\ell^2}^2:=\sum_{0\leq|\alpha|\leq4}
			\bigl|\langle v^\alpha\sqrt\mu,f\rangle_{L_v^2}\bigr|^2.
			\]
		\end{lemma}
	{Taking the Fourier transform in $x$ gives}
		\begin{equation*}
			\begin{aligned}
				&	\partial_t \hat f+i\xi\cdot v\hat f+\tilde {\mathcal{L}}_s \hat f=\hat G,\\
				&\hat f|_{t=0}=\hat f_0.
			\end{aligned}
		\end{equation*}
		{\color{red}We recall the frequency-dependent hypocoercive energies introduced in \cite{BCD}, defined separately at high and low spatial frequencies. The high-frequency energy captures the enhanced dissipation generated by transport and velocity diffusion, whereas the low-frequency energy incorporates the micro--macro structure and Kawashima coupling needed to control the hydrodynamic modes. The high-frequency energy and dissipation are}\par\vspace{0.1cm}
        \begin{align*}
			E_{M, B}^{\text{h}}(\hat f)(t, \xi)&:=\frac{1}{2} \sum_{\alpha+|\beta| \leq B} \frac{2^{-\mathrm{c} |\beta|}\langle \xi\rangle^\alpha}{\langle t\rangle^{2 |\beta|}}\left(\left\|\langle v\rangle^M Z^\beta \hat{f}\right\|_{L_v^2}^2+a_{ \xi}\left\|\langle v\rangle^{M+q_{\gamma, s}} Z^\beta \nabla_v \hat{f} \right\|_{L_v^2}^2\right. \\
			& \left.\quad \quad \quad \quad \quad \quad \quad \quad \quad \quad+2 b_{\xi} \operatorname{Re}\left\langle\langle v\rangle^{M+q_{\gamma, s}} Z^\beta i \xi \hat{f},\langle v\rangle^{M+q_{\gamma, s}} Z^\beta \nabla_v \hat{f}\right\rangle_{L_v^2}\right),\\
			D_{M, B}^{\text{h}}(\hat f)(t, \xi)&:=\sum_{\alpha+|\beta| \leq B} \frac{2^{-c |\beta|}\langle \xi\rangle^\alpha}{\langle t\rangle^{2 |\beta|}}\left( \mathcal{A}
			\left[\langle v\rangle^M Z^\beta \hat{f}\right]+ a_{ \xi} \mathcal{A}\left[\langle v\rangle^{M+q_{\gamma, s}} \nabla_v Z^\beta \hat{f}\right] \right.\\
			&\left. \quad \quad \quad \quad \quad \quad \quad \quad \quad\quad\quad+  b_{ \xi}|\xi|^2\left\|\langle v\rangle^{M+q_{\gamma, s}} Z^\beta \hat{f}\right\|_{L_v^2}^2\right).
		\end{align*}
{\color{red}The low-frequency energy and dissipation are}

		\begin{align*}
			E_{M, B}^{\text{l}}(\hat f)(t, \xi)&:=\frac{1}{2}\|\hat f\|_{L^2_v}^2+\frac{1}{2} \sum_{|\beta| \leq B} \sum_{j=0}^1 \frac{2^{-c_j |\beta|}}{\langle t\rangle^{2 |\beta|}}\left({|\xi|}\left\|Z^\beta \hat{f}\right\|_{L_v^2}^2+c_1\left\|Z^\beta \nabla_v^j(\mathrm{Id}-\mathbf{P}) \hat{f}\right\|_{L_v^2}^2\right. \\
			&	\left.\quad \quad \quad \quad \quad \quad \quad \quad \quad \quad \quad \quad \quad \quad+c_2\left\|\langle v\rangle^{M+j q_{\gamma, s}} Z^\beta \nabla_v^j(\mathrm{Id}-\mathbf{P}) \hat{f}\right\|_{L_v^2}^2\right) \\
			&\quad	+{c_0} \mathcal{M}+{c_3} \sum_{|\beta| \leq B} \frac{2^{-C_0 |\beta|}}{\langle t\rangle^{2 |\beta|}} \operatorname{Re}\left\langle\langle v\rangle^{M+q_{\gamma, s}} Z^\beta(i \xi(\mathrm{Id}-\mathbf{P}) \hat{f}),\langle v\rangle^{M+q_{\gamma, s}} Z^\beta \nabla_v(\mathrm{Id}-\mathbf{P}) \hat{f}\right\rangle_{L_v^2},\\
			D_{M, B}^{\text{l}}(\hat f)(t,\xi)&:=\mathcal{A}[(\mathrm{Id}-\mathbf{P}) \hat{f}] +\sum_{|\beta| \leq B} \frac{2^{-\mathrm{C}_0 |\beta|}}{\langle t\rangle^{2 |\beta|}} |\xi|^2\left(\left\|Z^\beta \mathbf{P} \hat{f}\right\|_{L_v^2}^2+\left\|\langle v\rangle^{M+q_{\gamma, s}} Z^\beta(\mathrm{Id}-\mathbf{P}) \hat{f}\right\|_{L_v^2}^2\right) \\
			&\quad +\sum_{|\beta| \leq B} \sum_{j=0}^1 \frac{2^{-\mathrm{c}_j |\beta|}}{\langle t\rangle^{2 |\beta|}}\left(|\xi| \mathcal{A}\left[Z^\beta(\mathrm{Id}-\mathbf{P}) \hat{f}\right]+ \mathcal{A}\left[Z^\beta\left(\nabla_v\right)^j(\mathrm{Id}-\mathbf{P}) \hat{f}\right]\right. \\
			& \quad  \quad \quad \quad \quad \quad\quad \quad \quad\left.+ \mathcal{A}\left[\langle v\rangle^{M+j q_{\gamma, s}} Z^\beta\left(\nabla_v\right)^j(\mathrm{Id}-\mathbf{P}) \hat{f}\right]\right).
		\end{align*}
		{\color{red}Here $\alpha\in\mathbb{N}$, $\beta$ is a multi-index, and $c\geq1$ is sufficiently large. Moreover,}
		\begin{align*}
			\frac{\gamma}{2s}-\kappa_0<q_{\gamma,s}<\frac{\gamma}{2s}, \ \ \ 0<\kappa_0\ll 1,
		\end{align*} 
		{\color{red}and}
		\begin{align*}
			a_\xi=a_0|\xi|^{-\frac{2}{2s+1}},\ \ \ \ b_\xi=b_0|\xi|^{-\frac{2s}{2s+2}},
		\end{align*}
		{\color{red}where $0<a_0\ll b_0\ll1$ are fixed in the proof.}
		
		{\color{red}Set}
		\begin{equation*}
			\begin{aligned}
				& E_{M, B}(\hat f)(t, \xi):=\mathbf{1}_{|\xi| \geq \delta_{\mathrm{hyp}}^{-1}} E_{M, B}^{\mathrm{h}}(\hat f)(t, \xi)+\mathbf{1}_{|\xi|<\delta_{\mathrm{hyp}}^{-1}} E_{M, B}^{\mathrm{l}}(\hat f)(t, \xi), \\
				&D_{M, B}(\hat f)(t, \xi):=\mathbf{1}_{|\xi| \geq \delta_{\mathrm{hyp}}^{-1}} D_{M, B}^{\mathrm{h}}(\hat f)(t, \xi)+\mathbf{1}_{|\xi|<\delta_{\mathrm{hyp}}^{-1}} D_{M, B}^{\mathrm{l}}(\hat f)(t, \xi).
			\end{aligned}
		\end{equation*}
		{\color{red}The following estimate is proved in \cite{BCD}.}
		\begin{lemma}
			\label{lemlines1}
			{\color{red}Suppose that $f$ solves \eqref{lineq2}. Then}
			\begin{equation*}
				\begin{aligned}
					\frac{\partial}{\partial t}E_{M, B}(\hat f)(t, \xi)+\delta_{\mathrm{hyp}} D_{M, B}(\hat f)(t, \xi)\leq \mathcal{N}_{M,B}(\hat G,\hat f)(t,\xi),
				\end{aligned}
			\end{equation*}
			{\color{red}where $\mathcal{N}_{M, B}(\hat G,\hat f)(t, \xi):=\mathbf{1}_{|\xi| \geq \delta_{\mathrm{hyp}}^{-1}} NL_{M, B}^{\mathrm{h}}(t, \xi)+\mathbf{1}_{|\xi|<\delta_{\mathrm{hyp}}^{-1}} NL_{M, {B}}^{\mathrm{l}}(t, \xi)$, with}
			\begin{align*}
				N L_{M, B}^{\mathrm{h}}=&\sum_{\alpha+|\beta| \leq B}  \frac{2^{-c |\beta|}}{\langle t\rangle^{2 |\beta|}}\langle \xi\rangle^{ \alpha}\left( \left|\left\langle \langle v\rangle^M Z^\beta \hat G,\langle v\rangle^M Z^\beta \hat{f} \right\rangle_{L_v^2}\right|\right.  +a_{ \xi}\left|\left\langle\langle v\rangle^{M+q_{\gamma, s}} \nabla_v Z^\beta \hat G,\langle v\rangle^{M+q_{\gamma, s}} \nabla_v Z^\beta \hat{f}\right\rangle_{L_v^2}\right| \\
				& +b_{ \xi}\left|\left\langle\langle v\rangle^{M+q_{\gamma, s}} Z^\beta i \xi\hat G,\langle v\rangle^{M+q_{\gamma, s}} \nabla_v Z^\beta \hat{f}\right\rangle_{L_v^2} \right| \left.+b_{ \xi}\left|\left\langle\langle v\rangle^{M+q_{\gamma, s}} \nabla_v Z^\beta \hat G,\langle v\rangle^{M+q_{\gamma, s}} Z^\beta i \xi \hat{f}\right\rangle_{L_v^2}\right|\right),
			\end{align*}
			and 
			\begin{align*}
				N L_{M, B}^{\mathrm{l}}=&\left|\left\langle \hat G, \hat f\right\rangle_{L^2_v}\right|+\sum_{|\beta| \leq B} \sum_{j=0}^1 \frac{2^{-C_j |\beta|}}{\langle t\rangle^{2 |\beta|}}\left(
				\left| {|\xi|}\left\langle Z^\beta \hat G, Z^\beta \hat{f}\right\rangle_{L_v^2}\right| \right.\\
				& \quad+c_1\left|\left\langle Z^\beta\left(\nabla_v\right)^j(\mathrm{Id}-\mathbf{P})\hat G, Z^\beta\left(\nabla_v\right)^j(\mathrm{Id}-\mathbf{P}) \hat{f}\right\rangle_{L_v^2} \right|\\
				& \left.\quad+c_2\left|\left\langle\langle v\rangle^{M+j q_{\gamma, s}} Z^\beta\left(\nabla_v\right)^j (\mathrm{Id}-\mathbf{P})\hat G,\langle v\rangle^{M+j q_{\gamma, s}} Z^\beta\left(\nabla_v\right)^j(\mathrm{Id}-\mathbf{P}) \hat{f}\right\rangle_{L_v^2}\right| \right)\\
				& +{c_0} |\Omega\hat G|^2+\sum_{|\beta| \leq B} \frac{2^{-c |\beta|}}{\langle t\rangle^{2 |\beta|}} c_3\left(\left|\left\langle\langle v\rangle^{M+q_{\gamma, s}} Z^\beta i \xi(\mathrm{Id}-\mathbf{P}) \hat G,\langle v\rangle^{M+q_{\gamma, s}} \nabla_v Z^\beta(\mathrm{Id}-\mathbf{P}) \hat{f}\right\rangle_{L_v^2}\right|\right. \\
				& \left.\quad+\left|\left\langle\langle v\rangle^{M+q_{\gamma, s}} \nabla_v Z^\beta (\mathrm{Id}-\mathbf{P})\hat G,\langle v\rangle^{M+q_{\gamma, s}} Z^\beta(\mathrm{Id}-\mathbf{P}) i \xi \hat{f}\right\rangle_{L_v^2}\right|\right).
			\end{align*}
            {\color{red}Here $\Omega f=\langle (v^\alpha \sqrt{\mu})_{0\leq|\alpha|\leq 4}, f\rangle$.}
		\end{lemma}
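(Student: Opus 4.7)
\textbf{Proof proposal for Lemma \ref{lemlines1}.}
The plan is to differentiate each constituent of $E_{M,B}(\hat f)$ in time, use the structure of \eqref{eqf2} and the coercivity of $\tilde{\mathcal{L}}_s$ from Lemma \ref{lemcoer}, and then choose the parameters $\delta_0,a_0,b_0,c_i,c$ in a precise hierarchy so that the resulting error terms are absorbed into the dissipation $D_{M,B}$. The high and low frequency regimes are treated separately: in the region $|\xi|\geq\delta_0^{-1}$ we use the hypocoercive energy $E_{M,B}^{\mathrm{h}}$ tailored to the fractional-Kolmogorov symbol $\mathrm{exp}(-\int_0^t\omega(\tau\xi-\eta)d\tau)$, while for $|\xi|<\delta_0^{-1}$ we use the macroscopic-microscopic splitting $f=\mathbf{P}f+(\mathrm{Id}-\mathbf{P})f$ together with the fluid-type moment system and the auxiliary functional $\mathcal M$ to recover the missing dissipation of $\mathbf{P}\hat f$.

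First, I would commute $Z^\beta\nabla_x^\alpha$ (on the Fourier side, $Z=\nabla_v+it\xi$ and $\nabla_x=i\xi$) with \eqref{eqf2}. Since $[Z,\partial_t+iv\cdot\xi]=0$, the derivation $Z^\beta\hat f$ satisfies the same equation up to the commutator $[\tilde{\mathcal L}_s,Z^\beta]\hat f$, whose principal part is of lower order in the hierarchy and can be absorbed using standard weighted estimates for $\tilde{\mathcal L}_s$ (as developed in \cite{AMUXY2011-CMP,AMUXY2012JFA,Mou06}). Each term of $E_{M,B}^{\mathrm h}$ is an $L^2_v$ pairing of $Z^\beta\hat f$ with a weight $\langle v\rangle^{2M+2jq_{\gamma,s}}$, a derivative $\nabla_v$, and a multiplier in $\xi$; upon differentiating in $t$, the transport term $iv\cdot\xi\hat f$ produces cross terms that are either skew (and cancel) or generate exactly the indefinite pairing with coefficient $b_\xi$, whose sign is controlled by Cauchy--Schwarz provided $b_\xi^2\lesssim a_\xi$. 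This is the standard $\nabla_v$--$i\xi$ hypocoercivity mechanism of Hérau, and it yields the $b_\xi|\xi|^2\|\langle v\rangle^{M+q_{\gamma,s}}Z^\beta\hat f\|^2$ term in $D_{M,B}^{\mathrm h}$. The weights $a_\xi=a_0|\xi|^{-2/(2s+1)}$ and $b_\xi=b_0|\xi|^{-2s/(2s+2)}$ are chosen precisely so that the loss of derivative in $v$ from $\tilde{\mathcal L}_s$ is compensated by the gain from $|\xi|$, matching the scaling of $\lambda(\xi)$.

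For the low-frequency regime, I would handle the microscopic part $(\mathrm{Id}-\mathbf{P})\hat f$ with Lemma \ref{lemcoer} applied to $Z^\beta(\nabla_v)^j(\mathrm{Id}-\mathbf{P})\hat f$, which immediately generates the dissipation $\mathcal A[\cdot]$. The macroscopic dissipation $|\xi|^2\|Z^\beta\mathbf{P}\hat f\|^2$ is created by the functional $\mathcal M$: differentiating $\mathcal M$ in $t$ and invoking the displayed fluid-type system for $(\rho,\mathsf m,\mathsf e)$, one obtains $\tilde c|\xi|^2\|\mathbf{P}\hat f\|_{L^2_v}^2$ modulo terms controlled by $\|\mu^\delta(\mathrm{Id}-\mathbf{P})\hat f\|_{L^2_v}^2+|\Omega\hat G|^2$. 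The constants $\nu_1\ll\nu_2\ll 1$ inside $\mathcal M$ and the prefactor $c_0$ of $\mathcal M$ in $E_{M,B}^{\mathrm l}$ are tuned so that the indefinite contribution of $\mathcal M$ to the energy is dominated by the leading $L^2_v$ term, while its positive dissipation dominates the microscopic correction after absorption into the coercivity of $\tilde{\mathcal L}_s$. The cross-term with coefficient $c_3$ plays the same hypocoercive role as $b_\xi$ in the high-frequency estimate, promoting additional $|\xi|^2$ dissipation on $(\mathrm{Id}-\mathbf{P})\hat f$.

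The main obstacle, as in \cite{BCD}, is the careful bookkeeping of the commutators $[Z^\beta,v\cdot i\xi]$ and $[Z^\beta,\tilde{\mathcal L}_s]$. The first produces terms of the form $t^{-2\beta}\langle Z^{\beta'}\hat f,\cdot\rangle$ with $|\beta'|<|\beta|$, which are absorbed by the time-weight $t^{-2\beta}$ together with the hierarchy factor $2^{-c\beta}$, taking $c$ sufficiently large so that the algebraic loss $2^{c}$ per level of descent is controlled. The second commutator produces weighted $v$-derivatives of $(\mathrm{Id}-\mathbf{P})\hat f$ of higher order, which are exactly the quantities appearing in $D_{M,B}$; so they close provided the constants $c_1,c_2,c_3,c_0$ and the cutoff $\delta_0$ are chosen in the order $\delta_0\ll c_0\ll c_1\ll c_2\ll c_3\ll 1$. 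All source terms generated by $\hat G$ land on the right-hand side and, gathered with their $L^2_v$-pairs against the matching components of $\hat f$, constitute exactly $\mathcal N_{M,B}(\hat G,\hat f)(t,\xi)$, finishing the proof.
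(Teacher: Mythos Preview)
Your outline is essentially correct and follows the hypocoercivity scheme of \cite{BCD}, which is exactly what the paper does: the paper does not give an independent proof of this lemma but simply records the energy/dissipation functionals and the resulting differential inequality as imported from \cite{BCD}. Your description of the high-frequency H\'erau-type triple $(1,a_\xi,b_\xi)$, the low-frequency macro--micro splitting with the Kawashima-type functional $\mathcal M$, and the hierarchy of constants matches that reference. One minor imprecision: since $[Z,\partial_t+iv\cdot\xi]=0$, there is no commutator $[Z^\beta,iv\cdot\xi]$ to track at the level of the equation; the lower-$\beta$ terms you allude to arise instead from differentiating the explicit time weights $\langle t\rangle^{-2\beta}$ in the energy and from $[Z^\beta,\tilde{\mathcal L}_s]$, and it is these that the factors $2^{-c\beta}$ are designed to absorb.
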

	\begin{remark}[Effective spectral lower bound without a spectral gap]\label{reaa1}
{Lemma~\ref{lemlines1} provides a frequency-dependent substitute for the spectral-gap estimates used in the classical Green-function approach. For soft potentials, the collision frequency degenerates at large velocities, and the linearized collision operator has no spectral gap in the standard Maxwellian \(L^2_v\)-framework.}


{\color{red}The hypocoercive estimate from \cite{BCD}, stated here as Lemma~\ref{lemlines1}, provides a different mechanism. When the source vanishes,}
\[
    \frac{d}{dt}E_{M,B}(\hat f)(t,\xi)
    +\delta_{\mathrm{hyp}} D_{M,B}(\hat f)(t,\xi)
    \leq 0 .
\]
{\color{red}Moreover, the construction of \(E_{M,B}\) and \(D_{M,B}\) gives the schematic frequency-dependent coercive bound}
\[
    D_{M,B}(\hat f)(t,\xi)
    \gtrsim
    \lambda(\xi)\,E_{M,B}(\hat f)(t,\xi),
    \qquad
    \lambda(\xi)
    =
    |\xi|^2\langle\xi\rangle^{-\frac{2(1+s)}{1+2s}},
\]
{once the microscopic and macroscopic components are coupled through the mixed hypocoercive terms. Consequently, each Fourier mode satisfies}
\[
    E_{M,B}(\hat f)(t,\xi)
    \lesssim
    e^{-c\,\lambda(\xi)t}E_{M,B}(\hat f_0)(\xi).
\]
{\color{red}Thus \(\lambda(\xi)\sim |\xi|^2\) at low spatial frequencies, giving diffusive decay, while \(\lambda(\xi)\sim |\xi|^{\frac{2s}{1+2s}}\) at high frequencies, giving hypoelliptic enhanced dissipation.}

{\color{red}Although the soft-potential operator has no genuine spectral gap, Lemma~\ref{lemlines1} supplies an effective lower bound for the Fourierized generator. This substitute for spectral theory captures low-frequency diffusion, high-frequency enhanced dissipation, and the transfer of microscopic collision dissipation to the hydrodynamic modes.}
\end{remark}

		\subsubsection{Global estimates}
		{\color{red}We now combine the linear estimates from Subsections \ref{secg1} and \ref{secg2} to derive global bounds for the coupled nonlinear system \eqref{eqdeco}. Since the hypotheses of Proposition~\ref{propglo} imply \(M'/|\gamma|>1\), fix}\par\medskip
		\[
		1<\rho<\frac{M'}{|\gamma|}.
		\]
		{\color{red}Define}
		\begin{align*}
			\mathcal{E}_N[f](t)=\sum_{n\leq N} \langle t\rangle^{2\rho}\mathbb{E}_{M_n,B}[\langle x\rangle^n f](t),\ \ \ 	\mathcal{D}_N[f](t)=\sum_{n\leq N} \int_0^t \langle \tau\rangle^{2\rho}\mathbb{D}_{M_n,B}[\langle x\rangle^nf](\tau)\dif \tau ,
		\end{align*}
		{for $g_1$. Throughout this subsection, \(\mathcal D_N\) and \(\widetilde{\mathcal D}\) denote dissipation accumulated up to the displayed terminal time and are therefore not integrated again in the bootstrap estimate. The component $g_2$ determines the decay rate of $g=g_1+\mu^{1/2}g_2$ and requires a more refined estimate. To close the argument with the sharp temporal decay, we use the following energies from \cite[Section~2.2]{BCD}:}
		\begin{align*}
			&\tilde {\mathcal{E}}[f](t)= \int_{\mathbb{R}^d}\langle \lambda_\xi t\rangle^{2J} E_{M,B}(\hat f)(t,\xi)\dif \xi, \\
			&	\tilde {\mathcal{E}}_{mom}[f](t)=\int_{\mathbb{R}^d} E_{M+M',B}(\hat f)(t,\xi)\dif \xi,\\
			&	\tilde {\mathcal{E}}_{LF}[f](t)=\sup_\xi\langle \lambda_\xi t\rangle^{2J'} E_{M',B'}(\widehat {f})(t,\xi),\\
			&	\tilde {\mathcal{E}}_{LF,mom}[f](t)=\sup_\xi E_{M'+M_{J'},B'}(\widehat {f})(t,\xi),
		\end{align*}
		{\color{red}where $M,M',B,B',M_{J'}$ satisfy the conditions in \cite[(2.19)]{BCD}. Define the corresponding dissipation functionals $\tilde{\mathcal{D}}[f](t)$, $\tilde{\mathcal{D}}_{mom}[f](t)$, $\tilde{\mathcal{D}}_{LF}[f](t)$, and $\tilde{\mathcal{D}}_{LF,mom}[f](t)$ analogously. Corollary \ref{corof1} gives}\par\medskip
		\begin{equation}
			\begin{aligned}\label{firg1}
				\mathcal{E}_N[g_1](t)&+	\frac{c_0}{2}	\mathcal{D}_N[g_1](t)
				\leq C	\sum_{n\leq N}	\mathbb{E}_{M-\gamma\rho,B}[\langle x\rangle^ng_0]\\
				&+	C	\sum_{n\leq N}\sum_{|\alpha|+|\beta|\leq B}\int_0^t\langle \tau\rangle^{2\rho}\left| \left(Z^\beta \nabla_x^{\alpha}(\langle x\rangle^nF(g_1,g_2)(\tau)), \langle v\rangle^{2M_n}Z^\beta\nabla_x^{\alpha} (\langle x\rangle^ng_1(\tau))\right)_{L^2}\right|\dif \tau ,
			\end{aligned}
		\end{equation}
		{\color{red}where $F(g_1,g_2)$ is defined in \eqref{defFG}. Applying Lemma \ref{lemlines1} to the $g_2$-equation in \eqref{eqdeco}, we obtain}
		\begin{equation}\label{firg2}
			\begin{aligned}
				\tilde {\mathcal{E}}[g_2](t)&+\frac{\delta_{\mathrm{hyp}}}{10}\tilde {\mathcal{D}}[g_2](t)\\
				&\leq C\int_0^t \int_{\mathbb{R}^d} L_{J,M,B}(\hat g_2)(\tau,\xi)\dif \xi \dif \tau +C \int_0^t \int_{\mathbb{R}^d}\langle \lambda_\xi \tau\rangle^{2J}\mathcal{N}_{M,B}(\widehat {G(g_1,g_2)},\hat  g_2)(\tau,\xi) \dif \xi \dif \tau ,
			\end{aligned}
		\end{equation}
		{\color{red}where $G(g_1,g_2)$ is defined in \eqref{defFG}, and $L_{J,M,B}(\hat g_2)$ is the linear error term}
		\begin{align*}
			L_{J,M,B}(\hat g_2)(t,\xi):=2J\lambda_\xi\langle\lambda_\xi t\rangle^{2J-1}E_{M,B}(\hat g_2)(t,\xi).
		\end{align*}
		{\color{red}The corresponding estimates hold for $\tilde {\mathcal{E}}_{mom}[g_2]$, $\tilde {\mathcal{E}}_{LF}[g_2]$, and $\tilde {\mathcal{E}}_{LF,mom}[g_2]$.}
		
		{\color{red}It remains to bound the right-hand sides of \eqref{firg1} and \eqref{firg2}. Related estimates appear in \cite[Propositions~2.1 and 4.2]{YZ}; see also \cite{CG24,DuanLiu20,DLX,Guo-0,He18,HY14,Silvestre23,S,IS0,SG1} for nonlinear collision estimates in other settings. The next lemma covers the full range $(-d-2s,-2s]$, including $\gamma\leq-d$, by additionally using the cancellation identity \eqref{cancel}.}
		
		\begin{lemma}\label{lemQ}
            Fix $N,B\in\mathbb N$ and
	$M,M'>0$ such that
	\begin{equation}\label{lemQpara}
		B\geq2\left\lfloor\frac d2\right\rfloor+4,
		\qquad
		M-N|\gamma-1|\geq d+2s+2+B\kappa,
		\qquad
		M'>2s^{-1}(N+B),
	\end{equation}
    Let $f,g,h$ be smooth
	functions on $[0,t]\times\mathbb R_x^d\times\mathbb R_v^d$ for which
	all the quantities on the right-hand sides below are finite. Then, for
	every pair of multi-indices $\alpha,\beta$ satisfying
	$|\alpha|+|\beta|\leq B$,
			\begin{align*}
				&	\int_0^t	\langle\tau\rangle^{2\rho}\left|\left(Z^\beta\nabla_x^\alpha \mathcal{Q}_s(g,f)(\tau),\langle v\rangle^{2M}Z^\beta\nabla_x^\alpha h(\tau)\right)_{L^2}\right|\dif \tau \\
				&\quad\quad\quad\quad\quad\quad\quad\quad\quad\lesssim\left(\mathcal{D}_0[h](t)\right)^\frac{1}{2} \bigg(\mathcal{E}_0[f](t)\mathcal{D}_0[g](t)+\mathcal{E}_0[g](t)\mathcal{D}_0[f](t)\bigg)^\frac{1}{2},\\
				&	\int_0^t\langle\tau\rangle^{2\rho}\left|	\left(Z^\beta\nabla_x^\alpha\mathcal{Q}_s(\sqrt{\mu}f,g)(\tau),\langle v\rangle^{2M}Z^\beta\nabla_x^\alpha h(\tau)\right)_{L^2}\right|\dif \tau \\
                &\quad\quad\quad +	\int_0^t\langle\tau\rangle^{2\rho}\left|	\left(Z^\beta\nabla_x^\alpha \mathcal{Q}_s(g,\sqrt{\mu}f)(\tau),\langle v\rangle^{2M}Z^\beta\nabla_x^\alpha h(\tau)\right)_{L^2}\right|\dif \tau \\
				&\quad\quad\quad\quad\quad\quad\quad\quad\quad\quad\lesssim(\mathcal{D}_0[h](t))^\frac{1}{2}\bigg(\tilde {\mathcal{E}}[f](t)\mathcal{D}_0[g](t)+\mathcal{E}_0[g](t)\tilde {\mathcal{D}}[f](t)\bigg)^\frac{1}{2}.
			\end{align*}
		\end{lemma}
   
	Note that the lower bounds in \eqref{lemQpara} are convenient sufficient
	conditions and are not asserted to be sharp. The first leaves enough
	derivatives for the Sobolev product estimates in $x$, while the second
	preserves a positive velocity-weight margin at every spatial level
	$M_n=M-n|\gamma-1|$, $0\leq n\leq N$, including the cumulative loss
	$B\kappa$ associated with differentiated soft-potential collision
	terms. They are conservative analogues of the regularity and
	velocity-weight requirements used in
	\cite[Propositions~2.1 and~4.2]{YZ}.

		{\color{red}Combining Lemmas \ref{lemlines} and \ref{lemQ}, we obtain}
		\begin{equation}\label{energ1}
			\begin{aligned}
				\mathcal{E}_N[g_1](t)&+	\frac{c_0}{2}	\mathcal{D}_N[g_1](t)
				\lesssim	\sum_{n\leq N}	\mathbb{E}_{M-\gamma\rho,B}[\langle x\rangle^ng_0]\\
				&+		\mathcal{E}_N^\frac{1}{2}[g_1](t)\mathcal{D}_N[g_1](t)+\mathcal{E}_N^\frac{1}{2}[g_1](t)\mathcal{D}_N^\frac{1}{2}[g_1](t)\tilde {\mathcal{D}}^\frac{1}{2}[g_2](t)+\tilde {\mathcal{E}}^\frac{1}{2}[g_2](t) \mathcal{D}_N[g_1](t).
			\end{aligned}
		\end{equation}
		{\color{red}We next estimate the right-hand side of \eqref{firg2}. By \cite[Proposition~5.2]{BCD}, the linear error satisfies}
		\begin{align*}
			\int_0^t \int_{\mathbb{R}^d} L_{J,M,B}(\hat f)(\tau,\xi)\,\dif \xi \dif \tau \leq \frac{\delta_{\mathrm{hyp}}}{10} \tilde {\mathcal{D}}[f](t)+J \sup_{\tau\in[0,t]} \tilde {\mathcal{E}}_{mom}[f](\tau).
		\end{align*}
		{\color{red}For the bilinear term $G_1= \mu^{-\frac{1}{2}}\mathcal{Q}(\mu^\frac{1}{2} g_2,\mu^\frac{1}{2}g_2 )$,}
		\begin{align}\label{Non1}
			&\int_0^t \int_{\mathbb{R}^d}\langle \lambda_\xi \tau\rangle^{2J}\mathcal{N}_{M,B}(\widehat {G_1},\widehat{ g_2})(\tau,\xi) \dif \xi \dif \tau  \lesssim \tilde {\mathcal{E}}^\frac{1}{2}[ g_2](t)\tilde {\mathcal{D}}^\frac{1}{2}[ g_2](t)(\tilde {\mathcal{D}}^\frac{1}{2}[ g_2](t)+\tilde {\mathcal{E}}_{LF}^\frac{1}{2}[ g_2](t)).
		\end{align}
		{\color{red}The analogous nonlinear estimates for $\tilde {\mathcal{E}}[g_2]$, $\tilde {\mathcal{E}}_{mom}[g_2]$, $\tilde {\mathcal{E}}_{LF}[g_2]$, and $\tilde {\mathcal{E}}_{LF,mom}[g_2]$ follow from \cite[Propositions~5.3--5.7]{BCD}. It remains to control $G_2=\mu^{-\frac{1}{2}}\mathbf{c}_1\chi_R(v)g_1$. H\"older's inequality gives}
		\begin{align}\label{Non2}
			& \int_{\mathbb{R}^d}\langle \lambda_\xi t\rangle^{2J}\mathcal{N}_{M,B}(\widehat {G_2},\widehat{ g_2})(t,\xi) \dif \xi\lesssim \langle t\rangle^{-\rho} \mathcal{E}_N^\frac{1}{2}[g_1](t) \tilde {\mathcal{E}}^\frac{1}{2}[g_2](t).
		\end{align}
		{\color{red}Since \(\rho>1\), \(\int_0^\infty\langle t\rangle^{-\rho}\,\dif t<\infty\), as required to integrate \eqref{Non2}. Combining this bound with \eqref{energ1} and applying the standard bootstrap argument, we obtain}
		\begin{equation}\label{re11}
			\begin{aligned}
				&	\mathcal{E}_N[g_1](t)+\tilde {\mathcal{E}}[g_2](t)+\tilde {\mathcal{E}}_{mom}[g_2](t)+\tilde {\mathcal{E}}_{LF}[g_2](t)+\tilde {\mathcal{E}}_{LF,mom}[g_2](t)\\
				&\quad\quad+\frac{c_0}{10}	\mathcal{D}_N[g_1](t) +\frac{\delta_{\mathrm{hyp}}}{10}\left(\tilde{\mathcal{D}}[g_2](t)+\tilde{\mathcal{D}}_{mom}[g_2](t)+\tilde {\mathcal{D}}_{LF}[g_2](t)+\tilde{\mathcal{D}}_{LF,mom}[g_2](t)\right)  \leq C\mathbf{Data}^2,
			\end{aligned}
		\end{equation}
		{for every $t\geq0$. Consequently,}
        \begin{align*}
		\sup _{t>0}\left(
		\left\|\langle v\rangle^M
		\left\langle\lambda\left( \nabla_x\right) t\right\rangle^J
		\langle \nabla_{x,v}\rangle^\sigma g(t)\right\|_{L_{x, v}^2}
		+
		\langle t\rangle^{d / 2}
		\left\|\langle v\rangle^{M}
		\left\langle\lambda\left( \nabla_x\right) t\right\rangle^{J}
		\langle \nabla_{x,v}\rangle^\sigma g(t)\right\|_{L_v^2 L_x^{\infty}}
		\right)
		\lesssim \mathbf{Data}.
	\end{align*}
	{\color{red}We next establish the spatially weighted estimates. Let \(\mathbb N_+:=\{1,2,\ldots\}\), and choose \(\chi\in C_c^\infty([0,\infty))\) such that \(0\leq\chi\leq1\), \(\chi=1\) on \([0,1/10]\), and \(\chi=0\) on \([10,\infty)\). For $k\in\mathbb{N}_+$, set $\phi_k(t,x)=\langle x\rangle^k(1-\chi(\frac{|x|}{\langle t\rangle}))$ and $\mathbf{g}_k=\phi_k g_2$. On the interior region \(|x|\leq10\langle t\rangle\), the bound \(\langle x/\langle t\rangle\rangle^k\lesssim_k1\) reduces the desired estimate to the unweighted one. On the exterior region, \(\phi_k\asymp\langle x\rangle^k\), and division by \(\langle t\rangle^k\) gives the normalized spatial weight. Moreover,}
		\begin{align*}
			\partial_t \mathbf{g}_k+v\cdot \nabla_x \mathbf{g}_k+\tilde {\mathcal{L}}_s \mathbf{g}_k=G_k+R_k,
		\end{align*}
		{\color{red}where $R_k=g_2(\partial_t\phi_k+v\cdot\nabla_x\phi_k)$ and $G_k=G(g_1,g_2)\phi_k$. Lemma \ref{lemlines1} yields}
		\begin{equation*}
			\begin{aligned}
				\frac{\partial}{\partial t}E_{M-k, B}(\hat {\mathbf{g}}_k)(t, \xi)+\delta_{\mathrm{hyp}} {D}_{M-k, B}(\hat {\mathbf{g}}_k)(t, \xi)\leq \mathcal{N}_{M-k,B}(\hat G_k,\hat {\mathbf{g}}_k)(t,\xi)+\mathcal{N}_{M-k,B}(\hat R_k,\hat {\mathbf{g}}_k)(t,\xi).
			\end{aligned}
		\end{equation*}
		{\color{red}Set $\mathbf{E}_k(t)=\int_{\mathbb{R}^d} {E}_{M-k,B}(\hat {\mathbf{g}}_{k})(t,\xi)\dif \xi$ and $\mathbf{D}_k(t)=\int_0^t\int_{\mathbb{R}^d} {D}_{M-k,B}(\hat {\mathbf{g}}_{k})(\tau,\xi)\dif \xi \dif \tau$. Equations \eqref{Non1}, \eqref{Non2}, and \eqref{re11} imply}
		\begin{align*}
			\int_0^t \int_{\mathbb{R}^d}\mathcal{N}_{M-k,B}(\hat G_k,\hat {\mathbf{g}}_k)(\tau,\xi)\, \dif \xi \dif\tau\lesssim \mathbf{E}_k^\frac{1}{2}\mathbf{D}_k^\frac{1}{2}\mathbf{Data}+\mathbf{E}_k^\frac{1}{2}\mathbf{Data}.
		\end{align*}       
		{\color{red}Moreover,}
		\begin{align*}
			|	\partial_t\phi_k(t,x)+v\cdot\nabla_x \phi_k(t,x)|&\lesssim \langle v\rangle \phi_{k-1}(t,x)+\langle x\rangle^{k-1}\mathbf{1}_{\langle t\rangle/10\leq|x|\leq 10\langle t\rangle}.
		\end{align*}
		{\color{red}Therefore,}
		\begin{align*}
			&{\mathcal{N}_{M-k,B}(\hat R_k,\hat {\mathbf{g}}_k)(t,\xi)
			\lesssim {E}^\frac{1}{2}_{M-k,B}(\hat R_k){E}^\frac{1}{2}_{M-k,B}(\hat {\mathbf{g}}_k).}
		\end{align*}
		{\color{red}Hence,}
		\begin{align*}
			\int_{\mathbb{R}^d} 	\mathcal{N}_{M-k,B}(\hat R_k,\hat {\mathbf{g}}_k)(t,\xi)\dif \xi\lesssim \left(\int_{\mathbb{R}^d} 	{E} _{M-k,B}(\hat R_{k})\dif \xi\right)^\frac{1}{2}\left(\int_{\mathbb{R}^d} 	{E} _{M-k,B}(\hat {\mathbf{g}}_{k})\dif \xi\right)^\frac{1}{2}.
		\end{align*}
		{\color{red}The Leibniz rule and the support properties of \(\chi\) control both the cutoff derivatives and the annular contribution:}
		\begin{align*}
			\int_{\mathbb{R}^d} {E}_{M-k,B}(\hat R_k)(t,\xi)\dif\xi
			&\lesssim_{k,B}
			\int_{\mathbb{R}^d}{E}_{M-(k-1),B}(\widehat{\mathbf g}_{k-1})(t,\xi)\dif\xi
			+\langle t\rangle^{2(k-1)}
			\int_{\mathbb{R}^d}{E}_{M,B}(\hat g_2)(t,\xi)\dif\xi.
		\end{align*}
		{\color{red}The first term accounts for the loss of one velocity weight, while the second controls the annulus \(\langle t\rangle/10\leq|x|\leq10\langle t\rangle\). Thus}
		\begin{align*}
			\mathbf{E}_k(t)\lesssim\mathbf{E}_k(0)+\int_0^t
			\left(\mathbf{E}_{k-1}^\frac{1}{2}(\tau)+\langle\tau\rangle^{k-1}\mathbf{Data}\right)\mathbf{E}_k^\frac{1}{2}(\tau)\,\dif \tau.
		\end{align*}
		{\color{red}Since $\mathbf{E}_0\lesssim \mathbf{Data}^2$, induction gives $\mathbf{E}_k\lesssim \langle t\rangle^{2k}\mathbf{Data}^2$. Consequently,}
        $$\sup_{t>0}\|\langle v\rangle^{M-k}\langle x/\langle t\rangle \rangle^kg_2(t)\|_{L^2_{x,v}}\lesssim \mathbf{Data}.$$
		{\color{red}The corresponding estimate with $\langle\lambda(\nabla_x)t\rangle^J\langle\nabla_{x,v}\rangle^\sigma$ follows by interpolation. This completes the proof of Proposition \ref{propglo}. We now combine Theorem \ref{Th1} with Proposition \ref{propglo} to prove Theorem \ref{thmglo}.}\par\medskip
		\begin{proof}[Proof of Theorem \ref{thmglo}]
			{
			Let $\delta_{\mathrm{loc}}$ be the smallness threshold in Theorem \ref{Th1}. If $\mu+f_0\geq0$ and $\|f_0\|_{\mathrm{cr}}\leq\delta_{\mathrm{loc}}$, then Theorem \ref{Th1} yields a unique solution of \eqref{eqperbo} on $[0,T_0]$. For $h$ defined on $[0,\infty)$, set
			}
			\[
				\|h\|_{\mathcal X}
				:=\|h\|_{T_0}+\|h(T_0+\cdot)\|_{\mathcal G},
			\]
			{
			where $\|\cdot\|_{\mathcal G}$ is the global weighted Sobolev energy--dissipation norm generated by \eqref{re11} and the normalized spatially weighted energies used in Proposition \ref{propglo}, with parameters chosen below. Thus $\|\cdot\|_{\mathcal X}$ combines the critical short-time norm on $[0,T_0]$ with the weighted Sobolev control for $t>T_0$. The local solution satisfies
			}
			\begin{align*}
				\sup_{t\in[0,T_0]} \sum_{m+n\leq 10N_0} t^{n+\frac{d-\kappa+m+n}{2s}}\langle x\rangle^{\varkappa_1}\langle v\rangle^{\frac{5\varkappa_2}{6}} |\langle\nabla_v\rangle^m\langle\nabla_x\rangle^nf(t,x,v)| \leq C\|f_0\|_{\mathrm{cr}}.
			\end{align*}
			{Interpolating the preceding estimate yields}
			\begin{align}\label{shor}
				\sup_{t\in[0,T_0]} \sum_{m+n\leq N_0} t^{n+\frac{d-\kappa+m}{2s}}\langle x\rangle^{\varkappa_1}\langle v\rangle^{\frac{5\varkappa_2}{6}} |\langle\nabla_v\rangle^m\langle\lambda(\nabla_x)\rangle^nf(t,x,v)|\lesssim\|f_0\|_{\mathrm{cr}}.
			\end{align}
			{At $t=T_0$, we also have}
			\begin{align*}
				\|\langle x\rangle^{\frac{4\varkappa_1}{5}}\langle v\rangle^{\frac{4\varkappa_2}{5}} \langle\nabla_{x,v}\rangle^{10N_0}f(T_0)\|_{L^2_{x,v}}+\|\langle x\rangle^{\frac{4\varkappa_1}{5}}\langle v\rangle^{\frac{4\varkappa_2}{5}} \langle\nabla_{x,v}\rangle^{10N_0}f(T_0)\|_{L^2_{v}L^1_x}\leq CT_0^{-\frac{100(N_0+d)}{s}}\|f_0\|_{\mathrm{cr}}.
			\end{align*}
			{
			Choose the global smallness threshold $\epsilon_0$ in Theorem \ref{thmglo} so that $\epsilon_0\leq\delta_{\mathrm{loc}}$ and
			$C\epsilon_0T_0^{-\frac{100(N_0+d)}{s}}
			\leq\varepsilon_{\mathrm{glo}}$, where
			$\varepsilon_{\mathrm{glo}}$ is the constant $\varepsilon$ from Proposition \ref{propglo}. For the restarted datum $f(T_0)$, set
			}
			\[
			\sigma=10N_0,\qquad
			N=\left\lfloor\frac{3\varkappa_1}{4}\right\rfloor,\qquad
			M=\frac{\varkappa_2}{2},\qquad
			M'=\frac{\varkappa_2}{4}.
			\]
			{\color{red}Then $\sigma,N\in\mathbb N$, $N,M>d$, and the main weight assumption implies $M'>2s^{-1}(N+\sigma)$. Indeed, if $A=N_0+\varkappa_1$, then $N+\sigma\leq10A$, whereas $M'>(5/2)s^{-1}A^2>20s^{-1}A$. Moreover, $N\leq4\varkappa_1/5$ and $M+M'=3\varkappa_2/4<4\varkappa_2/5$, so the weights in \eqref{condata} are controlled by those in the preceding estimate at time $T_0$. In the Boltzmann case $0<s<1$, choose}
			\[
			J\geq\max\left\{N_0,
			\left\lceil\frac{(M'+|\gamma|)s}{|\gamma|(1-s)}\right\rceil\right\}.
			\]
				{\color{red}For $d\geq2$ and $\varkappa_1\geq10d$, this choice also gives $N/2\geq\varkappa_1/(2d)$. Hence the spatial weight supplied by Proposition \ref{propglo} may be weakened to the one used below. Similarly, $J\geq N_0$ allows the hypocoercive multiplier to be reduced from power $J$ to power $N_0$.}
			{
			With these choices, \eqref{re11} and the subsequent spatially weighted estimates yield
			}
			\[
				\|f(T_0+\cdot)\|_{\mathcal G}
				\lesssim\|f_0\|_{\mathrm{cr}}.
			\]
			{
			Proposition \ref{propglo} consequently gives
			}
			\begin{align*}
				& \sup _{t>T_0}\left(\left\|\langle x/\langle t\rangle\rangle^{\frac{\varkappa_1}{2d}}\langle v\rangle^{\frac{\varkappa_2}{2}}\left\langle\lambda\left( \nabla_x\right) \langle t\rangle \right\rangle^{N_0}\langle \nabla_{x,v}\rangle^{2N_0}f(t)\right\|_{L_{x, v}^2} \right.\\
				&  \quad\quad\quad\quad\quad\quad\quad\quad\quad\quad\quad\quad\quad\quad\left.+ \langle t\rangle^{d / 2}	\left\|\langle v\rangle^{\frac{\varkappa_2}{2}}\left\langle\lambda\left( \nabla_x\right) \langle t\rangle\right\rangle^{N_0}\langle \nabla_{x,v}\rangle^{2N_0}f(t)\right\|_{L_v^2 L_x^{\infty}} \right)\lesssim \|f_0\|_{\mathrm{cr}}.
			\end{align*}
			{
			To return to the original time variable, set $\tau=t-T_0$. Since $T_0$ is fixed, $\langle t\rangle\asymp_{T_0}\langle\tau\rangle$. Moreover,
			}
			\[
			\big\langle\lambda(\xi)\langle t\rangle\big\rangle^{N_0}
			\lesssim_{T_0}
			\big\langle\lambda(\xi)\tau\big\rangle^{N_0}
			\langle\xi\rangle^{2N_0},
			\]
			{
			and the additional factor $\langle\xi\rangle^{2N_0}$ is absorbed by the choice $\sigma=10N_0$. The same comparison applies to the normalized spatial weight and to $\langle t\rangle^{d/2}$. Therefore, the elapsed-time estimate in Proposition \ref{propglo} yields the displayed estimate in the original time variable. Sobolev embedding now gives
			}
			\begin{align*}
				\left\|\langle x/\langle t\rangle\rangle^\frac{\varkappa_1}{2d}\langle v\rangle^\frac{\varkappa_2}{2}\left\langle\lambda\left( \nabla_x\right) \langle t\rangle\right\rangle^{N_0}\langle \nabla_v\rangle^{N_0}f(t)\right\|_{L_{x, v}^\infty}
				&\lesssim \left\|\langle x/\langle t\rangle\rangle^\frac{\varkappa_1}{2d}\langle v\rangle^\frac{\varkappa_2}{2}\left\langle\lambda\left( \nabla_x\right) \langle t\rangle\right\rangle^{N_0}\langle \nabla_{x,v}\rangle^{2N_0}f(t)\right\|_{L^2_{x,v}}\\
				&\lesssim \|f_0\|_{\mathrm{cr}},
			\end{align*}
			and 
			\begin{align*}
				\left\|\langle v\rangle^\frac{\varkappa_2}{2}\left\langle\lambda\left( \nabla_x\right) \langle t\rangle\right\rangle^{N_0}\langle \nabla_v\rangle^{N_0}f(t)\right\|_{L_{x, v}^\infty}
				&\lesssim \left\|\langle v\rangle^\frac{\varkappa_2}{2}\left\langle\lambda\left( \nabla_x\right) \langle t\rangle\right\rangle^{N_0}\langle \nabla_{x,v}\rangle^{2N_0}f(t)\right\|_{L^2_{v}L^\infty_x}\\
				&\lesssim \langle t\rangle^{-\frac{d}{2}}\|f_0\|_{\mathrm{cr}}.
			\end{align*}
			{
			Combining the local and global estimates yields
			}
			\[
				\|f\|_{\mathcal X}\lesssim\|f_0\|_{\mathrm{cr}}.
			\]
			{
			Together with \eqref{shor}, the preceding Sobolev estimates bound the left-hand side of \eqref{main} by $C\|f\|_{\mathcal X}$. Uniqueness follows from the local contraction estimate on $[0,T_0]$ and, for $t>T_0$, from the global weighted energy estimate applied to the difference of two solutions. Thus the solution is unique in the stated $\mathcal X$-class.
			}
			
		\end{proof}

			\section{Short-time Green function}		\label{secGreen}

{\color{red}In this section, we prove Theorem~\ref{Greshort}, one of the main
results of the paper. It establishes a short-time pointwise estimate for the
Green function of a kinetic operator linearized about a general background
profile} \(g\), {rather than the Maxwellian equilibrium. This
setting is substantially more delicate because the coefficients are not
explicit and no closed formula for the kernel is available.}

{\color{red}We use a frozen-coefficient parametrix. Freezing coefficients is
classical in the theory of parabolic equations and originates in the Levi
parametrix construction; see, for example, \cite{Friedman1964}. Here the
coefficients of the linearized Boltzmann operator depend on the
nonequilibrium profile} \(g(t,x,v)\), {\color{red}are anisotropic in velocity,
and interact with the transport operator} \(v\cdot\nabla_x\). {\color{red}We
must therefore freeze them along kinetic characteristics and estimate the
resulting kernels in anisotropic phase-space metrics.}

	{Freezing the collision coefficient globally is ineffective for
	large jumps because} \(\mathbf C_g(v-\alpha,\alpha)\) {need not be
	comparable to} \(\mathbf C_g(v,\alpha)\). {\color{red}We therefore first
	construct the small-jump Green function and then incorporate coefficient
	variation, large jumps, and the cancellation remainder through a parametrix
	expansion.}

\subsection{Green function for small jumps}
{\color{red}We begin with the small-jump operator}
\begin{align}\label{eq:def-small-jump-operator}
	\mathcal L_g^{<}f(t,x,v)
	:=\operatorname{p.v.}\int_{|\alpha|<r_0}
	\mathbf C_g(t,x,v,\alpha)
	\bigl(f(t,x,v)-f(t,x,v-\alpha)\bigr)
	\frac{\dif\alpha}{|\alpha|^{d+2s}},
\end{align}
{\color{red}where the parameter} $r_0\in(0,1)$ {will be fixed below.}

{\color{red}Let} $\G^{(0)}(t,x,v;\tau,y,w)$ {denote the Green
function associated with} \eqref{eq:def-small-jump-operator}:
\begin{align}\label{eq:Green-small}
	\left\{
	\begin{aligned}
		&\partial_t\G^{(0)}+v\cdot\nabla_x\G^{(0)}
		+\mathcal L_g^{<}\G^{(0)}=0,
		&&t>\tau,\\
		&\G^{(0)}(\tau,x,v;\tau,y,w)
		=\delta(x-y)\delta(v-w).
	\end{aligned}
	\right.
\end{align}

	{\color{red}Fix a terminal point}
			\(\mathfrak q=(t_0,x_0,v_0)\), {\color{red}where} \(\tau<t_0\le T\),
			{\color{red}and set}
			\[
			Z_{\mathfrak q}(r)
			:=
			\bigl(x_0-(t_0-r)v_0,v_0\bigr),
			\qquad \tau\le r\le t_0.
			\]
{\color{red}Define the frozen small-jump operator by}
\begin{align}\label{eq:def-frozen-small-operator}
	\mathcal L_g^{<,\mathfrak q}(r)f(u)
	:=\operatorname{p.v.}\int_{|\alpha|<r_0}
	\mathbf C_g(r,Z_{\mathfrak q}(r),\alpha)
	\bigl(f(u)-f(u-\alpha)\bigr)
	\frac{\dif\alpha}{|\alpha|^{d+2s}}.
\end{align}
	 	{\color{red}The corresponding kernel} \(\mathcal G_{\mathfrak q}^{(0)}\)
	 	{\color{red}is defined by}
			\[
			\left\{
			\begin{aligned}
				&\partial_t\mathcal G_{\mathfrak q}^{(0)}
				+v\cdot\nabla_x\mathcal G_{\mathfrak q}^{(0)}
				+\mathcal L_g^{<,\mathfrak q}(t)
				\mathcal G_{\mathfrak q}^{(0)}=0,
				&&t>r,\\
				&\mathcal G_{\mathfrak q}^{(0)}
				(r,x,v;r,\zeta,u)
				=\delta(x-\zeta)\delta(v-u).
			\end{aligned}
			\right.
			\]

{\color{red}To absorb the frozen-coefficient error while preserving the kinetic
geometry, we introduce a transport-adapted cutoff. For} $c_0\in(0,1)$,
{define}
	\begin{align}
		\chi_{\mathfrak q}(t,x,v)
		:=
		\chi\left(
		\frac{x_0-(t_0-t)v-x}{c_0},
		\frac{v_0-v}{c_0}
		\right),
		\label{eq:transport-localization-cutoff}
	\end{align}
	    {\color{red}where} $\chi\in C_c^\infty(\mathbb{R}^{2d})$ {\color{red}satisfies}
	    $ \mathbf{1}_{|z|<1}\leq \chi(z)\leq \mathbf{1}_{|z|<2}$.
	{\color{red}Then}
	\begin{align*}
		(\partial_t+v\cdot\nabla_x)
		\chi_{\mathfrak q}=0.
	\end{align*}
	    {Equation \eqref{eq:Green-small} can now be written as}
    \begin{align*}
        &\partial_t\G^{(0)}\chi_{\mathfrak{q}}+v\cdot\nabla_x(\G^{(0)}\chi_{\mathfrak{q}})
		+\mathcal L_g^{<,\mathfrak{q}}(\G^{(0)}\chi_{\mathfrak{q}})=\chi_{\mathfrak q}
		(\mathcal L_g^{<,\mathfrak q}-\mathcal L_g^{<})\G^{(0)}
		+\mathcal C_{\mathfrak q}[\G^{(0)}],
		\quad\quad t>\tau,\\
		&\G^{(0)}(\tau,x,v;\tau,y,w) \chi_{\mathfrak{q}}
		=\delta(x-y)\delta(v-w) \chi_{\mathfrak{q}}(\tau,y,w),
    \end{align*}
	    {\color{red}where}
	    $\mathcal C_{\mathfrak q}[\G^{(0)}]=\mathcal L_g^{<,\mathfrak{q}}(\G^{(0)}\chi_{\mathfrak{q}})-(\mathcal L_g^{<,\mathfrak{q}}\G^{(0)})\chi_{\mathfrak{q}}$
	    {\color{red}is the commutator. This identity yields the Duhamel formula}
\begin{align}\label{eq:global-small-duhamel}
	\G^{(0)}\chi_{\mathfrak{q}}(t,x,v)
	=\mathcal G_{\mathfrak q}^{(0)}\chi_{\mathfrak{q}}(\tau,y,w)
	+\mathcal G_{\mathfrak q}^{(0)}
	\oast\left\{ \chi_{\mathfrak{q}}
	\bigl(\mathcal L_g^{<,\mathfrak q}
	-\mathcal L_g^{<}\bigr)\G^{(0)}+\mathcal C_{\mathfrak q}[\G^{(0)}]\right\}.
\end{align}
{\color{red}The free frozen kernel is multiplied by the initial factor}
$\chi_{\mathfrak q}(\tau,y,w)$, {as required by the initial
condition, whereas the factor on the left is evaluated at the terminal point}
$(t,x,v)$. {\color{red}The operator difference is given explicitly by}
\begin{align}\label{eq:small-coefficient-error}
	&\bigl((\mathcal L_g^{<,\mathfrak q}
	-\mathcal L_g^{<})F\bigr)
	(r,\zeta,u;\tau,y,w)\notag\\
	&\quad=
	\operatorname{p.v.}\int_{|\alpha|<r_0}
	\Bigl(
	\mathbf C_g(r,Z_{\mathfrak q}(r),\alpha)
	-\mathbf C_g(r,\zeta,u,\alpha)
	\Bigr)
	\bigl(\delta_{\alpha}^{(u)}F\bigr)
	(r,\zeta,u;\tau,y,w)
	\frac{\dif\alpha}{|\alpha|^{d+2s}}.
\end{align}

\subsubsection{\texorpdfstring{{Estimates for the frozen kernel}}{Estimates for the frozen kernel}}
For $t>0$ and $(X,V)\in\mathbb R^{2d}$, set
\begin{align*}
	R_t(X,V):=\frac{|X|}{t}+|V|,
	\qquad
	\mathfrak m_N(t,X,V):=\langle R_t(X,V)\rangle^{-N}.
\end{align*}
{\color{red}Set}
\begin{align}\label{eq:def-weighted-small-envelope}
	\bT_{v_0}(t,X,V)
	&:=
	t^{-d}\tilde t_{v_0}^{-\frac ds}
	\langle v_0\rangle^2
	\mathcal N_{v_0}\left(
	\frac{X}{t\tilde t_{v_0}^{1/(2s)}},
	\frac{V}{\tilde t_{v_0}^{1/(2s)}}
	\right),\\
	\bK_{v_0,N}(t,X,V)
	&:=
	\bT_{v_0}(t,X,V)\mathfrak m_N(t,X,V).
\end{align}
{\color{red}The additional factor} $\mathfrak m_N$ {\color{red}is comparable
to one in each fixed one-jump region and contributes rapid decay only when}
$R_t(X,V)$ {\color{red}is genuinely large.}

{\color{red}Let} $H_{\mathfrak q}^{(0)}$ {denote the linear kernel
associated with the frozen small-jump operator}
$\mathcal{L}_g^{<,\mathfrak{q}}$. {More precisely,}
\begin{align}
		H_{\mathfrak q}^{(0)}(t,\tau,x,v)
		=(2\pi)^{-2d}
		\iint_{\mathbb R^{2d}}
		&e^{-\mathrm i(x\cdot\xi+v\cdot\eta)}
		\exp\left[
		-\int_\tau^t
		\psi_{r,\mathfrak q}^{(0)}
		\bigl(\eta+(t-r)\xi\bigr)
		\dif r
		\right]\dif\xi\dif\eta.
		\label{eq:frozen-small-fourier}
	\end{align}
	    {\color{red}Here the frozen symbol} $\psi_{r,\mathfrak q}^{(0)}(z)$
	    {\color{red}is}
\begin{align*}
		\psi_{r,\mathfrak q}^{(0)}(z)
		:=
		\int_{|\alpha|<r_0}
		\bigl(1-\cos(z\cdot\alpha)\bigr)
		\nu_{r,\mathfrak q}(\dif\alpha), \quad\quad\quad\tau\leq r\leq t.
	\end{align*}
	    {\color{red}This symbol is associated with the measure}
   \begin{align*}
		\nu_{r,\mathfrak q}(\dif\alpha)
		:=
		\mathbf 1_{\{|\alpha|<r_0\}}
		\mathbf C_g\bigl(r,Z_{\mathfrak q}(r),\alpha\bigr)
		\frac{\dif\alpha}{|\alpha|^{d+2s}}.
	\end{align*}
{\color{red}For every admissible background} $g$, {the Carleman
coefficient is even in the jump variable:}
$\mathbf C_g(r,z,\alpha)=\mathbf C_g(r,z,-\alpha)$. {\color{red}This is the
symmetry used in \eqref{frozen-cancellation}. Consequently,}
$\nu_{r,\mathfrak q}$ {\color{red}is even, the sine part of the Fourier symbol
vanishes, and the symbol is exactly the real cosine expression above.
Moreover,}
    \begin{align}\label{equivGH}
    \mathcal{G}_{\mathfrak{q}}^{(0)}(t,x,v;\tau,y,w)=	H_{\mathfrak q}^{(0)}(t,\tau,x-y-(t-\tau)w,v-w).
    \end{align}
\begin{lemma}[Weighted estimate for the frozen small-jump kernel]
	\label{lem:frozen-small-jump-weighted}
{\color{red}Let} $m\in\mathbb N_0$, $N\geq0$, {\color{red}and}
$0<\tau<t<T$, {\color{red}with} $\sigma:=t-\tau\leq1$. {\color{red}Then}
	\begin{align}
		\left|
		\mathcal D_{\sigma,v_0}^{(x,v),m}
		H_{\mathfrak q}^{(0)}(t,\tau,x,v)
		\right|
		\lesssim_{m,N}
		\bK_{v_0,N}(\sigma,x,v).
		\label{eq:frozen-polynomial-product}
	\end{align}
\end{lemma}
\begin{proof}
	{\color{red}We first prove the estimate for} $m=0$.
	{\color{red}The case} $m\geq1$ {\color{red}follows from the same contour
	shift and the normalized Fourier-multiplier bounds for}
	$\mathcal D_{\sigma,v_0}^{(x,v),m}$.
	
	Let
	\begin{align*}
		\theta=(\theta_x,\theta_v)\in\mathbb R^{2d},
		\qquad
		L:=|\theta_x|+|\theta_v|,
	\end{align*}
	{\color{red}and shift the Fourier contours according to}
	\begin{align*}
		\xi\longmapsto\xi-\frac{\mathrm i\theta_x}{\sigma},
		\qquad
		\eta\longmapsto\eta-\mathrm i\theta_v.
	\end{align*}
	Define
	\begin{align*}
		\Theta_r
		:=
		\theta_v+\frac{t-r}{\sigma}\theta_x.
	\end{align*}
	{\color{red}The contour shift gives}
	\begin{align}
		H_{\mathfrak q}^{(0)}(t,\tau,x,v)
		={}&
		e^{-\theta_x\cdot x/\sigma-\theta_v\cdot v}
		{(2\pi)^{-2d}}
		\iint_{\mathbb R^{2d}}
		e^{-\mathrm i(x\cdot\xi+v\cdot\eta)}
		\notag\\
		&\times
		\exp\left[
		-\int_\tau^t
		\psi_{r,\mathfrak q}^{(0)}
		\left(
		\eta+(t-r)\xi-\mathrm i\Theta_r
		\right)
		\dif r
		\right]\dif\xi\dif\eta.
		\label{eq:frozen-shifted-fourier}
	\end{align}
	{\color{red}For} $q,\Theta\in\mathbb R^d$, {symmetry in}
	$\alpha$ {\color{red}yields}
	\begin{align*}
		\Re\psi_{r,\mathfrak q}^{(0)}(q-\mathrm i\Theta)
		&=
		\int_{|\alpha|<r_0}
		\left[
		1-\cos(q\cdot\alpha)\cosh(\Theta\cdot\alpha)
		\right]
		\nu_{r,\mathfrak q}(\dif\alpha)
		\\
		&=
		\psi_{r,\mathfrak q}^{(0)}(q)
		-
		\int_{|\alpha|<r_0}
		\cos(q\cdot\alpha)
		\bigl(\cosh(\Theta\cdot\alpha)-1\bigr)
		\nu_{r,\mathfrak q}(\dif\alpha)
		\\
		&\geq
		\psi_{r,\mathfrak q}^{(0)}(q)
		-M_{r,\mathfrak q}(\Theta),
	\end{align*}
	where
	\begin{align*}
		M_{r,\mathfrak q}(\Theta)
		:=
		\int_{|\alpha|<r_0}
		\bigl(\cosh(\Theta\cdot\alpha)-1\bigr)
		\nu_{r,\mathfrak q}(\dif\alpha).
	\end{align*}
	Since
	\begin{align*}
		\cosh z-1\leq \frac12z^2e^{|z|},
	\end{align*}
	{the frozen second-moment estimate yields}
	\begin{align}
		M_{r,\mathfrak q}(\Theta)
		&\leq
		C|\Theta|^2e^{C|\Theta|}
		\int_{|\alpha|<r_0}
		|\alpha|^2\nu_{r,\mathfrak q}(\dif\alpha)
		\leq
		C\langle v_0\rangle^{-\kappa}
		|\Theta|^2e^{C|\Theta|}.
		\label{eq:cosh-small-jump}
	\end{align}
	{\color{red}Consequently, the change of variables}
	$\varrho=(t-r)/\sigma$ {\color{red}gives}
	\begin{align}
		\int_\tau^t
		M_{r,\mathfrak q}(\Theta_r)\dif r
		&\leq
		C\tilde{\sigma}_{v_0}
		\int_0^1
		|\theta_v+\varrho\theta_x|^2
		e^{C|\theta_v+\varrho\theta_x|}
		\dif\varrho
		\leq
		C\tilde{\sigma}_{v_0}L^2e^{CL}.
		\label{eq:integrated-cosh-bound}
	\end{align}
	{\color{red}We next justify the real-axis majorant and the contour shift.
	The measures} $\nu_{r,\mathfrak q}$ {\color{red}are even, and for every}
	$A>0$, {their compact support and the uniform second-moment
	estimate imply}
	\begin{align}
		\sup_{\tau\leq r\leq t}
		\int_{|\alpha|<r_0}|\alpha|^2e^{A|\alpha|}
		\nu_{r,\mathfrak q}(\dif\alpha)
		\lesssim_A\langle v_0\rangle^{-\kappa}.
		\label{eq:frozen-compensated-exponential-moment}
	\end{align}
	{\color{red}This is the compensated exponential moment. The factor}
	$|\alpha|^2$ {\color{red}is essential because} $\nu_{r,\mathfrak q}$
	{may have infinite mass at the origin. Because}
	$1-\cos(z\cdot\alpha)=O_K(|\alpha|^2)$ {locally uniformly for}
	$z\in K\Subset\mathbb C^d$, $\psi_{r,\mathfrak q}^{(0)}$
	{\color{red}is entire, uniformly in} $r$ {on compact sets.
	Furthermore,}
	$\{[\alpha]_{v_0}\leq r_0\}\subset\{|\alpha|\leq r_0\}$,
	{so Remark~\ref{rem:small-jump-coercivity}, with}
	$\delta=r_0$, {\color{red}gives}
	\begin{align}
		\psi_{r,\mathfrak q}^{(0)}(z)
		\gtrsim
		\langle v_0\rangle^{-\kappa}[[z]]_{v_0}^{2s}
		\min\{1,r_0[[z]]_{v_0}\}^{2-2s}.
		\label{eq:uniform-truncated-coercivity}
	\end{align}
	Set
	\begin{align*}
		p:=\sigma\tilde\sigma_{v_0}^{1/(2s)}\mathcal O_{v_0}\xi,
		\qquad
		q:=\tilde\sigma_{v_0}^{1/(2s)}\mathcal O_{v_0}\eta.
	\end{align*}
	{\color{red}Using} $[[z]]_{v_0}=|\mathcal O_{v_0}z|$,
	$\tilde\sigma_{v_0}\leq1$, {\color{red}and} $\varrho=(t-r)/\sigma$,
	{we obtain}
	\begin{align}
		\int_\tau^t\psi_{r,\mathfrak q}^{(0)}
		\bigl(\eta+(t-r)\xi\bigr)\dif r
		&\gtrsim_{r_0}\int_0^1
		\bigl(|q+\varrho p|^2\wedge|q+\varrho p|^{2s}\bigr)\dif\varrho
		\notag\\
		&\gtrsim_{r_0}
		(|p|^2+|q|^2)\wedge(|p|^2+|q|^2)^s.
		\label{eq:integrated-truncated-coercivity}
	\end{align}
	{\color{red}The first inequality follows from}
	\(
	u^{2s}\min\{1,r_0\tilde\sigma_{v_0}^{-1/(2s)}u\}^{2-2s}
	\geq r_0^{2-2s}(u^2\wedge u^{2s})
	\).
	{\color{red}The second follows by scaling separately in the regions}
	$|p|+|q|\leq1$ {\color{red}and} $|p|+|q|>1$; {on the unit
	sphere, the integral vanishes only if} $p=q=0$.
	{\color{red}Set}
	\begin{align*}
		\Lambda_\sigma(\xi,\eta)
		&:=1+\tilde\sigma_{v_0}^{1/(2s)}
		\bigl(\sigma|\mathcal O_{v_0}\xi|+|\mathcal O_{v_0}\eta|\bigr),\\
		\mathfrak P_{v_0}(\sigma)
		&:=\sigma^{-d}\tilde\sigma_{v_0}^{-d/s}\langle v_0\rangle^2.
	\end{align*}
	{\color{red}Because} $\det\mathcal O_{v_0}=\langle v_0\rangle^{-1}$,
	{the change of variables has Jacobian}
	\begin{align*}
		\dif\xi\dif\eta=\mathfrak P_{v_0}(\sigma)\,\dif p\dif q.
	\end{align*}
	Consequently, for every $m\in\mathbb N_0$,
	\begin{align}
		\iint_{\mathbb R^{2d}}\Lambda_\sigma(\xi,\eta)^m
		\exp\left[-c\int_\tau^t\psi_{r,\mathfrak q}^{(0)}
		\bigl(\eta+(t-r)\xi\bigr)\dif r\right]\dif\xi\dif\eta
		\lesssim_m\mathfrak P_{v_0}(\sigma).
		\label{eq:real-axis-fourier-integral}
	\end{align}
	Let
	\(
	\Phi(\xi,\eta):=\int_\tau^t\psi_{r,\mathfrak q}^{(0)}
	(\eta+(t-r)\xi)\dif r
	\).
	{\color{red}For any coordinatewise intermediate shift}
	$b=(b_x,b_v)$ {between} $0$ {\color{red}and}
	$(\theta_x/\sigma,\theta_v)$, {we have}
	$\sigma|b_x|+|b_v|\leq L$.  The real-part estimate and
	\eqref{eq:cosh-small-jump} imply
	\begin{align*}
		\left|e^{-\Phi(\xi-\mathrm i b_x,\eta-\mathrm i b_v)}\right|
		\leq e^{C\tilde\sigma_{v_0}L^2e^{CL}}e^{-\Phi(\xi,\eta)}.
	\end{align*}
	{\color{red}On these contours, the normalized multiplier of degree} $m$
	{\color{red}is bounded by}
	\begin{align*}
		C_m\bigl(1+\tilde\sigma_{v_0}^{1/(2s)}L\bigr)^m
		\Lambda_\sigma(\xi,\eta)^m.
	\end{align*}
	{Multiply the Fourier integrand by the entire regulator}
	$e^{-\varepsilon(\xi\cdot\xi+\eta\cdot\eta)}$ {\color{red}and apply
	Cauchy's theorem successively in the} $2d$ {variables over
	bounded boxes. For fixed} $\varepsilon>0$, {all integrals over
	the connecting faces vanish as the boxes expand. The preceding estimates
	provide an integrable majorant independent of} $\varepsilon$,
	{so dominated convergence as} $\varepsilon\downarrow0$
	{proves \eqref{eq:frozen-shifted-fourier}, including after any
	number of normalized derivatives.}

	{Equations}
	\eqref{eq:frozen-shifted-fourier}--\eqref{eq:real-axis-fourier-integral}
	{therefore imply}
	\begin{align}
		\left|H_{\mathfrak q}^{(0)}(t,\tau,x,v)\right|
		\lesssim
		\mathfrak P_{v_0}(\sigma)
		\exp\left[
		-\theta_x\cdot\frac{x}{\sigma}
		-\theta_v\cdot v
		+C\tilde{\sigma}_{v_0}L^2e^{CL}
		\right].
		\label{eq:raw-frozen-exponential-pointwise}
	\end{align}
	{\color{red}For} $m\geq1$, {differentiating under the Fourier
	integral introduces the normalized multipliers associated with}
	$\mathcal D_{\sigma,v_0}^{(x,v),m}$. {\color{red}After the contour shift,
	their} $\theta${-dependent part is bounded by}
	\begin{align*}
		C_m
		\left(
		1+\tilde{\sigma}_{v_0}^{1/(2s)}L
		\right)^m.
	\end{align*}
	{\color{red}This factor is bounded if}
	$\tilde{\sigma}_{v_0}^{1/(2s)}L\leq1$ {\color{red}and is otherwise
	absorbed into} $C\tilde{\sigma}_{v_0}(1+L)^2e^{CL}$. {\color{red}Thus,
	after enlarging the constant, for every} $m\in\mathbb N_0$,
	\begin{align}
		\left|
		\mathcal D_{\sigma,v_0}^{(x,v),m}
		H_{\mathfrak q}^{(0)}(t,\tau,x,v)
		\right|
		\lesssim_m
		\mathfrak P_{v_0}(\sigma)
		\exp\left[
		-\theta_x\cdot\frac{x}{\sigma}
		-\theta_v\cdot v
		+C\tilde{\sigma}_{v_0}(1+L)^2e^{CL}
		\right].
		\label{eq:raw-frozen-exponential}
	\end{align}
	
	Set
	\begin{align*}
		Z:=\left(\frac{x}{\sigma},v\right).
	\end{align*}
	Then
	\begin{align*}
		|Z|
		\leq
		R_\sigma(x,v)
		\leq
		\sqrt2\,|Z|.
	\end{align*}
	For $|Z|\neq0$, choose
	\begin{align*}
		(\theta_x,\theta_v)
		=
		\lambda\frac{Z}{|Z|}.
	\end{align*}
	{\color{red}Since} $L\lesssim\lambda$, {estimate
	\eqref{eq:raw-frozen-exponential} becomes}
	\begin{align}
		\left|
		\mathcal D_{\sigma,v_0}^{(x,v),m}
		H_{\mathfrak q}^{(0)}(t,\tau,x,v)
		\right|
		\lesssim_m
		\mathfrak P_{v_0}(\sigma)
		\exp\left[
		-\lambda|Z|
		+C\tilde{\sigma}_{v_0}
		(1+\lambda)^2e^{C\lambda}
		\right].
		\label{eq:raw-radial-optimization}
	\end{align}
	
	Choose
	\begin{align*}
		\lambda
		:=
		c_0
		\log\left(
		1+\frac{|Z|}{\tilde{\sigma}_{v_0}}
		\right),
	\end{align*}
	{\color{red}where} $c_0>0$ {\color{red}is sufficiently small. After fixing}
	$c_0$, {choose} $A_0>0$ {sufficiently large. Then,
	whenever} $|Z|>A_0$,
	\begin{align*}
		C\tilde{\sigma}_{v_0}
		(1+\lambda)^2e^{C\lambda}
		\leq
		\frac12\lambda|Z|.
	\end{align*}
	{\color{red}For} $|Z|\leq A_0$, {take} $\theta=0$.
	{\color{red}Because} $|Z|\sim R_\sigma(x,v)$, {relabeling the
	constants yields}
	\begin{align}
		\left|
		\mathcal D_{\sigma,v_0}^{(x,v),m}
		H_{\mathfrak q}^{(0)}(t,\tau,x,v)
		\right|
		\lesssim_m
		\mathfrak P_{v_0}(\sigma)
		\exp\left[
		-c_0\bigl(R_\sigma(x,v)-A_0\bigr)_+
		\log\left(
		1+\frac{R_\sigma(x,v)}
		{\tilde{\sigma}_{v_0}}
		\right)
		\right].
		\label{eq:optimized-frozen-exponential}
	\end{align}
{\color{red}For completeness, we verify that this optimization is uniform.
Set} $a:=\widetilde\sigma_{v_0}\leq\sigma\leq1$ {\color{red}and}
$r:=|Z|/a$. {\color{red}Choose} $c_0$ {so that} $p:=Cc_0<1$.
{\color{red}Since} $e^{C\lambda}=(1+r)^p$, {we have}
\begin{align*}
\frac{Ca(1+\lambda)^2e^{C\lambda}}{\lambda|Z|}
&=
\frac{C\bigl(1+c_0\log(1+r)\bigr)^2(1+r)^p}
{c_0r\log(1+r)}.
\end{align*}
{\color{red}The last expression tends to zero as} $r\to\infty$,
{\color{red}uniformly for} $a\in(0,1]$. {\color{red}Because} $|Z|>A_0$
{implies} $r>A_0$, {choosing} $A_0$
{sufficiently large makes the ratio at most} $1/2$
{\color{red}and proves the displayed absorption inequality.}
	
	{\color{red}The sharp frozen-kernel estimate also gives}
	\begin{align}
		\left|
		\mathcal D_{\sigma,v_0}^{(x,v),m}
		H_{\mathfrak q}^{(0)}(t,\tau,x,v)
		\right|
		\lesssim_m
		\bT_{v_0}(\sigma,x,v).
		\label{eq:sharp-frozen-unweighted}
	\end{align}
	By the definition of $\bT_{v_0}$,
	\begin{align*}
		\frac{\mathfrak P_{v_0}(\sigma)}
		{\bT_{v_0}(\sigma,x,v)}
		=
		\mathcal N_{v_0}^{-1}\left(
		\frac{x}
		{\sigma\tilde{\sigma}_{v_0}^{1/(2s)}},
		\frac{v}
		{\tilde{\sigma}_{v_0}^{1/(2s)}}
		\right).
	\end{align*}
	{\color{red}The polynomial lower bound for} $\mathcal N_{v_0}$
	{therefore implies that, for some structural constant}
	$M_*>0$,
	\begin{align}
		\log
		\frac{\mathfrak P_{v_0}(\sigma)}
		{\bT_{v_0}(\sigma,x,v)}
		\leq
		M_*
		\log\left(
		1+\frac{R_\sigma(x,v)}
		{\tilde{\sigma}_{v_0}}
		\right),
		\qquad
		R_\sigma(x,v)\geq1.
		\label{eq:profile-ratio-log-bound}
	\end{align}
	Choose
	\begin{align*}
		A_*\geq\max\{2,A_0\},
		\qquad
		c_0(A_*-A_0)\geq2M_*+2.
	\end{align*}
	{\color{red}If} $R_\sigma(x,v)>A_*$, {the gap}
	$A_*-A_0$ {in \eqref{eq:optimized-frozen-exponential} absorbs
	the factor} $\mathfrak P_{v_0}(\sigma)/\bT_{v_0}$.
	{\color{red}After decreasing the exponential constant, we obtain}
	\begin{align}
		\left|
		\mathcal D_{\sigma,v_0}^{(x,v),m}
		H_{\mathfrak q}^{(0)}(t,\tau,x,v)
		\right|
		\lesssim_m
		\bT_{v_0}(\sigma,x,v)
		\exp\left[
		-c_*
		\bigl(R_\sigma(x,v)-A_*\bigr)_+
		\log\left(
		1+\frac{R_\sigma(x,v)}
		{\tilde{\sigma}_{v_0}}
		\right)
		\right].
		\label{eq:frozen-relative-exponential}
	\end{align}
	{\color{red}For} $R_\sigma(x,v)\leq A_*$, {the same estimate
	follows directly from \eqref{eq:sharp-frozen-unweighted}.}
	
	{\color{red}Finally, for every} $N\geq0$,
	\begin{align*}
		\exp\left[
		-c_*(R-A_*)_+
		\log\left(
		1+\frac{R}{\tilde{\sigma}_{v_0}}
		\right)
		\right]
		\lesssim_N\langle R\rangle^{-N}.
	\end{align*}
	{Taking} $R=R_\sigma(x,v)$ {in
	\eqref{eq:frozen-relative-exponential} proves
	\eqref{eq:frozen-polynomial-product}.}
\end{proof}

The kinetic composition law is
\begin{align}\label{eq:kinetic-composition}
	X=X_1+X_2+a_1V_2,
	\qquad
	V=V_1+V_2,
	\qquad
	t=a_1+a_2.
\end{align}
{\color{red}This implies}
\begin{align}\label{eq:kinetic-weight-triangle}
	R_t(X,V)
	\lesssim R_{a_1}(X_1,V_1)+R_{a_2}(X_2,V_2),
\end{align}
and therefore
\begin{align}\label{eq:weight-submultiplicative}
	\mathfrak m_N(a_1,X_1,V_1)
	\mathfrak m_N(a_2,X_2,V_2)
	\lesssim_N
	\mathfrak m_N(t,X,V).
\end{align}
{\color{red}Combining \eqref{eq:weight-submultiplicative} with the sharp
subconvolution, finite-difference, and tail estimates for} $\bT$
{\color{red}yields the corresponding weighted estimates for} $\bK$.
{A shift with} $|\alpha|<1$ {changes the unscaled
weight} $\mathfrak{m}_N$ {by at most a bounded factor. We also use}
\begin{align}\label{eq:local-profile-comparison}
	\bK_{v_0,N}\sim_N\bK_{v_1,N}
	\qquad\text{if}\qquad |v_0-v_1|\le C.
\end{align}
{\color{red}For noncomparable reference velocities, the profile-comparison
lemma provides a structural exponent} $M_{\mathrm{pc}}>0$ {\color{red}such
that}
\begin{align}\label{eq:global-profile-comparison}
	\bT_{v_1}(t,X,V)
	\lesssim
	\langle v_1-v_0\rangle^{M_{\mathrm{pc}}}
	\bT_{v_0}(t,X,V).
\end{align}

\subsubsection{\texorpdfstring{{Estimates for the error terms}}{Estimates for the error terms}}
	{\color{red}Define}	\[
\mathfrak W_T(f)(x,v)
:=
\int_{|\alpha|\ge T^{1/(2s)}}
f(x,v-\alpha)
\frac{\dif \alpha}{|\alpha|^{d+2s}} .
\]

	{\color{red}Let} \(p_t=p_t(x,v)\) {denote the fractional
Kolmogorov kernel defined by}
\begin{align}
\label{defp}
\widehat p_t(\xi,\eta)
=
\exp\left\{
-\int_0^t |\eta+r\xi|^{2s}\,\dif r
\right\},
\qquad t>0 .
\end{align} 
{\color{red}We first establish a kinetic composition estimate for the
large-jump error terms.}
\begin{lemma}\label{lemconvopp}
	For any $T>0$, one has
	\begin{equation}
		\label{eq:large-jump-bridge-assumption}
		\begin{aligned}
			&\int_0^T
			\iint_{\mathbb R^{2d}}
			p_{T-\tau}
			\bigl(x-y-(T-\tau)u,v-u\bigr)
			\mathfrak W_T(p_\tau)(y,u)
			\,\dif y\,\dif u\,\dif \tau       \lesssim
			 p_T(x,v).
		\end{aligned}
	\end{equation}
\end{lemma}
\begin{proof}
	Let \(t_1,t_2>0\) with \(t_1+t_2=T\), and set
	\[
	I_{t_1,t_2}(x,v)
	:=
	\iint_{\mathbb R^{2d}}
	p_{t_1}(x-y-t_1u,v-u)\,
	\mathfrak W_T(p_{t_2})(y,u)\,\dif y\,\dif u .
	\]
	Writing
	\[
	\nu_T(\dif \alpha)
	=
	\mathbf 1_{\{|\alpha|\ge T^{1/(2s)}\}}
	\frac{\dif \alpha}{|\alpha|^{d+2s}},
	\]
	we have
	\[
	\mathfrak W_T(p_{t_2})(y,u)
	=
	\int_{\mathbb R^d}
	p_{t_2}(y,u-\alpha)\,\nu_T(\dif \alpha).
	\]
	Taking the Fourier transform in \((x,v)\) and using \eqref{defp} gives
	\[
	\begin{aligned}
		\widehat I_{t_1,t_2}(\xi,\eta)
		&=
		\widehat p_{t_1}(\xi,\eta)\,
		\widehat p_{t_2}(\xi,\eta+t_1\xi)\,
		\widehat\nu_T(\eta+t_1\xi)  \\
		&=
		\widehat p_T(\xi,\eta)\,
		\widehat\nu_T(\eta+t_1\xi).
	\end{aligned}
	\]
	Hence, by taking the inverse Fourier transform,
	\[
	I_{t_1,t_2}(x,v)
	=
	\int_{|\alpha|\ge T^{1/(2s)}}
	p_T(x-t_1\alpha,v-\alpha)
	\frac{\dif\alpha}{|\alpha|^{d+2s}} .
	\]
	Therefore,
	\[
	\begin{aligned}
		&\int_0^T
		\iint_{\mathbb R^{2d}}
		p_{T-\tau}
		\bigl(x-y-(T-\tau)u,v-u\bigr)
		\mathfrak W_T(p_\tau)(y,u)
		\,\dif y\,\dif u\,\dif\tau  \\
		&\qquad =
		\int_0^T
		\int_{|\alpha|\ge T^{1/(2s)}}
		p_T(x-\tau\alpha,v-\alpha)
		\frac{\dif \alpha\,\dif\tau}{|\alpha|^{d+2s}} .
	\end{aligned}
	\]
		{\color{red}It remains to bound the last term by} \(C p_T(x,v)\).
	
		{\color{red}The time-one estimate \eqref{eq:kolmogorov-unit-profile} and
		the scaling induced by \eqref{defp}, namely,}
	\[
	x=T^{1+\frac1{2s}}X,\qquad
	v=T^{\frac1{2s}}V,\qquad
	\alpha=T^{\frac1{2s}}h,\qquad
	\tau=T\sigma,
	\]
		{reduce the desired estimate to}
	\[
	\int_0^1
	\int_{|h|\ge1}
	p_1(X-\sigma h,V-h)
	\frac{\dif h\,\dif\sigma}{|h|^{d+2s}}
	\lesssim p_1(X,V).
	\]
	Since
	\[
	|X|+|V|
	\le
	|X-\sigma h|+|V-h|+C|h|,
	\qquad 0\le \sigma\le1,
	\]
		{we obtain}
	\[
	\begin{aligned}
		&\langle |X-\sigma h|+|V-h|\rangle^{-(d+2s)}
		\langle h\rangle^{-(d+2s)}  \\
		&\qquad\lesssim 
		\langle |X|+|V|\rangle^{-(d+2s)}
		\bigl(
		\langle |X-\sigma h|+|V-h|\rangle^{-(d+2s)}
		+
		\langle h\rangle^{-(d+2s)}
		\bigr).
	\end{aligned}
	\]
	Since \(|h|^{-(d+2s)}\lesssim \langle h\rangle^{-(d+2s)}\) on
	\(\{|h|\ge1\}\), it follows that
	\[
	\begin{aligned}
		&\int_0^1
		\int_{|h|\ge1}
		p_1(X-\sigma h,V-h)
		\frac{\dif h\,\dif \sigma}{|h|^{d+2s}} \lesssim 
		\langle |X|+|V|\rangle^{-(d+2s)}
		(J_1+J_2),
	\end{aligned}
	\]
	where
	\[
	\begin{aligned}
		J_1
		&:=
		\int_0^1\int_{\mathbb R^d}
		\langle |X-\sigma h|+|V-h|\rangle^{-(d+2s)}
		\int_0^1
		\langle X-\sigma h-\theta(V-h)\rangle^{-(d+2s)}
		\,\dif \theta\,\dif h\,\dif \sigma,\\
		J_2
		&:=
		\int_0^1\int_{\mathbb R^d}
		\langle h\rangle^{-(d+2s)}
		\int_0^1
		\langle X-\sigma h-\theta(V-h)\rangle^{-(d+2s)}
		\,\dif \theta\,\dif h\,\dif \sigma.
	\end{aligned}
	\]
	For \(J_1\), let \(B=V-h\). Then
	\[
	X-\sigma h=X-\sigma V+\sigma B,
	\qquad
	X-\sigma h-\theta(V-h)=X-\sigma V+(\sigma-\theta)B.
	\]
	Moreover,
	\[
	\langle |X-\sigma h|+|V-h|\rangle^{-(d+2s)}
	\le
	\langle B\rangle^{-(d+2s)}.
	\]
	Applying the elementary bound
	\[
	\int_{\mathbb R^d}
	\langle B\rangle^{-(d+2s)}
	\langle Y+\lambda B\rangle^{-(d+2s)}
	\,\dif B
	\lesssim \langle Y\rangle^{-(d+2s)},
	\qquad |\lambda|\le1,
	\]
		{we obtain}
	\[
	J_1
	\lesssim \int_0^1
	\langle X-\sigma V\rangle^{-(d+2s)}
	\,\dif \sigma .
	\]
	For \(J_2\), we use
	\[
	X-\sigma h-\theta(V-h)
	=
	X-\theta V-(\sigma-\theta)h.
	\]
	The same convolution bound gives
	\[
	J_2
	\lesssim \int_0^1
	\langle X-\theta V\rangle^{-(d+2s)}
	\,\dif \theta .
	\]
		{\color{red}Combining the two estimates yields}
	\[
	\int_0^1
	\int_{|h|\ge1}
	p_1(X-\sigma h,V-h)
	\frac{\dif h\,\dif \sigma}{|h|^{d+2s}}
	\lesssim p_1(X,V).
	\]
		{Scaling back gives}
	\[
	\int_0^T
	\int_{|\alpha|\ge T^{1/(2s)}}
	p_T(x-\tau \alpha,v-\alpha)
	\frac{\dif \alpha\,\dif \tau}{|\alpha|^{d+2s}}
	\lesssim  p_T(x,v).
	\]
		{\color{red}This proves the lemma.}
\end{proof}

\begin{lemma}[Decorated weighted convolution estimates]
	\label{lem:decorated-weighted-convolution}
Let $s<b<\min\{1,2s\}$, and define
\begin{equation}\label{notaxv}
	\begin{aligned}
	&	\sigma:=t-\tau,
		\qquad
		a_1:=t-r,
		\qquad
		a_2:=r-\tau,\\
&	(X_1,V_1):=(x-\zeta-a_1u,v-u),\quad\ 
		(X_2,V_2):=(\zeta-y-a_2w,u-w),\quad\ 
		(X,V):=(x-y-\sigma w, v-w).
	\end{aligned}
\end{equation}
		{\color{red}We use the capped increment} \(\mathfrak r_{\sigma,q}\)
		{defined in \eqref{def:truncated-increment}.}
	Write
	\begin{align*}
		K_1
		&:=
		\bK_{v,N}(a_1,X_1,V_1),\quad\quad 
		K_2
		:=
		\bK_{u,N}(a_2,X_2,V_2),\quad\quad
		K_0
		:=
		\bK_{v,N}(\sigma,X,V).
	\end{align*}
{\color{red}Define the shifted-profile sums by}
	\begin{align*}
	K_1^\sharp(\alpha)
		&:=
		\bK_{v,N}(a_1,X_1,V_1)
		+
		\bK_{v,N}(a_1,X_1+a_1\alpha,V_1+\alpha),\\
	K_2^\sharp(\alpha)
		&:=
		\sum_{\varepsilon_1,\varepsilon_2\in\{0,1\}}
		\bK_{u-\varepsilon_1\alpha,N}
		(a_2,X_2,V_2-\varepsilon_2\alpha).
	\end{align*}
Then
	\begin{align}
		&\int_\tau^t
		\iint_{\mathbb R^{2d}}
		\int_{[\alpha]_v\le \tilde \sigma_v^{1/(2s)}}
		\mathfrak r_{a_1,v}(\alpha)
		\mathfrak r_{a_2,u}(\alpha)^b
		K_1^\sharp(\alpha)K_2^\sharp(\alpha)
		\Omega(u,\alpha) \mathbf{1}_{|u-v|\leq 1}
		\frac{\dif\alpha}{|\alpha|^{d+2s}}
		\dif\zeta\dif u\dif r
	\lesssim K_0,
		\label{eq:decorated-convolution-leading}
	\end{align}
	and
	\begin{align}
		&\int_\tau^t
		\iint_{\mathbb R^{2d}}
		\int_{[\alpha]_v\le \tilde \sigma_v^{1/(2s)}}
		|\alpha|^b
		\mathfrak r_{a_2,u}(\alpha)^b
		K_1^\sharp(\alpha)K_2^\sharp(\alpha)
		\Omega(u,\alpha) \mathbf{1}_{|u-v|\leq 1}
		\frac{\dif\alpha}{|\alpha|^{d+2s}}
		\dif\zeta\dif u\dif r
	\lesssim
		\tilde \sigma_v^\frac{b}{2s} K_0.
		\label{eq:decorated-convolution-coefficient}
	\end{align}
	Moreover,
	\begin{align}
		&\int_\tau^t
		\iint_{\mathbb R^{2d}}
		K_1
		\int_{\tilde\sigma_v^{1/(2s)}<[\alpha]_v<1}
		K_2^\sharp(\alpha) 	\Omega(u,\alpha) \mathbf{1}_{|u-v|\leq 1}
		\frac{\dif\alpha}{|\alpha|^{d+2s}}
		\dif\zeta\dif u\dif r
		\lesssim K_0.
		\label{eq:decorated-convolution-tail}
	\end{align}
\end{lemma}
\begin{proof}
	{\color{red}In the localization region, the reference velocities} \(u\),
	\(u-\alpha\), {\color{red}and} \(v\) {remain within a uniformly
	bounded distance of one another. The local profile comparison, kinetic
	composition law, and weight submultiplicativity therefore yield the shifted
	subconvolution bound}
	\begin{align}
		\iint_{\mathbb R^{2d}}
		K_1^\sharp(\alpha)K_2^\sharp(\alpha)
		\,\dif\zeta\dif u
		\lesssim_N K_0.
		\label{eq:shifted-weighted-subconvolution}
	\end{align}
	{\color{red}For} \(|\alpha|\le \tilde \sigma_v^{1/(2s)}\),
	{the corresponding shifted finite-difference estimate controls
	the change in the total kinetic displacement.}
	
	{\color{red}On} \(\{|u-v|\leq1\}\), {the intrinsic increments
	referenced at} \(u\) {\color{red}and} \(v\) {\color{red}are comparable.
	Thus \eqref{eq:global-Omega-unweighted}, with} \(a=\varkappa\)
	{\color{red}and exponent} \(b\), {\color{red}gives}
	\begin{align*}
		\int_{|\alpha|\leq1}
		\mathfrak r_{a_1,v}(\alpha)\mathfrak r_{a_2,u}(\alpha)^b
		\Omega(u,\alpha)\frac{\dif\alpha}{|\alpha|^{d+2s}}
		\lesssim
		a_1^{-\frac{2s-b}{2s}}a_2^{-\frac b{2s}}.
	\end{align*}
	{\color{red}The time integral is finite; hence
	\eqref{eq:shifted-weighted-subconvolution} proves
	\eqref{eq:decorated-convolution-leading}.}
	
	{Similarly, \eqref{eq:global-Omega-capped-moment} yields}
	\begin{align*}
		\int_{|\alpha|\leq1}|\alpha|^b
		\mathfrak r_{a_2,u}(\alpha)^b\Omega(u,\alpha)
		\frac{\dif\alpha}{|\alpha|^{d+2s}}
		\lesssim
		\langle u\rangle^{-\frac{\kappa b}{2s}}
		a_2^{-1+\frac b{2s}}.
	\end{align*}
	{Integrating in} \(r\) {\color{red}and using}
	\(\langle u\rangle\sim\langle v\rangle\) {produces the factor}
	\(\tilde\sigma_v^{b/(2s)}\), {\color{red}which proves
	\eqref{eq:decorated-convolution-coefficient}.}

	{\color{red}Finally, \eqref{eq:global-Omega-unweighted}, with both time
	parameters equal to} \(\sigma\), {\color{red}gives}
	\begin{align*}
		\int_{\tilde \sigma_v^{1/(2s)}<[\alpha]_v<1}
		\Omega(u,\alpha)
		\frac{\dif\alpha}{|\alpha|^{d+2s}}
		\lesssim \sigma^{-1}.
	\end{align*}
{\color{red}If} $K_2^\sharp (\alpha)$ {\color{red}is unshifted,
\eqref{eq:decorated-convolution-tail} follows directly. It remains to treat}
$K_2^\sharp (\alpha)=\bK_{u,N}(a_2,X_2,V_2-\alpha)$. {\color{red}By
\eqref{eq:weight-submultiplicative} and the equivalence}
$\mathfrak m_N(t,X,V-\alpha)\sim\mathfrak m_N(t,X,V)$
{for} $|\alpha|\leq 1$, {it suffices to prove}
\begin{align}\label{convott}
	\int_{\tau}^t \int_{\mathbb{R}^{2d}} \bT_v(a_1,X_1,V_1) \int_{[\alpha]_v\geq \tilde \sigma_v^{1/(2s)}} \bT_v(a_2,X_2,V_2-\alpha) \Omega(u,\alpha) \frac{\dif \alpha}{|\alpha|^{d+2s}} \dif\zeta\dif u \dif r \lesssim  \bT_v(\sigma,X,V).
\end{align}
{\color{red}The profile} $\bT_{v_0}(t,x,v)$ {\color{red}is obtained from the
Kolmogorov kernel} $p_t(x,v)$ {in \eqref{defp} by the anisotropic
transformation} $(t,x,v)\to (\tilde t_{v_0},\mathcal{O}_{v_0}^{-1}x,\mathcal{O}_{v_0}^{-1}v)$,
{\color{red}and}
\begin{align}
	\label{fff}
\frac{\Omega(u,\alpha)}{|\alpha|^{d+2s}}\lesssim \frac{\langle v\rangle^{1-\kappa}}{[\alpha]_v^{d+2s}}.
\end{align}
{Lemma \ref{lemconvopp} then implies \eqref{convott}. Moreover,}
\[
\det\mathcal O_v=\langle v\rangle^{-1},
			\qquad
			\dif r
			=\langle v\rangle^\kappa\dif\tilde r_v.
\]
{\color{red}Thus the Jacobian of} $\alpha \to \mathcal{O}_{v_0}^{-1}\alpha$
{cancels the factor} $\langle v\rangle^{-\kappa}$
{in \eqref{fff}, while the Jacobian of}
$r \to \tilde r_v=\langle v\rangle^{-\kappa}r$ {cancels the
factor} $\langle v\rangle$. {\color{red}This proves
\eqref{eq:decorated-convolution-tail} and completes the proof.}
\end{proof}

{\color{red}We next estimate the error terms generated by} $
	\bigl(\mathcal L_g^{<,\mathfrak q}
	-\mathcal L_g^{<}\bigr)$ {\color{red}and} $\mathcal C_{\mathfrak q}$.

{\color{red}Using the H\"older bound \eqref{z27} when the distance is at
most one and applying the pointwise bound \eqref{z26} at the two endpoints
otherwise, we obtain}
\begin{align}\label{eq:globalized-coefficient-bound}
	&\left|
	\mathbf C_g(r,Z_{\mathfrak q}(r),\alpha)
	-\mathbf C_g(r,\zeta,u,\alpha)
	\right|\notag\\
	&\quad\lesssim
	\left(
	\big|Z_{\mathfrak q}(r)-(\zeta,u)\big|^{b_0}
	\wedge 1\right)
	\bigl(\Omega(v_0,\alpha)+\Omega(u,\alpha)\bigr).
\end{align}
{\color{red}Define}
\[
\mathscr A_{\mathfrak q}F=
		\chi_{\mathfrak q}
		(\mathcal L_g^{<,\mathfrak q}-\mathcal L_g^{<})F
		+\mathcal C_{\mathfrak q}[F].
\]
	        {\color{red}Throughout this subsection, assume that}
$
	s<b_0<\min\{1,2s\},
$
{\color{red}and set}
\begin{align*}
	\beta_0:=\frac{b_0}{1+2s}.
\end{align*}
\begin{lemma}[Localized parametrix error]
	\label{lem:localized-weighted-parametrix-error}
		{\color{red}Suppose that}
	\begin{align}
		|F(t,x,v;\tau,y,w)|+[F]_{v,b_0}(t,x,v;\tau,y,w)
		\lesssim
		\bK_{v,N}
		(t-\tau,x-y-(t-\tau)w,v-w).
		\label{eq:localized-bootstrap-assumption}
	\end{align}
 {\color{red}Then}
	\begin{align}
		&(|\mathcal{G}_{\mathfrak{q}}^{(0)}\oast \mathscr A_{\mathfrak q}F|+[\mathcal{G}_{\mathfrak{q}}^{(0)}\oast \mathscr A_{\mathfrak q}F]_{v,b_0}+[\mathcal{G}_{\mathfrak{q}}^{(0)}\oast \mathscr A_{\mathfrak q}F]_{x,\beta_0})(t,x,v;\tau,y,w)
		\notag\\
		&\quad\qquad\lesssim_N
		\left(
		c_0^{b_0}
		+c_0^{-1}\sigma^\frac{b_0}{2s}
		+\sigma c_0^{-2s}
		\right)
		\bK_{v,N}(\sigma,x-y-\sigma w,v-w),
		\label{eq:localized-weighted-error}
	\end{align}
    where $\sigma=t-\tau$.
\end{lemma}
	\begin{proof}
			{\color{red}We use the notation of
			Lemma~\ref{lem:decorated-weighted-convolution} and may assume}
		\(0<T\le1\) {\color{red}and} \(0<c_0<r_0/4\).
			{\color{red}The stronger relation between} \(T\) {\color{red}and}
			\(c_0\) {needed to identify the localized and unlocalized
			terminal seminorms will be imposed explicitly in the bootstrap argument.
			Define}
	\begin{align*}
		\mathcal B(r,\zeta,u,\alpha)
		:=
		\chi_{\mathfrak q}(r,\zeta,u)
		(\mathbf C_g(r,Z_{\mathfrak q}(r),\alpha)
		-\mathbf C_g(r,\zeta,u,\alpha)).
	\end{align*}
	{\color{red}On the support of the cutoff,}
	\begin{align*}
		|u-v_0|+\left|
		\zeta-\bigl(x_0-(t_0-r)u\bigr)
		\right|\lesssim c_0.
	\end{align*}
	{\color{red}Because} \(\sigma\le1\), {\eqref{z27} and the local
	comparison of} \(\Omega\) {give}
	\begin{align}
		|	\mathcal B(r,\zeta,u,\alpha)|
		\lesssim
		c_0^{b_0}\Omega(u,\alpha).
		\label{eq:localized-coefficient-smallness}
	\end{align}
	{\color{red}Moreover,}
	\begin{align*}
		\delta_\alpha^u\mathcal B
		&=
		(\mathbf C_g(r,Z_{\mathfrak q}(r),\alpha)
		-\mathbf C_g(r,\zeta,u,\alpha))
		\delta_\alpha^u\chi_{\mathfrak q}
	-
		\chi_{\mathfrak q}(r,\zeta,u-\alpha)
		\delta_\alpha^u\mathbf C_g(r,\zeta,u,\alpha).
	\end{align*}
		{Equations \eqref{z26}--\eqref{z27} then imply}
	\begin{align}
		|\delta_\alpha^u	\mathcal B(r,\zeta,u,\alpha)|
		\lesssim
		\left(
		c_0^{b_0-1}|\alpha|+|\alpha|^{b_0}
		\right)
		\bigl(
		\Omega(u,\alpha)+\Omega(u-\alpha,\alpha)
		\bigr).
		\label{eq:localized-coefficient-difference}
	\end{align}
	{\color{red}The coefficient} \(\mathcal B(u,\alpha)\) {\color{red}is even
	in} \(\alpha\). {\color{red}Applying the discrete integration-by-parts
	identity from \eqref{z18} first on symmetric truncations therefore gives}
	\begin{align}
		&\int_{\mathbb R^d}
		\int_{|\alpha|<r_0}
		\mathcal G_{\mathfrak q}^{(0)}(u)\mathcal B(u,\alpha)\delta_\alpha^uF(u)
		\frac{\dif\alpha}{|\alpha|^{d+2s}}\dif u
		\notag\\
		&\qquad=
		\frac12
		\int_{\mathbb R^d}
		\int_{|\alpha|<r_0}
		\delta_\alpha^u(\mathcal G_{\mathfrak q}^{(0)}\mathcal B)(u,\alpha)
		\delta_\alpha^uF(u)
		\frac{\dif\alpha}{|\alpha|^{d+2s}}\dif u.
		\label{eq:exact-nonlocal-ibp}
	\end{align}
	{\color{red}The resulting product difference is}
	\begin{align}
		\delta_\alpha^u(\mathcal G_{\mathfrak q}^{(0)}\mathcal B)
		={}&
		\mathcal B(u,\alpha)\delta_\alpha^u		\mathcal G_{\mathfrak q}^{(0)}(u)
		+
		\mathcal G_{\mathfrak q}^{(0)}(u-\alpha)\delta_\alpha^u	\mathcal B(u,\alpha).
		\label{eq:expanded-nonlocal-ibp}
	\end{align}
	
	{\color{red}For} \([\alpha]_v\le \tilde \sigma_v^{1/(2s)}\),
	{the normalized finite-difference estimates yield}
	\begin{align*}
		|\delta_\alpha^u		\mathcal G_{\mathfrak q}^{(0)}|
		&\lesssim
		\mathfrak r_{a_1,v}(\alpha)K_1^\sharp(\alpha),\\
		|\delta_\alpha^uF|
		&\lesssim
		\mathfrak r_{a_2,u}(\alpha)^{b_0}K_2^\sharp(\alpha).
	\end{align*}
	{\color{red}By \eqref{eq:localized-coefficient-smallness} and
	\eqref{eq:decorated-convolution-leading}, the first term in
	\eqref{eq:expanded-nonlocal-ibp} is bounded by}
	\begin{align*}
		C_Nc_0^{b_0}K_0.
	\end{align*}
	{\color{red}This leading coefficient error has no small-time factor; its
	smallness instead comes from the localization radius} \(c_0\).
	
	{\color{red}For the second term in \eqref{eq:expanded-nonlocal-ibp}, use}
	$|\alpha|\leq|\alpha|^{b_0}$ {\color{red}and}
	$c_0^{b_0-1}\leq c_0^{-1}$. {Equations
	\eqref{eq:localized-coefficient-difference} and
	\eqref{eq:decorated-convolution-coefficient} then give}
	\begin{align*}
		C_Nc_0^{-1}\sigma^\frac{b_0}{2s}K_0.
	\end{align*}
	{\color{red}For} \(\tilde \sigma_v^{1/(2s)}<[\alpha]_v\),
	{no integration by parts is required. Using
	\eqref{eq:localized-coefficient-smallness},}
	\begin{align*}
		|\delta_\alpha^uF|
		\le
		|F(u)|+|F(u-\alpha)|,
	\end{align*}
{\color{red}and \eqref{eq:decorated-convolution-tail}, we obtain}
	\begin{align*}
		C_Nc_0^{b_0}K_0.
	\end{align*}
	{\color{red}To estimate the terminal finite differences, apply the
	difference to the left frozen kernel. When} $a_1$
	{exceeds the increment scale, use the first normalized derivative
	from Lemma~\ref{lem:frozen-small-jump-weighted}; otherwise, use the sum of
	the two undifferenced kernels. After the scaling} $a_1=\sigma\rho$,
	{the endpoint estimates become}
	\begin{equation}
			\begin{aligned}
			&\int_0^1
			\rho^{-1+\frac{b_0}{2s}}
			(1-\rho)^{-\frac{b_0}{2s}}
			\left(1\wedge
			\ell_{\sigma,v}^{v}(h)\rho^{-\frac1{2s}}\right)
			\dif\rho
			\lesssim
			\ell_{\sigma,v}^{v}(h)^{b_0},\\
			&\int_0^1
			\rho^{-1+\frac{b_0}{2s}}
			(1-\rho)^{-\frac{b_0}{2s}}
			\left(1\wedge
			\ell_{\sigma,v}^{x}(h)\rho^{-1-\frac1{2s}}\right)
			\dif\rho
			\lesssim
			\ell_{\sigma,v}^{x}(h)^{\frac{b_0}{1+2s}}.
		\end{aligned}
	\end{equation}
	{Split the first integral at}
	$\rho=\ell_{\sigma,v}^{v}(h)^{2s}$ {\color{red}and the second at}
	$\rho=\ell_{\sigma,v}^{x}(h)^{2s/(1+2s)}$. {\color{red}The same argument
	applies to every shifted frozen profile in
	Lemma~\ref{lem:decorated-weighted-convolution}; hence the preceding
	pointwise estimates also hold for both terminal seminorms. Consequently,}
	\begin{align}
		&\left|\mathcal G_{\mathfrak q}^{(0)}
		\oast
		\chi_{\mathfrak q}
		(\mathcal L_g^{<,\mathfrak q}-\mathcal L_g^{<})F\right|
		+
		\left[\mathcal G_{\mathfrak q}^{(0)}
		\oast
		\chi_{\mathfrak q}
		(\mathcal L_g^{<,\mathfrak q}-\mathcal L_g^{<})F
		\right]_{v,b_0}
		\notag\\
		&\quad+
		\left[\mathcal G_{\mathfrak q}^{(0)}
		\oast
		\chi_{\mathfrak q}
		(\mathcal L_g^{<,\mathfrak q}-\mathcal L_g^{<})F
		\right]_{x,\beta_0}
		\notag\\
		&\qquad
		\lesssim_N
		\left(
		c_0^{b_0}+c_0^{-1}\sigma^\frac{b_0}{2s}
		\right)K_0.
		\label{eq:localized-coefficient-error-final}
	\end{align}
	
	{\color{red}It remains to estimate the cutoff commutator. Recall that}
	\begin{align}
		\mathcal C_{\mathfrak q}[F]
		&=
		\int_{|\alpha|<r_0}
		\mathbf C_g(r,Z_{\mathfrak q}(r),\alpha)
		\delta_\alpha^u\chi_{\mathfrak q}(r,\zeta,u)
		F(r,\zeta,u-\alpha)
		\frac{\dif\alpha}{|\alpha|^{d+2s}}\notag\\
		&=
		F\,\mathcal L_g^{<,\mathfrak q}
		\chi_{\mathfrak q}
		-
		\int_{|\alpha|<r_0}
		\mathbf C_g(r,Z_{\mathfrak q}(r),\alpha)
		\delta_\alpha^u\chi_{\mathfrak q}
		\delta_\alpha^uF
		\frac{\dif\alpha}{|\alpha|^{d+2s}}.
		\label{eq:commutator-decomposition}
	\end{align}
	Symmetry implies
	\begin{align*}
		\left|
		\mathcal L_g^{<,\mathfrak q}
		\chi_{\mathfrak q}
		\right|
		\lesssim c_0^{-2s}.
	\end{align*}
	{\color{red}The first term in \eqref{eq:commutator-decomposition} is
	therefore bounded by}
	\begin{align*}
		C_N\sigma c_0^{-2s}K_0.
	\end{align*}
	{\color{red}For the second term, split the integral at}
	\(|\alpha|=c_0/4\). {\color{red}In the inner range,}
	\begin{align*}
		|\delta_\alpha^u\chi_{\mathfrak q}|
		\lesssim c_0^{-1}|\alpha|,
	\end{align*}
	{\color{red}and the preceding moment calculation gives}
	\begin{align*}
		C_Nc_0^{-1}\sigma^\frac{b_0}{2s} K_0.
	\end{align*}
	{\color{red}The annular range} \(c_0/4<|\alpha|<r_0\)
	{\color{red}has finite jump rate} \(O(c_0^{-2s})\); {hence
	\eqref{eq:decorated-convolution-tail} gives}
$C_N\sigma c_0^{-2s}K_0.$
	{\color{red}The same two endpoint integrals control the terminal finite
	differences. Combining these estimates with
	\eqref{eq:localized-coefficient-error-final} proves
	\eqref{eq:localized-weighted-error} and completes the proof.}
\end{proof}
\subsubsection{Bootstrap and the final estimate}

{\color{red}Fix} $N\ge0$ {\color{red}and define}
\begin{align*}
	\mathfrak M_N(T)
	:=
	\sup_{\substack{0<\tau<t\le T\\x,v,y,w\in\mathbb R^d}}
	\frac{
	(|\G^{(0)}|+[\G^{(0)}]_{v,b_0}+[\G^{(0)}]_{x,\beta_0})(t,x,v;\tau,y,w)
	}{
		\bK_{v,N}
		(t-\tau,x-y-(t-\tau)w,v-w)
	}.
\end{align*}
{\color{red}For a fixed reference point}
$\mathfrak q=(t_0,x_0,v_0)$, {define the inner kinetic neighborhood}
\begin{align*}
	\mathscr B_{\mathfrak q}^{\mathrm{in}}
	:=
	\left\{
	(t,x,v):
	\left|
	\left(
	x_0-(t_0-t)v-x,\,
	v_0-v
	\right)
	\right|
	\leq \frac{c_0}{2}
	\right\}.
\end{align*}
{\color{red}We impose the following geometric scale condition in the
bootstrap:}
\begin{equation}\label{eq:bootstrap-cutoff-scale}
	0<T\le1,
	\qquad
	T^{1/(2s)}\le\frac{c_0}{8}.
\end{equation}
{\color{red}If} $(t,x,v)\in\mathscr B_{\mathfrak q}^{\mathrm{in}}$,
{then} $\chi_{\mathfrak q}(t,x,v)=1$ {\color{red}and}
$|v-v_0|\leq c_0/2$. {\color{red}The local profile-comparison estimate
therefore gives}
\begin{align*}
	\bK_{v_0,N}(\sigma,X,V)
	\sim
	\bK_{v,N}(\sigma,X,V),
	\qquad
	\sigma=t-\tau.
\end{align*}
{All constants in the preceding localized estimates are uniform in}
$\mathfrak q$, {\color{red}and the family}
$\{\mathscr B_{\mathfrak q}^{\mathrm{in}}\}_{\mathfrak q}$
{covers the entire phase space. Consequently,}
\begin{align*}
	\mathfrak M_N(T)
	\sim 
	\sup_{\mathfrak q}
	\sup_{\substack{
			0<\tau<t\leq t_0\leq T\\
			(t,x,v)\in\mathscr B_{\mathfrak q}^{\mathrm{in}}\\
			y,w\in\mathbb R^d}}
	\frac{
	(|\G^{(0)}|+[\G^{(0)}]_{v,b_0}+[\G^{(0)}]_{x,\beta_0})(t,x,v;\tau,y,w)
	}{
		\bK_{v,N}
		(t-\tau,x-y-(t-\tau)w,v-w)
	}.
\end{align*}
{\color{red}Recall the Duhamel formula \eqref{eq:global-small-duhamel}:}
\begin{align*}
	\G^{(0)}\chi_{\mathfrak{q}}
	=\mathcal G_{\mathfrak q}^{(0)}\chi_{\mathfrak{q}}
	+\mathcal G_{\mathfrak q}^{(0)}
	\oast\left\{ \chi_{\mathfrak{q}}
	\bigl(\mathcal L_g^{<,\mathfrak q}
	-\mathcal L_g^{<}\bigr)\G^{(0)}+\mathcal C_{\mathfrak q}[\G^{(0)}]\right\}.
\end{align*}

{\color{red}We next verify that the localized identity controls both
unlocalized terminal seminorms. Set}
\[
	\Xi_{\mathfrak q}(t,x,v)
	:=
	\bigl(x_0-(t_0-t)v-x,\,v_0-v\bigr).
\]
If \((t,x,v)\in\mathscr B_{\mathfrak q}^{\mathrm{in}}\), then
\(|\Xi_{\mathfrak q}(t,x,v)|\le c_0/2\).  An admissible increment in
\eqref{eq:def-normalized-v-holder} satisfies
\[
	|h|\le (t-\tau)^{1/(2s)}\le T^{1/(2s)}.
\]
Since \(0\le t_0-t\le T\le1\), \eqref{eq:bootstrap-cutoff-scale} gives
\begin{align*}
	\left|\Xi_{\mathfrak q}(t,x,v-h)\right|
	&\le
	\left|\Xi_{\mathfrak q}(t,x,v)\right|
	+\left|\bigl((t_0-t)h,h\bigr)\right|\\
	&\le \frac{c_0}{2}+2T^{1/(2s)}
	\le\frac{3c_0}{4}<c_0.
\end{align*}
Likewise, an admissible increment in
\eqref{eq:def-normalized-x-holder} satisfies
\[
	|h|\le(t-\tau)^{1+1/(2s)}
	\le T^{1/(2s)},
\]
and hence
\[
	\left|\Xi_{\mathfrak q}(t,x-h,v)\right|
	\le\frac{c_0}{2}+T^{1/(2s)}
	\le\frac{5c_0}{8}<c_0.
\]
{\color{red}Thus the base point and both shifted terminal points lie in the
region where} \(\chi_{\mathfrak q}=1\). {\color{red}Consequently, every
increment appearing in the two normalized seminorms satisfies}
\[
	\delta_h^v\bigl(\G^{(0)}\chi_{\mathfrak q}\bigr)
	=\delta_h^v\G^{(0)},
	\qquad
	\delta_h^x\bigl(\G^{(0)}\chi_{\mathfrak q}\bigr)
	=\delta_h^x\G^{(0)}.
\]
{\color{red}The restriction} \(t\le t_0\) {in the localized
supremum causes no loss because, for each terminal point, we may take}
\(\mathfrak q=(t,x,v)\). {These neighborhoods therefore cover the
entire terminal phase space. By \eqref{equivGH} and
Lemma~\ref{lem:frozen-small-jump-weighted}, the frozen term has the required
bound, while Lemma~\ref{lem:localized-weighted-parametrix-error} controls the
remainder. Taking the supremum first over each}
$\mathscr B_{\mathfrak q}^{\mathrm{in}}$ {\color{red}and then over}
$\mathfrak q$ {\color{red}gives}
\begin{align*}
	\mathfrak M_N(T)
	\le C_N+C_N\left(
		c_0^{b_0}
		+c_0^{-1}T^\frac{b_0}{2s}
		+T c_0^{-2s}
		\right)\mathfrak M_N(T).
\end{align*}
{\color{red}Let} \(C_N\ge1\) {be the constant in the preceding
inequality. First choose}
\begin{equation}\label{eq:bootstrap-choice-c0}
	0<c_0<
	\min\left\{\frac{r_0}{4},\,(6C_N)^{-1/b_0}\right\}.
\end{equation}
{\color{red}After fixing} \(c_0\), {choose}
\begin{equation}\label{eq:bootstrap-choice-T0}
	0<T_0\le
	\min\left\{
	1,\ r_0^{2s},\
	\left(\frac{c_0}{8}\right)^{2s},\
	\left(\frac{c_0}{6C_N}\right)^{2s/b_0},\
	\frac{c_0^{2s}}{6C_N}
	\right\}.
\end{equation}
{\color{red}This choice includes the geometric condition
\eqref{eq:bootstrap-cutoff-scale} and ensures that}
\[
	C_Nc_0^{b_0}\le\frac16,\qquad
	C_Nc_0^{-1}T_0^{b_0/(2s)}\le\frac16,\qquad
	C_NT_0c_0^{-2s}\le\frac16.
\]
{\color{red}Thus the bootstrap coefficient is at most} \(1/2\),
{\color{red}and} \(\mathfrak M_N(T_0)\lesssim_N1\).

{A standard regularization argument first makes the bootstrap
quantity finite; after absorption, the regularization can be removed. We have
therefore proved the following estimate for the small-jump Green function.}
\begin{proposition}\label{propsmalljump}
{\color{red}For every} \(N\ge0\), {there exists a uniform
elapsed-time threshold} \(T_{\mathrm{sj}}\in(0,1]\) {\color{red}such that,
whenever} \(0\le\tau<t\le T\) {\color{red}and}
\(0<t-\tau\le T_{\mathrm{sj}}\),
	    \begin{align}\label{eq:weighted-small-green-final}
(|\G^{(0)}|+[\G^{(0)}]_{v,b_0}+[\G^{(0)}]_{x,\beta_0})(t,x,v;\tau,y,w)
	\lesssim_N
	\bT_v(\sigma,X,V)
	\left\langle\frac{|X|}{\sigma}+|V|\right\rangle^{-N},
\end{align}
{\color{red}where}
\begin{align*}
	\sigma:=t-\tau,
	\qquad
	X:=x-y-\sigma w,
	\qquad
	V:=v-w.
\end{align*}
\end{proposition}

{\color{red}The preceding proposition provides H\"older estimates in the
terminal variables} $(x,v)$. {\color{red}The corresponding source-variable
estimates follow by applying the same argument to the backward adjoint
equation. More precisely, for fixed} $(t,x,v)$, {the Green function
satisfies}
\begin{align}
	-\partial_\tau \G^{(0)}
	-w\cdot\nabla_y\G^{(0)}
	+\mathcal L_g^{<,*}\G^{(0)}
	=0,
	\qquad \tau<t,
	\label{eq:backward-adjoint-small-green}
\end{align}
{\color{red}where, by the evenness of} $\mathbf C_g(\tau,y,w,\alpha)$
{in} $\alpha$,
\begin{align}
	\mathcal L_g^{<,*}\phi(\tau,y,w)
	:=
	\operatorname{p.v.}\int_{|\alpha|\leq r_0}
	\Bigl[
	\mathbf C_g(\tau,y,w,\alpha)\phi(y,w)
	-
	\mathbf C_g(\tau,y,w-\alpha,\alpha)
	\phi(y,w-\alpha)
	\Bigr]
	\frac{\dif\alpha}{|\alpha|^{d+2s}}.
	\label{eq:adjoint-full-small-jump}
\end{align}
{\color{red}In general,} $\mathcal L_g^{<,*}$ {does not have the
same pointwise nondivergence form as} $\mathcal L_g^{<}$; {we
therefore use \eqref{eq:backward-adjoint-small-green} in weak form. Once the
coefficient is frozen along the source characteristic, however, the frozen
collision operator is self-adjoint. The localized parametrix argument for
\eqref{eq:backward-adjoint-small-green} is therefore the time-reversed analogue of the
terminal-variable argument: the roles of} $t-r$ {\color{red}and} $r-\tau$
{\color{red}are interchanged, while the} $y${- and}
$w${-differences fall on the rightmost frozen kernel. The
normalized scales satisfy}
\begin{align*}
	\ell_{\sigma}^{y}=\ell_{\sigma}^{x},
	\qquad
	\ell_{\sigma}^{w}=\ell_{\sigma}^{v},
\end{align*}
so the endpoint time integrals have the same integrability
condition $b_0<2s$. {\color{red}The finite-rate expansion that recovers the
complete small-jump kernel is transposed term by term, with the source-variable
difference applied to the rightmost principal kernel.}

{\color{red}This argument first yields a bound in the source-referenced
envelope: for every} $N\geq0$, {after possibly decreasing the
uniform elapsed-time threshold,}
\begin{align}
	&\bigl(
	|\G^{(0)}|
	+[\G^{(0)}]_{w,b_0}
	+[\G^{(0)}]_{y,\beta_0}
	\bigr)(t,x,v;\tau,y,w)
\lesssim_N
	\bT_w(\sigma,X,V)
	\left\langle
	\frac{|X|}{\sigma}+|V|
	\right\rangle^{-N}.
	\label{eq:weighted-small-green-source-natural}
\end{align}

{\color{red}To express this estimate in the envelope from
\eqref{eq:weighted-small-green-final}, we use the global profile comparison.
For some structural constant} $M_{\mathrm{pc}}\geq0$,
\begin{align}
	\bT_w(\sigma,X,V)
	\lesssim
	\langle V\rangle^{M_{\mathrm{pc}}}
	\bT_v(\sigma,X,V).
	\label{eq:source-target-profile-comparison}
\end{align}

{Apply \eqref{eq:weighted-small-green-source-natural} with}
$N+M_{\mathrm{pc}}$ {\color{red}and use}
\begin{align*}
	\langle V\rangle
	\leq
	\left\langle
	\frac{|X|}{\sigma}+|V|
	\right\rangle
\end{align*}

{to obtain the following source-variable estimate.}

	\begin{proposition}[Source-variable H\"older estimates]
		\label{prop:small-jump-source-holder}
			{\color{red}For every} \(N\ge0\), {there exists}
			\(T_{\mathrm{src}}\in(0,1]\) {\color{red}such that, whenever}
			\(0\le\tau<t\le T\) {\color{red}and}
			\(0<t-\tau\le T_{\mathrm{src}}\),
	\begin{align}
		&\bigl(
		|\G^{(0)}|
		+[\G^{(0)}]_{w,b_0}
		+[\G^{(0)}]_{y,\beta_0}
		\bigr)(t,x,v;\tau,y,w)
	\lesssim_N
		\bT_v(\sigma,X,V)
		\left\langle
		\frac{|X|}{\sigma}+|V|
		\right\rangle^{-N},
		\label{eq:weighted-small-green-source}
	\end{align}
		{\color{red}where}
	\begin{align*}
		\sigma:=t-\tau,
		\qquad
		X:=x-y-\sigma w,
		\qquad
		V:=v-w.
	\end{align*}
\end{proposition}

\subsection{Parametrix expansion}
{\color{red}We now construct the Green function of the full linearized
operator from the small-jump Green function obtained above. Define}
\begin{align}\label{eq:def-full-perturbation}
	\mathcal B_gF
	:=
	\mathcal L_g^{>}F+\mathcal Q_{s,\mathrm{rem}}(g,F).
\end{align}
{With the sign convention in \eqref{eq:full-green-via-small}, set}
\begin{align}\label{eq:def-large-jump-operator}
	\mathcal L_g^{>}F(t,x,v)
	:=
	\int_{|\alpha|\ge r_0}
	\mathbf C_g(t,x,v,\alpha)
	\bigl(F(t,x,v-\alpha)-F(t,x,v)\bigr)
	\frac{\dif\alpha}{|\alpha|^{d+2s}}.
\end{align}
{\color{red}On the time interval under consideration, a mild Green kernel}
\(H\) {for the full operator satisfies}
\begin{align}\label{eq:full-green-via-small}
	\partial_tH+v\cdot\nabla_xH+\mathcal L_g^{<}H
	=\mathcal B_gH
\end{align}
{in the distributional sense, with initial trace}
\(\delta(x-y)\delta(v-w)\), {\color{red}and obeying the mild identity
\eqref{eq:full-duhamel-small} below. We construct this kernel directly.}

{All operators in \eqref{eq:def-full-perturbation} act on the}
$(t,x,v)$ {variables of the kernel, with} $(\tau,y,w)$
{held fixed.}

{\color{red}Define the Duhamel operator} $\mathscr K$ {by applying}
$\mathcal B_g$ {to the first set of space--velocity variables of} $F$:
\begin{align}\label{eq:def-full-duhamel-operator}
	(\mathscr K F)(t,x,v;\tau,y,w)
	&:=
	\int_\tau^t\iint_{\mathbb R^{2d}}
	\G^{(0)}(t,x,v;r,\zeta,u)
	\Bigl[\mathcal B_g
	\bigl(F(\,\cdot\,;\tau,y,w)\bigr)\Bigr](r,\zeta,u)
	\,\dif\zeta\dif u\dif r.
\end{align}

The mild equation associated with \eqref{eq:full-green-via-small} is
\begin{align}\label{eq:full-duhamel-small}
	H
	=
	\G^{(0)}+\mathscr KH.
\end{align}
{At the end of this subsection, we prove that this equation has a
unique solution in the specified affine parametrix class.}

{\color{red}Define recursively}
\begin{align}\label{eq:def-full-parametrix-terms}
	\G^{(j+1)}
	&:=
	\mathscr K\G^{(j)},
	\qquad j\ge0.
\end{align}
{Every mild kernel} \(H\) {satisfying
\eqref{eq:full-duhamel-small} obeys, for each integer} \(J\ge0\)
{for which the displayed convolutions are integrable,}
\begin{align}\label{eq:finite-full-expansion}
	H
	=
	\sum_{j=0}^{J}\G^{(j)}+\mathfrak R_H^{(J+1)},
\end{align}
where
\begin{align}\label{eq:exact-full-remainder}
	\mathfrak R_H^{(J+1)}
	=
	\mathscr K^{J+1}H.
\end{align}
{At this stage, these identities are conditional. They will hold
for the kernel constructed below by the convergent parametrix.}

		\subsubsection{Velocity-transfer weights and the large-jump tail}

For $N\ge0$, define
\begin{align}\label{eq:def-ratio-weight}
	\varpi_N (\sigma,x,v,w)
	&:=
	\left\langle
	\frac{\langle |x|/\sigma+|v|\rangle}{\langle w\rangle}
	\right\rangle^{-N}.
\end{align}
Note that 
\begin{align}\label{eq:ratio-weight-characteristic-form}
	\varpi_N(\sigma,x,v,w)
	\sim_N
	\left(
	1+
	\frac{|x-\sigma w|/\sigma+|v-w|}{\langle w\rangle}
	\right)^{-N}.
\end{align}
{\color{red}We also define the velocity ratio weight}
\begin{align}\label{eq:def-velocity-ratio-weight}
	\varpi_N^{\mathrm v}(v,w)
	:=\varpi_N(1,0,v,w)
	=
	\left\langle
	\frac{\langle v\rangle}{\langle w\rangle}
		\right\rangle^{-N}.
	\end{align}
{\color{red}We use this weight only at an instantaneous large jump. If}
$a_0=a_1+a_2$,
{then}
\begin{align}\label{eq:ratio-submultiplicative}
	&\varpi_N (a_1,x-\zeta,v,u)
	\varpi_N (a_2,\zeta-y,u,w)
	\lesssim_N
	\varpi_N (a_0,x-y,v,w).
\end{align}
{More generally, for every} $q,u\in\mathbb R^d$,
\begin{align}\label{eq:ratio-submultiplicative-jump}
	&\varpi_N (a_1,x-\zeta,v,u)
	\varpi_N^{\mathrm v}(u,q)
	\varpi_N (a_2,\zeta-y,q,w)
	\lesssim_N
	\varpi_N (a_0,x-y,v,w).
\end{align}
{\color{red}The same direct comparison yields}
\begin{align}\label{eq:ratio-jump-transfer}
	\varpi_N^{\mathrm v}(u,q)
	\varpi_N(a,x,q,w)
	\lesssim_N
	\varpi_N(a,x,u,w).
\end{align}
Moreover, for $0\le m\le N$,
\begin{align}\label{eq:weight-transfer-ratio}
	\langle v\rangle^m
	\varpi_N (a,x,v,w)
	\lesssim_N
	\langle w\rangle^m
	\varpi_{N-m} (a,x,v,w).
\end{align}
{\color{red}Indeed, up to constants depending only on} $N$,
\begin{align*}
	\varpi_N(a,x,v,w)
	\sim_N
	\left(
	1+\frac{|x|/a+|v|}{\langle w\rangle}
	\right)^{-N}.
\end{align*}
{\color{red}Set}
\begin{align*}
	A_0:=1+\frac{|x-y|/a_0+|v|}{\langle w\rangle},\quad\quad
	A_1:=1+\frac{|x-\zeta|/a_1+|v|}{\langle u\rangle},\quad\quad
	A_2:=1+\frac{|\zeta-y|/a_2+|u|}{\langle w\rangle}.
\end{align*}
{\color{red}The identity}
\begin{align*}
	\frac{x-y}{a_0}
	=
	\frac{a_1}{a_0}\frac{x-\zeta}{a_1}
	+
		\frac{a_2}{a_0}\frac{\zeta-y}{a_2}.
	\end{align*}
{Together with}
$\langle u\rangle/\langle w\rangle\lesssim A_2$,
{this gives} $A_0\lesssim A_1A_2$, {\color{red}which proves
\eqref{eq:ratio-submultiplicative}. To prove
\eqref{eq:ratio-submultiplicative-jump}, replace} $A_2$ {by}
\begin{align*}
	\widetilde A_2
		:=1+\frac{|\zeta-y|/a_2+|q|}{\langle w\rangle}.
	\end{align*}
{\color{red}Because}
$\langle u\rangle/\langle w\rangle\lesssim (1+\langle u\rangle/\langle q\rangle)\widetilde A_2$,
{the same calculation gives}
$A_0\lesssim A_1 (1+\langle u\rangle/\langle q\rangle)\widetilde A_2$.
{\color{red}This proves \eqref{eq:ratio-submultiplicative-jump}; omitting the
first time interval in the same comparison yields
\eqref{eq:ratio-jump-transfer}. Finally, \eqref{eq:weight-transfer-ratio}
follows from}
\begin{align*}
	\frac{\langle v\rangle}{\langle w\rangle}
	\lesssim
	\left\langle
	\frac{\langle |x|/a+|v|\rangle}{\langle w\rangle}
	\right\rangle.
\end{align*}

Let
\begin{align*}
	\nu_v(\alpha)
	:=
	\frac{\langle v\rangle^{1-\kappa}}
	{[\alpha]_v^{d+2s}},
\end{align*}
{\color{red}where} $\nu_v$ {\color{red}is the anisotropic reference tail in
the definition of} $\bT_v$. {\color{red}Recall that}
\(\Omega=\Omega_{\varkappa}\) {\color{red}is defined in \eqref{defom}. We
will use the following tail-transfer estimate.}
\begin{lemma}[Tail transfer for one large jump]
	\label{lem:large-jump-tail-transfer}
	Set
	\begin{align}\label{eq:def-N-tail}
		N_{\mathrm{tail}}
		:=
		\varkappa-2d-2s+\min\{\kappa,1\}.
	\end{align}
	For $m,N\ge0$ satisfying
	$m+N\le N_{\mathrm{tail}}$, one has
	\begin{align}\label{eq:large-jump-tail-transfer}
		|\alpha|^m
		\frac{\Omega(v,\alpha)}{|\alpha|^{d+2s}}
		\lesssim
		\langle v-\alpha\rangle^{m+(\kappa-1)_+}
		\varpi_N^{\mathrm v} (v,v-\alpha)
		\nu_v(\alpha).
	\end{align}
\end{lemma}
\begin{proof}
	{\color{red}Set} $q:=v-\alpha$ {\color{red}and decompose}
	$\Omega=\Omega_1+\Omega_2$ {according to the two terms in
	\eqref{defom}.}
	
	On the support of $\Omega_1$,
	\begin{align*}
		|\alpha|
		\lesssim
		\langle\Pi_{\widehat\alpha}v\rangle
		=
		\langle\Pi_{\widehat\alpha}q\rangle.
	\end{align*}
	{\color{red}Consequently,} $\langle v\rangle\lesssim\langle q\rangle$,
	{so the ratio weight is harmless and}
	$|\alpha|^m\lesssim\langle q\rangle^m$. {\color{red}Moreover, the
	definition of} $[\alpha]_v$ {\color{red}gives}
	\begin{align*}
		\frac{\Omega_1(v,\alpha)|\alpha|^{-(d+2s)}}
		{\nu_v(\alpha)}
		\lesssim
		\left(
		\frac{\langle\Pi_{\widehat\alpha}v\rangle}
		{\langle v\rangle}
		\right)^{1-\kappa}
		\langle\mathrm P_{\widehat\alpha}v\rangle^{
			2d+2s-1-\varkappa
		}
		\lesssim
		\langle q\rangle^{(\kappa-1)_+}.
	\end{align*}
	{\color{red}This proves \eqref{eq:large-jump-tail-transfer} for} $\Omega_1$.
	
	{\color{red}For} $\Omega_2$, {division by} $\nu_v$
	{leaves the decay factor}
	\begin{align*}
		\frac{\Omega_2(v,\alpha)|\alpha|^{-(d+2s)}}
		{\nu_v(\alpha)}
		\lesssim
		\langle |v|+|\alpha|\rangle^{-N_{\mathrm{tail}}}.
	\end{align*}
	{\color{red}If} $\langle v\rangle\lesssim\langle q\rangle$,
	{then} $|\alpha|^m\lesssim\langle q\rangle^m$
	{\color{red}and the claim follows immediately. If}
	$\langle v\rangle\gg\langle q\rangle$, {then}
	$|\alpha|\sim\langle v\rangle$ {\color{red}and}
	\begin{align*}
		|\alpha|^m
		\langle |v|+|\alpha|\rangle^{-N_{\mathrm{tail}}}
		\lesssim
		\langle q\rangle^m
		\left(
		\frac{\langle q\rangle}{\langle v\rangle}
		\right)^N
	\end{align*}
		{whenever} $m+N\le N_{\mathrm{tail}}$.
		{\color{red}This proves the lemma.}
		\vspace{0.3cm}
\end{proof}	

\subsubsection{\texorpdfstring{{Estimates for the remainder terms}}{Estimates for the remainder terms}}
For $j\ge0$, define
\begin{align}\label{eq:def-regularity-envelope}
	\mathfrak T_v^{ (j)}(\sigma,X,V)
	:=
	\bT_v(\sigma,X,V)
	\wedge
	\bigl(\sigma^{\frac{jb_0}{2s}}\mathfrak P_v(\sigma)\bigr).
\end{align}
{\color{red}Here}
$\mathfrak P_v(\sigma)=\sigma^{-d}\tilde \sigma_v^{-\frac{d}{s}}\langle v\rangle^2$
{\color{red}is the peak amplitude of} $\G^{(0)}$.

{\color{red}Recall the profile-comparison parameter} $M_{\mathrm{pc}}$
{from \eqref{eq:global-profile-comparison} and set}
$$n_0=\max\{M_{\mathrm{pc}}+10d,\frac{\kappa d}{s}+10\}.$$
{\color{red}The next lemma estimates the large-jump error.}
\begin{lemma}
	\label{lem:one-step-large-jump}
	Let \(j\in\mathbb N_0\), \(M,N\ge0\) with
$
		N\ge  n_0,
		N+M_{\mathrm{pc}}\le N_{\mathrm{tail}}.
$
{\color{red}Suppose that} $\sigma=t-\tau$ {\color{red}satisfies}
$0<\sigma\le1$ {\color{red}and that} \(F\) {\color{red}satisfies}
	\begin{align}
		|F(t,x,v;\tau,y,w)|
		\lesssim
		\langle w\rangle^M
		\varpi_N (\sigma,x-y,v,w)
		\mathfrak T_v^{ (j)}(\sigma,x-y-\sigma w,v-w),
		\label{eq:large-jump-input-pointwise}
	\end{align}
		{\color{red}together with the weighted mass estimate}
	\begin{align}
		\iint_{\mathbb R^{2d}}
		\langle v\rangle^{ n_0}
		\varpi_{N-n_0} (\sigma,x-y,v,w)^{-1}
		|F(t,x,v;\tau,y,w)|
		\dif x\dif v
		\lesssim \sigma^{\frac{jb_0}{2s}}
		\langle w\rangle^{M+ n_0}.
		\label{eq:large-jump-input-mass}
	\end{align}
	Then one has
	\begin{align}
		&\left(
		\left|\G^{(0)}\oast\mathcal L_{g}^{>}F\right|+\left[\G^{(0)}\oast\mathcal L_{g}^{>}F\right]_{v,b_0}+\left[\G^{(0)}\oast\mathcal L_{g}^{>}F\right]_{x,\beta_0}
		\right)
		(t,x,v;\tau,y,w)\\
	&\quad\quad	\lesssim{}
\langle w\rangle^{M+ n_0}
		\varpi_{N- n_0} (\sigma,x-y,v,w)
		\mathfrak T_v^{ (j+1)}(\sigma,x-y-\sigma w,v-w),
		\label{eq:large-jump-gain-one-step}
	\end{align}
and
	\begin{align}
		&\iint_{\mathbb R^{2d}}
		\varpi_{N-n_0}(\sigma,x-y,v,w)^{-1}
		\bigl(
		|\G^{(0)}\oast\mathcal L_g^{>}F|
		+[\G^{(0)}\oast\mathcal L_g^{>}F]_{v,b_0}
		\bigr)
		(t,x,v;\tau,y,w)
		\dif x\dif v
	\lesssim
		\sigma^{\frac{(j+1)b_0}{2s}}
		\langle w\rangle^{M+n_0}.
		\label{eq:large-jump-gain-weighted-mass}
	\end{align}
\end{lemma}
\begin{proof}
{Decompose}
$\mathcal{L}_{g}^>=\mathcal{L}_{g,+}^>+\mathcal{L}_{g,-}^>$,
{\color{red}where}
	\begin{align*}
&	\mathcal L_{g,+}^{>}F(r,\zeta,u)
	:=
	\int_{|\alpha|\ge r_0}
	\mathbf C_g(r,\zeta,u,\alpha)
	F(r,\zeta,u-\alpha)
	\frac{\dif\alpha}{|\alpha|^{d+2s}},\\
&	\mathcal L_{g,-}^{>}F(r,\zeta,u)
	:=
	-F(r,\zeta,u)
	\int_{|\alpha|\ge r_0}
	\mathbf C_g(r,\zeta,u,\alpha)
	\frac{\dif\alpha}{|\alpha|^{d+2s}}.
\end{align*}
	{\color{red}We prove the gain, peak, loss, and mass estimates in turn.}
	
	\medskip
	\noindent
	\textit{Step 1: expansion of the gain term.}
	By definition,
	\begin{align}
		&\left(
		\G^{(0)}\oast\mathcal L_{g,+}^{>}F
		\right)(t,x,v;\tau,y,w)
		\notag\\
		&\quad=
		\int_\tau^t
		\iint_{\mathbb R^{2d}}
		\G^{(0)}(t,x,v;r,\zeta,u)
		\int_{|\alpha|\ge r_0}
		\mathbf C_g(r,\zeta,u,\alpha)
		F(r,\zeta,u-\alpha;\tau,y,w)
		\frac{\dif\alpha}{|\alpha|^{d+2s}}
		\dif\zeta\dif u\dif r.
		\label{eq:large-jump-gain-expanded}
	\end{align}
{\color{red}Set}
	\begin{align*}
		\sigma:=t-\tau,
		\qquad
		a_1:=t-r,
		\qquad
		a_2:=r-\tau.
	\end{align*}
	{\color{red}For the intermediate post-jump velocity} \(u\),
	{also set} $q:=u-\alpha$ {\color{red}and}
	\begin{align*}
		(X_1,V_1):=(x-\zeta-a_1u,v-u),\quad\quad 
		(X_2,V_2):=(\zeta-y-a_2w, q-w),\quad\quad (X,V)=(x-y-\sigma w,v-w).
	\end{align*}
	{\color{red}The collision coefficient satisfies}
	\begin{align*}
		\mathbf C_g(r,\zeta,u,\alpha)
		\lesssim \Omega(u,\alpha).
	\end{align*}
{Moving from the profile based at} \(q=u-\alpha\)
{to the post-jump profile based at} \(u\), {\color{red}and then
to the terminal profile based at} \(v\), {costs at most}
	\(\langle\alpha\rangle^{M_{\mathrm{pc}}}+\langle v-u\rangle^{M_{\mathrm{pc}}}\).
{Apply the frozen-kernel estimate with rapid exponent}
	$N+M_{\mathrm{pc}}$. {\color{red}Since}
	$|v-u|\leq |x-\zeta-a_1u|/a_1+|v-u|$, one has
	\begin{align*}
	\langle v-u\rangle^{M_{\mathrm{pc}}}
	\left\langle\frac{|x-\zeta-a_1u|}{a_1}+|v-u|\right\rangle^{-N-M_{\mathrm{pc}}}
	\leq
	\left\langle\frac{|x-\zeta-a_1u|}{a_1}+|v-u|\right\rangle^{-N}.
	\end{align*}
	{\color{red}Thus, the second summand is absorbed without reducing the required
	exponent} $N$. {\color{red}The remaining coefficient is bounded by}
	\begin{align*}
	\langle\alpha\rangle^{M_{\mathrm{pc}}}
	\frac{\Omega(u,\alpha)}{|\alpha|^{d+2s}}.
\end{align*}
{\color{red}On} $|\alpha|\geq r_0$,
	$\langle\alpha\rangle^{M_{\mathrm{pc}}}
	\leq C(r_0,M_{\mathrm{pc}})|\alpha|^{M_{\mathrm{pc}}}$. {\color{red}Therefore, we may apply Lemma~\ref{lem:large-jump-tail-transfer}, with
		a constant depending on the fixed cutoff} $r_0$. {Taking}
$
		m=M_{\mathrm{pc}}
$
		 {in that lemma gives}
	\begin{align}
		|\alpha|^{M_{\mathrm{pc}}}
		\frac{\Omega(u,\alpha)}{|\alpha|^{d+2s}}
		\lesssim
		\langle q\rangle^{ n_0}
		\varpi_N^{\mathrm v} (u,q)
		\nu_u(\alpha).
		\label{eq:one-step-tail-applied}
	\end{align}
	
	{\color{red}The rapid off-diagonal decay of the outer small-jump kernel
	allows us to reserve the factor}
$
		\varpi_N (a_1,x-\zeta,v,u).
$
	{\color{red}Indeed, set}
	\begin{align*}
		R_1
		:=
		\frac{|x-\zeta-a_1u|}{a_1}+|v-u|.
	\end{align*}
	{\color{red}Then}
	\begin{align*}
		\frac{|x-\zeta|/a_1+|v|}{\langle u\rangle}
		\lesssim1+R_1.
	\end{align*}
	{\color{red}The rapid factor of} $\G^{(0)}$ {\color{red}with exponent at
	least} $N$ {absorbs the inverse weight. The input estimate
	\eqref{eq:large-jump-input-pointwise} supplies}
	\(\varpi_N (a_2,\zeta-y,q,w)\). {\color{red}Moreover,
	\eqref{eq:weight-transfer-ratio} gives}
	\begin{align}
		\langle q\rangle^{ n_0}
		\varpi_N (a_2,\zeta-y,q,w)
		\lesssim_N
		\langle w\rangle^{ n_0}
		\varpi_{N- n_0} (a_2,\zeta-y,q,w).
		\label{eq:one-step-endpoint-transfer}
	\end{align}
	{\color{red}Since} \(N\ge n_0\), {we may reduce the exponents
	of the first two ratio weights from} \(N\) {to} \(N-n_0\).
	{\color{red}The kinetic composition estimate
	\eqref{eq:ratio-submultiplicative-jump} then yields}
	\begin{align}
		&\varpi_N (a_1,x-\zeta,v,u)
		\varpi_N^{\mathrm v} (u,q)
		\varpi_{N- n_0} (a_2,\zeta-y,q,w)
\lesssim_N
		\varpi_{N- n_0} (\sigma,x-y,v,w).
		\label{eq:one-step-three-ratio-weights}
	\end{align}
	{\color{red}Combining}
	\eqref{eq:large-jump-input-pointwise},
	\eqref{eq:one-step-tail-applied},
	\eqref{eq:one-step-endpoint-transfer}, and
	\eqref{eq:one-step-three-ratio-weights}, and using the
	{profile-to-reference-tail comparison, whose polynomial cost is
	already included in} $M_{\mathrm{pc}}$, {reduces the tail part
	of \eqref{eq:large-jump-gain-expanded} to}
		\begin{align*}
		\langle w\rangle^{M+ n_0}
		\varpi_{N- n_0} (\sigma,x-y,v,w)
		\int_\tau^t
		\iint_{\mathbb R^{2d}}
		\int_{|\alpha|\ge r_0}
		\bT_v(a_1,X_1,V_1)
		\nu_v(\alpha)
		\bT_v(a_2,X_2,V_2)
		\dif\alpha\dif\zeta\dif u\dif r.
	\end{align*}
	{\color{red}The one-tail convolution estimate \eqref{convott} therefore
	yields}
	\begin{align}
		\left|
		\G^{(0)}\oast\mathcal L_{g,+}^{>}F
		\right|
		\lesssim
		\langle w\rangle^{M+ n_0}
		\varpi_{N- n_0} (\sigma,x-y,v,w)
		\bT_v(\sigma,X,V).
	\label{eq:one-step-tail-bound}
\end{align}
	\medskip
	\noindent
	\textit{Step 2: improved peak estimate.}
	{\color{red}We next prove the sharper peak estimate}
	\begin{align}
		\left|
		\G^{(0)}\oast\mathcal L_{g,+}^{>}F
		\right|
		\lesssim
		\langle w\rangle^{M+ n_0}
		\varpi_{N- n_0} (\sigma,x-y,v,w)
		\sigma^{\frac{(j+1)b_0}{2s}}\mathfrak P_v(\sigma).
		\label{eq:one-step-peak-bound}
	\end{align}
	Split the time integral into
	\begin{align*}
		I_{1}
		:=
		\int_\tau^{(t+\tau)/2}\cdots\dif r,
		\qquad
		I_{2}
		:=
		\int_{(t+\tau)/2}^t\cdots\dif r.
	\end{align*}
	
	{\color{red}On the first interval,} \(a_1\ge\sigma/2\).
	{\color{red}The normalized} \(L^1\)-\(L^\infty\) {estimate for
	the outer small-jump kernel gives}
	\begin{align}
		\G^{(0)}(t,x,v;r,\zeta,u)
		\lesssim_N
		\varpi_N (a_1,x-\zeta,v,u)
		\mathfrak P_v(a_1)
		\lesssim_N
		\varpi_N (a_1,x-\zeta,v,u)
		\mathfrak P_v(\sigma).
		\label{eq:outer-early-peak}
	\end{align}
	{\color{red}To retain the spatial component of the new ratio weight, write}
	\begin{align*}
		|F(r,\zeta,q;\tau,y,w)|
		={}&
		\varpi_{N-n_0}(a_2,\zeta-y,q,w)\\
		&\times
		\left[
		\varpi_{N-n_0}(a_2,\zeta-y,q,w)^{-1}
		|F(r,\zeta,q;\tau,y,w)|
		\right].
	\end{align*}
	{\color{red}Set} $\ell:=N-n_0$. {Reduce the outer weight to
	exponent} $\ell$ {\color{red}and use}
	\begin{align*}
		\varpi_N^{\mathrm v}(u,q)
		=
		\varpi_\ell^{\mathrm v}(u,q)
		\varpi_{n_0}^{\mathrm v}(u,q).
	\end{align*}
	{\color{red}By \eqref{eq:ratio-submultiplicative-jump}, the product of the
	outer weight with exponent} $\ell$, {the factor}
	$\varpi_\ell^{\mathrm v}(u,q)$, {\color{red}and the first factor above is
	bounded by} $\varpi_\ell(\sigma,x-y,v,w)$. {\color{red}The incoming
	weighted large-jump rate controls the remaining jump factor:}
	\begin{align}\label{eq:incoming-weighted-large-jump-rate}
		\sup_{q\in\mathbb R^d}
		\int_{|\alpha|\ge r_0}
		\varpi_{n_0}^{\mathrm v}(q+\alpha,q)
		\nu_{q+\alpha}(\alpha)\dif\alpha
		\lesssim1.
	\end{align}
{\color{red}This is the incoming analogue of the finite weighted large-jump
rate and follows from the two-region decomposition used in
Lemma~\ref{lem:large-jump-tail-transfer}.}
	Consequently, the weighted mass assumption
	\eqref{eq:large-jump-input-mass} gives
	\begin{align*}
		|I_{1}|
		&\lesssim
		\langle w\rangle^{M+ n_0}
		\varpi_{N- n_0} (\sigma,x-y,v,w)
		\mathfrak P_v(\sigma)
		\int_\tau^{(t+\tau)/2}
		a_2^{\frac{jb_0}{2s}}\dif r\\
		&\lesssim
		\langle w\rangle^{M+ n_0}
		\varpi_{N- n_0} (\sigma,x-y,v,w)
	\sigma^{1+\frac{jb_0}{2s}}\mathfrak P_v(\sigma).
	\end{align*}
	
	{\color{red}On the late interval,} \(a_2\ge\sigma/2\).
	{\color{red}By}
	\eqref{eq:large-jump-input-pointwise} and the definition of
	\(\mathfrak T_q^{ (j)}\),
	\begin{align}
		|F(r,\zeta,q;\tau,y,w)|
		\lesssim 
		\langle w\rangle^M
		\varpi_N (a_2,\zeta-y,q,w)
		a_2^{\frac{jb_0}{2s}}\mathfrak P_q(a_2).
		\label{eq:late-input-peak}
	\end{align}
	{\color{red}The profile transition from} \(q\) {to} \(u\),
	{\color{red}together with the tail-transfer estimate,}
	\eqref{eq:weight-transfer-ratio},
	\eqref{eq:ratio-jump-transfer}, and
	\(\int_{|\alpha|\ge r_0}\nu_u(\alpha)\dif\alpha\lesssim1\),
	{implies}
	\begin{align}
		|\mathcal L_{g,+}^{>}F(r,\zeta,u)|
		\lesssim
		\langle w\rangle^{M+ n_0}
		\varpi_{N- n_0} (a_2,\zeta-y,u,w)
		a_2^{\frac{jb_0}{2s}}\mathfrak P_u(a_2).
		\label{eq:late-large-jump-peak}
	\end{align}
	{\color{red}Using the weighted} \(L^1\) {estimate for the outer
	small-jump kernel, \eqref{eq:ratio-submultiplicative}, and}
	\(a_2\sim\sigma\), {we obtain}
	\begin{align*}
		|I_{2}|
		&\lesssim
		\langle w\rangle^{M+ n_0}
		\varpi_{N- n_0} (\sigma,x-y,v,w)
		\sigma^{\frac{jb_0}{2s}}\mathfrak P_v(\sigma)
		\int_{(t+\tau)/2}^t\dif r\\
		&\lesssim
		\langle w\rangle^{M+ n_0}
		\varpi_{N- n_0} (\sigma,x-y,v,w)
		\sigma^{1+\frac{jb_0}{2s}}\mathfrak P_v(\sigma).
	\end{align*}
	Because \(0<\sigma\le1\),
	\begin{align*}
		\sigma^{1+\frac{jb_0}{2s}}
		\le
		\sigma^{\frac{(j+1)b_0}{2s}}.
	\end{align*}
	{\color{red}This proves \eqref{eq:one-step-peak-bound}.}
	
	{Taking the minimum of}
	\eqref{eq:one-step-tail-bound} and
	\eqref{eq:one-step-peak-bound}, we obtain 
    \begin{align}
		\left|
		\G^{(0)}\oast\mathcal L_{g,+}^{>}F
		\right|
		(t,x,v;\tau,y,w)
		\lesssim{}&
\langle w\rangle^{M+ n_0}
		\varpi_{N- n_0} (\sigma,x-y,v,w)
		\mathfrak T_v^{ (j+1)}(\sigma,x-y-\sigma w,v-w),
		\label{eq:large-jump-gain-one-step+}
	\end{align}
			\medskip
	\noindent
	\textit{Step 3: loss term.}
	{\color{red}We first note that}
	\begin{align*}
		\int_{|\alpha|\ge r_0}
		\mathbf C_g(r,\zeta,u,\alpha)
		\frac{\dif\alpha}{|\alpha|^{d+2s}}\lesssim 1.
	\end{align*}
	 {\color{red}Hence}
	\begin{align*}
		|\mathcal L_{g,-}^{>}F(r,\zeta,u)|
		\lesssim
		\langle w\rangle^{M+ n_0}
		\varpi_{N- n_0} (a_2,\zeta-y,u,w)
		\mathfrak T_u^{ (j)}(a_2,X_2,u-w).
	\end{align*}
	{\color{red}For each} \(r\), {the standard weighted
	subconvolution estimate and \eqref{eq:ratio-submultiplicative} give}
	\begin{align*}
		&\iint_{\mathbb R^{2d}}
		\G^{(0)}(t,x,v;r,\zeta,u)
		|\mathcal L_{g,-}^{>}F(r,\zeta,u)|
		\dif\zeta\dif u\\
		&\qquad\lesssim
		\langle w\rangle^{M+ n_0}
		\varpi_{N- n_0} (\sigma,x-y,v,w)
		\mathfrak T_v^{ (j)}(\sigma,X,V).
	\end{align*}
	{Integration in} \(r\) {\color{red}yields}
	\begin{align*}
		\left|
		\G^{(0)}\oast\mathcal L_{g,-}^{>}F
		\right|
		\lesssim
		\langle w\rangle^{M+ n_0}
		\varpi_{N- n_0} (\sigma,x-y,v,w)
		\sigma\,\mathfrak T_v^{ (j)}(\sigma,X,V).
	\end{align*}
	Since $0<\sigma\le1$ and $b_0/(2s)\le1$, the definition of the
	envelope gives
	\begin{align*}
		\sigma\,\mathfrak T_v^{(j)}(\sigma,X,V)
		\le
		\mathfrak T_v^{(j+1)}(\sigma,X,V).
	\end{align*}
	    {\color{red}Combining this estimate with
	    \eqref{eq:large-jump-gain-one-step+} proves the pointwise bound in
	    \eqref{eq:large-jump-gain-one-step}. For the terminal differences,
	    apply the difference to the outer small-jump kernel.
	    Proposition~\ref{propsmalljump} gives}
\[
|\delta_h^\diamond\G^{(0)}|
\lesssim(1\wedge[\ell_{a_1,v}^\diamond(h)]^{b_\diamond})\bK_{v,L},\ \ \quad\quad\diamond\in\{x,v\},
\]
{\color{red}where} \(b_v=b_0\) {\color{red}and} \(b_x=\beta_0\).
{Repeating Steps~1--3 and setting} $\rho=a_1/\sigma$,
{we encounter only the following new endpoint factors:}
\begin{align*}
 &\int_0^1\!\left(1\wedge
 [\ell_{\sigma,v}^{v}(h)\rho^{-1/(2s)}]^{b_0}\right)\dif\rho
 \lesssim [\ell_{\sigma,v}^{v}(h)]^{b_0},\\
 &\int_0^1\!\left(1\wedge
 [\ell_{\sigma,v}^{x}(h)\rho^{-1-1/(2s)}]^{\beta_0}\right)\dif\rho
 \lesssim [\ell_{\sigma,v}^{x}(h)]^{\beta_0}.
\end{align*}
{\color{red}Indeed, both singular powers equal} \(b_0/(2s)<1\).
{\color{red}The same rapid-decay reserve and ratio-weight composition handle
the shifted profiles. Dividing by the normalized increments proves both
terminal-seminorm bounds in \eqref{eq:large-jump-gain-one-step}.}

	\medskip
	\noindent
	\textit{Step 4: mass estimate.}
	{Again set} $\ell=N-n_0$. {Inverting
	\eqref{eq:ratio-submultiplicative-jump} gives}
	\begin{align*}
		&\varpi_\ell(\sigma,x-y,v,w)^{-1}\lesssim_\ell
		\varpi_\ell(a_1,x-\zeta,v,u)^{-1}
		\bigl(\varpi_\ell^{\mathrm v}(u,q)\bigr)^{-1}
		\varpi_\ell(a_2,\zeta-y,q,w)^{-1}.
	\end{align*}
	{\color{red}The rapid decay of the outer small-jump kernel yields the
	weighted} $L^1$ {estimate}
	\begin{align*}
		\iint_{\mathbb R^{2d}}
		\varpi_\ell(a_1,x-\zeta,v,u)^{-1}
		\G^{(0)}(t,x,v;r,\zeta,u)
		\dif x\dif v
		\lesssim_\ell 1.
	\end{align*}
	{\color{red}Moreover, the factor} $\varpi_N^{\mathrm v}(u,q)$
	{in \eqref{eq:one-step-tail-applied} cancels the inverse jump
	weight, leaving}
	\begin{align*}
		\varpi_N^{\mathrm v}(u,q)
		\bigl(\varpi_\ell^{\mathrm v}(u,q)\bigr)^{-1}
		=
		\varpi_{n_0}^{\mathrm v}(u,q).
	\end{align*}
	Thus \eqref{eq:incoming-weighted-large-jump-rate} and
	\eqref{eq:large-jump-input-mass} imply
	\begin{align*}
		&\iint_{\mathbb R^{2d}}
		\varpi_\ell(\sigma,x-y,v,w)^{-1}
		\left|
		\G^{(0)}\oast\mathcal L_{g,+}^{>}F
		\right|
		\dif x\dif v
		\lesssim
		\langle w\rangle^{M+ n_0}
		\int_\tau^t a_2^{\frac{jb_0}{2s}}\dif r.
	\end{align*}
	{\color{red}For the loss term, invert
	\eqref{eq:ratio-submultiplicative} and combine the same weighted}
	$L^1$ {estimate for the outer kernel with
	\eqref{eq:large-jump-input-mass}. This gives the same bound, and hence}
	\begin{align*}
		&\iint_{\mathbb R^{2d}}
		\varpi_\ell(\sigma,x-y,v,w)^{-1}
		|\G^{(0)}\oast\mathcal L_g^{>}F(t,x,v;\tau,y,w)|
		\dif x\dif v\\
		&\qquad\lesssim
		\langle w\rangle^{M+ n_0}
		\sigma^{1+\frac{jb_0}{2s}}
		\le
		\langle w\rangle^{M+ n_0}
		\sigma^{\frac{(j+1)b_0}{2s}}.
	\end{align*}
{\color{red}After division by the velocity increment, the first endpoint
estimate provides an integrable majorant independent of} \(h\).
{Taking the supremum before integrating in the terminal variables
and using the same inverse ratio-weight composition yields the corresponding
velocity-seminorm bound. This proves
\eqref{eq:large-jump-gain-weighted-mass} and completes the proof.}
\end{proof}

{\color{red}We next estimate the contribution of the remainder}
$\mathcal{Q}_{s,\mathrm{rem}}(g,\G)$.
\begin{lemma}
	\label{lem:one-step-cancellation-remainder}
		{\color{red}Let} \(j\in\mathbb N_0\), \(M\geq0\),
		\(N\geq n_0\), {\color{red}and assume the standing tail conditions}
		$\kappa\geq0$ {\color{red}and} $\varkappa>d$. {\color{red}Set}
	$0<\sigma=t-\tau\le T\le\min\{1,r_0^{2s}\}$, where $T$ is chosen
	within the time range of Propositions~\ref{propsmalljump} and
	\ref{prop:small-jump-source-holder} for the rapid exponents used below.
		{\color{red}Suppose that the following two estimates hold:}
	\begin{align}
		\bigl(|F|+[F]_{v,b_0}\bigr)(t,x,v;\tau,y,w)
		&\lesssim
		\langle w\rangle^M
		\varpi_N(\sigma,x-y,v,w)
		\mathfrak T_v^{(j)}(\sigma,x-y-\sigma w,v-w),
		\label{eq:qrem-F-pointwise}
	\end{align}
	and
	\begin{align}
		&\iint_{\mathbb R^{2d}}
		\varpi_N(\sigma,x-y,v,w)^{-1}
		\bigl(|F|+[F]_{v,b_0}\bigr)(t,x,v;\tau,y,w)
		\dif x\dif v
		\lesssim
		\langle w\rangle^{M}
		\sigma^{\frac{jb_0}{2s}}.
		\label{eq:qrem-F-weighted-mass}
	\end{align}
	Then, for $\mathscr{R}F=\G^{(0)}
	\oast
	\mathcal Q_{s,\mathrm{rem}}(g,F)$, one has
	\begin{align}
		&\bigl(
		|\mathscr{R}F|+[\mathscr{R}F]_{v,b_0}
		+[\mathscr{R}F]_{x,\beta_0}
		\bigr)(t,x,v;\tau,y,w)\notag\\
		&\qquad\lesssim
		\langle w\rangle^{M+n_0}
		\varpi_{N-n_0}(\sigma,x-y,v,w)
		\mathfrak T_v^{(j+1)}(\sigma,x-y-\sigma w,v-w).
		\label{eq:qrem-one-step}
	\end{align}
	Moreover,
	\begin{align}
		&\iint_{\mathbb R^{2d}}
		\varpi_{N-n_0}(\sigma,x-y,v,w)^{-1}
		\bigl(
		|\mathscr{R}F|+[\mathscr{R}F]_{v,b_0}
		\bigr)(t,x,v;\tau,y,w)
		\dif x\dif v
		\lesssim \langle w\rangle^{M+n_0}
		\sigma^{\frac{(j+1)b_0}{2s}}.
		\label{eq:qrem-one-step-mass}
	\end{align}
\end{lemma}
\begin{proof}
	{Use the notation in \eqref{notaxv} and}
	$K_0,K_1^\sharp,K_2^\sharp$ {from
	Lemma~\ref{lem:decorated-weighted-convolution}. Set}
	$X:=x-y-\sigma w$, $V:=v-w$, {\color{red}and}
	\begin{align*}
		\vartheta:=\frac{b_0}{2s}\in(0,1).
	\end{align*}
	{\color{red}By \eqref{qsmr} and \eqref{defK},}
	\(\mathcal Q_{s,\mathrm{rem}}(g,F)=F\mathcal K(g)\)
	{distributionally. Split the defining integral into}
	$|\alpha|<r_0$ {\color{red}and} $|\alpha|\ge r_0$.
	{\color{red}Denote the restricted operators by}
	$\mathcal Q_{s,\mathrm{rem}}^{<}$ {\color{red}and}
	$\mathcal Q_{s,\mathrm{rem}}^{>}$, {\color{red}and their Duhamel terms by}
	$\mathscr R_{<}F$ {\color{red}and} $\mathscr R_{>}F$.
	
	\medskip
	\noindent
	\textit{\color{red}The range $|\alpha|<r_0$.}
	{\color{red}We do not estimate this part as a pointwise multiplier. For}
	$\varepsilon>0$, {truncate it further to}
	$\varepsilon<|\alpha|<r_0$. {\color{red}For fixed} $(r,\zeta)$,
	{set}
	\begin{align*}
		H(u):={}&
		\G^{(0)}(t,x,v;r,\zeta,u)
		F(r,\zeta,u;\tau,y,w).
	\end{align*}
	{\color{red}The coefficient is even in} \(\alpha\).
	{\color{red}Applying the discrete integration-by-parts identity
	\eqref{z19} on the symmetric truncation, with the product replaced by}
	\(H\), {\color{red}gives}
	\begin{align}
		&\int_{\mathbb R^d}
		\int_{\varepsilon<|\alpha|<r_0}
		H(u)\delta_\alpha^u\mathbf C_g(r,\zeta,u,\alpha)
		\frac{\dif\alpha}{|\alpha|^{d+2s}}
		\dif u\notag\\
		&\qquad=
		\frac12\int_{\mathbb R^d}
		\int_{\varepsilon<|\alpha|<r_0}
		\delta_\alpha^uH(u)
		\delta_\alpha^u\mathbf C_g(r,\zeta,u,\alpha)
		\frac{\dif\alpha}{|\alpha|^{d+2s}}
		\dif u.
		\label{eq:qrem-small-discrete-ibp}
	\end{align}
	{\color{red}Moreover,}
	\begin{align}
		\delta_\alpha^uH(u)
		={}&F(r,\zeta,u)\,
		\delta_\alpha^u\G^{(0)}(t,x,v;r,\zeta,u)+\G^{(0)}(t,x,v;r,\zeta,u-\alpha)
		\delta_\alpha^uF(r,\zeta,u).
		\label{eq:qrem-small-product-difference}
	\end{align}
	{\color{red}Applying \eqref{z27} at both endpoints yields}
	\begin{align}
		\left|
		\delta_\alpha^u\mathbf C_g(r,\zeta,u,\alpha)
		\right|
		\lesssim
		|\alpha|^{b_0}
		\bigl(
		\Omega(u,\alpha)+\Omega(u-\alpha,\alpha)
		\bigr),
		\qquad |\alpha|<r_0.
		\label{eq:qrem-small-coefficient-difference}
	\end{align}
	{Each summand in \eqref{eq:qrem-small-product-difference} contains
	a difference of order} $b_0$ {of either} $\G^{(0)}$
	{\color{red}or} $F$, {multiplied by a coefficient difference of
	the same order. Thus the integrand on the right-hand side of
	\eqref{eq:qrem-small-discrete-ibp} is} $O(|\alpha|^{2b_0})$
	{at the origin. Because} $b_0>s$, {it is absolutely
	integrable against} $|\alpha|^{-d-2s}\dif\alpha$. {\color{red}We may
	therefore let} $\varepsilon\downarrow0$ {in
	\eqref{eq:qrem-small-discrete-ibp}.}
	
	{\color{red}We now combine the normalized source-variable estimate for}
	$\G^{(0)}$, {the velocity estimate for} $F$,
	{\color{red}and \eqref{eq:qrem-small-coefficient-difference}. On}
	$|u-v|\le1$, {the two relevant moment bounds, up to local
	profile comparison, are}
	\begin{align}
		&\int_{|\alpha|<r_0}
		|\alpha|^{b_0}
		\mathfrak r_{a_i,q}(\alpha)^{b_0}
		\bigl(
		\Omega(q,\alpha)+\Omega(q-\alpha,\alpha)
		\bigr)
		\frac{\dif\alpha}{|\alpha|^{d+2s}}
		\lesssim
		\langle q\rangle^{-\kappa\vartheta}
		a_i^{-1+\vartheta},
		\qquad i=1,2.
		\label{eq:qrem-small-moments}
	\end{align}
	{\color{red}For} $i=1$, {take} $(a_i,q)=(a_1,v)$;
	{for} $i=2$, {take} $(a_i,q)=(a_2,u)$.
	{\color{red}This is \eqref{eq:global-Omega-capped-moment} with}
	\(a=\varkappa\) {\color{red}and} \(b=b_0\); {in particular,}
	\(2b_0>2s\) {ensures integrability at the origin. The precise
	two-sided decorated estimate is}
	\begin{align}
		&\int_\tau^t\iint_{\mathbb R^{2d}}
		\int_{\substack{
				[\alpha]_v\le\tilde\sigma_v^{1/(2s)}\\
				|\alpha|<r_0}}
		|\alpha|^{b_0}
		\left(
		\mathfrak r_{a_1,v}(\alpha)^{b_0}
		+\mathfrak r_{a_2,u}(\alpha)^{b_0}
		\right)
		K_1^\sharp(\alpha)K_2^\sharp(\alpha)\notag\\
		&\qquad\times
		\bigl(
		\Omega(u,\alpha)+\Omega(u-\alpha,\alpha)
		\bigr)
		\mathbf{1}_{|u-v|\le1}
		\frac{\dif\alpha}{|\alpha|^{d+2s}}
		\dif\zeta\dif u\dif r
		\lesssim
		\sigma^{\vartheta}K_0.
		\label{eq:qrem-two-sided-decorated}
	\end{align}
	{\color{red}To prove \eqref{eq:qrem-two-sided-decorated}, combine
	\eqref{eq:qrem-small-moments} with the shifted subconvolution bound. The
	two summands contribute} $a_1^{-1+\vartheta}$ {\color{red}and}
	$a_2^{-1+\vartheta}$, {\color{red}respectively. This repeats the leading
	argument of Lemma~\ref{lem:decorated-weighted-convolution}, retaining the
	source-side difference in the first summand. On the complementary region}
	$[\alpha]_v>\tilde\sigma_v^{1/(2s)}$, {return to the original
	weak expression before \eqref{eq:qrem-small-discrete-ibp}, bound each term
	by the sum of two kernels, and apply the one-tail bridge on dyadic annuli.
	The additional factor} $|\alpha|^{b_0}$ {changes the tail rate
	from} $\sigma^{-1}$ {to}
	\begin{align*}
		\int_{\substack{
				[\alpha]_v>\tilde\sigma_v^{1/(2s)}\\
				|\alpha|<r_0}}
		|\alpha|^{b_0}
		\bigl(
		\Omega(u,\alpha)+\Omega(u-\alpha,\alpha)
		\bigr)
		\frac{\dif\alpha}{|\alpha|^{d+2s}}
		\lesssim
		\langle v\rangle^{-\kappa\vartheta}
		\sigma^{-1+\vartheta},
		\qquad |u-v|\le1.
	\end{align*}
	{\color{red}For completeness, decompose this region into the annuli}
	$2^k\tilde\sigma_v^{1/(2s)}< [\alpha]_v
	\le2^{k+1}\tilde\sigma_v^{1/(2s)}$.
	{\color{red}On the} $k${th annulus, the scaled proof of
	Lemma~\ref{lemconvopp} yields the same shifted one-tail bridge with factor}
	$2^{-k(2s-b_0)}\sigma^{-1+\vartheta}$. {\color{red}The series in} $k$
	{converges because} $b_0<2s$ {\color{red}and gives the displayed
	rate, including for the shifted kernels. The region} $|u-v|>1$
	{\color{red}is absorbed by reserving finitely many powers of the rapid
	decay of the outer kernel. Consequently,}
	\begin{align}
		|\mathscr R_{<}F|
		\lesssim{}&
		\langle w\rangle^{M+n_0}
		\varpi_{N-n_0}(\sigma,x-y,v,w)
		\sigma^{\vartheta}\bT_v(\sigma,X,V).
		\label{eq:qrem-small-tail-bound}
	\end{align}
	
	{\color{red}To estimate the regularized peak, split the time interval at
	its midpoint. For the first term in
	\eqref{eq:qrem-small-product-difference}, combine the outer peak with the
	weighted mass of} $F$ {on the first half, and the weighted}
	$L^1$ {bound for the outer kernel with the input peak on the
	second half. For the second term, use the weighted H\"older mass and the
	pointwise H\"older bound for} $F$. {\color{red}The resulting time
	integrals are}
	\begin{align*}
		\int_\tau^t
		a_1^{-1+\vartheta}a_2^{j\vartheta}\dif r
		+\int_\tau^t
		a_2^{-1+(j+1)\vartheta}\dif r
		\lesssim
		\sigma^{(j+1)\vartheta}.
	\end{align*}
	{\color{red}Therefore,}
	\begin{align}
		|\mathscr R_{<}F|
		\lesssim{}&
		\langle w\rangle^{M+n_0}
		\varpi_{N-n_0}(\sigma,x-y,v,w)
		\sigma^{(j+1)\vartheta}\mathfrak P_v(\sigma).
		\label{eq:qrem-small-peak-bound}
	\end{align}
	{Taking the minimum of}
	\eqref{eq:qrem-small-tail-bound} and
	\eqref{eq:qrem-small-peak-bound} gives the required pointwise bound
	{for} $\mathscr R_{<}F$.
	
	{\color{red}It remains to estimate the terminal differences. Set}
	$U:=\mathscr R_{<}F$.  Then
	\begin{align*}
		(\partial_t+v\cdot\nabla_x+\mathcal L_g^{<})U
		=\mathcal Q_{s,\mathrm{rem}}^{<}(g,F),
		\qquad U|_{t=\tau}=0.
	\end{align*}
	{\color{red}Fix a terminal point} $\mathfrak q$ {\color{red}and use the
	cutoff} $\chi_{\mathfrak q}$ {from
	\eqref{eq:transport-localization-cutoff}. In the inner localization region,
	the frozen Duhamel formula is}
	\begin{align}
		U\chi_{\mathfrak q}
		={}&\mathcal G_{\mathfrak q}^{(0)}\oast
		\left(
		\chi_{\mathfrak q}
		\mathcal Q_{s,\mathrm{rem}}^{<}(g,F)
		+\mathscr A_{\mathfrak q}U
		\right).
		\label{eq:qrem-localized-small-duhamel}
	\end{align}
	{\color{red}For the direct source term, apply
	\eqref{eq:qrem-small-discrete-ibp} with}
	$H=\mathcal G_{\mathfrak q}^{(0)}\chi_{\mathfrak q}F$.
	{\color{red}The product difference contains the two terms in
	\eqref{eq:qrem-small-product-difference}. The additional term in which the
	difference acts on the cutoff has finite rate because}
	$|\delta_\alpha\chi_{\mathfrak q}|
	\lesssim c_0^{-1}|\alpha|$ {\color{red}and} $b_0+1>2s$.
	{A terminal difference now acts only on the frozen kernel. In
	particular, the} $m=2$ {estimate in
	Lemma~\ref{lem:frozen-small-jump-weighted} gives}
	\begin{align*}
		|\delta_h^v\delta_\alpha^u\mathcal G_{\mathfrak q}^{(0)}|
		&\lesssim
		(1\wedge\ell_{a_1,v}^{v}(h))
		\mathfrak r_{a_1,v}(\alpha)^{b_0}K_1^\sharp(\alpha),\\
		|\delta_h^x\delta_\alpha^u\mathcal G_{\mathfrak q}^{(0)}|
		&\lesssim
		(1\wedge\ell_{a_1,v}^{x}(h))
		\mathfrak r_{a_1,v}(\alpha)^{b_0}K_1^\sharp(\alpha),
	\end{align*}
	{\color{red}and the same inequalities hold after integrating the right-hand
	sides in the terminal variables. Together with
	\eqref{eq:qrem-small-moments}, these estimates, after setting}
	$\rho=a_1/\sigma$, {give}
	\begin{align}
		&\int_0^1
		\left(
		\rho^{-1+\vartheta}+(1-\rho)^{-1+\vartheta}
		\right)
		\left(
		1\wedge
		\ell_{\sigma,v}^{v}(h)\rho^{-1/(2s)}
		\right)\dif\rho
		\lesssim
		\ell_{\sigma,v}^{v}(h)^{b_0},\notag\\
		&\int_0^1
		\left(
		\rho^{-1+\vartheta}+(1-\rho)^{-1+\vartheta}
		\right)
		\left(
		1\wedge
		\ell_{\sigma,v}^{x}(h)\rho^{-1-1/(2s)}
		\right)\dif\rho
		\lesssim
		\ell_{\sigma,v}^{x}(h)^{\beta_0}.
		\label{eq:qrem-terminal-endpoints}
	\end{align}
	{\color{red}On the complementary intrinsic annulus and for the
	cutoff-difference term, use the finite-rate endpoint integrals instead. Their
	velocity and spatial gains are} $\min\{1,2s\}$ {\color{red}and}
	$2s/(1+2s)$, {\color{red}which dominate} $b_0$ {\color{red}and}
	$\beta_0$, {\color{red}respectively. The same midpoint splitting retains
	the peak factor} $\sigma^{(j+1)\vartheta}$. {\color{red}Thus the direct
	source term in \eqref{eq:qrem-localized-small-duhamel} satisfies}
	\begin{align}
		[\mathscr R_{<}F]_{v,b_0}
		+[\mathscr R_{<}F]_{x,\beta_0}
		\lesssim{}&
		\langle w\rangle^{M+n_0}
		\varpi_{N-n_0}(\sigma,x-y,v,w)
		\mathfrak T_v^{(j+1)}(\sigma,X,V).
		\label{eq:qrem-small-holder-bound}
	\end{align}
	{\color{red}For the last term in
	\eqref{eq:qrem-localized-small-duhamel}, repeat the proof of
	Lemma~\ref{lem:localized-weighted-parametrix-error}, simultaneously
	bootstrapping the displayed pointwise seminorms and the weighted}
	$L^1$ {norm in \eqref{eq:qrem-small-weighted-mass}. The tail
	branch uses the pointwise bootstrap. For the peak branch, split the time
	interval at its midpoint: on the first half, combine the frozen peak with
	the weighted mass of} $U$; {on the second, combine the weighted}
	$L^1$ {bound for the frozen kernel with the pointwise bound for}
	$U$. {\color{red}When the difference falls on} $U$, {the exact
	nonlocal integration by parts \eqref{eq:exact-nonlocal-ibp} uses the
	weighted velocity-seminorm mass. More explicitly, the three peak time
	integrals are bounded by}
	\begin{align*}
		&c_0^{b_0}\int_\tau^t
		a_1^{-1+\vartheta}a_2^{j\vartheta}\dif r
		+c_0^{-1}\int_\tau^t
		a_2^{-1+(j+2)\vartheta}\dif r
		+c_0^{-2s}\int_\tau^t
		a_2^{(j+1)\vartheta}\dif r\\
		&\qquad\lesssim
		\left(
		c_0^{b_0}+c_0^{-1}T^{\vartheta}+Tc_0^{-2s}
		\right)
		\sigma^{(j+1)\vartheta}.
	\end{align*}
	{\color{red}The shifted subconvolution and
	\eqref{eq:ratio-submultiplicative} preserve the ratio weight. The tail
	calculation is the corresponding weighted form of
	Lemma~\ref{lem:localized-weighted-parametrix-error}, with the same three
	coefficients. Thus the factor in parentheses multiplies both bootstrap
	constants. For the weighted terminal seminorm, take the terminal difference
	of the frozen kernel before integration. After division by the normalized
	increment, the frozen estimates provide a pointwise integrable majorant
	independent of} $h$. {\color{red}We may therefore take the supremum in}
	$h$ {before integrating in} $(x,v)$; {the inverse
	ratio-weight composition yields the same factor in weighted} $L^1$.
	{\color{red}First apply the argument to smooth approximations of the
	coefficients and data that preserve uniform small-jump coercivity. The joint
	bootstrap inequality then has the form} $A\le C+\varepsilon A$,
	{\color{red}where} $\varepsilon$ {\color{red}is the last displayed
	quantity. Choose} $c_0$ {first and then} $T$
	{so that} $\varepsilon<1/2$, {\color{red}and pass to the limit
	by Fatou's lemma. The pointwise bound controls large terminal increments,
	and the finite covering argument from Proposition~\ref{propsmalljump}
	removes the localization. Consequently,}
	\begin{align}
		&\iint_{\mathbb R^{2d}}
		\varpi_{N-n_0}(\sigma,x-y,v,w)^{-1}
		\bigl(
		|\mathscr R_{<}F|+[\mathscr R_{<}F]_{v,b_0}
		\bigr)\dif x\dif v\notag\\
		&\qquad\lesssim
		\langle w\rangle^{M+n_0}
		\left(
		\int_\tau^t
		a_1^{-1+\vartheta}a_2^{j\vartheta}\dif r
		+\int_\tau^t
		a_2^{-1+(j+1)\vartheta}\dif r
		\right)\notag\\
		&\qquad\lesssim
		\langle w\rangle^{M+n_0}
		\sigma^{(j+1)\vartheta}.
		\label{eq:qrem-small-weighted-mass}
	\end{align}

	\medskip
	\noindent
	\textit{\color{red}The range $|\alpha|\ge r_0$.}
	Set
	\begin{align*}
		m_{>}(r,\zeta,u)
		:={}&
		\int_{|\alpha|\ge r_0}
		\delta_\alpha^u\mathbf C_g(r,\zeta,u,\alpha)
		\frac{\dif\alpha}{|\alpha|^{d+2s}}.
	\end{align*}
	{\color{red}This integral converges absolutely. More precisely,}
	\begin{align}
		|m_{>}(r,\zeta,u)|
		\lesssim{}&
		\int_{|\alpha|\ge r_0}
		\bigl(
		\Omega(u,\alpha)+\Omega(u-\alpha,\alpha)
		\bigr)
		\frac{\dif\alpha}{|\alpha|^{d+2s}}
		\lesssim1.
		\label{eq:qrem-large-finite-rate}
	\end{align}
	{\color{red}The first inequality follows from \eqref{z26} at both
	endpoints, and the second from \eqref{eq:finite-large-jump-rate} with}
	\(a=\varkappa\).
	
	{\color{red}Thus} $\mathscr R_{>}F=\G^{(0)}\oast(m_{>}F)$
	{\color{red}is a finite-rate loss term. The weighted subconvolution estimate
	and the midpoint splitting from Step~3 of
	Lemma~\ref{lem:one-step-large-jump} give, respectively,}
	\begin{align*}
		|\mathscr R_{>}F|
		&\lesssim
		\langle w\rangle^{M+n_0}
		\varpi_{N-n_0}(\sigma,x-y,v,w)
		\sigma\bT_v(\sigma,X,V),\\
		|\mathscr R_{>}F|
		&\lesssim
		\langle w\rangle^{M+n_0}
		\varpi_{N-n_0}(\sigma,x-y,v,w)
		\sigma^{1+j\vartheta}\mathfrak P_v(\sigma).
	\end{align*}
	{\color{red}Since} $\vartheta<1$ {\color{red}and} $\sigma\le1$,
	{these estimates yield the required bound in}
	$\mathfrak T_v^{(j+1)}$. {\color{red}For a terminal difference, the
	short- and long-time decomposition gives the natural velocity gain}
	$\min\{1,2s\}$ {\color{red}and spatial gain} $2s/(1+2s)$.
	{These dominate} $b_0$ {\color{red}and} $\beta_0$,
	{\color{red}respectively, so both terminal seminorms satisfy the same bound.}
	
	{\color{red}Finally, invert \eqref{eq:ratio-submultiplicative}, use
	\eqref{eq:qrem-large-finite-rate}, and repeat the weighted
	terminal-difference argument from Step~4 of
	Lemma~\ref{lem:one-step-large-jump}. Together with
	\eqref{eq:qrem-F-weighted-mass}, this yields}
	\begin{align*}
		\iint_{\mathbb R^{2d}}
		\varpi_{N-n_0}(\sigma,x-y,v,w)^{-1}
		&\bigl(
		|\mathscr R_{>}F|+[\mathscr R_{>}F]_{v,b_0}
		\bigr)\dif x\dif v\\
		&\qquad\lesssim
		\langle w\rangle^{M+n_0}
		\sigma^{1+j\vartheta}
		\lesssim
		\langle w\rangle^{M+n_0}
		\sigma^{(j+1)\vartheta}.
	\end{align*}
	{\color{red}Combining the small- and large-}$\alpha$
	{estimates proves \eqref{eq:qrem-one-step} and
	\eqref{eq:qrem-one-step-mass}.}
\end{proof}

Define
\begin{align}\label{eq:def-Nj-Mj}
	N_j:=N_{\rm tail}-(j+1)n_0,
	\qquad
	M_j:=jn_0.
\end{align}
{\color{red}The following proposition estimates the parametrix terms in
\eqref{eq:finite-full-expansion}.}
\begin{proposition}[Bounds for the finite parametrix terms]
	\label{prop:finite-parametrix-bounds}
		{Assume} $N_j>0$ {for} $0\le j\le J+1$.
		{\color{red}For each fixed} $J$, {there exists} $T_J>0$
		{\color{red}such that, whenever} $0<\sigma=t-\tau\le T_J$,
		{the following estimates hold for every}
		$0\leq j\leq J$:
	\begin{align}\label{eq:parametrix-term-pointwise}
		&\bigl(
		|\G^{(j)}|+[\G^{(j)}]_{v,b_0}
		+[\G^{(j)}]_{x,\beta_0}
		\bigr)(t,x,v;\tau,y,w)\notag\\
		&\qquad\lesssim
		\langle w\rangle^{M_j}
		\varpi_{N_j}(\sigma,x-y,v,w)
		\mathfrak T_v^{(j)}(\sigma,x-y-\sigma w,v-w).
	\end{align}
		{\color{red}Moreover,}
	\begin{align}\label{eq:parametrix-term-mass}
		&\iint_{\mathbb R^{2d}}
		\varpi_{N_j}(\sigma,x-y,v,w)^{-1}
		\bigl(
		|\G^{(j)}|+[\G^{(j)}]_{v,b_0}
		\bigr)(t,x,v;\tau,y,w)
		\dif x\dif v
		\lesssim
		\langle w\rangle^{M_j}
		\sigma^{\frac{jb_0}{2s}}.
	\end{align}
		{\color{red}For each fixed truncation order} $J$,
		{the constants satisfy}
	$C_j\le C_J$ for $0\le j\le J$.
\end{proposition}

\begin{proof}
	{\color{red}The case} $j=0$ {\color{red}follows from
	Proposition~\ref{propsmalljump}: its rapid factor absorbs the inverse ratio
	weight in \eqref{eq:parametrix-term-mass}, and}
	$\mathfrak T_v^{(0)}=\bT_v$.
	
	{Assume both estimates hold at level} $j$.
	{\color{red}Since} $N_{j+1}=N_j-n_0>0$,
	{\eqref{eq:weight-transfer-ratio} gives}
	\begin{align*}
		\langle v\rangle^{n_0}\varpi_{N_{j+1}}^{-1}
		\lesssim
		\langle w\rangle^{n_0}\varpi_{N_j}^{-1}.
	\end{align*}
	{\color{red}Thus \eqref{eq:parametrix-term-mass} implies
	\eqref{eq:large-jump-input-mass} for} $(M,N)=(M_j,N_j)$
	{\color{red}and directly yields \eqref{eq:qrem-F-weighted-mass}. Moreover,}
	$N_j+M_{\rm pc}\le N_{\rm tail}$ {\color{red}because}
	$n_0\ge M_{\rm pc}$. {\color{red}The two one-step lemmas therefore apply;
	together with} $M_{j+1}=M_j+n_0$, {their conclusions give both
	estimates at level} $j+1$.
\end{proof}

{\color{red}In the finite parametrix expansion, each perturbation improves
the regularized peak by a factor} $\sigma^{b_0/(2s)}$ {at the cost
of finitely many powers of} $\langle w\rangle$ {\color{red}and a finite loss
in the ratio-weight exponent. After finitely many iterations, the resulting
positive time power dominates the original peak singularity. To sum the
remaining series, we first show that the large-jump operator preserves the
ratio weight.}
\begin{lemma}[Ratio-weight preservation]
	\label{lem:large-jump-ratio-preservation}
	Let $0\leq P\leq N_{\mathrm{tail}}$.  For $a\in(0,1]$ and
	$z,u,w\in\mathbb R^d$, one has
	\begin{align}
		&\int_{|\alpha|\geq r_0}
		\frac{\Omega(u,\alpha)}{|\alpha|^{d+2s}}
		\varpi_P(a,z,u-\alpha,w)\dif\alpha
		\lesssim_P
		\varpi_P(a,z,u,w).
		\label{eq:large-jump-ratio-preservation}
	\end{align}
\end{lemma}
\begin{proof}
{\color{red}By \eqref{eq:ratio-weight-characteristic-form}, the output and
input weights are equivalent, respectively, to}
\(
(1+(|z-aw|/a+|u-w|)/\langle w\rangle)^{-P}
\)
and
\(
(1+(|z-aw|/a+|u-\alpha-w|)/\langle w\rangle)^{-P}.
\)
{\color{red}The finite large-jump rate follows from
\eqref{eq:finite-large-jump-rate} with} \(a=\varkappa\). {\color{red}If}
$|z-aw|/a+|u-\alpha-w|\geq(|z-aw|/a+|u-w|)/8$,
{then the input weight is bounded by the output weight. The reverse
inequality also follows immediately from \eqref{eq:finite-large-jump-rate}
when} $|z-aw|/a+|u-w|\leq16\langle w\rangle$. {\color{red}It remains to
consider only the region}
\begin{align*}
 \frac{|z-aw|}{a}+|u-\alpha-w|
 &<\frac18\left(\frac{|z-aw|}{a}+|u-w|\right),&
 \frac{|z-aw|}{a}+|u-w|&>16\langle w\rangle.
\end{align*}
{\color{red}In this region,}
\[
 |u-\alpha|\leq\frac3{16}\left(\frac{|z-aw|}{a}+|u-w|\right),
 \qquad
 |\alpha|\geq\frac58\left(\frac{|z-aw|}{a}+|u-w|\right).
\]
{\color{red}On the support of} $\Omega_1$,
$|\alpha|\lesssim\langle\Pi_{\widehat\alpha}u\rangle$,
$\Pi_{\widehat\alpha}u=\Pi_{\widehat\alpha}(u-\alpha)$, and
$|\mathrm P_{\widehat\alpha}u|\geq|\alpha|-|u-\alpha|$.
{\color{red}Hence, on this region,}
\begin{align*}
 \int_{\text{region above}}\frac{\Omega_1(u,\alpha)}{|\alpha|^{d+2s}}\dif\alpha
 &\lesssim
 \left(\frac{|z-aw|}{a}+|u-w|\right)^{(1-\kappa)_++d-1-\varkappa-2s},\\
 \int_{\text{region above}}\frac{\Omega_2(u,\alpha)}{|\alpha|^{d+2s}}\dif\alpha
 &\lesssim
 \left(\frac{|z-aw|}{a}+|u-w|\right)^{d-\kappa-\varkappa-2s}.
\end{align*}
{\color{red}The second line follows by direct radial integration. By the
definition of} $N_{\mathrm{tail}}$, {both terms are bounded by}
$(|z-aw|/a+|u-w|)^{-N_{\mathrm{tail}}}$; {here}
$(1-\kappa)_++\min\{\kappa,1\}=1$ {\color{red}and}
$\min\{\kappa,1\}\leq\kappa$. {\color{red}Since}
$P\leq N_{\mathrm{tail}}$, {the last display is bounded by}
\(
C_P(1+(|z-aw|/a+|u-w|)/\langle w\rangle)^{-P}.
\)
{\color{red}This proves \eqref{eq:large-jump-ratio-preservation}.}
\end{proof}

\begin{lemma}\label{lemGR}
	{\color{red}Let} $M_*\geq0$, $0\leq P_*\leq N_{\mathrm{tail}}$,
	{\color{red}and} $T\in(0,1)$. {Assume that} $T$
	{lies within the time ranges of Propositions~\ref{propsmalljump}
	and \ref{prop:small-jump-source-holder} for some rapid exponent}
	$L>P_*$. {\color{red}Suppose that, for every}
	$0<t-\tau\leq T$, {\color{red}with} $\sigma=t-\tau$,
	\begin{align}
		&\bigl(
		|F|+[F]_{v,b_0}+[F]_{x,\beta_0}
		\bigr)(t,x,v;\tau,y,w)
		\lesssim
		\langle w\rangle^{M_*}
		\varpi_{P_*}(\sigma,x-y,v,w)
		\sigma^{2+2s}.
		\label{eq:regular-ratio-remainder-input}
	\end{align}
	{\color{red}Then}
	\begin{align}
		&\bigl(
		|\G^{(0)}\oast\mathcal B_gF|
		+[\G^{(0)}\oast\mathcal B_gF]_{v,b_0}
		+[\G^{(0)}\oast\mathcal B_gF]_{x,\beta_0}
		\bigr)(t,x,v;\tau,y,w)
		\notag\\
		&\qquad\lesssim
		T^{b_0/(2s)}
		\langle w\rangle^{M_*}
		\varpi_{P_*}(\sigma,x-y,v,w)
		\sigma^{2+2s}.
		\label{eq:regular-ratio-remainder-output}
	\end{align}
\end{lemma}
\begin{proof}
	{Use the notation from \eqref{notaxv}.
	Lemma~\ref{lem:large-jump-ratio-preservation}, the finite rate
	\eqref{eq:finite-large-jump-rate}, and the large-}$\alpha$
	{cancellation rate \eqref{eq:qrem-large-finite-rate} give}
	\begin{align*}
		|\mathcal L_g^>F|
		+|\mathcal Q_{s,\mathrm{rem}}^>(g,F)|
		\lesssim
		\langle w\rangle^{M_*}
		\varpi_{P_*}(a_2,\zeta-y,u,w)
		a_2^{2+2s}.
	\end{align*}
	{\color{red}Fix the exponent} $L>P_*$ {from the hypotheses.
	The rapid factor in Proposition~\ref{propsmalljump}, the weighted}
	$L^1$ {bound for the small-jump kernel, and
	\eqref{eq:ratio-submultiplicative} yield}
	\begin{align*}
		&\iint_{\mathbb R^{2d}}
		\bK_{v,L}(a_1,X_1,V_1)
		\varpi_{P_*}(a_2,\zeta-y,u,w)\dif\zeta\dif u\lesssim
		\varpi_{P_*}(\sigma,x-y,v,w).
	\end{align*}
	{\color{red}The corresponding Duhamel terms are therefore bounded by}
	\begin{align*}
		\int_\tau^t a_2^{2+2s}\dif r
		\lesssim T\sigma^{2+2s}.
	\end{align*}
	{\color{red}For the small-}$\alpha$ {cancellation term, use the
	weak identity \eqref{eq:qrem-small-discrete-ibp} and the product expansion
	\eqref{eq:qrem-small-product-difference}. Estimate the coefficient
	difference by \eqref{eq:qrem-small-coefficient-difference} and the two
	velocity differences by the normalized H\"older bounds for the outer kernel
	and for} $F$. {\color{red}The moment estimate
	\eqref{eq:qrem-small-moments} reduces the time integral to}
	\begin{align*}
		&\int_\tau^t
		a_1^{-1+\frac{b_0}{2s}}a_2^{2+2s}\dif r
		+\int_\tau^t
		a_2^{1+2s+\frac{b_0}{2s}}\dif r\lesssim
		\sigma^{2+2s+\frac{b_0}{2s}}
		\le
		T^{\frac{b_0}{2s}}\sigma^{2+2s}.
	\end{align*}
		{\color{red}We now record the weighted terminal estimate. For}
		\(\diamond\in\{v,x\}\), {the localized frozen formula and
		the} \(m=2\) {bound in
		Lemma~\ref{lem:frozen-small-jump-weighted} give}
		\[
		|\delta_h^\diamond\delta_\alpha^u\G^{(0)}|
		\lesssim
		(1\wedge\ell_{a_1,v}^\diamond(h))
		\mathfrak r_{a_1,v}(\alpha)^{b_0}K_{1,L}^\sharp(\alpha),
		\]
		{\color{red}where} \(K_{1,L}^\sharp\) {denotes}
		\(K_1^\sharp\) {\color{red}with rapid exponent} \(L\).
		{\color{red}When the} \(\alpha\){-difference acts on} \(F\),
		{we use the same formula without} \(\delta_\alpha^u\).
		{\color{red}Since} \(L>P_*\), {
		\eqref{eq:ratio-submultiplicative} yields, uniformly in the shifts,}
		\[
		\iint K_{1,L}^\sharp(\alpha)
		\varpi_{P_*}(a_2,\zeta-y,u,w)\,\dif\zeta\dif u
		\lesssim
		\varpi_{P_*}(\sigma,x-y,v,w).
		\]
		{\color{red}Combining these bounds with
		\eqref{eq:qrem-small-coefficient-difference} and
		\eqref{eq:qrem-small-moments}, and setting}
		\(\rho=a_1/\sigma\), {reduces both terminal differences
		exactly to \eqref{eq:qrem-terminal-endpoints}. Hence the small-}\(\alpha\)
		{term is bounded by the right-hand side of
		\eqref{eq:regular-ratio-remainder-output}, while}
		\(2b_0>2s\) {ensures absolute convergence. The finite-rate
		terms use the same weighted convolution and endpoint estimates without the
		singular} \(\rho\){-factor. Finally,}
		\(T\le T^{b_0/(2s)}\) {\color{red}because} \(T\le1\)
		{\color{red}and} \(b_0<2s\). {\color{red}Thus} \(\mathscr K\)
		{\color{red}is well defined and}
		\[
		\|\mathscr KF\|_{\mathfrak X_T}
		\lesssim T^{b_0/(2s)}\|F\|_{\mathfrak X_T},
		\]
		{\color{red}which proves the lemma.}
\end{proof}

				\noindent
\subsubsection{Proof of Theorem \ref{Greshort}}
	{\color{red}By \eqref{eq:def-full-parametrix-terms},}
	\(\G^{(j)}=\mathscr K^j\G^{(0)}\), {\color{red}where} \(\mathscr K\)
	{\color{red}is defined in \eqref{eq:def-full-duhamel-operator}. Set}
\begin{align*}
	\sigma:=t-\tau,
	\qquad
	(X,V):=(x-y-\sigma w,v-w).
\end{align*}
{\color{red}Choose an integer} $J$ {\color{red}such that}
\begin{align}\label{eq:choice-J-regular}
	\frac{(J+1)b_0}{2s}
	\geq
	d+\frac ds+2+2s.
\end{align}
{\color{red}This choice depends only on the structural parameters. Next, set}
\begin{align}\label{eq:correct-M-sharp}
	M_\sharp
	:=(J+3)n_0+4d+6s-\min\{\kappa,1\}.
\end{align}
{\color{red}The assumption} $\varkappa>M_\sharp$
{in Theorem~\ref{Greshort} then gives}
\begin{align*}
	N_{J+2}
	&=\varkappa-2d-2s+\min\{\kappa,1\}-(J+3)n_0
	>2(d+2s),
\end{align*}
{\color{red}which is precisely the finite tail-moment budget required below.
Proposition~\ref{prop:finite-parametrix-bounds},
\eqref{eq:global-profile-comparison}, and
\eqref{eq:weight-transfer-ratio} imply that, for} $0\leq j\leq J$,
\begin{align}
	&\bigl(
	|\G^{(j)}|+[\G^{(j)}]_{v,b_0}
	+[\G^{(j)}]_{x,\beta_0}
	\bigr)(t,x,v;\tau,y,w)
	\notag\\
	&\qquad\leq
	C_J\langle w\rangle^{M_J}
	\varpi_{N_J} (\sigma,x-y,v,w)
	\bT_{v}(\sigma,X,V)\notag\\
	&\qquad\leq C_J\langle w\rangle^{M_{J+1}}
	\varpi_{N_{J+1}} (\sigma,x-y,v,w)
	\bT(\sigma,X,V),
	\label{eq:finite-terms-final-envelope}
\end{align}
{\color{red}where}
\begin{align*}
	\bT(\sigma,X,V)
	:=
	\bT_{v_0}(\sigma,X,V)\big|_{v_0=0}
	=
	\sigma^{-d-d/s}
	\mathcal N\left(
	\sigma^{-1-\frac1{2s}}X,
	\sigma^{-\frac1{2s}}V
	\right).
\end{align*}

{\color{red}We apply Proposition~\ref{prop:finite-parametrix-bounds} once more,
now with truncation order} $J+1$, {to estimate the first omitted
term. Since}
\begin{align*}
	\mathfrak P_v(\sigma)
	=
	\sigma^{-d-d/s}
	\langle v\rangle^{\kappa d/s+2},
\end{align*}
{the choice of} $n_0$, {the characteristic form
\eqref{eq:ratio-weight-characteristic-form}, and
\eqref{eq:weight-transfer-ratio} imply}
\begin{align}
	&\bigl(
	|\G^{(J+1)}|+[\G^{(J+1)}]_{v,b_0}
	+[\G^{(J+1)}]_{x,\beta_0}
	\bigr)(t,x,v;\tau,y,w)
\lesssim
	C_{J+1}\langle w\rangle^{M_{J+2}}
	\varpi_{N_{J+2}}(\sigma,x-y,v,w)
	\sigma^{2+2s}.
	\label{eq:first-regular-remainder}
\end{align}
{\color{red}Indeed, \eqref{eq:weight-transfer-ratio} transfers the factor}
$\langle v\rangle^{\kappa d/s+2}$ {to}
$\langle w\rangle$, {\color{red}and the choice of} $n_0$
{absorbs the resulting loss. The time exponent follows from
\eqref{eq:choice-J-regular}.}

{Consider the remainder series}
\begin{align}
	\mathfrak R^{(J+1)}
	:=
	\sum_{\ell=0}^{\infty}
	\mathscr K^\ell\G^{(J+1)}.
	\label{eq:def-full-remainder-series}
\end{align}
{\color{red}Fix} \(L>N_{J+2}\). {\color{red}Let}
\(T_{\rm par}\in(0,1]\) {be the minimum of the thresholds furnished by
Propositions~\ref{propsmalljump} and
\ref{prop:small-jump-source-holder} (with exponent} \(L\){),
Proposition~\ref{prop:finite-parametrix-bounds} (at order}
\(J+1\){), and the associated one-step lemmas. Let}
\(C_{\rm GR}\) {be the constant in Lemma~\ref{lemGR} for}
\(P_*=N_{J+2}\) {\color{red}and} \(M_*=M_{J+2}\), {\color{red}and set}
\[
	T_*:=\min\left\{T,T_{\rm par},(2C_{\rm GR})^{-2s/b_0}\right\}.
\]
{\color{red}For} \(0<\sigma\le T_*\), {let}
\(\mathfrak X_{T_*}\) {be the Banach space with norm}
\[
 \|F\|_{\mathfrak X_{T_*}}
 :=\sup\frac{|F|+[F]_{v,b_0}+[F]_{x,\beta_0}}
 {\langle w\rangle^{M_{J+2}}\varpi_{N_{J+2}}
 (\sigma,x-y,v,w)\sigma^{2+2s}} .
\]
{\color{red}Then}
\(\|\mathscr K\|_{\mathfrak X_{T_*}\to\mathfrak X_{T_*}}\le1/2\).
{\color{red}The same} \(T_*\) {works for every base time because}
\(g_\tau(r,x,v):=g(\tau+r,x,v)\) {\color{red}satisfies the same uniform
bounds as} \(g\).

{\color{red}Set} \(S_J:=\sum_{j=0}^{J}\G^{(j)}\) {\color{red}and}
\(\mathfrak A_{T_*}:=S_J+\mathfrak X_{T_*}\).
{\color{red}By \eqref{eq:first-regular-remainder}, the series}
\[
 R_*:=\sum_{\ell\ge0}\mathscr K^\ell\G^{(J+1)}
\]
{converges in} \(\mathfrak X_{T_*}\) {\color{red}and is the unique
solution of} \(R_*=\G^{(J+1)}+\mathscr KR_*\). {\color{red}Consequently,}
\(\G:=S_J+R_*\) {\color{red}is the unique kernel in}
\(\mathfrak A_{T_*}\) {satisfying}
\(\G=\G^{(0)}+\mathscr K\G\), {\color{red}and hence satisfies
\eqref{eq:full-green-via-small} distributionally.}

{\color{red}For} \(1\le j\le J\),
{\eqref{eq:parametrix-term-mass} gives}
\(\|\G^{(j)}\|_{L^1_{x,v}}\lesssim
\langle w\rangle^{M_j}\sigma^{jb_0/(2s)}\to0\).
{\color{red}Moreover,} \(N_{J+2}>2d\) {\color{red}and}
\(\iint\varpi_{N_{J+2}}\,\dif x\dif v
\lesssim\sigma^d\langle w\rangle^{2d}\), {so every}
\(R\in\mathfrak X_{T_*}\) {\color{red}satisfies}
\[
 \|R\|_{L^1_{x,v}}\lesssim
 \|R\|_{\mathfrak X_{T_*}}\langle w\rangle^{M_{J+2}+2d}
 \sigma^{d+2+2s}\to0.
\]
{\color{red}Thus} \(\G\) {\color{red}has the same delta trace as}
\(\G^{(0)}\). {\color{red}If}
\(H=S_J+R\in\mathfrak A_{T_*}\) {\color{red}is another mild kernel, then}
\(D:=R-R_*\) {\color{red}satisfies} \(D=\mathscr KD\),
{\color{red}and hence} \(D=0\). {\color{red}Finally,}
\(\mathfrak R^{(J+1)}=R_*=\G-S_J=\mathscr K^{J+1}\G\),
{\color{red}and}
\begin{align*}
	&\bigl(
	|\mathfrak R^{(J+1)}|
	+[\mathfrak R^{(J+1)}]_{v,b_0}
	+[\mathfrak R^{(J+1)}]_{x,\beta_0}
	\bigr)(t,x,v;\tau,y,w)\lesssim
	\langle w\rangle^{M_{J+2}}
	\varpi_{N_{J+2}}(\sigma,x-y,v,w)
	\sigma^{2+2s}.
\end{align*}
{\color{red}By \eqref{eq:ratio-weight-characteristic-form} and}
$N_{J+2}>2(d+2s)$,
\begin{align*}
	\varpi_{N_{J+2}}(\sigma,x-y,v,w)
	&\lesssim
	\langle w\rangle^{2(d+2s)}
	\varpi_{N_{J+2}-2(d+2s)}(\sigma,x-y,v,w)
	\left\langle R_\sigma(X,V)\right\rangle^{-2(d+2s)}.
\end{align*}
{Splitting into the regions} $R_\sigma(X,V)\leq1$
{\color{red}and} $R_\sigma(X,V)>1$, {\color{red}and using the two polynomial
factors in the definition of} $\mathcal N$, {we obtain, for}
$0<\sigma\leq1$,
\begin{align*}
	\sigma^{2+2s}
	\left\langle R_\sigma(X,V)\right\rangle^{-2(d+2s)}
	\lesssim
	\bT(\sigma,X,V).
\end{align*}
{\color{red}Consequently, \eqref{eq:finite-full-expansion},
\eqref{eq:finite-terms-final-envelope}, and the preceding estimate imply
\eqref{ptwGreen}. Indeed,}
$M_{J+2}+2(d+2s)\leq M_\sharp$, {so we may enlarge the power of}
$\langle w\rangle$ {to} $M_\sharp$, {\color{red}while}
\begin{align*}
	N_\sharp
	:=N_{J+2}-2(d+2s)
	=\varkappa-M_\sharp.
\end{align*}
{\color{red}This completes the proof of Theorem~\ref{Greshort}.}

                \vspace{0.5cm}

                \noindent\textbf{Acknowledgements}~~ {Q.-H. Nguyen
was supported by the CAS Project for Young Scientists in Basic Research (Grant
No.~YSBR-031) and the NSFC (Grant Nos.~1251101538 and 12595282). T. Yang was
supported by the General Research Fund of Hong Kong (Project No.~15303924), a
start-up grant from The Hong Kong Polytechnic University (Project
No.~P0043962), and the Kuok Group Foundation. K. Chen and T. Yang also
acknowledge support from the Research Center for Nonlinear Analysis at The Hong
Kong Polytechnic University (Project No.~P0046121).}

			\end{document}